\documentclass[10pt,a4paper,reqno]{amsart}

\usepackage{avant}
\usepackage{amsmath}
\usepackage{amsfonts}
\usepackage{palatino}
\usepackage{a4wide}
\usepackage{amssymb}
\usepackage{amsthm}

\usepackage{xcolor}
\definecolor{darkgreen}{rgb}{0.8,0.2,0}
\definecolor{citegreen}{rgb}{0,0.6,0}
\definecolor{refred}{rgb}{0.8,0,0}
\usepackage[colorlinks, citecolor=citegreen, linkcolor=refred]{hyperref}
\usepackage{tikz,fp,ifthen}
\usepackage{tikz-3dplot}
\usepackage[fixamsbugs]{mathtools}

\usepackage{mathptmx}
\usepackage{mathpazo}
\usepackage{palatino}
\usepackage{a4wide}
\numberwithin{figure}{section}

\theoremstyle{plain}

\newtheorem{teo}{Theorem}[section]
\newtheorem{lemma}[teo]{Lemma}
\newtheorem{prop}[teo]{Proposition}
\newtheorem{cor}[teo]{Corollary}
\newtheorem{ackn}{Acknowledgements\hspace{-.2em}}

\theoremstyle{definition}
\newtheorem{dfnz}[teo]{Definition}

\newtheorem{oss}[teo]{Remark}
\theoremstyle{remark}

\usepackage{enumitem}
\newcommand{\subscript}[2]{$#1 _ #2$}
\renewcommand{\phi}{\varphi}
\numberwithin{equation}{section}

\def\NN{{{\mathbb N}}}

\def\R{{\mathbb{R}}}

\newcommand{\EEE}{\color{black}}

\def\mathbb RR{\mathrm{R}}
\usepackage{scalerel}[2014/03/10]
\usepackage[usestackEOL]{stackengine}
\def\intavg{\,\ThisStyle{\ensurestackMath{%
			\stackinset{c}{0\LMpt}{c}{0\LMpt}{\SavedStyle-}{\SavedStyle\phantom{\int}}}%
		\setbox0=\hbox{$\SavedStyle\int\,$}\kern-\wd0}\int}

\title{Fractured membranes under determinant constraints: the case of cohesive surface energies}

\author[Nicola Pio Melillo]{Nicola Pio Melillo}
\address[Nicola Pio Melillo]{ Scuola Superiore Meridionale, Universitá di Napoli Federico II Largo
	San Marcellino 10, 80138 Naples, Italy.}
    \email{n.melillo@ssmeridionale.it}
\author[Dario Reggiani]{Dario Reggiani}
\address[Dario Reggiani]{Applied Mathematics M\"unster, University of M\"unster\\
	Einsteinstrasse 62, 48149 M\"unster, Germany}
\email{dario.reggiani@uni-muenster.de}

\subjclass[2020]{49J45, 74K15, 74A45}
\keywords{Dimension reduction, $\Gamma$-convergence, free discontinuity problems, cohesive fracture, incompressibility, finite elasticity, fracture mechanics}

\begin{document}

	\begin{abstract}
        This paper is devoted to the variational derivation of reduced models for elastic membranes with fracture under constraints on the determinant of the deformation gradient. We consider two physically relevant settings: the \emph{orientation-preserving} regime, in which the deformation is required to preserve orientation locally ($\det \nabla u > 0$), and the \emph{incompressible} regime, in which the deformation preserves volume ($\det \nabla u = 1$). In both cases, the surface energy density is allowed to depend on the jump amplitude, thus encompassing \emph{cohesive fracture models with activation threshold}. The main technical contribution is the construction of recovery sequences that simultaneously satisfy the determinant constraint and optimize the surface energy. This is achieved through a combination of $C^\infty$ diffeomorphisms converging to the identity (which rotate the normal to the jump set so as to minimize the reduced surface energy), and a new smooth approximation result for $GSBV^p$ functions.
	\end{abstract}

	\maketitle
	
	\section{Introduction}

    The mathematical study of dimension reduction in elasticity has a long history, dating back to the pioneering contributions of L.~Euler and D.~Bernoulli, although most rigorous variational derivations appeared only in the 1990s~\cite{acerbi1991variational,anzellotti1994dimension,ledret1993modele}. In this paper we focus on the derivation of a reduced model for cohesive elastic membranes under determinant constraints. Our approach relies on $\Gamma$-convergence~\cite{dal2012introduction}, a method that has been successfully applied to a broad class of dimension-reduction problems, including plates~\cite{conti2009gamma,eleuteri2024asymptotic,freddi2010dimension,friedrich2026kirchhoff,friedrich2020derivation,friesecke2002theorem,friesecke2006hierarchy,hornung2014derivation,kruvzik2026effects}, rods and ribbons~\cite{cicalese2017global,cicalese2017hemihelical,davoli2011thin,friedrich2024voids,friedrich2022ribbon}, and membranes~\cite{cesana2015effective,engl2026membrane,hafsa2008nonlinear,hafsa2008nonlinear2,trabelsi2006modeling}. We mention also its use in elastoplasticity \cite{davoli2014linearized,davoli2014quasistatic,davoli2013mora}. In the context of variational fracture mechanics~\cite{bourdin2008variational}, dimension-reduction results are comparatively scarce, both in linearized~\cite{almi2021dimension,almi2023brittle,babadjian2016reduced,ginster2024euler} and nonlinear elasticity~\cite{almi2023brittlemembranes,babadjian2006quasistatic,braides2001brittle,schmidt2017griffith,schmidt2023continuum}. While some of these works incorporate either a orientation-preserving condition (positive Jacobian determinant, in the Griffith setting) or an incompressibility constraint (unit Jacobian determinant, in the Sobolev setting), the case of jump-dependent surface energies under determinant constraints has so far remained open. The present work addresses this gap.

\medskip

\noindent\textbf{Setting of the problem.}
Let $\Sigma \subset \mathbb{R}^2$ be a bounded open set with Lipschitz boundary, and let $\Sigma_\rho := \Sigma \times (-\rho/2,\, \rho/2)$ denote the thin reference configuration of thickness $\rho > 0$. We study the asymptotic behavior, as $\rho \to 0$, of the free-discontinuity functional
\begin{equation}\label{equation1}
	\mathcal{F}_\rho(v) :=
	\begin{cases}
		\displaystyle\int_{\Sigma_\rho} W(\nabla v)\, dy + \int_{J_v} \psi([v], \nu_v)\, d\mathcal{H}^2
		& \text{if } v \in GSBV^p(\Sigma_\rho;\mathbb{R}^3), \\[8pt]
		+\infty & \text{otherwise in } L^0(\Sigma_\rho;\mathbb{R}^3),
	\end{cases}
\end{equation}
where $p \in (1,+\infty)$, $L^0(\Sigma_\rho;\mathbb{R}^3)$ is the space of measurable functions, and $GSBV^p(\Sigma_\rho;\mathbb{R}^3)$ denotes the class of generalized special functions of bounded variation whose approximate gradient belongs to $L^p$ and whose jump set has finite $\mathcal{H}^2$-measure. Here $W : \mathbb{M}^{3 \times 3} \to [0,+\infty]$ is the stored-energy density and $\psi : \mathbb{R}^3 \setminus \{0\} \times \mathbb{S}^2 \to [0,+\infty)$ is the surface energy density.

To reduce the problem to a fixed domain, we perform the change of variables $(y_1,y_2,y_3) = (x_1,x_2,\rho x_3)$, so that $\mathcal{F}_\rho$ can be rewritten as the rescaled energy
\begin{equation}\label{intro resc func}
	\rho^{-1}\mathcal{G}_\rho(u) =
	\begin{cases}
		\displaystyle\int_{\Sigma_1}
		W\!\left(\nabla_\alpha u \,\bigg|\, \frac{1}{\rho}\partial_3 u\right) dx
		+ \int_{J_u} \psi_\rho([u], \nu_u)\, d\mathcal{H}^2
		& \text{if } u \in GSBV^p(\Sigma_1;\mathbb{R}^3), \\[10pt]
		+\infty & \text{otherwise},
	\end{cases}
\end{equation}
where $\Sigma_1 := \Sigma \times (-1/2,\, 1/2)$, $\nabla_\alpha u := (\partial_1 u \mid \partial_2 u) \in \mathbb{M}^{3 \times 2}$, and $\psi_\rho(z,\nu) := \psi(z,\, \nu_\alpha,\, \nu_3/\rho)$ with $\nu_\alpha:=(\nu_1,\nu_2)$.

Without a determinant constraint, the $\Gamma$-limit of $\rho^{-1}\mathcal{G}_\rho$ has been identified by Braides and Fonseca~\cite{braides2001brittle} as
\begin{equation}\label{000}
	\mathcal{G}_0(u) =
	\begin{cases}
		\displaystyle\int_\Sigma \mathcal{Q}W_0(\nabla_\alpha u)\, dx + \int_{J_u \cap \Sigma} \mathcal{B}\psi_0([u], \nu_u)\, d\mathcal{H}^1
		& \text{if } u \in \mathcal{A}, \\[8pt]
		+\infty & \text{otherwise},
	\end{cases}
\end{equation}
where $\mathcal{A} := \{u \in GSBV^p(\Sigma;\mathbb{R}^3) \mid u \text{ is independent of } x_3\}$, and the reduced densities are
\[
W_0(E) := \inf_{\xi \in \mathbb{R}^3} W(E \mid \xi), \qquad
\psi_0(z,\nu) := \inf_{\zeta \in \mathbb{R}} \psi(z,\, \nu_\alpha,\, \zeta).
\]
Here $\mathcal{Q}W_0$ and $\mathcal{B}\psi_0$ denote the quasiconvex envelope and the $BV$-elliptic envelope, respectively (see Definitions~\ref{defquasiconvex} and~\ref{defBV-ellipticity}). The main difficulty in extending this result to the determinant-constrained setting is the lack of uniform $p$-growth of the bulk energy density $W$, which prevents the direct application of classical integral representation theorems~\cite{bouchitte2002global,braides1996homogenization,cagnetti2019gamma}.

\medskip

\noindent\textbf{Assumptions on the bulk energy.}
We treat two cases. In the \emph{orientation-preserving} case, we assume:
\begin{enumerate}[label=(\subscript{A}{\arabic*})]
	\item $W \in C(\mathbb{M}^{3 \times 3};[0,+\infty])$;
	\item $W(F) < +\infty$ if $\det F > 0$, and $W(F) = +\infty$ if $\det F \le 0$;
	\item there exists $C_1 > 0$ such that $W(F) \ge C_1|F|^p - C_1^{-1}$ for every $F \in \mathbb{M}^{3 \times 3}$;
	\item for every $\delta > 0$ there exists $c_\delta > 0$ such that $W(F) \le c_\delta(1+|F|^p)$ whenever $\det F \ge \delta$.
\end{enumerate}
Condition $(A_2)$ enforces $\det \nabla u > 0$ a.e.\ for every admissible deformation, while $(A_4)$ replaces the standard $p$-growth from above, which is incompatible with the constraint $\det F > 0$ (see~\cite{hafsa2008nonlinear,hafsa2008nonlinear2}).

In the \emph{incompressible} case, we replace $(A_1)$--$(A_4)$ with:
\begin{enumerate}
	\item[$(\widetilde{A}_1)$] $W \in C(\mathrm{SL}(3);[0,+\infty))$, where $\mathrm{SL}(3) := \{F \in \mathbb{M}^{3 \times 3} \mid \det F = 1\}$;
	\item[$(\widetilde{A}_2)$] $W(F) = +\infty$ if $\det F \neq 1$; otherwise, there exists $c \ge 1$ such that 
    $$
    c^{-1}|F|^p - c \le W(F) \le c|F|^p + c.
    $$
\end{enumerate}
Here $(\widetilde{A}_2)$ enforces $\det \nabla u = 1$ a.e.\ for every admissible deformation, and the $p$-growth and coercivity conditions are simultaneously available on $\mathrm{SL}(3)$.

\medskip

\noindent\textbf{Assumptions on the surface energy.}
In both cases, the surface energy density $\psi : \mathbb{R}^3 \setminus \{0\} \times \mathbb{S}^2 \to [0,+\infty)$ satisfies:
\begin{enumerate}[label=(\subscript{B}{\arabic*})]
	\item for every $\nu \in \mathbb{S}^2$ there exists a decreasing continuous function $\sigma : [0,+\infty) \to [0,+\infty)$ with $\sigma(0) = 0$ such that, for every $z_1, z_2 \in \mathbb{R}^3 \setminus \{0\}$,
	$$
    |\psi(z_1,\nu) - \psi(z_2,\nu)| \le \sigma(|z_1-z_2|)(\psi(z_1,\nu) + \psi(z_2,\nu));
    $$
	\item for every $\nu \in \mathbb{S}^2$ there exists $C_2 \ge 1$ such that $\psi(z_1,\nu) \le C_2\, \psi(z_2,\nu)$ whenever $|z_1| \le |z_2|$, and $\psi(z_1,\nu) \le \psi(z_2,\nu)$ whenever $C_2|z_1| \le |z_2|$;
	\item there exist a Borel function $\varphi : \mathbb{R}^3 \setminus \{0\} \to [0,+\infty)$ and constants $0 < C_3 \le C_4$ such that, for all $z \in \mathbb{R}^3 \setminus \{0\}$ and $\nu \in \mathbb{S}^2$, 
    \[
    1 \le \varphi(z) \le 1+|z| \qquad \mbox{and} \qquad C_3\varphi(z) \le \psi(z,\nu) \le C_4\varphi(z);
    \]
	\item $\psi(z,\nu) = \psi(-z,-\nu)$ ;
	\item $\psi$ is upper semicontinuous on $(\mathbb{R}^3 \setminus \{0\}) \times \mathbb{S}^2$.
\end{enumerate}
We may extend $\psi$ to a function defined on 
			$(\mathbb{R}^3 \setminus \{0\}) \times \mathbb{R}^3$ by positive $1$-homogeneity, namely
			$$
			\psi(z,\nu) =
			\begin{cases}
				|\nu| \, \psi\!\left(z, \dfrac{\nu}{|\nu|}\right) & \text{if } \nu \neq 0, \\[6pt]
				0 & \text{otherwise}.
			\end{cases}
			$$
			Since assumptions $(B_1)$--$(B_5)$ remain valid for the extended function, from now on 
			we may regard $\psi$ as defined on 
			$\mathbb{R}^3 \setminus \{0\} \times \mathbb{R}^3$.
            
Assumptions $(B_1)$ and $(B_3)$ together imply continuity of $\psi$ in $z$. We remark that $(B_3)$ yields a uniform positive lower bound $\psi(z,\nu) \ge C_3 > 0$ for all $z \neq 0$: our framework thus models \emph{cohesive fracture with activation threshold}, intermediate between Griffith's theory~\cite{griffith1921rupture} (where $\psi$ is independent of $[u]$) and Barenblatt's model~\cite{barenblatt1962equilibrium} (where $\psi(z,\nu) \to 0$ as $|z| \to 0$). The latter regime, which lacks the compactness properties guaranteed by $(B_3)$, remains an interesting open problem. We observe that if $\varphi(z) = 1 + |z|$, the coercivity in $(B_3)$ allows one to work directly in $SBV^p(\Sigma_\rho;\mathbb{R}^3)$ rather than $GSBV^p$. Finally, assumption $(B_2)$ is used in the proof of the lower bound, which relies on a result from \cite{cagnetti2019gamma}.

\medskip

\noindent\textbf{Main results.}
Our main theorems (Theorems~\ref{teorfinal1} and~\ref{teorfinal2}) establish that, under assumptions $(A_1)$--$(A_4)$ (resp.\ $(\widetilde{A}_1)$--$(\widetilde{A}_2)$) and $(B_1)$--$(B_5)$, the rescaled functionals $\rho^{-1}\mathcal{G}_\rho$ $\Gamma$-converge with respect to the convergence in measure to the functional $\mathcal{G}_0$ defined in~\eqref{000}, in both the orientation-preserving and incompressible cases.

The $\Gamma$-$\liminf$ inequality follows by adapting the arguments of~\cite{braides2001brittle}, using the compactness guaranteed by $(B_3)$ to show that the limit deformation is independent of $x_3$ and that $(\nu_u)_3 = 0$ $\mathcal{H}^2$-a.e.\ on $J_u$. A key point is that the coupled growth encoded in $(B_3)$ allows one to absorb the surface energy remainder arising from the third component of the normal.

The $\Gamma$-$\limsup$ inequality is considerably more delicate and constitutes the main technical contribution of this work. To date, the $\Gamma$-convergence of $\rho^{-1}\mathcal{G}_\rho$ under a determinant constraint has been established only in the orientation-preserving setting and only for Griffith-type energies ($\psi(z,\nu) = |\nu|$) in~\cite{almi2023brittlemembranes}. The proof relies on the density of piecewise affine functions with maximal-rank gradient in $GSBV^p$, building on a classical result of Gromov and Eliashberg~\cite{gromov1971nonsingular,gromov1986pdr} and on the observation~\cite{dephilippis2017approximation} that a domain with smooth cuts can be mapped onto a Lipschitz domain via a bi-$W^{1,\infty}$ diffeomorphism close to the identity. However, this density result applies only to the estimation of jump set's length, thus a substantial refinement is needed to handle the jump-dependent surface energy.

We address this through two main constructions. First, for the incompressible case, since piecewise affine approximants lack the regularity needed to control the incompressibility constraint, we develop an approximation result for $GSBV^p$ functions (Proposition~\ref{propu_jbuona}) that provides $W^{k,\infty}$-regular approximants away from the jump set, with controlled determinant. This relies on mapping the cracked domain onto a Lipschitz domain through smooth diffeomorphisms constructed via convolutions with variable kernels (Lemma~\ref{lemmadiffeolisciokernelvariabile}). We emphasize that this approximation result (as the one in~\cite{almi2023brittlemembranes}) is proved in the two-dimensional setting, which is the one required for the membrane application considered in this paper.

Second, we build a family of $C^\infty$ diffeomorphisms $f_{j,\rho}$ converging to the identity in $W^{k,\infty}$ for every $k \ge 1$ (Lemma~\ref{lemmalinearalgebra} and Lemma~\ref{corollariodet}). These maps act as isometries near each segment of the jump set, rotating the normal so as to optimize the third component of $\psi$ through the infimum defining $\psi_0$. In the incompressible case, the additional constraint $\det \nabla f_{j,\rho} = 1$ is enforced via an ODE-based correction (Lemma~\ref{lemma1}).

The crucial intermediate step is to bound the $\Gamma$-$\limsup$ by the auxiliary functional
\begin{equation}\label{eqG_0^w}
	\mathcal{G}^w_0(u) :=
	\begin{cases}
		\displaystyle\int_\Sigma W_0(\nabla_\alpha u)\, dx + \int_{J_u \cap \Sigma} \psi_0([u], \nu_u)\, d\mathcal{H}^1
		& \text{if } u \in Y, \\[8pt]
		+\infty & \text{otherwise},
	\end{cases}
\end{equation}
where $Y$ consists of functions in $\mathcal{A}$ whose trace on $\Sigma$ belongs to $\mathrm{Aff}^*(\Sigma \setminus \overline{J_u};\mathbb{R}^3) \cap \widehat{\mathcal{W}}(\Sigma;\mathbb{R}^3)$ (see Definitions~\ref{defAff^*} and~\ref{defwidehatW}). The passage from $\mathcal{G}^w_0$ to $\mathcal{G}_0$ then follows by an adaptation of the relaxation arguments of~\cite{hafsa2008nonlinear2}, replacing the density of local immersions with Theorem~\ref{teorapproxaff^*}.

\medskip

To the best of our knowledge, these are the first dimension-reduction results for membranes with jump-dependent surface energy under determinant constraints. We believe that our techniques may open the way to further reduced theories involving different geometries (e.g., shells~\cite{almi2021dimension,almi2025generalized,ciarlet2000volIII}) or weaker growth assumptions on $\psi$ or $W$.

We finally point out that some of the techniques introduced
in this paper to construct the recovery sequences are not specific to the membrane scaling. 
In particular, the use of local diffeomorphisms to optimize the surface term and to preserve the determinant constraint could be useful in the construction of recovery
sequences for other dimension-reduction regimes, such as incompressible plate
theories in the non-fractured setting (see e.g. \cite{conti2009gamma}). 
However, a full extension to (for instance) fractured plate regimes appears to require substantially different additional ingredients. In particular, one would need compactness and rigidity estimates compatible with the presence of cracks, as well as suitable density results for the corresponding constrained limiting classes, for example density of smooth isometries in \(W^{2,2}\)-isometries on cracked domains. These issues are not addressed here and lie beyond the scope of the present paper.

\medskip

\noindent\textbf{Plan of the paper.} In Section~2 we fix notation and recall some preliminary density results \EEE in $SBV$ and $GSBV$. Section~3 contains the approximation results: the density theorem with maximal-rank constraint (Theorem~\ref{teorapproxaff^*}), and the higher-regularity approximation for the incompressible case (Proposition~\ref{propu_jbuona}). Section~4 is devoted to the proof of the $\Gamma$-convergence result in the orientation-preserving case, and Section~5 in the incompressible case. In Appendix A we recall some basic definitions and results concerning the spaces $SBV$ and $GSBV$, quasiconvexity, and $BV$-ellipticity. Finally, in Appendix B we give the proof of Lemma~\ref{lemmadiffeolisciokernelvariabile}.

	\section{Notations and Preliminaries}
	\subsection{Basic notation}
Let $n,k \in \mathbb{N}$. We denote by $\mathcal{L}^n$ the Lebesgue measure on $\mathbb{R}^n$ and by $\mathcal{H}^k$ the $k$-dimensional Hausdorff measure. The symbol $\mathbb{M}^{m \times n}$ stands for the space of real $m \times n$ matrices, and $\mathrm{Id}$ denotes the identity matrix. For every $r>0$ and $x \in \mathbb{R}^n$, we write $B_r(x)$ for the open ball of radius $r$ centred at $x$. Given $\delta>0$ and $U \subseteq \mathbb{R}^n$, we set $(U)_\delta := U + B_\delta(0)$. For an open set $G \subset \mathbb{R}^2$ and $h>0$, we write $G_h := G \times (-h/2,\, h/2)$. For $x \in \R^3$ we write $x_\alpha:=(x_1,x_2)$.

Given a matrix $A \in \mathbb{M}^{3 \times 2}$, we write $A = (A^1 \mid A^2)$, where $A^1$ and $A^2$ denote the first and second column of $A$, respectively. For two vectors $u,v \in \mathbb{R}^3$, we denote by $u \wedge v$ their cross product. We say that two matrices $A,B \in \mathbb{M}^{3 \times 2}$ are \emph{rank-one connected} if $\mathrm{rank}(A-B) = 1$. We denote by $\mathrm{span}(A)$ the vector space generated by the columns of $A$.

A subset $K$ of $\Omega$ is said to be \emph{polyhedral} (with respect to $\Omega$) if it is the intersection of $\Omega$ with a finite union of $(n-1)$-dimensional simplices of $\mathbb{R}^n$. A set $K \subseteq \mathbb{R}^n$ is \emph{countably $\mathcal{H}^{n-1}$-rectifiable} if it can be written as
\[
K = N \cup \bigcup_{i \in \mathbb{N}} K_i,
\]
with $\mathcal{H}^{n-1}(N) = 0$ and each $K_i$ contained in a $C^1$ hypersurface $\Gamma_i$. Every such set carries a Borel measurable \emph{approximate unit normal} $\nu_K : K \to \mathbb{S}^{n-1}$, defined $\mathcal{H}^{n-1}$-a.e.

Let $f : \mathbb{R}^n \to \mathbb{R}^m$ with $m,n \ge 1$. At every point $x \in \mathbb{R}^n$ where $f$ is differentiable, the Jacobian matrix $\nabla f(x) \in \mathbb{M}^{m \times n}$ is defined by
\[
(\nabla f(x))^j_{i} = \frac{\partial f_i}{\partial x_j}(x), \qquad i=1,\dots,m, \quad j=1,\dots,n.
\]

We now recall some standard notions from the calculus of variations.

\begin{dfnz}[Quasiconvexity]
	A Borel measurable, locally bounded function $f : \mathbb{M}^{m \times n} \to \mathbb{R}$ is \emph{quasiconvex} if
	\[
	\intavg_\Omega f(z + \nabla \varphi(x)) \, dx \ge f(z)
	\]
	for every $\varphi \in W^{1,\infty}_0(\Omega;\mathbb{R}^m)$, for every $z \in \mathbb{M}^{m \times n}$, and for some bounded Lipschitz domain $\Omega \subseteq \mathbb{R}^n$.
\end{dfnz}

\begin{dfnz}[Quasiconvex envelope]\label{defquasiconvex}
	Let $f : \mathbb{M}^{m \times n} \to [0,+\infty]$ be Borel measurable. The \emph{quasiconvex envelope} of $f$ is
	\[
	\mathcal{Q}f(\xi) := 
	\inf \bigg\{
	\frac{1}{|B_1|}\int_{B_1} f(\xi + \nabla \phi(x)) \, dx \;\bigg|\; 
	\phi \in W^{1,\infty}_0(B_1;\mathbb{R}^m)
	\bigg\},
	\qquad \xi \in \mathbb{M}^{m \times n}.
	\]
\end{dfnz}

\begin{oss}
	If $f : \mathbb{M}^{m \times n} \to [0,+\infty)$ is Borel measurable and locally bounded, then by \cite[Theorem~6.9]{dacorogna1989},
	\[
	\mathcal{Q}f(\xi)
	=
	\sup \big\{ h(\xi) \;\big|\; h : \mathbb{M}^{m \times n} \to [0,+\infty),\;
	h \le f,\; h \text{ quasiconvex} \big\}
	\qquad \text{for every } \xi \in \mathbb{M}^{m \times n}.
	\]
\end{oss}

\begin{dfnz}[Rank-one convexity]
	Let $f : \mathbb{M}^{m \times n} \to [0,+\infty]$ be Borel measurable.
	\begin{itemize}
		\item $f$ is \emph{rank-one convex} if for every $\lambda \in (0,1)$ and every $\xi,\xi' \in \mathbb{M}^{m \times n}$ with $\mathrm{rank}(\xi-\xi') = 1$,
		\[
		f(\lambda \xi + (1-\lambda)\xi') 
		\le 
		\lambda f(\xi) + (1-\lambda)f(\xi').
		\]
		\item The \emph{rank-one convex envelope} of $f$, denoted by $\mathcal{R}f$, is the largest rank-one convex function below $f$.
	\end{itemize}
\end{dfnz}

\begin{dfnz}[Piecewise affine functions]
	A function $f : \Omega \subseteq \mathbb{R}^n \to \mathbb{R}^m$ is \emph{piecewise affine} if there exists a finite family $\{V_i\}_{i \in I}$ of pairwise disjoint open subsets of $\Omega$ such that $\mathcal{L}^n(\Omega \setminus \bigcup_{i \in I} V_i)=0$ and $f|_{V_i}$ is affine for every $i \in I$. We denote
	\[
	\mathrm{Aff}(\Omega;\mathbb{R}^m)
	:= 
	\{ f \in C(\Omega;\mathbb{R}^m) \mid f \text{ is piecewise affine} \},
	\]
	\[
	\mathrm{Aff}_c(\Omega;\mathbb{R}^m)
	:= 
	\{ f \in C_c(\Omega;\mathbb{R}^m) \mid f \text{ is piecewise affine} \}.
	\]
\end{dfnz}

\begin{dfnz}[Triangulation]
	Let $\Omega$ be a bounded open subset of $\mathbb{R}^n$ and $u \in \mathrm{Aff}(\Omega;\mathbb{R}^m)$. The \emph{triangulation} of $u$, denoted $\mathcal{T}_u$, is the collection of sets $T_i \cap \Omega$, where each $T_i$ is an $n$-simplex on which $u$ is affine. The \emph{diameter} of $\mathcal{T}_u$ is $\max\{\mathrm{diam}(T_i) \mid T_i \cap \Omega \in \mathcal{T}_u\}$. A point in $\Omega$ that is a vertex of some $T_i$ is called a \emph{vertex} of the triangulation. When $n=2$, the closed segments in $\overline{\partial T_i \cap \Omega}$ are called \emph{edges}.
\end{dfnz}

\begin{dfnz}[Clarke subdifferential]
	Let $f : \Omega \subseteq \mathbb{R}^n \to \mathbb{R}^m$ be locally Lipschitz. The \emph{Clarke subdifferential} of $f$ at $x \in \overline{\Omega}$ is
	\[
	\partial f(x)
	:=
	\mathrm{conv}
	\Big\{
	\lim_{k \to \infty} \nabla f(x_k)
	\;\Big|\;
	x_k \to x,\; \nabla f(x_k) \text{ exists}
	\Big\}.
	\]
\end{dfnz}

\begin{dfnz}\label{defAff^*}
	Let $\Omega$ be a bounded open set with $n \le m$.
	\begin{itemize}
		\item If $n < m$:
		\[
		\mathrm{Aff}^*(\Omega;\mathbb{R}^m)
		:=
		\bigg\{
		v \in \mathrm{Aff}(\Omega;\mathbb{R}^m)
		\;\bigg|\;
		\min\{ \det(A^T A) \mid A \in \partial v(x),\; x \in \Omega \} > 0
		\bigg\}.
		\]
		\item If $n = m$:
		\[
		\mathrm{Aff}^*(\Omega;\mathbb{R}^m)
		:=
		\{ v \in \mathrm{Aff}(\Omega;\mathbb{R}^m) \mid \det(\nabla v) > 0 \text{ in } \Omega \}.
		\]
	\end{itemize}
\end{dfnz}

\subsection{Density results in $SBV$ and $GSBV$}

    In this subsection, we recall a density result for $GSBV$ functions that will be used later. This result can be found in \cite{cortesani1999density}. We first introduce the subclass $\mathcal{W}$ of “regular” $SBV$ functions.

     \begin{dfnz}
	The class $\mathcal{W}(\Omega;\mathbb{R}^m)$ consists of all $u \in SBV(\Omega;\mathbb{R}^m)$ such that:
	\begin{itemize}
		\item $u \in W^{1,\infty}(\Omega \setminus \overline{J_u};\mathbb{R}^m)$;
		\item $J_u$ is essentially closed, i.e., $\mathcal{H}^{n-1}(\overline{J_u} \setminus J_u) = 0$;
		\item $J_u$ is a finite union of $(n{-}1)$-dimensional polyhedral pieces.
	\end{itemize}
\end{dfnz}

	\begin{teo}[Cortesani–Toader]\label{teorcortesani}
		Let \(\Omega \subseteq \mathbb{R}^n\) be a bounded open set with Lipschitz boundary, and let \(u \in GSBV^p(\Omega;\mathbb{R}^m)\) with \(p>1\).  
		Then there exists a sequence \(u_j \in \mathcal{W}(\Omega;\mathbb{R}^m) \cap C^\infty(\Omega\setminus\overline{J_u};\mathbb{R}^m)\) such that
		\[
		u_j \to u \text{ in measure}, 
		\qquad 
		\nabla u_j \to \nabla u \text{ in } L^p(\Omega),
		\]
		and
		\[
		\limsup_{j\to\infty}
		\int_{J_{u_j} \cap \overline{A}}
		g(x,u_j^+,u_j^-,\nu_{u_j})\, d\mathcal{H}^{n-1}
		\;\le\;
		\int_{J_u \cap \overline{A}}
		g(x,u^+,u^-,\nu_u)\, d\mathcal{H}^{n-1}
		\]
		for every open \(A \Subset \Omega\) and every upper semicontinuous function  
		\[
		g : \Omega \times \mathbb{R}^m \times \mathbb{R}^m \times \mathbb{S}^{n-1} \to [0,+\infty)
		\]
		satisfying \(g(x,a,b,\nu) = g(x,b,a,-\nu)\) for all arguments.  
		If moreover \(u \in L^\infty(\Omega)\), then one may assume \(\|u_j\|_{L^\infty(\Omega)} \le \|u\|_{L^\infty(\Omega)}\) for all \(j\).
	\end{teo}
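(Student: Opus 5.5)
The plan is to reduce the statement to the classical $SBV^p\cap L^\infty$ density theorem of Cortesani--Toader. A truncation and diagonal argument then handles general $GSBV^p$ functions, and a final mollification gives smoothness off the jump set.

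\textbf{Step 1 (bounded case).} Assume first $u\in SBV^p(\Omega;\mathbb{R}^m)\cap L^\infty$. Extend $u$ by reflection across the Lipschitz boundary to a slightly larger domain, using local bi-Lipschitz charts. This produces no new jump inside $\Omega$ and keeps $\nabla u\in L^p$.

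\emph{Jump set.} Cover $J_u$, up to a set of small $\mathcal{H}^{n-1}$ measure, by finitely many disjoint closed balls. In each ball $J_u$ is close to a hyperplane piece where $u^\pm$ and $\nu_u$ are almost constant; this uses Besicovitch covering and the rectifiability of $J_u$. Replace $J_u$ in each ball by the polyhedral piece.

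\emph{Bulk.} Away from the pieces, approximate $u$ by $W^{1,\infty}$ functions as in the Sobolev case. Near the pieces, approximate by functions whose only jumps lie on the polyhedral pieces. Glue with a partition of unity.

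\emph{Surface estimate.} The estimate $\limsup\int_{J_{u_j}\cap\overline A}g\le\int_{J_u\cap\overline A}g$ follows from the upper semicontinuity of $g$. On each polyhedral piece the traces $u_j^\pm$ and normal $\nu_{u_j}$ are close to $u^\pm(x_0),\nu_u(x_0)$ for a Lebesgue point $x_0$. The symmetry $g(x,a,b,\nu)=g(x,b,a,-\nu)$ makes the integrand independent of the orientation chosen on each piece. Working on $\overline A$ with $A\Subset\Omega$ avoids boundary contributions.

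\emph{Small jumps and the $L^\infty$ bound.} The part of $J_u$ outside the chosen balls has small measure, but it is erased by the approximation and contributes only nonnegatively to the right-hand side. Its effect on the bulk is absorbed through the $L^p$ convergence of gradients, which is the delicate estimate of Cortesani--Toader. Composing with the projection onto the ball of radius $\|u\|_\infty$, which is 1-Lipschitz, gives the $L^\infty$ bound without increasing the energy.

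\textbf{Step 2 (general $GSBV^p$).} Set $u^M=T_M u$, where $T_M$ is a smooth truncation equal to the identity on $B_M$ with $|\nabla T_M|\le1$. Then $u^M\in SBV^p\cap L^\infty$ and $\nabla u^M\to\nabla u$ in $L^p$. Also $u^M\to u$ in measure, and $J_{u^M}\subseteq J_u$ with traces converging pointwise.

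Apply Step 1 to each $u^M$ and extract a diagonal subsequence. To control the surface term, note that $g(x,T_Mu^+,T_Mu^-,\nu)\to g(x,u^+,u^-,\nu)$ only through upper semicontinuity, so an upper bound on $g$ is needed. I expect this to be the main obstacle. A first attempt is to replace $g$ by $g_k\downarrow g$ continuous, bounded on bounded sets, and use monotone convergence. This works if $\int_{J_u} g<\infty$. If the integral is infinite the estimate is trivial.

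\textbf{Step 3 (smoothness).} Mollify with a variable radius in the open set $\Omega\setminus\overline{J_{u_j}}$, taking the radius proportional to the distance to the polyhedral set. This preserves $W^{1,\infty}$ and the jump set, since $J_{u_j}$ is essentially closed, and gives $C^\infty(\Omega\setminus\overline{J_{u_j}})$ regularity. Mollification also preserves the $L^\infty$ bound, being a convex average. The statement's space $C^\infty(\Omega\setminus\overline{J_u})$ should be read with $J_{u_j}$ in place of $J_u$.
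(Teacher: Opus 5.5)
The paper gives no proof of this statement; it is quoted from \cite{cortesani1999density}. If your Step~1 is meant as a citation of the bounded case of that theorem, that is legitimate. The sketch you give, however, would not prove it, and deferring its core to ``the delicate estimate of Cortesani--Toader'' is circular. Two points in the sketch fail:
\begin{itemize}
\item The residual part of $J_u$ cannot simply be erased. It may be dense, and removing a jump of non-small amplitude costs gradient energy that is not controlled by its $\mathcal{H}^{n-1}$-measure. A ramp of width $w$ across a piece $\Gamma$ costs about $|[u]|^p\,w^{1-p}\,\mathcal{H}^{n-1}(\Gamma)$, and $w$ must be small to keep the modification localized.
\item Inside a ``good'' ball, $u$ is not Sobolev on either side of the flat piece, because the rest of $J_u$ still lives there. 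So there is no Sobolev approximation to invoke.
\end{itemize}
The missing ingredient is a preliminary approximation by functions whose jump set is essentially closed and which are regular off it. Classically this comes from the regularity theory of Mumford--Shah-type minimizers; see \cite{dephilippis2017approximation} for a more direct construction. Only after that can the compact leftover of small $\mathcal{H}^{n-1}$-measure be disposed of, typically by redefining the function in finitely many small balls. This creates jumps of small total area and amplitude at most $2\|u\|_{L^\infty}$, which is where boundedness of $u$ and local boundedness of the u.s.c.\ $g$ are used.

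Step~2 is where the argument genuinely fails, and the obstacle you flag is not a technicality. The sequence must not depend on $g$, so the diagonal selection over $M$ has to use $g$-free quantities, such as $\|\nabla u^M_j-\nabla u^M\|_{L^p}$, $|D(u^M_j-u^M)|(\Omega)$ and $\mathcal{H}^{n-1}(J_{u^M_j}\triangle J_{u^M})$, as in Proposition~\ref{prop3}. After that you still need $\limsup_M\int_{J_{u^M}\cap\overline A}g(x,T_Mu^+,T_Mu^-,\nu_u)\,d\mathcal{H}^{n-1}\le\int_{J_u\cap\overline A}g(x,u^+,u^-,\nu_u)\,d\mathcal{H}^{n-1}$, and the trouble sits at the truncated values on $J_u\cap\{|u^\pm|>M\}$. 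Replacing $g$ by continuous $g_k\downarrow g$ bounded on bounded sets does not help: the bound of $g_k$ at the truncated values grows with $M$ and may outpace the decay of $\mathcal{H}^{n-1}(J_u\cap\{|u^\pm|>M\})$.

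In fact, for unbounded $u$ no sequence works for all u.s.c.\ $g$. Take $n=2$, $m=1$, and $u=\sum_k k\,\chi_{B_{r_k}(x_k)}$ with disjoint balls $B_{r_k}(x_k)\Subset A$ and $\sum_k r_k<\infty$. Then $u\in GSBV^p$, $\nabla u=0$, and $J_u$ is a union of circles. Let $u_j\in\mathcal W$ satisfy $u_j\to u$ in measure and $\nabla u_j\to0$ in $L^p$.
\begin{itemize}
\item Since $J_{u_j}$ is polyhedral, $\mathcal{H}^1(J_{u_j}\cap J_u)=0$.
\item For each $R$, infinitely many $u_j$ jump in $A$ with $|u_j^+|+|u_j^-|>R$ on a set of positive length. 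Otherwise $\phi\circ u_j$, with $\phi(t)=(t-R)^+$, would lie in $W^{1,p}(A)$ with $\nabla(\phi\circ u_j)\to0$, forcing $\phi\circ u$ to be constant in $A$.
\item Choose compact sets $K_k$ in separated shells $\{R_k<|a|+|b|<S_k\}$, with $x$-projection disjoint from $J_u$, each carrying positive mass for the image of $\mathcal{H}^1\llcorner J_{u_{j_k}}$ under $x\mapsto(x,u_{j_k}^+,u_{j_k}^-,\nu_{u_{j_k}})$.
\item Set $g=\sum_k c_k\chi_{K_k\cup\iota(K_k)}$, with $\iota(x,a,b,\nu)=(x,b,a,-\nu)$ and $c_k$ large. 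This $g$ is symmetric and u.s.c., and $\int_{J_u\cap\overline A}g=0$, while the left-hand side is unbounded along $j_k$.
\end{itemize}
So in the $GSBV^p$ case the inequality can only hold for a restricted class of $g$. Examples are $g$ independent of the traces, or $g=\psi(a-b,\nu)$ with a monotonicity such as $(B_2)$; there $|T_Ma-T_Mb|\le|a-b|$ supplies exactly the domination your Step~2 lacks. The paper only uses the theorem with $g=\psi(a-b,\nu)$ (Remark~\ref{oss3}).

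Step~3 is fine, and you are right that smoothness should be read off $\overline{J_{u_j}}$.
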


    Finally, we introduce a further, more regular subclass of functions in $\mathcal{W}$ in dimension two.

    \begin{dfnz}\label{defwidehatW}
	Let $\Omega \subseteq \mathbb{R}^2$ be a bounded open set with Lipschitz boundary. We denote by $\widehat{\mathcal{W}}(\Omega;\mathbb{R}^m)$ the subclass of $\mathcal{W}(\Omega;\mathbb{R}^m)$ consisting of those $u$ for which each connected component of $J_u$ is either a single segment or the union of two segments sharing exactly one endpoint, with the convex hull of each component contained in $\Omega$.
    \end{dfnz}
	
	\begin{oss}\label{oss3}
        Assume that in Theorem \ref{teorcortesani} the function $g$ is such that $g(x,a,b,\nu)=\psi(a-b,\nu)$ for some upper semicontinuous function $\psi : \mathbb{R}^{m} \times \mathbb{S}^{1} \to [0,+\infty)$ satisfying the linear growth bound
			\[
			\psi(z,\nu) \le C\,(1+|z|)
			\qquad \text{for all $z \in \mathbb R^m$ and all  $\nu\in\mathbb{S}^{1}$}.
			\]
		Then, since \(\psi\) is independent of \(x\) and satisfies the growth bound, it is locally bounded near \(\partial\Omega\).  
		Thus, using \cite[Remark~3.2]{cortesani1999density}, for any open set \(A\subseteq\Omega\) we may replace \(\overline{A}\) with the relative closure of \(A\) in \(\Omega\). In particular, we can suppose that $A=\Omega$. Moreover, using \cite[Lemma 5.2]{dephilippis2017approximation}, we can always assume that $J_{u_j} \Subset \Omega$. Finally, if $n=2$, using \cite[Lemma 2.20]{almi2023brittlemembranes} and \cite[Theorem C]{dephilippis2017approximation} together with Corollary~\ref{corlimsup},
		we may choose the sequence $u_j$ so that
		\[
		u_j \in \widehat{\mathcal{W}}(\Omega;\mathbb{R}^{m})
		\cap C^{\infty}\!\bigl(\Omega \setminus \overline{J_u};\R^m\bigr)
		\]
		using a diagonal argument.
	\end{oss}

	\section{Approximation results}

    Throughout this section we work in the two-dimensional setting. We emphasize
    that the approximation results below are stated and proved in this dimension,
    which is sufficient for the membrane dimension-reduction problem addressed in
    this paper.

	The goal is to prove approximation results for $GSBV^p$-functions in the spirit of Theorem~\ref{teorcortesani}, with the additional requirement that the approximating sequence satisfies a maximal-rank constraint on the Jacobian. We present two results. The first, Theorem~\ref{teorapproxaff^*}, extends \cite[Theorem~3.1]{almi2023brittlemembranes} to more general surface energies and will be used in the construction of recovery sequences under the orientation-preserving constraint. The second, Proposition~\ref{propu_jbuona}, provides higher-order regularity of the approximating sequence away from the jump set and will be instrumental in the construction of recovery sequences under the incompressibility constraint.
    
	\begin{teo}\label{teorapproxaff^*}
		Let $\Omega \subseteq \mathbb{R}^2$ be a bounded open set with Lipschitz boundary, and let $1<p<+\infty$. 
		Let $u \in GSBV^p(\Omega;\mathbb{R}^m)$, with $m \ge 2$. 
		Then there exists a sequence of functions
		\[
		u_j \in \mathrm{Aff}^*(\Omega \setminus \overline{J_{u_j}};\mathbb{R}^m)
		\cap \widehat{\mathcal{W}}(\Omega;\mathbb{R}^m)
		\]
		such that
		\begin{itemize}
			\item[(i)] $u_j \to u$ in measure;
			\item[(ii)] $\nabla u_j \to \nabla u$ in $L^p(\Omega;\mathbb{M}^{m \times 2})$;
			\item[(iii)]  for all $\psi : \mathbb{R}^{m} \times \mathbb{S}^{1} \to [0,+\infty)$ upper semicontinuous functions which satisfy the symmetry
			\[
			\psi(z,\nu) = \psi(-z,-\nu) 
			\qquad \text{for all } (z,\nu),
			\]
			and the linear growth bound
			\[
			\psi(z,\nu) \le C\,(1+|z|)
			\qquad \text{for all } (z,\nu)\in\mathbb R^m \times \mathbb{S}^{1},
			\]
            and every $E \subseteq \Omega$ Borel set, it holds
			\[
			\limsup_{j \to\infty}
			\int_{J_{u_j} \cap E}
			\psi\bigl([u_j], \nu_{u_j}\bigr)\,
			d\mathcal{H}^{1}
			\;\le\;
			\int_{J_u \cap E}
			\psi\bigl([u], \nu_u\bigr)\,
			d\mathcal{H}^{1}.
			\]
		\end{itemize}
	\end{teo}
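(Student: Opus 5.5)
We argue by a diagonal argument built on two successive approximations: first by regular functions in $\widehat{\mathcal{W}}$ (Remark~\ref{oss3}), then by piecewise affine maps of maximal rank away from the jump set, following \cite[Theorem~3.1]{almi2023brittlemembranes}.

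\emph{Step 1 (regular approximation).} By Theorem~\ref{teorcortesani} and Remark~\ref{oss3}, we choose $v_k \in \widehat{\mathcal{W}}(\Omega;\mathbb{R}^m)\cap C^\infty(\Omega\setminus\overline{J_{v_k}};\mathbb{R}^m)$ with $J_{v_k}\Subset\Omega$. These satisfy $v_k\to u$ in measure and $\nabla v_k\to\nabla u$ in $L^p$. For every usc symmetric $\psi$ with linear growth, the surface inequality holds with $A=\Omega$.

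To pass from $\Omega$ to an arbitrary Borel set $E$ in (iii), we use that the measures $\mu_k:=\psi([v_k],\nu_{v_k})\mathcal{H}^1\llcorner J_{v_k}$ have uniformly bounded mass. The inequality holds on every relatively closed set, namely on relative closures of open sets. Applying it to the pair $\Omega$, $\overline{A}$ and to the complementary open set, the $\limsup$ bound on $\Omega$ combined with the lower bound coming from lower semicontinuity would give weak-* convergence $\mu_k\rightharpoonup\mu:=\psi([u],\nu_u)\mathcal{H}^1\llcorner J_u$. A bound for every Borel set $E$, however, does not follow from weak convergence alone. We therefore plan to arrange something stronger, namely $\mathcal{H}^1(J_{v_k}\triangle J_u)\to0$ together with $[v_k]\to[u]$ in $\mathcal{H}^1$-measure on $J_u$. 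This is what the Cortesani--Toader construction essentially provides, since the jump set is approximated by polyhedral pieces close in Hausdorff measure.

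We are not certain this strengthening is available; if it is not, it is the point to revisit. The inequality for all $E$ would then follow from usc of $\psi$, Fatou's lemma applied to $C(1+|[\cdot]|)-\psi$, and the linear growth bound controlling the part on the symmetric difference.

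\emph{Step 2 (maximal rank).} For fixed $k$, $J_{v_k}$ is a finite union of segments and V-shaped pairs with convex hulls in $\Omega$, and $v_k$ is smooth off $J_{v_k}$. As in \cite{almi2023brittlemembranes}, we map $\Omega\setminus\overline{J_{v_k}}$, after small opening of the cracks, onto a Lipschitz domain by a bi-$W^{1,\infty}$ map close to the identity \cite[Theorem C]{dephilippis2017approximation}. On that domain we apply the Gromov--Eliashberg density of piecewise affine maps with $\min\det(A^TA)>0$ in $W^{1,p}$; this step uses $m\ge2$. Pulling back yields $w_{k,h}\in\mathrm{Aff}^*(\Omega\setminus\overline{J_{v_k}})\cap\widehat{\mathcal W}$ with $J_{w_{k,h}}=J_{v_k}$, with $\nabla w_{k,h}\to\nabla v_k$ in $L^p$, and with $w_{k,h}\to v_k$ uniformly on compact sets away from the crack. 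We also require the traces to converge uniformly on both sides of each crack segment, which can be ensured by taking uniform convergence up to the boundary of the Lipschitz domain. Then $[w_{k,h}]\to[v_k]$ uniformly on $J_{v_k}$, so by $(B)$-type usc and dominated convergence the surface integral on $J_{v_k}\cap E$ satisfies $\limsup_h\le\int_{J_{v_k}\cap E}\psi([v_k],\nu_{v_k})$.

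\emph{Step 3 (diagonal).} Measure convergence is metrizable and the surface estimates pass to a diagonal sequence, so choosing $h(k)$ suitably gives the conclusion. A subtlety is that (iii) must hold for \emph{all} $\psi$ and $E$ simultaneously. We handle this by making the Step~2 error uniform: we choose $h(k)$ so that $\|[w_{k,h}]-[v_k]\|_\infty\le 1/k$. Then for each fixed $\psi$ the modulus argument works along the diagonal.

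The main obstacle is Step~2, namely obtaining maximal-rank piecewise affine approximants whose traces on both sides of the cracks converge uniformly, and not merely in $W^{1,p}$. We would first try to get this from Sobolev embedding on the Lipschitz image domain via the uniform Gromov--Eliashberg approximation.
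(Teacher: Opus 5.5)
You have located the right spot, but the strengthening you propose for Step~1 does not exist, and no argument can close this gap: part (iii) is false for general $u$ when $E$ ranges over all Borel sets. Your claim that Cortesani--Toader ``essentially provides'' $\mathcal{H}^1(J_{v_k}\triangle J_u)\to0$ confuses two things. Being close in length or in Hausdorff distance is not the same as having a small symmetric difference. If $J_{v_k}$ is a finite union of segments and $J_u$ is curved, the two sets meet in an $\mathcal{H}^1$-null set, so $\mathcal{H}^1(J_{v_k}\triangle J_u)=\mathcal{H}^1(J_{v_k})+\mathcal{H}^1(J_u)$.

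Here is a concrete counterexample. Take $\Omega=B_2(0)$, $m=2$, $u=e_1\chi_{B_1(0)}$, $\psi\equiv1$, and $E=\Omega\setminus\partial B_1(0)$, which is even open. Any $u_j\in\widehat{\mathcal W}(\Omega;\mathbb{R}^2)$ has $J_{u_j}$ contained in finitely many segments, and each segment meets $\partial B_1(0)$ in at most two points. Hence $\mathcal{H}^1(J_{u_j}\cap E)=\mathcal{H}^1(J_{u_j})$, and (iii) would force $\mathcal{H}^1(J_{u_j})\to0$. On the other hand, (i), (ii) and the lower semicontinuity of Theorem~\ref{thm:closure} (applied along a subsequence realizing the $\liminf$, if it is finite) give $\liminf_j\mathcal{H}^1(J_{u_j})\ge\mathcal{H}^1(J_u)=2\pi$.

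The paper's proof does not supply the missing step either. For $u\in\widehat{\mathcal W}\cap C^\infty(\Omega\setminus\overline{J_u})$ its argument is essentially your Step~2:
\begin{itemize}
\item it opens the cracks with the piecewise affine homeomorphism of Lemma~\ref{lemma01};
\item it approximates in $W^{1,p}$ with $p>2$, which gives uniform convergence of the traces;
\item it concludes from $J_{u_j}\subseteq J_u$, $\mathcal{H}^1(J_u\triangle J_{u_j})\to0$ and Corollary~\ref{corlimsup}.
\end{itemize}
Note that $\Phi$ must be piecewise affine, with $\det M\ge\frac12$ on its Clarke subdifferential, for $w_j\circ\Phi$ to remain in $\mathrm{Aff}^*$. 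A general bi-$W^{1,\infty}$ map as in your citation of \cite{dephilippis2017approximation} would not do. In this class the Borel version is correct, and your reverse-Fatou argument for it is fine.

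The paper then reduces the general case ``by a diagonal argument and Remark~\ref{oss3}''. By the counterexample above, this reduction can only deliver (iii) for $E$ relatively closed in $\Omega$, obtained from Theorem~\ref{teorcortesani} with $g=\chi_E\psi$. In particular it covers $E=\Omega$, which is the case actually used in Proposition~\ref{propgamma_limsup}. So the statement your Steps~1--3 can prove is the one with relatively closed $E$.

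Separately, the ``modulus argument'' in your Step~3 is not available for a merely upper semicontinuous $\psi$. The diagonal choice of $h(k)$ must be justified uniformly over all admissible $\psi$, a point the paper also leaves implicit.
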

	The proof of Theorem \ref{teorapproxaff^*} follows the lines of \cite[Theorem~3.1]{almi2023brittlemembranes}, while also accounting carefully for the contribution of the surface energy term. To this aim, we need \cite[Lemma 3.2]{almi2023brittlemembranes} which we report here.
	\begin{lemma}\label{lemma01}
		Let $\Pi \subseteq \mathbb{R}^2$ be either a line segment or the union of two line segments sharing exactly one endpoint. 
		Then, for every sufficiently small $\delta > 0$, there exist a set $\Delta \subseteq \mathbb{R}^2$, depending on $\delta$, and a homeomorphism 
		\[
		\Phi_\delta : \mathbb{R}^2 \setminus \Pi \to \mathbb{R}^2 \setminus \Delta
		\]
		such that the following properties hold:
		\begin{itemize}
			\item[(i)] $\mathbb{R}^2 \setminus \Delta$ is a Lipschitz domain;
			\item[(ii)] $\Phi_\delta(x) = x$ for every $x \in \mathbb{R}^2 \setminus (\Pi)_\delta$;
			\item[(iii)] $\|\Phi_\delta - \mathrm{Id}\|_{W^{1,\infty}(\mathbb{R}^2 \setminus \Pi)} \to 0$ as $\delta \to 0$.
		\end{itemize}
	\end{lemma}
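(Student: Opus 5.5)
The plan is to construct $\Phi_\delta$ as a small perturbation of the identity, $\Phi_\delta = \mathrm{Id} + \varepsilon V_\delta$. Here $V_\delta$ is a vector field on $\mathbb{R}^2\setminus\Pi$ that pushes points away from $\Pi$, so that the slit opens into a thin set $\Delta$ with nonempty interior whose complement is Lipschitz. The amplitude $\varepsilon=\varepsilon(\delta)$ is chosen so small that $\varepsilon\,\mathrm{Lip}(V_\delta)\to 0$. I first treat the case of a single segment, normalised to $\Pi=[-1,1]\times\{0\}$. I set $g(x_1):=\varepsilon(1-|x_1|)_+$ and
\[
\Phi_\delta(x_1,x_2) := \Bigl(x_1,\; x_2 + \mathrm{sign}(x_2)\, g(x_1)\bigl(1-|x_2|/\delta\bigr)_+\Bigr), \qquad x_2\neq 0 .
\]
On each vertical half-line $\{x_1\}\times(0,\delta)$ with $|x_1|<1$ one has $\partial_{x_2}\Phi_\delta^2 = 1-g(x_1)/\delta>0$ as soon as $\varepsilon<\delta$. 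Hence this half-line is mapped increasingly onto $\{x_1\}\times(g(x_1),\delta)$, and symmetrically for $x_2<0$. Away from the strip $\{|x_1|<1,\ |x_2|<\delta\}\subseteq(\Pi)_\delta$ the map is the identity, and it is continuous because $g$ vanishes for $|x_1|\ge 1$. Thus $\Phi_\delta$ is a homeomorphism of $\mathbb{R}^2\setminus\Pi$ onto $\mathbb{R}^2\setminus\Delta$, with $\Delta=\{|y_2|\le g(y_1)\}$ a thin rhombus. The complement of a rhombus is a Lipschitz domain, which gives (i), and (ii) holds. For (iii), the derivative satisfies $|\nabla\Phi_\delta-\mathrm{Id}|\le |g'|+g/\delta\le \varepsilon+\varepsilon/\delta$. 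Choosing $\varepsilon=\delta^2$, this tends to zero, and so does $\|\Phi_\delta-\mathrm{Id}\|_\infty\le\varepsilon$. A rigid motion and a scaling reduce any segment to this normalised one.

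For two segments $\Pi=S_1\cup S_2$ sharing the endpoint $P$ and forming an angle $\theta\in(0,\pi)\cup(\pi,2\pi)$, I would keep the same scheme with a general field. The first step is to build $V$ on $\mathbb{R}^2\setminus\Pi$ with the following properties:
\begin{itemize}
\item $V$ is supported in $(\Pi)_\delta$, bounded, and locally Lipschitz along paths not crossing $\Pi$, with constant $L_\delta\lesssim 1/\delta$;
\item $V$ has one-sided traces $V^\pm$ on $\Pi$ satisfying $V^\pm\cdot\nu^\pm>0$ at interior points of $S_1,S_2$, where $\nu^\pm$ are the outward normals on each side;
\item near $P$ the traces are non-degenerate on both sides, i.e.\ on the convex side and on the reflex side;
\item $V^\pm$ vanish linearly at the two free endpoints.
\end{itemize}
On the reflex side $V$ can be taken to be $\nabla\mathrm{dist}(\cdot,\Pi)$ times cutoffs. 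On the convex side I would use a fixed smooth field equal to the bisector direction near $P$, interpolated with the normals of $S_1$ and $S_2$ by a partition of unity. This field is Lipschitz at scale $\delta$, but its Lipschitz constant is of order $1/(\delta\sin(\theta/2))$, which is harmless for fixed $\Pi$.

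The next step is to set $\Phi_\delta=\mathrm{Id}+\varepsilon V$ with $\varepsilon L_\delta\to0$. Local injectivity follows from $|\nabla\Phi_\delta-\mathrm{Id}|\le\varepsilon L_\delta<1$. Global injectivity for pairs of points lying on the same side is obtained via $|x-y|\le\varepsilon|V(x)-V(y)|$ along short paths avoiding $\Pi$. Points on opposite sides of the crack cannot collide, because the strictly outward traces separate their images.

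It then remains to identify the image. $\Phi_\delta$ extends continuously to the two-sided closure of $\mathbb{R}^2\setminus\Pi$, and the traces $\Phi_\delta^\pm(\Pi)$ form a closed curve. This curve is a graph over $\Pi$ on each side, and it closes up at the free endpoints with nonzero opening angles, since $V$ vanishes there linearly and not faster. At $P$, on the convex side, it produces a corner of positive angle. I would then define $\Delta$ as the region enclosed by this curve, so that $\Phi_\delta(\mathbb{R}^2\setminus\Pi)=\mathbb{R}^2\setminus\Delta$. The complement of $\Delta$ is a Lipschitz domain because its boundary is a finite union of Lipschitz graphs with transversal corners.

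The main obstacle I expect is the vertex $P$ on the convex side when $\theta$ is small. There the two opened sheets must meet so that $\Delta$ contains a genuine neighbourhood of the corner point and is not pinched. A pinch, where two lens shapes merely touch at $P$, would make $\mathbb{R}^2\setminus\Delta$ non-Lipschitz. At the same time the perturbation must stay $W^{1,\infty}$-small. I would first check that the bisector field, with its trace strictly transversal to both $S_1$ and $S_2$, yields near $P$ a wedge-shaped boundary of $\Delta$ with opening comparable to $\theta$. If this fails, my fallback is to first open a small rhombus around each segment separately, with widths adapted so that the two rhombi overlap in a neighbourhood of $P$. I would then check that their union has Lipschitz complement.
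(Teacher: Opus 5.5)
Your single-segment construction is correct. Fibrewise monotonicity gives a homeomorphism onto the complement of the rhombus $\{|y_2|\le g(y_1)\}$, and $\|\Phi_\delta-\mathrm{Id}\|_{W^{1,\infty}}=O(\delta)$. The paper does not prove this lemma; it imports it from \cite{almi2023brittlemembranes}. By Remark~\ref{ossPi}, that construction keeps one face of $\Pi$ fixed and is piecewise affine. Yours opens both faces and, because of the term $g(x_1)|x_2|/\delta$, is not piecewise affine. The statement does not require piecewise affinity, but Step~1 of the proof of Theorem~\ref{teorapproxaff^*} uses it.

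The two-segment case, however, is not proved, and the concrete choices you make do not work as stated.
\begin{itemize}
\item $\nabla\mathrm{dist}(\cdot,\Pi)$ is not Lipschitz at $P$ from the reflex side. Let $d_i$ be the unit directions of the segments from $P$. On the nonempty open sector $\{x:(x-P)\cdot d_1<0,\ (x-P)\cdot d_2<0\}$ the closest point of $\Pi$ is $P$, so $\nabla\mathrm{dist}(x,\Pi)=(x-P)/|x-P|$, whose derivative has size $1/|x-P|$. Your non-degeneracy requirement forbids a cutoff vanishing at $P$, so $\nabla(\varepsilon V)$ is unbounded, $\Phi_\delta\notin W^{1,\infty}$, and (iii) fails.
\item Several steps are only asserted: injectivity for points on opposite sides of the crack, simplicity of the curve $\Phi_\delta^+(\Pi)\cup\Phi_\delta^-(\Pi)$, and the identity $\Phi_\delta(\mathbb{R}^2\setminus\Pi)=\mathbb{R}^2\setminus\Delta$. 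They need an argument precisely near the free endpoints, where $V^\pm\to0$, and near $P$.
\item The fallback cannot work. Each single-segment rhombus has zero width at its endpoint $P$, so the two rhombi only touch at $P$. Then $B_r(P)\setminus\Delta$ is disconnected for every small $r$, which is exactly the pinch you want to avoid. Making them overlap near $P$ would mean opening the cut beyond $P$, through points of $\mathbb{R}^2\setminus\Pi$ where $\Phi_\delta$ must be continuous.
\end{itemize}

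The missing observation is that a broken line is a graph in the right direction. Let $\theta\in(0,\pi)$ be the convex angle at $P$ and $b$ its unit bisector, and use coordinates $(s,t)$ centred at $P$ along $(b^\perp,b)$. Then $\Pi=\{(s,\gamma(s)) : s\in[a,c]\}$ with $\gamma(s)=|s|\cot(\theta/2)$ and $a<0<c$, so your own formula applies verbatim:
\[
\Phi_\delta(s,t)=\Bigl(s,\ t+\mathrm{sign}\bigl(t-\gamma(s)\bigr)\,g(s)\bigl(1-|t-\gamma(s)|/\delta\bigr)_+\Bigr).
\]
Here $g$ is piecewise affine with $g(a)=g(c)=0$ and $0<g\le\delta^2$ on $(a,c)$, and $\Phi_\delta=\mathrm{Id}$ for $s\notin(a,c)$. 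The needed properties then follow directly.
\begin{itemize}
\item Each punctured fibre $\{s\}\times(\mathbb{R}\setminus\{\gamma(s)\})$ is mapped monotonically onto $\{s\}\times(\mathbb{R}\setminus[\gamma(s)-g(s),\gamma(s)+g(s)])$.
\item $\Delta=\{|t-\gamma(s)|\le g(s)\}$ is bounded by the two Lipschitz graphs $\gamma\pm g$. They meet transversally at the free endpoints and stay $2g(0)>0$ apart at $P$, so there is no pinch.
\item The set where $\Phi_\delta\neq\mathrm{Id}$ lies in $(\Pi)_\delta$, since the vertical distance to $\Pi$ dominates the Euclidean one.
\item $|\nabla\Phi_\delta-\mathrm{Id}|\le 2\|g'\|_\infty+\|g\|_\infty\bigl(1+\cot(\theta/2)\bigr)/\delta=O(\delta)$.
\end{itemize}
This is the picture in Remark~\ref{ossPi} and Figure~\ref{figure1}, where only one face is displaced and the apex moves along the bisector. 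It is also the graph-based scheme of Step~1 in the proof of Lemma~\ref{lemmadiffeolisciokernelvariabile}. If you prefer to keep your vector-field scheme, replace $\nabla\mathrm{dist}$ near $P$ by the constant field $-b$, which is strictly transversal to both segments from the reflex side. The fibred construction, however, gives injectivity and the description of the image for free.
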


    \begin{oss}\label{ossPi}
    We briefly describe the geometry of the set $\Delta$ produced by the 
    construction of $\Phi_\delta$ in Lemma~\ref{lemma01}, following 
    \cite{almi2023brittlemembranes} (see also Figure~\ref{figure1}).
    
    \begin{itemize}
        \item If $\Pi$ is a single line segment, then $\Delta$ is bounded 
        by two curves $\Pi^+$ and $\Pi^-$: one is a line segment and the 
        other consists of two line segments meeting at exactly one point. 
        Moreover, $\partial \Pi^+ = \partial \Pi^- = \partial \Pi$, where $\partial \Pi$ and $\partial \Pi^\pm$ denote the relative boundaries of $\Pi$ and $\Pi^\pm$, respectively.
        
        \item If $\Pi$ is the union of two line segments sharing exactly 
        one endpoint, then $\Delta$ is bounded by two curves $\Pi^+$ and 
        $\Pi^-$, each of which is itself the union of two line segments 
        sharing exactly one endpoint. Again, 
        $\partial \Pi^+ = \partial \Pi^- = \partial \Pi$ (the $\partial$ denotes once again the relative boundary).
    \end{itemize}
    
    \noindent In both cases, the set $\Delta$ is a region enclosed between 
    $\Pi^+$ and $\Pi^-$, which are the images under $\Phi_\delta$ of the 
    two sides of $\Pi$, and whose endpoints coincide with those of $\Pi$.
    \end{oss}

     \begin{figure}[h!]
		\begin{tikzpicture}[scale=1.2]
			
			\begin{scope}
				
				\draw (0,0) circle (2cm);
				
				\draw (-1,0.3) -- (0,1) -- (1,0.3);
				
				\draw (-1,-0.3) -- (1,-0.3);
				
				\node at (1.3,0.7) {$\Pi_2$};
				\node at (1.3,-0.3) {$\Pi_1$};
				
			\end{scope}
			
			\begin{scope}[xshift=7cm]
				\draw (0,0) circle (2cm);
				
				\fill[blue!20] (-1,0.3) -- (0,1) -- (1,0.3) -- (0,1.5)-- cycle;
				\draw (-1,0.3) -- (0,1) -- (1,0.3);
				\draw (-1,0.3) -- (0,1.5) -- (1,0.3);
				
				\draw (-1,-0.3) -- (1,-0.3);
				\draw (-1,-0.3) -- (0,0.3)--(1,-0.3);
				\fill[blue!20] (-1,-0.3) -- (0,0.3)--(1,-0.3);-- cycle;
				\node at (1.3,0.7) {$\Pi^+_2$};
				\node at (0,0.6) {$\Pi^-_2$};
				\node at (0.7,0.1) {$\Pi^+_1$};
				\node at (0,-0.5) {$\Pi^-_1$};
			\end{scope}
			
		\end{tikzpicture} 
		\caption{Construction in Lemma \ref{lemma01}: The shaded part represents $\Delta$. }
        \label{figure1}
	\end{figure}
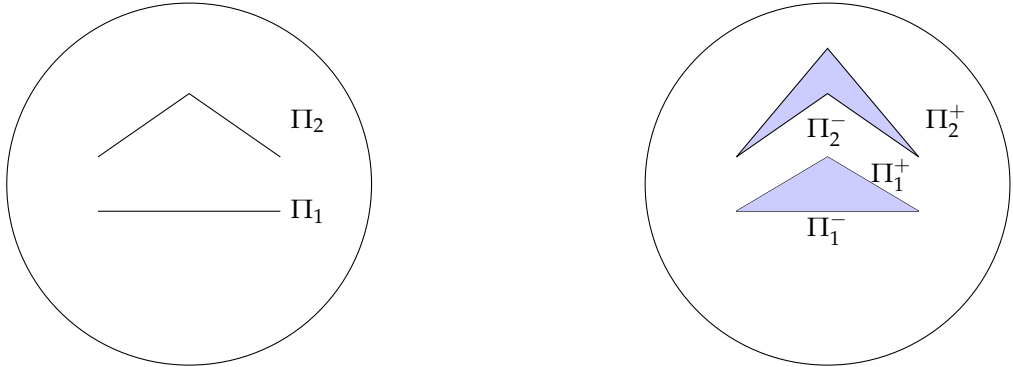

	\begin{proof}[Proof of Theorem~\ref{teorapproxaff^*}]

By a diagonal argument and Remark~\ref{oss3}, it suffices to prove the result for
\[
u \in \widehat{\mathcal{W}}(\Omega;\mathbb{R}^{m}) \cap C^{\infty}\!\bigl(\Omega \setminus \overline{J_u};\R^m\bigr).
\]

\textit{Step 1: Construction of the diffeomorphism.}
Let $\Pi_1, \dots, \Pi_k$ denote the connected components of $\overline{J_u}$, and for each $i = 1, \dots, k$, fix an open neighborhood $A_i \Subset \Omega$ of $\Pi_i$. We may assume the sets $A_i$ are pairwise disjoint.

Applying Lemma~\ref{lemma01} to each $\Pi_i$, we obtain $W^{1,\infty}$-piecewise affine homeomorphisms
\[
\Phi_i : A_i \setminus \Pi_i \;\longrightarrow\; A_i \setminus D_i,
\]
where $A_i \setminus D_i$ is a Lipschitz domain and $\Phi_i(x) = x$ outside a neighborhood of $\Pi_i$ compactly contained in $A_i$. By property~(iii) of Lemma~\ref{lemma01}, taking $\delta > 0$ sufficiently small, we may also assume that for every $i = 1, \dots, k$,
\begin{equation}\label{eq:det-bound}
\|D\Phi_i\|_{L^\infty(A_i \setminus \Pi_i)}
+ \|D\Phi_i^{-1}\|_{L^\infty(A_i \setminus D_i)} \le 3,
\end{equation}
and
\begin{equation}\label{eq:det-pos}
\det M \ge \tfrac{1}{2}
\quad \text{for every } M \in \partial\Phi_i(x),\ x \in A_i \setminus \Pi_i.
\end{equation}
Define the global map $\Phi : \Omega \setminus \overline{J_u} \to V$ by
\[
\Phi(x) :=
\begin{cases}
x, & \text{if } x \in \Omega \setminus \bigcup_{i=1}^{k} A_i, \\[4pt]
\Phi_i(x), & \text{if } x \in A_i \setminus \Pi_i, \quad i = 1, \dots, k,
\end{cases}
\]
where
\[
V := \Omega \setminus \bigcup_{i=1}^{k} D_i.
\]
Since the sets $A_i$ are pairwise disjoint, $V$ is a bounded open Lipschitz domain, and \eqref{eq:det-bound}--\eqref{eq:det-pos} yield the global bounds
\begin{equation}\label{eq:global-det-bound}
\|D\Phi\|_{L^\infty(\Omega \setminus \overline{J_u})}
+ \|D\Phi^{-1}\|_{L^\infty(V)} \le 3,
\qquad
\det M \ge \tfrac{1}{2}
\quad \text{for a.e. } x \in \Omega \setminus \overline{J_u}.
\end{equation}
By Remark~\ref{ossPi}, the image of each $\Pi_i$ under $\Phi$ consists of two curves $\Pi_i^+$ and $\Pi_i^-$, each being a union of finitely many line segments; in particular, $\partial V$ is piecewise linear.

\textit{Step 2: Definition of the auxiliary function and approximation in $V$.}
Set $v : V \to \mathbb{R}^m$ by
\[
v(x) := u\bigl(\Phi^{-1}(x)\bigr), \quad x \in V.
\]
By construction and \eqref{eq:global-det-bound}, we have
\[
v \in W^{1,\infty}(V;\mathbb{R}^m) \cap C(\overline{V};\mathbb{R}^m).
\]
Since $V$ is a bounded open Lipschitz domain and $v \in W^{1,\infty}(V;\mathbb{R}^m)$, reasoning as in the proof of \cite[Theorem~3.1]{almi2023brittlemembranes}, there exists a sequence $\{w_j\} \subset \mathrm{Aff}^*(V;\mathbb{R}^m)$ such that for every $p \in (1,\infty)$
\begin{equation}\label{eq:W1p-conv}
\|v - w_j\|_{W^{1,p}(V;\mathbb{R}^m)} \to 0 \quad \text{as } j \to +\infty.
\end{equation}
By choosing $p > 2 = \dim V$, in particular we have
\begin{equation}\label{eq:unif-conv}
w_j \to v \quad \text{uniformly on } \overline{V}.
\end{equation}

\textit{Step 3: Definition of the approximating sequence.}
For $x \in \Omega \setminus \overline{J_u}$, set
\[
u_j(x) := w_j\bigl(\Phi(x)\bigr).
\]
By construction, $u_j \in \mathrm{Aff}^*(\Omega \setminus \overline{J_{u_j}};\mathbb{R}^m) \cap \widehat{\mathcal{W}}(\Omega;\mathbb{R}^m)$, and $J_{u_j} \subseteq J_u$.
Properties~(i) and~(ii) follow from \eqref{eq:W1p-conv} and \eqref{eq:global-det-bound} by the same change-of-variables argument as in \cite[Theorem~3.1]{almi2023brittlemembranes}.

\textit{Step 4: Proof of property~(iii).}
By construction (up to a consistent orientation of the normal vectors), for every $i=1,\dots,k$, and every $x \in \Pi_i$, 
\[
u_j^\pm(x) = w_j(\Phi^\pm(x))
\qquad \text{and} \qquad
u^\pm(x) = v(\Phi^\pm(x)).
\]
The uniform convergence~\eqref{eq:unif-conv} therefore gives
\[
u_j^\pm \to u^\pm \quad \text{uniformly on } J_u.
\]
In particular, $[u_j] \to [u]$ uniformly on $J_u$. Combined with the fact that $J_{u_j} \subset J_u$, we infer
\begin{equation}\label{eq:sym-diff}
\mathcal{H}^1\bigl(J_u \,\triangle\, J_{u_j}\bigr) \to 0 \quad \text{as } j \to +\infty.
\end{equation}
Collecting the uniform convergence of the traces, (ii), and \eqref{eq:sym-diff}, we conclude that
\[
|D u_j - D u|(\Omega) \to 0  \quad \text{as } j \to +\infty.
\]
Property~(iii) then follows directly from Corollary~\ref{corlimsup}.
\end{proof}

    Since piecewise affine homeomorphisms do not provide sufficient regularity for the construction of recovery sequences under the incompressibility constraint, we adopt an alternative approach based on smooth diffeomorphisms mapping the cracked domain onto a Lipschitz domain. We recall that given a set $U \subset \R^n$ we indicate its $\delta$-neighborhood with $(U)_\delta$.

	\begin{lemma}\label{lemmadiffeolisciokernelvariabile}
		Let $S \subseteq \mathbb{R}^2$ be a regular curve such that
		\[
		S = \{ (x_1, g(x_1)) \mid x_1 \in [a,b] \},
		\]
		where $g \in C^\infty([a,b])$. Then, for every sufficiently small $\delta > 0$, there exists a function
		\[
		\Phi_\delta : \mathbb{R}^2 \setminus \overline{S} \to \mathbb{R}^2 \setminus \Delta_\delta
		\]
		such that:
		\begin{itemize}
			\item[(i)] $\Phi_\delta \in C^1(\mathbb{R}^2 \setminus \overline{S}; \mathbb{R}^2)$, $\nabla \Phi_\delta \in L^1(\mathbb{R}^2; \mathbb{M}^{2 \times 2})$, and for every integer $k \geq 1$ one has
			\[
			\nabla \Phi_\delta \in W^{k,\infty}\!\left(\mathbb{R}^2 \setminus \overline{\bigl(S \cup B_\delta(g(a)) \cup B_\delta(g(b))\bigr)};\, \mathbb{M}^{2 \times 2}\right);
			\]
			
			\item[(ii)] $\mathbb{R}^2 \setminus \Delta_\delta$ is a set with Lipschitz boundary;
			
			\item[(iii)] $\Phi_\delta(x) = x$ for every $x \in \mathbb{R}^2 \setminus (S)_{2\delta}$;
			
			\item[(iv)] $\|\Phi_\delta - \operatorname{Id}\|_{W^{1,\infty}(\mathbb{R}^2 \setminus \overline{S}; \mathbb{R}^2)} \to 0$ as $\delta \to 0$.
		\end{itemize}
	\end{lemma}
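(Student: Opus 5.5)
We first straighten the curve and then build a smooth ``opening'' map by convolution with a kernel whose width depends on the distance to $S$. Concretely, set $\Psi(x_1,x_2):=(x_1,x_2-g(x_1))$, extending $g$ smoothly beyond $[a,b]$. This is a $C^\infty$ diffeomorphism of $\mathbb{R}^2$ with $\det\nabla\Psi=1$, it maps $S$ onto the flat segment $[a,b]\times\{0\}$, and it is bi-Lipschitz with all derivatives bounded. It therefore suffices to prove the statement for a flat segment $S_0=[a,b]\times\{0\}$ and then conjugate, $\Phi_\delta:=\Psi^{-1}\circ\Phi^0_{c\delta}\circ\Psi$, with $c$ chosen so that $\Psi^{-1}\big((S_0)_{2c\delta}\big)\subseteq(S)_{2\delta}$. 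Properties (i)--(iv) are stable under composition with a fixed smooth diffeomorphism, and $\Psi^{-1}$ maps Lipschitz domains to Lipschitz domains. The excluded balls in (i) are read as balls around the endpoints of $S$.

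\emph{The flat case.} Following Lemma~\ref{lemma01}, we take the piecewise affine homeomorphism $\Phi^{\rm pa}_\delta$ that lifts the upper side of $S_0$ to a tent $\Pi^+$ of height $h\ll\delta$ with vertex above the midpoint, and keeps the lower side fixed. We take it to be the identity outside $(S_0)_{\delta}$ and to have gradient within $O(h/\delta)$ of $\mathrm{Id}$. This map is not smooth across the finitely many lines of its triangulation, so we regularize it by
\[
\Phi^0_\delta(x):=\int_{\mathbb{R}^2}\Phi^{\rm pa}_\delta\bigl(x-\varepsilon(x)y\bigr)\,\eta(y)\,dy ,
\]
where $\eta$ is a standard mollifier and the width is $\varepsilon(x)=\theta\,\min\{\mathrm{dist}(x,S_0),\delta\}$ with $\theta$ small. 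In practice we take a smoothed version of the distance, which is smooth in $\mathbb{R}^2\setminus S_0$. On the upper and lower half-planes we mollify the one-sided restrictions, each extended across $S_0$ as an affine map near the segment. Since $\varepsilon$ vanishes on $S_0$, the boundary values are preserved, and the image of $\mathbb{R}^2\setminus S_0$ is the complement of a region $\Delta_\delta$ bounded by $S_0$ and the tent $\Pi^+$ up to small corrections. This gives (ii): $\mathbb{R}^2\setminus\Delta_\delta$ is Lipschitz because near the endpoints the two boundary curves meet at a positive angle.

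\emph{Derivative bounds.} We differentiate under the integral. By Lipschitz continuity of $\varepsilon$ with constant $\le\theta$, the gradient of $\Phi^0_\delta$ is an average of $\nabla\Phi^{\rm pa}_\delta$ plus an error of size $\theta\|\nabla\Phi^{\rm pa}_\delta-\mathrm{Id}\|_\infty$. This yields $\|\Phi^0_\delta-\mathrm{Id}\|_{W^{1,\infty}}\lesssim h/\delta\to0$, giving (iv) and, for small $\delta$, injectivity and $\det\nabla\Phi^0_\delta\ge1/2$. Property (iii) holds because the support of $\Phi^{\rm pa}_\delta-\mathrm{Id}$ grows by at most $\theta\delta$. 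Higher derivatives involve $\nabla^j\varepsilon\sim\mathrm{dist}^{1-j}$ and kernel derivatives $\varepsilon^{-j}$. To avoid blow-up at $S_0$ itself, we arrange the kinks of $\Phi^{\rm pa}_\delta$ to lie at distance $\gtrsim\delta$ from $S_0$ away from the endpoints, by making the tent flat near $S_0$. Then, near the open segment, $\Phi^{\rm pa}_\delta$ is affine on each side, mollification changes nothing there, and all derivatives are bounded uniformly. This yields the $W^{k,\infty}$ bound outside $B_\delta$ of the endpoints.

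\emph{Main obstacle.} The hardest step is the behaviour near the endpoints. There the kinks of the tent necessarily accumulate at the tip, so higher derivatives blow up like $\mathrm{dist}^{-j}$. Only $\nabla\Phi\in L^1$ and $C^1$ regularity survive, which is exactly why the statement excludes $B_\delta$ around the endpoints. We must check three things there: continuity of $\nabla\Phi^0_\delta$ (the averages of piecewise constant gradients vary continuously), that the image boundary stays Lipschitz, and that $\Phi^0_\delta$ is injective. Injectivity we would obtain from the bound $\|\nabla\Phi^0_\delta-\mathrm{Id}\|_\infty<1$ on each side of the cut, together with a degree argument that uses the matching boundary values.
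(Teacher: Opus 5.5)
Your reduction to a flat segment via the shear $(x_1,x_2)\mapsto(x_1,x_2-g(x_1))$ is sound. So are the first-order estimates for the variable-width mollification, and hence (iii) and (iv). The gap is in the step meant to give (i) away from the endpoints.

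\textbf{The gap.} You claim that near the open segment $\Phi^{\rm pa}_\delta$ is affine on each side, so that mollification does nothing there. This cannot hold for the map you describe. $\Phi^{\rm pa}_\delta$ sends the straight upper side of $S_0$ onto the tent $\Pi^+$, which has a corner above $m=(a+b)/2$. A piecewise affine map can bend a straight boundary piece into a corner only if one of its kink lines starts at the preimage $(m,0)$ of that corner. More generally, suppose $\Phi^{\rm pa}_\delta$ were a single affine map on the upper side along the whole open segment. Since it fixes both endpoints, it would fix all of $S_0$, and $\Delta_\delta$ would be degenerate. Now look at $(m,0)$. Your width there is $\varepsilon(x)\approx\theta x_2\to 0$, while $\nabla\Phi^{\rm pa}_\delta$ jumps by $\gtrsim h/(b-a)$ across the kink. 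Hence $|\nabla^2\Phi^0_\delta|\gtrsim h/\bigl((b-a)\theta x_2\bigr)$ near $(m,0^+)$, which is unbounded. So (i) fails at a point of the cut far from the excluded endpoint balls. This is exactly the property needed later in Proposition~\ref{propu_jbuona} and Lemma~\ref{lemma1}. The phrase ``making the tent flat near $S_0$'' does not cure this. Every corner of $\Pi^+$ lying over an interior point of $S_0$ outside the excluded balls forces a kink that reaches the cut.

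\textbf{The missing idea.} The homeomorphism you smooth must itself be smooth up to the cut from each side away from the endpoints. Its non-smooth locus should then meet $S$ only inside the endpoint balls. Within your piecewise affine framework, one repair is to replace the tent by a trapezoid whose two corners lie over points at distance $\ell<c\delta$ from the endpoints, with $h\ll\ell$. The kink lines starting there must leave those balls at height $\gtrsim\delta$. Then your argument goes through.

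\textbf{The paper's route.} The paper avoids piecewise affine maps altogether. It takes
\[
\Delta_\delta=\{x_1\in[0,1],\ g(x_1)\le x_2\le g(x_1)+\delta^2 f(x_1)\}
\]
with $f$ smooth, $f(0)=f(1)=0$ and $f'(0)>0>f'(1)$; this gives (ii). It then sets $\Psi_\delta(x)=\bigl(x_1,\varphi((x_2-g)/f)f+g\bigr)$ with $\varphi$ piecewise linear. This map is the identity below $S$ and smooth above $S$ up to $S$. Its only non-smooth locus is the curve $x_2=g+\delta f$, which touches $S$ only at the endpoints. After that, the De Philippis--Fusco--Pratelli variable-kernel convolution yields (i) directly. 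A further by-product is $(\Phi_\delta)_1=x_1$, since the kernel is radial. Bijectivity then follows from monotonicity in $x_2$, with no need for the degree argument you sketch.
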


    Since we believe this to be a standard result but were unable to locate a precise reference in the literature, and since the argument relies on convolutions with variable kernels, a tool not used elsewhere in the paper, we include a sketch of its proof in the Appendix.

	\begin{oss}\label{osslemmakernelvariabile}
		As a consequence of {(i)}, {(iii)}, and {(iv)} of Lemma \ref{lemmadiffeolisciokernelvariabile}, we also have that,
		for $\delta > 0$ sufficiently small, the inverse map $\Phi_\delta^{-1}$ satisfies
		{(iii)}, {(iv)}, and
		\begin{itemize}
			\item[\emph{(i$'$)}] 
			$\Phi_\delta^{-1} \in C^\infty(\mathbb{R}^2 \setminus \Delta_\delta;\mathbb{R}^2)$,
			$\nabla \Phi_\delta^{-1} \in L^\infty(\mathbb{R}^2;\mathbb{R}^{2 \times 2})$, and, for every integer
			$k \geq 2$,
			\[
			\Phi_\delta^{-1} \in W^{k,\infty}\!\left(
			\mathbb{R}^2 \setminus \bigl[\Delta_\delta \cup \Phi_\delta(B_\delta(g(a))) \cup \Phi_\delta(B_\delta(g(b)))\bigr];
			\mathbb{R}^2
			\right).
			\]
		\end{itemize}
		
	\end{oss}

    Using the diffeomorphisms constructed in Lemma~\ref{lemmadiffeolisciokernelvariabile}, we can now establish the second approximation result for $GSBV^p$ functions. The dimensional assumption $m>n$ is essential: the extra codimension provides the flexibility needed to approximate by locally orientation-preserving maps (which are not piecewise affine).

	\begin{prop}\label{propu_jbuona}
		Let $\Omega \subseteq \mathbb{R}^2$ be a bounded set with Lipschitz boundary and let $p \in (1,+\infty)$. For every
		$u \in \widehat{\mathcal{W}}(\Omega;\mathbb{R}^m) \cap \mathrm{Aff}^*(\Omega \setminus J_u;\mathbb{R}^m)$ with $m \geq 3$, there exists a sequence
		$$u_j \in SBV^p(\Omega;\mathbb{R}^m) \cap W^{k,\infty}(\Omega \setminus \overline{J_{u_j}};\mathbb{R}^m)$$ 
        for every $k \in \mathbb{N}$ such that:
		\begin{itemize}
			\item[(i)] $\det\big((\nabla u_j(x))^T (\nabla u_j(x))\big) \geq \theta$ for every $j \geq 1$ and for a.e.\ $x \in \Omega$, with
			$\theta = \theta(u) > 0$;
			
			\item[(ii)] $u_j \to u$ in $L^p(\Omega;\mathbb{R}^m)$ and $\nabla u_j \to \nabla u$ in
			$L^p(\Omega;\mathbb{M}^{m \times 2})$ as $j \to +\infty$;
			
			\item[(iii)] $J_{u_j} \Subset \Omega$ consists of a finite union of regular curves for every $j \geq 1$, and
			\[
			\lim_{j \to +\infty} \mathcal{H}^1(J_{u_j} \,\triangle\, J_u) = 0;
			\]
			
			\item[(iv)] $\|u_j\|_{L^\infty(\Omega;\mathbb{R}^m)} \leq \|u\|_{L^\infty(\Omega;\mathbb{R}^m)} + 1$ and
			$\|\nabla u_j\|_{L^\infty(\Omega;\mathbb{M}^{m \times 2})} \leq 8 \|\nabla u\|_{L^\infty(\Omega;\mathbb{M}^{m \times 2})}$;
			
			\item[(v)] For every $\varepsilon > 0$, and for $j$ sufficiently large, there exists an open set $A_\varepsilon$ such that
        \[
        J_{u_j} \cap A_\varepsilon \subset J_u \cap A_\varepsilon,
        \qquad 
        \mathcal{H}^1(J_{u_j} \setminus A_\varepsilon) < \varepsilon,
        \]
    and the traces $u_j^\pm$ are equicontinuous on $J_u \cap A_\varepsilon$. Moreover, denoting by $h$ the number of connected components of $J_u$ and by $h'$ the number of connected components of $J_u \cap A_\varepsilon$, we have
    \[
    h \leq h' \leq 2h .
    \]
		\end{itemize}
	\end{prop}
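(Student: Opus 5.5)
The plan is to mimic the proof of Theorem~\ref{teorapproxaff^*}, replacing the piecewise affine homeomorphisms of Lemma~\ref{lemma01} by the smooth maps of Lemma~\ref{lemmadiffeolisciokernelvariabile}, and the approximation in $\mathrm{Aff}^*$ by a mollification that preserves the maximal-rank condition.

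\textbf{Step 1: straightening the jump set into a Lipschitz domain.} Since $u\in\widehat{\mathcal W}(\Omega;\R^m)$, each connected component $\Pi_i$ of $\overline{J_u}$ is a segment or two segments meeting at a corner, and its convex hull lies in $\Omega$. Near the corner point, $J_u$ cannot be written as a smooth graph, so I first slightly modify $J_u$. Fix $\varepsilon>0$ and remove a ball $B_\eta$ around each corner and around each endpoint. On what remains, each piece of $J_u$ is a straight segment, hence a smooth graph over a suitable axis. I then apply Lemma~\ref{lemmadiffeolisciokernelvariabile} to each piece, in pairwise disjoint neighbourhoods $A_i\Subset\Omega$, with $\delta\ll\eta$.

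Near the removed corner, $u$ still jumps. To handle this I would replace a small arc of the jump set near the corner by a smooth regular curve $\tilde S_i$ that rounds off the corner inside $B_\eta$. I extend $u$ across the symmetric difference by a Lipschitz extension with controlled gradient, chosen so that the rank condition still holds. The new jump set then consists of finitely many regular curves and differs from $J_u$ by $\mathcal H^1$-measure $O(\eta)$. This gives (iii), and also (v): take $A_\varepsilon$ to be the complement of the small balls. Cutting out the corner splits one component into at most two, which yields $h\le h'\le 2h$.

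Gluing the maps $\Phi_\delta$ of the various pieces via (iii) of Lemma~\ref{lemmadiffeolisciokernelvariabile} gives a map $\Phi:\Omega\setminus\overline{J}\to V$ onto a Lipschitz domain $V$. By (iv) we have $\|\nabla\Phi-\mathrm{Id}\|_\infty$ small, so $\det\nabla\Phi\ge 1/2$ and $|\nabla\Phi|,|\nabla\Phi^{-1}|\le 2$. Remark~\ref{osslemmakernelvariabile} then gives $W^{k,\infty}$ regularity of $\Phi^{-1}$ away from the $\delta$-balls at the tips.

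\textbf{Step 2: smoothing on $V$.} Set $v:=u\circ\Phi^{-1}\in W^{1,\infty}(V;\R^m)$, which is continuous up to $\partial V$. Since $u\in\mathrm{Aff}^*$, the Clarke gradient of $u$ has $\det(A^TA)\ge\theta_0$. The Clarke subdifferential is convex, so this minimum over the convex hull is positive. I then mollify $v$ at scale $\tau$, after extending it across the Lipschitz boundary $\partial V$ by a local reflection that is bi-Lipschitz close to an isometry. Near interfaces between triangles, $\nabla(v*\rho_\tau)$ is an average of nearby gradients of $v$, hence lies in (a small neighbourhood of) the convex hull of $\nabla u$ values transported by $\nabla\Phi^{-1}$ near $\mathrm{Id}$. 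The lower bound $\det(\nabla w_\tau^T\nabla w_\tau)\ge\theta$ then follows by continuity, with $\theta$ depending only on $\theta_0$ and $\|\nabla u\|_\infty$. Finally set $u_j:=w_{\tau_j}\circ\Phi$.

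\textbf{Step 3: checking the properties.} Properties (ii) and (iv) follow by change of variables: $\|\nabla u_j\|_\infty\le 2\cdot 2\cdot 2\|\nabla u\|_\infty$, with factors from $\Phi$, $\Phi^{-1}$, and the reflection, and the sup bound comes from mollification. For (v), equicontinuity of the traces on $J_u\cap A_\varepsilon$ follows from the uniform Lipschitz bound on $w_\tau$ together with the smoothness of $\Phi^\pm$ there. The $W^{k,\infty}$ regularity holds away from the tip balls; near the tips, Lemma~\ref{lemmadiffeolisciokernelvariabile}(i) only gives $C^1$ with $L^1$ gradient. I would repair this by choosing the rounded curve so that the tips are enclosed by the smooth deformation, or by placing the tips where $w_\tau$ is locally constant-rank affine.

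\textbf{Main obstacle.} I expect the main obstacle to be the rank estimate (i) under mollification. Convex combinations of full-rank $m\times2$ matrices need not be full rank, so the argument needs the Clarke-hull condition built into $\mathrm{Aff}^*$ and must ensure that $\nabla\Phi^{-1}$, which varies, does not destroy it. Closely tied to this is the treatment of corners and tips, where the diffeomorphism degenerates. That treatment is also where the factor $2h$ and the exceptional set in (v) come from.
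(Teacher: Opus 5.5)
Your overall architecture matches the paper's. You open the cracks with Lemma~\ref{lemmadiffeolisciokernelvariabile}, mollify $v=u\circ\Phi^{-1}$ at a scale small enough that each averaged gradient lies in a single Clarke subdifferential $\partial u(\zeta)$ (transported by $\nabla\Phi^{-1}$ close to $\mathrm{Id}$), and compose back. The gap is in how you treat tips and corners, which is exactly where the paper's key device sits.

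Near an endpoint of a crack, Lemma~\ref{lemmadiffeolisciokernelvariabile}(i) gives no bound on $\nabla^k\Phi$ for $k\ge 2$, and none is possible. Bounded $\nabla^2\Phi$ near the tip would force the traces of $\nabla\Phi$ on the two faces to agree up to $O(r)$ at distance $r$ from the tip. The two image faces would then be tangent at the tip, and $\mathbb{R}^2\setminus\Delta$ would not be Lipschitz there. Neither of your repairs helps:
\begin{itemize}
\item if $w_\tau(y)=Ly+c$ near the image of the tip, then $u_j=L\Phi+c$ there inherits exactly this singularity;
\item removing a ball around an endpoint only moves the tip to the boundary of the ball, and the part of the crack left inside the ball is not opened, so the image domain is not Lipschitz.
\end{itemize}

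Likewise, the rank-preserving Lipschitz extension at the rounded corner is asserted, not constructed. It must match the traces of $u$ from the other side on both segments. If $e_1,e_2$ are their directions and $A,B$ the adjacent gradients, the natural linear extension $e_1\mapsto Ae_1$, $e_2\mapsto Be_2$ is degenerate whenever $Ae_1\parallel Be_2$. It must also preserve the Clarke-hull condition your mollification argument relies on.

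The paper bypasses all of this. It straightens the corner by a bi-Lipschitz $\Xi$, equal to $\mathrm{Id}$ off $B_\sigma(x_0)$, only so that Lemma~\ref{lemmadiffeolisciokernelvariabile} applies. It claims no determinant bound in $\Phi(\Xi(B_\sigma(x_0)))$. It then overwrites $u_j$ on $B_{2\sigma}(x_i)$, $i=0,1,2$ (corner and both endpoints), by the affine map $x\mapsto\nabla u(z)(x-x_i)$ with a fixed full-rank $\nabla u(z)$. On these balls (i), (iv) and $W^{k,\infty}$ regularity are trivial, and no tips remain. The cost is a new jump on $\partial B_{2\sigma}(x_i)$ of total length at most $12\pi\sigma$, which is absorbed in (iii) by a diagonal argument in $j$ and $\sigma\to 0$.

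There is a second failing step. Extending $v$ by a local reflection across $\partial V$ before mollifying destroys (i). If $\partial V$ is locally $\{y_2=0\}$ and $\nabla v=(F^1|F^2)$ nearby, the reflected extension has gradient $(F^1|-F^2)$. At boundary points the mollified gradient is then $(F^1|0)$, which has rank one. Closeness to an isometry is not enough; you need closeness to the identity. This matters precisely on the crack faces $\Pi^\pm$, where the traces live. The paper never extends $v$. It mollifies only in $\{\mathrm{dist}(\cdot,\partial U)>2\delta_j\}$ and precomposes with diffeomorphisms $\varphi_j\to\mathrm{Id}$ in $C^1$ with $\varphi_j(U)\Subset\{\mathrm{dist}(\cdot,\partial U)>2\delta_j\}$. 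This only multiplies the Gram determinant by $(\det\nabla\varphi_j)^2$.
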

	\begin{proof}
    We divide the proof in several steps. Take $\sigma > 0$ sufficiently small (to be
	determined later).
    
    \textit{Step 1: smoothing the angle.}
	Let $\Pi \Subset \Omega$ be a connected component of $\overline{J_u}$. Since the connected 
    components of $J_u$ are well separated, we may argue locally and assume without loss of 
    generality that $J_u$ consists of a single component.  Since 
    $u \in \widehat{\mathcal{W}}(\Omega;\mathbb{R}^m)$, the set $\Pi$ is either a segment or the 
    union of two segments that meet at a single endpoint $x_0$. Let us consider the latter case, as the first one is in fact simpler. We can find a 
    bi-Lipschitz homeomorphism $\Xi : \mathbb{R}^2 \to \mathbb{R}^2$ such that $\Xi(\Pi) = S \Subset \Omega$,
    where $S$ is a smooth curve that can be written (up to a rotation) as the graph of a 
    $C^\infty$ function. Moreover, $\Xi$ can be chosen so that $\Xi = \mathrm{Id}$ outside $B_\sigma(x_0)$ and so that there exists $C>0$ depending only on $\Pi$ with the property that
    \begin{equation*}
         \mathrm{Lip}(\Xi), \ \mathrm{Lip}(\Xi^{-1}) \leq C.
    \end{equation*}
	Let us define
	\[
	w := u \circ \Xi^{-1} \in SBV^p(\Omega;\mathbb{R}^m).
	\]

    \textit{Step 2: mapping into the Lipschitz domain.}
	By definition, $J_w$ satisfies the assumptions of Lemma \ref{lemmadiffeolisciokernelvariabile}. Hence, we can find a $W^{1,\infty}$-diffeomorphism
	\[
	\Phi : \Omega \setminus J_w \to \Omega \setminus \Delta =: U
	\]
	satisfying properties (i)--(iv) of Lemma \ref{lemmadiffeolisciokernelvariabile}, such that $\Delta \subset (J_w)_\sigma$, and 
	\begin{equation}\label{3.2.8}
    \begin{aligned} 
    &\|\Phi - \text{Id}\|_{W^{1,\infty}(\Omega \setminus J_w;\mathbb{R}^2)}
	+
	\|\Phi^{-1} - \text{Id}\|_{W^{1,\infty}(U;\mathbb{R}^2)}
	\leq \sigma, \\
    &\det(\nabla \Phi(x)) \geq 1 - \sigma
	\quad \text{for every } x \in \Omega \setminus \overline{J_w}.
    \end{aligned}
    \end{equation}
	Let $V \subset \Omega$ be the open set such that
	\[
	\Phi \in W^{k,\infty}(\Omega \setminus \overline{J_w \cup V};\mathbb{R}^2).
	\]
	By Lemma~\ref{lemmadiffeolisciokernelvariabile}, the set $V$ is given by
	\begin{equation}\label{3.2.9}
		V = \bigcup_{i=1}^2 B_\sigma(x_i),
	\end{equation}
	where $\{x_i\}_{i=1}^2 \subset \Omega$ are the two endpoints of $J_w$.
	Possibly taking $\sigma$ smaller, we may additionally assume that $V \Subset \Omega$ and that
	\eqref{3.2.9} is a disjoint union. Furthermore, by Remark~\ref{osslemmakernelvariabile}, we also have
	\[
	\Phi^{-1} \in W^{k,\infty}(U \setminus \Phi(\overline{V});\mathbb{R}^2)
	\quad \text{for every } k \geq 1.
	\]
	Setting
	\[
	v := w \circ \Phi^{-1},
	\]
	we have $v \in W^{1,\infty}(U;\mathbb{R}^m)$ and, by construction,
	\[
	\|v\|_{W^{1,\infty}(U;\mathbb{R}^m)}
	\leq 2 \|u\|_{W^{1,\infty}(\Omega \setminus J_u;\mathbb{R}^m)}.
	\]
	Since $u \in \mathrm{Aff}^*(\Omega;\mathbb{R}^m)$, there exists $\eta > 0$ such that
	\[
	\det(M^T M) \geq \eta
	\quad \text{for every } M \in \partial u(x), \ \text{for every } x \in \Omega \setminus J_u.
	\]

    \textit{Step 3: Smoothing the $\mathrm{Aff}^*$ function.}
Let $\delta_j \to 0$ and let $\{\rho_\tau\}_{\tau > 0}$ be a family of standard mollifiers. 
Setting $(U)_{-\delta} := \{x \in U : \mathrm{dist}(x,\partial U) > \delta\}$, we define
\[
v_j := \rho_{\delta_j} * v \in C^\infty\!\left((U)_{-2\delta_j};\mathbb{R}^m\right).
\]
By standard properties of convolution,
\begin{equation}\label{3.2.11}
\|v_j\|_{L^\infty((U)_{-\delta_j};\mathbb{R}^m)} \leq \|v\|_{L^\infty(U;\mathbb{R}^m)},
\qquad
\|\nabla v_j\|_{L^\infty((U)_{-\delta_j};\mathbb{R}^{m\times 2})}
\leq \|\nabla v\|_{L^\infty(U;\mathbb{R}^{m\times 2})}.
\end{equation}
Moreover, we preliminary observe that since $u \in \mathrm{Aff}^* (\Omega;\R^m)$, we can take $\sigma>0$ and $\delta_j>0$ sufficiently small so that for every $z \in \Omega \setminus \overline{J_u}$ there exists $\zeta \in \Omega \setminus \overline{J_u}$ such that
\begin{equation}\label{ufff}
\nabla u(y) \in \partial u(\zeta) \quad \text{for every } y \in B_{(1+\sigma)\delta_j}\!\left(z\right) 
\cap \Omega \setminus \overline{J_u}.
\end{equation}

Let us take $\sigma>0$ and $\delta_j>0$ small so that \eqref{ufff} is verified. We claim that for every 
$x \in (U)_{-2\delta_j} \setminus (\Phi(\Xi(B_\sigma(x_0))))_{\delta_j}$,
\[
\det\!\big((\nabla v_j(x))^T \nabla v_j(x)\big) \geq \frac{\eta}{2}.
\]
Fix such an $x$. Since $x \notin (\Phi(\Xi(B_\sigma(x_0))))_{\delta_j}$, for every 
$y \in B_{\delta_j}(x)$ we have $\Xi^{-1}(\Phi^{-1}(y)) = \Phi^{-1}(y)$, hence
\begin{equation}\label{3.2.12}
\nabla v_j(x)
= \int_{B_{\delta_j}(x)} \nabla u\!\left(\Phi^{-1}(y)\right)\nabla \Phi^{-1}(y)
\,\rho_{\delta_j}(x-y)\,dy.
\end{equation}
By~\eqref{3.2.8}, $\mathrm{Lip}(\Phi^{-1}) \leq 1+\sigma$, thus
\begin{equation}\label{3.2.14}
\Phi^{-1}(B_{\delta_j}(x)) \subset B_{(1+\sigma)\delta_j}\!\left(\Phi^{-1}(x)\right).
\end{equation}
There exists $\zeta \in \Omega \setminus \overline{J_u}$ such that
\begin{equation}\label{3.2.14.5}
\nabla u(y) \in \partial u(\zeta)
\quad \text{for every } y \in B_{(1+\sigma)\delta_j}\!\left(\Phi^{-1}(x)\right) 
\cap \Omega \setminus \overline{J_u}.
\end{equation}
Using the piecewise affine structure of $u$, \eqref{3.2.14}, and~\eqref{3.2.14.5}, we find 
a finite convex combination
\begin{equation}\label{3.2.15}
\int_{B_{\delta_j}(x)} \nabla u\!\left(\Phi^{-1}(y)\right)\rho_{\delta_j}(x-y)\,dy
= \sum_{\ell=1}^L \lambda_\ell \nabla u(y_\ell) \in \partial u(\zeta).
\end{equation}
Combining~\eqref{3.2.8}, \eqref{3.2.11}, \eqref{3.2.12}, and~\eqref{3.2.15}, we obtain
\[
\det\!\big((\nabla v_j(x))^T \nabla v_j(x)\big)
\geq \eta - \omega\!\left(\sigma C \|\nabla u\|_{L^\infty(\Omega;\mathbb{M}^{m\times 2})}^2\right),
\]
where $C > 0$ is independent of $x$ and $j$, and $\omega$ is the modulus of continuity of 
the determinant on $\{F \in \mathbb{R}^{m\times 2} : |F| \leq 
C\|\nabla u\|_{L^\infty}^2 + 1\}$. Since $\omega(s) \to 0$ as $s \to 0$, taking $\sigma$ 
smaller if necessary yields the claim.

Since $U$ has Lipschitz boundary, there exists a sequence of diffeomorphisms 
$\varphi_j \in C^\infty(U;U)$ with
\[
\varphi_j(U) \Subset (U)_{-2\delta_j}
\quad \text{and} \quad
\|\varphi_j - \mathrm{Id}\|_{C^1(U;\mathbb{R}^2)} \to 0.
\]
Setting $\widetilde{v}_j(x) := v_j(\varphi_j(x))$, we have 
$\widetilde{v}_j \in C^\infty(U;\mathbb{R}^m)$ with
\[
\|\widetilde{v}_j\|_{L^\infty(U;\mathbb{R}^m)} \leq \|v\|_{L^\infty(U;\mathbb{R}^m)},
\qquad
\|\nabla \widetilde{v}_j\|_{L^\infty(U;\mathbb{M}^{m\times 2})}
\leq 2\|\nabla v\|_{L^\infty(U;\mathbb{M}^{m\times 2})}.
\]
Moreover, for $\delta_j$ sufficiently small and every 
$x \in U \setminus (\Phi(\Xi(B_\sigma(x_0))))_{2\delta_j}$,
\begin{equation}\label{3.2.16}
\det\!\big((\nabla \widetilde{v}_j(x))^T \nabla \widetilde{v}_j(x)\big)
= \big[\det(\nabla\varphi_j(x))\big]^2
\det\!\big((\nabla v_j(\varphi_j(x)))^T \nabla v_j(\varphi_j(x))\big)
\geq \frac{\eta}{4}.
\end{equation}
Finally, $v_j \to v$ in $W^{1,p}((U)_{-2\delta_j};\mathbb{R}^m)$ and, by the properties of 
$\varphi_j$,
\[
\widetilde{v}_j \to v \quad \text{in } W^{1,p}(U;\mathbb{R}^m) \text{ for every } p \in (1,\infty).
\]
Fixing $p > 2$, this convergence is uniform on $\overline{U}$.

\textit{Step 4: Definition of the approximating sequence.}
Let $z \in \Omega \setminus \overline{J_u}$ be such that $\nabla u(z) \in \partial u(z)$. 
For $\delta_j$ small we have 
$\Xi^{-1}(\Phi^{-1}((\Phi(\Xi(B_\sigma(x_0))))_{2\delta_j})) \subset B_{2\sigma}(x_0)$.
Setting $\mathcal{V} := \bigcup_{i=0}^2 B_{2\sigma}(x_i)$, we define
\[
u_{\sigma,j}(x) :=
\begin{cases}
\widetilde{v}_j(\Phi(x)) & x \in \Omega \setminus (J_w \cup \mathcal{V}), \\[4pt]
\nabla u(z)(x - x_i) & x \in B_{2\sigma}(x_i),\ i = 0,1,2.
\end{cases}
\]
Since $\Phi \in W^{k,\infty}(\Omega \setminus \overline{J_w \cup V};\mathbb{R}^2)$ for every 
$k \geq 1$ and $\widetilde{v}_j \in C^\infty(U;\mathbb{R}^m)$, we have
\[
u_{\sigma,j} \in W^{k,\infty}\!\left(\Omega \setminus \overline{J_{u_{\sigma,j}}};\mathbb{R}^m\right)
\quad \text{for every } k \geq 1,
\]
and
\[
J_{u_{\sigma,j}} \subset (J_w \setminus \mathcal{V}) \cup \partial\mathcal{V} \Subset \Omega,
\]
so $u_{\sigma,j} \in SBV^p(\Omega;\mathbb{R}^m)$.

\textit{Verification of~(i).}
By~\eqref{eq:det-pos} and~\eqref{3.2.16}, for $\sigma$ sufficiently small and every 
$x \in \Omega \setminus \overline{(\mathcal{V} \cup J_{u_{\sigma,j}})}$,
\[
\det\!\big((\nabla u_{\sigma,j}(x))^T \nabla u_{\sigma,j}(x)\big)
= \big[\det(\nabla\Phi(x))\big]^2
\det\!\big((\nabla \widetilde{v}_j(\Phi(x)))^T \nabla \widetilde{v}_j(\Phi(x))\big)
\geq \frac{\eta}{8},
\]
so~(i) holds with $\theta = \eta/8$.

\textit{Verification of~(iii).}
By construction,
\begin{equation}\label{3.2.17}
\mathcal{H}^1(J_{u_{\sigma,j}} \,\triangle\, J_w)
\leq \mathcal{H}^1(J_w \cap \mathcal{V}) + 12\pi\sigma.
\end{equation}
A diagonal argument in $j$ and $\sigma \to 0$ yields~(iii). We henceforth drop $\sigma$ from 
the notation.

\textit{Verification of~(v).}
Fix $\varepsilon > 0$ and let $A_\varepsilon$ be an open set with 
$\mathcal{H}^1(J_u \setminus A_\varepsilon) < \varepsilon$ and $A_\varepsilon \cap \mathcal{V} = \emptyset$. 
Since $\widetilde{v}_j \to v$ uniformly, the traces $u_j^\pm$ converge uniformly to $u^\pm$ on 
$J_u \cap A_\varepsilon$, giving equicontinuity. The number of connected components of 
$J_{u_j} \cap A_\varepsilon$ is exactly two, which gives~(v).

\textit{Verification of~(iv).}
For $\sigma$ sufficiently small and every $j \geq 1$,
\[
\|u_j\|_{L^\infty(\Omega;\mathbb{R}^m)}
\leq \|\widetilde{v}_j\|_{L^\infty(U;\mathbb{R}^m)}
+2 \sigma\|\nabla u\|_{L^\infty(\Omega;\mathbb{M}^{m\times 2})}
\leq \|u\|_{L^\infty(\Omega;\mathbb{R}^m)} + 1,
\]
\[
\|\nabla u_j\|_{L^\infty(\Omega;\mathbb{R}^{m\times 2})}
\leq \max\!\Big\{
\|\nabla u\|_{L^\infty},\,
\|\nabla \widetilde{v}_j\|_{L^\infty}\|\nabla\Phi\|_{L^\infty}
\Big\}
\leq 8\|\nabla u\|_{L^\infty(\Omega;\mathbb{M}^{m\times 2})},
\]
which yields~(iv).

\textit{Verification of~(ii).}
It suffices to prove~(ii) with $w$ in place of $u$ and with $\widetilde{v}_j(\Phi)$ in place of $u_j$, as (ii) will then follow from 
the definition of $u_j$, the definition of $w$, and the properties of $\Xi$. Since $\widetilde{v}_j \to v$ in 
$W^{1,p}(U;\mathbb{R}^m)$, a change of variables using~\eqref{3.2.8} gives
\[
\int_\Omega |\widetilde{v}_j(\Phi(x)) - w(x)|^p\,dx
\leq (1+\sigma)\|\widetilde{v}_j - v\|_{L^p(U)}^p \to 0.
\]
For the gradient, writing
\[
\nabla(\widetilde{v}_j \circ \Phi) - \nabla w
= \nabla\widetilde{v}_j(\Phi)\,(\nabla\Phi - I)
+ \big(\nabla\widetilde{v}_j(\Phi) - \nabla w\,\nabla\Phi^{-1}(\Phi)\big)
+ \nabla w\,(\nabla\Phi^{-1}(\Phi) - I),
\]
and using~\eqref{3.2.8} and another change of variables, we obtain
\[
\begin{aligned}
\int_\Omega |\nabla(\widetilde{v}_j(\Phi(x))) - \nabla w(x)|^p\,dx
&\leq 4^p(1+\sigma)\|\nabla\widetilde{v}_j - \nabla v\|_{L^p(U;\mathbb{M}^{2 \times m})}^p \\
&\quad + 4^{p+2}\sigma^p\,\mathcal{L}^2(\Omega)\,\|\nabla w\|_{L^\infty(\Omega;\mathbb{M}^{2 \times m})}^p.
\end{aligned}
\]
Letting $j \to +\infty$ and $\sigma \to 0$ yields $\nabla(\widetilde{v}_j \circ \Phi) \to \nabla w$ 
in $L^p(\Omega;\mathbb{M}^{m\times 2})$, completing the proof of~(ii).
\end{proof}

	\section{Dimension Reduction under Orientation-Preservation Constraints}
In this section we focus on the orientation preserving case. 
In this setting we recall that $W : \mathbb{M}^{3 \times 3} \to [0,+\infty]$ satisfies $(A_1)$--$(A_4)$, and $\psi \colon (\R^3 \setminus \{0\}) \times \R^3 \to [0,+\infty)$ satisfies $(B_1)$--$(B_5)$.

Our aim is to prove the $\Gamma$-convergence of the family of functionals $\rho^{-1}\mathcal{G}_\rho$ defined in \eqref{intro resc func}.

	\subsection{Properties of $W_0$ and $\psi_0$, and lower bound} 
	
	In order to show the $\Gamma$-convergence, we begin by listing some properties of $W_0$ and $\psi_0$ that will be useful later. The proofs of the properties of $W_0$ are given respectively in \cite[Lemma 2.4]{hafsa2008nonlinear2} and \cite[Lemma 4.11]{almi2023brittlemembranes}.
	\begin{lemma}\label{lemma2}
		Let $W : \mathbb{M}^{3 \times 3} \to [0,+\infty]$ satisfy $(A_1)$--$(A_4)$. Then the following facts hold:
		\begin{itemize}
			\item[(1)] $W_0$ is continuous as an extended–valued function and satisfies $(A_3)$ with the same
			constants $C_1$ and $p$ as $W$;
			
			\item[(2)] for $F \in \mathbb{M}^{3 \times 2}$, one has $W_0(F) = +\infty$ if and only if $F^1 \wedge F^2 = 0$;
			
			\item[(3)] for every $\delta > 0$, there exists $c_\delta > 0$ such that for every $F \in \mathbb{M}^{3 \times 2}$, if 
			$|F^1 \wedge F^2| \ge \delta$, then
			\[
			W_0(F) \le c_\delta \,(1 + |F|^p).
			\]
		\end{itemize}
	\end{lemma}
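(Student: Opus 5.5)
The three properties are obtained in the order (2), (1), (3). The key structural fact is the identity $\det(F\mid\xi)=\langle F^1\wedge F^2,\xi\rangle$ for $F\in\mathbb{M}^{3\times 2}$ and $\xi\in\mathbb{R}^3$.

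For (2): if $F^1\wedge F^2=0$, then $\det(F\mid\xi)=0$ for every $\xi$. By $(A_2)$ this gives $W(F\mid\xi)=+\infty$ for all $\xi$, hence $W_0(F)=+\infty$. Conversely, if $F^1\wedge F^2\neq 0$, choose $\xi=t\,F^1\wedge F^2$ with $t>0$. Then $\det(F\mid\xi)=t|F^1\wedge F^2|^2>0$, so $W(F\mid\xi)<+\infty$ by $(A_2)$, and therefore $W_0(F)<+\infty$.

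For the growth part of (1), $(A_3)$ gives
\[
W(F\mid\xi)\ge C_1(|F|^p+|\xi|^p)^{\cdot}\ge C_1|F|^p-C_1^{-1},
\]
since $|(F\mid\xi)|\ge|F|$. Taking the infimum over $\xi$ yields the same bound for $W_0$. The same coercivity shows that for $F$ in a bounded set, the infimum may be restricted to $|\xi|\le R$ with $R$ locally uniform. In particular, it is attained (by continuity of $W$ in the extended sense, $(A_1)$).

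For (3): given $|F^1\wedge F^2|\ge\delta$, take the test vector
\[
\xi=F^1\wedge F^2/|F^1\wedge F^2|^{\cdot}\cdot s,
\]
choosing the normalization so that $\det(F\mid\xi)\ge\delta$ while $|\xi|$ stays controlled by $1+|F|$. Concretely, take $\xi=\nu\,\max\{1,|F|\}$ with $\nu$ the unit normal $F^1\wedge F^2/|F^1\wedge F^2|$. Then $\det(F\mid\xi)=|F^1\wedge F^2|\,|\xi|\ge\delta$. Applying $(A_4)$ gives
\[
W_0(F)\le W(F\mid\xi)\le c_\delta\bigl(1+|F|^p+\max\{1,|F|\}^p\bigr)\le c'_\delta(1+|F|^p).
\]

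The main obstacle is continuity in (1), as an extended-valued function. Lower semicontinuity follows from attainment on a compact set of $\xi$ together with lower semicontinuity of $W$. Upper semicontinuity at a point $F$ with $F^1\wedge F^2\neq 0$ is handled as follows. Let $\xi_0$ be a minimizer, so $\det(F\mid\xi_0)>0$. For $F_k\to F$, the matrices $(F_k\mid\xi_0)$ eventually have positive determinant, and continuity of $W$ on $\{\det>0\}$ gives
\[
\limsup_k W_0(F_k)\le\lim_k W(F_k\mid\xi_0)=W_0(F).
\]
At points with $F^1\wedge F^2=0$ we must show $W_0(F_k)\to+\infty$. Suppose instead that $W_0(F_k)$ stays bounded along a subsequence. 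Coercivity bounds the minimizers $\xi_k$, so $\xi_k\to\xi$ along a further subsequence. Then $(F_k\mid\xi_k)\to(F\mid\xi)$, and $\det(F\mid\xi)=0$. Continuity of $W$ into $[0,+\infty]$ with $W=+\infty$ on $\{\det\le 0\}$ then forces $W(F_k\mid\xi_k)\to+\infty$, a contradiction.
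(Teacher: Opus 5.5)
Your argument is correct. The paper does not actually prove this lemma: it defers to \cite[Lemma~2.4]{hafsa2008nonlinear2} and \cite[Lemma~4.11]{almi2023brittlemembranes}. Your proposal therefore supplies a self-contained proof along the standard lines:
\begin{itemize}
\item the identity $\det(F\mid\xi)=\langle F^1\wedge F^2,\xi\rangle$ gives (2);
\item coercivity in $\xi$ lets you extract convergent near-minimizers for continuity, with $(A_1)$ in its extended-valued form doing the work at rank-deficient $F$;
\item a test vector along $\nu=F^1\wedge F^2/|F^1\wedge F^2|$ gives (3).
\end{itemize}

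A few things to tidy up. First, the growth display is garbled. What $(A_3)$ gives is $W(F\mid\xi)\ge C_1|(F\mid\xi)|^p-C_1^{-1}\ge C_1\max\{|F|,|\xi|\}^p-C_1^{-1}$. This yields both the bound for $W_0$ with the same constants and the coercivity in $\xi$.

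Second, your claim that the infimum can be restricted to $|\xi|\le R$ with $R$ locally uniform on bounded sets is false near rank-deficient matrices. Coercivity only bounds near-minimizers in terms of an upper bound for $W_0(F)$, and minimizers can genuinely escape to infinity. For example, take $W(G)=|G|^p+(\det G)^{-1}$ on $\{\det G>0\}$ and $W=+\infty$ elsewhere (this satisfies $(A_1)$--$(A_4)$), and $F_\varepsilon=(e_1\mid\varepsilon e_2)$. Then the minimizers of $\xi\mapsto W(F_\varepsilon\mid\xi)$ have the form $t_\varepsilon e_3$ with $t_\varepsilon\to+\infty$. This does not break your proof. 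Near nondegenerate $F$, part (3) gives a local bound on $W_0$. At degenerate $F$, your last paragraph correctly bounds $W_0(F_k)$ along a subsequence before extracting bounded minimizers. Just phrase the lower semicontinuity step that way.

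Finally, in (3) the factor $\max\{1,|F|\}$ is unnecessary. The choice $\xi=\nu$ already gives $\det(F\mid\nu)=|F^1\wedge F^2|\ge\delta$ and $|(F\mid\nu)|^p\le 2^{p/2}(1+|F|^p)$.
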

	\begin{lemma}\label{lemma3}
		Let $W_0 \in C(\mathbb{M}^{3 \times 2}; [0,+\infty])$ be a $p$-coercive function satisfying (2) and (3)
		of Lemma \ref{lemma2}. Then:
		\begin{itemize}
			\item $\mathcal R W_0$ is finite valued, continuous, and $p$-coercive;
			
			\item there exists $C > 0$ such that 
			\[
			\mathcal R W_0(F) \le C(1 + |F|^p)
			\]
			for every $F \in \mathbb{M}^{3 \times 2}$;
			
			\item $QW_0$ is finite valued, continuous, $p$-coercive, and rank-one convex.
		\end{itemize}	
	\end{lemma}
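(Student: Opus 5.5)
The plan is to pass through the rank-one convex envelope via the Kohn–Strang iteration. We set $\mathcal R_0 := W_0$ and $\mathcal R_{k+1}(F) := \inf\{\lambda \mathcal R_k(F_1) + (1-\lambda)\mathcal R_k(F_2) : F = \lambda F_1 + (1-\lambda)F_2,\ \mathrm{rank}(F_1-F_2)\le 1\}$, so that $\mathcal R W_0 = \lim_k \mathcal R_k = \inf_k \mathcal R_k$. The argument has four steps.

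\textbf{Step 1 (finiteness of $\mathcal R_1$ and the growth bound).} We show that $\mathcal R_1$ is finite everywhere with $\mathcal R_1(F) \le C(1+|F|^p)$; this is the key step and the main obstacle. Fix $F=(F^1|F^2)$, possibly with $F^1\wedge F^2=0$. Since $m=3>2$, we choose a unit vector $e\in\mathbb{R}^3$ orthogonal to $\mathrm{span}(F)$ and set $t := 1+|F|$. We split along the rank-one direction $e\otimes(1,0)$:
\[
F = \tfrac12 (F^1 + t e\,|\,F^2) + \tfrac12 (F^1 - t e\,|\,F^2).
\]
Each piece has cross product $(F^1\pm te)\wedge F^2 = \pm t\, e\wedge F^2$ whenever $F^1\wedge F^2=0$. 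This is nonzero if $F^2\ne0$, but its size is not uniform in $F$.

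To fix this, we first do a preliminary split in the second column. Write $F=\tfrac12(F^1|F^2+se')+\tfrac12(F^1|F^2-se')$ with $e'$ a unit vector chosen so that $|F^1\wedge e'|$ is as large as possible, or with $e'\perp F^1$ if $F^1\neq 0$. We then apply the first-column split to each piece. This produces four matrices whose columns are mutually orthogonal-ish, each with $|G^1\wedge G^2|\ge 1$ and $|G|\le C(1+|F|)$. This is the standard two-level lamination of Ben Belgacem / Anza Hafsa–Mandallena, and it requires going to $\mathcal R_2$ rather than $\mathcal R_1$, which is harmless.

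Lemma~\ref{lemma2}(3) with $\delta=1$ then gives $W_0(G)\le c_1(1+|G|^p)\le C(1+|F|^p)$. Hence $\mathcal R W_0\le\mathcal R_2(F)\le C(1+|F|^p)$.

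\textbf{Step 2 (continuity and coercivity of $\mathcal R W_0$).} The bound from Step 1 makes $\mathcal R W_0$ a finite rank-one convex function. Such functions are separately convex along rank-one directions, hence locally Lipschitz (a standard fact, see Dacorogna). This gives continuity. For $p$-coercivity, note that $G\mapsto C_1|G|^p - C_1^{-1}$ is convex, hence rank-one convex, and lies below $W_0$ by Lemma~\ref{lemma2}(1). It therefore also lies below $\mathcal R W_0$.

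\textbf{Step 3 (properties of $\mathcal Q W_0$).} We use the ordering $C_1|F|^p - C_1^{-1}\le \mathcal Q W_0\le \mathcal R W_0\le C(1+|F|^p)$. The upper inequality $\mathcal QW_0\le\mathcal RW_0$ holds because $\mathcal QW_0\le W_0$ and, for functions with $p$-growth, laminates can be realized by $W^{1,\infty}_0$ perturbations. More precisely, $\mathcal Q W_0 = \mathcal Q(\mathcal R W_0)$, since any quasiconvex function below $W_0$ is rank-one convex and hence lies below $\mathcal R W_0$. The lower inequality holds because the convex minorant is quasiconvex. Thus $\mathcal Q W_0 = \mathcal Q\mathcal R W_0$ is the quasiconvex envelope of a finite continuous function with $p$-growth. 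By the Dacorogna formula in the Remark following Definition~\ref{defquasiconvex}, it is quasiconvex and finite.

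\textbf{Step 4 (conclusion).} A finite quasiconvex function is rank-one convex. It is therefore locally Lipschitz, hence continuous, and it is $p$-coercive by the lower bound in Step 3. This completes the properties of $\mathcal Q W_0$.
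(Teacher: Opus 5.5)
Your treatment of $\mathcal RW_0$ (Steps~1--2) follows the standard route: Ben Belgacem's two-level lamination, then separate convexity. This is the route behind the result the paper imports, without proof, from \cite[Lemma~4.11]{almi2023brittlemembranes}. One detail in Step~1 needs repair. After the second-column split, what controls $|G^1\wedge G^2|$ is the length of the new second column, not $|F^1\wedge e'|$. For $H=(F^1\,|\,F^2\pm se')$ and a unit $e\perp\mathrm{span}(H)$ one has $|(H^1\pm te)\wedge H^2|^2=|H^1\wedge H^2|^2+t^2|H^2|^2$, so you need $|F^2\pm se'|\ge 1$. Your choice ($e'\perp F^1$, $s$ unspecified) does not guarantee this; it fails, e.g., if $F^2=se'$. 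Taking $e'\perp F^2$ and $s=t=1$, or any unit $e'$ with $s=1+|F|$, gives $|G^1\wedge G^2|\ge1$ and $|G|\le C(1+|F|)$, and the rest of Step~1 goes through.

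The genuine gap is Step~3. In this paper $\mathcal QW_0$ is the infimum of Definition~\ref{defquasiconvex}. Moreover $W_0=+\infty$ on $\{G^1\wedge G^2=0\}$ and blows up near that set at a rate that $(A_1)$--$(A_4)$ do not control. The identification of this cell formula with the largest quasiconvex minorant (the Remark after Definition~\ref{defquasiconvex}) holds only for finite, locally bounded integrands. So your observation that every quasiconvex minorant of $W_0$ lies below $\mathcal RW_0$ only identifies the \emph{sup}-envelope of $W_0$ with $\mathcal Q(\mathcal RW_0)$. Transferring this to the inf-formula presupposes that $\mathcal QW_0$ is quasiconvex, which is essentially what the third bullet asserts.

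Your other justification does not apply either, since $W_0$ has no $p$-growth. Cutting off a laminate creates boundary layers whose gradients lie near the segments joining $F$ to the laminate gradients, and $G^1\wedge G^2$ can vanish along these. For $F=0$ the layer gradients are close to $\theta G_k$ with $\theta\to0$, so $|(\theta G_k)^1\wedge(\theta G_k)^2|=\theta^2|G_k^1\wedge G_k^2|\to0$, and the layer energy can be infinite. Thus even finiteness of $\mathcal QW_0$ at rank-deficient matrices is not established.

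The missing step is the inequality $\mathcal QW_0\le\mathcal Q(\mathcal RW_0)$; the reverse is trivial since $W_0\ge\mathcal RW_0$. This is also exactly what the paper later uses, via $\mathcal QW_0\le\mathcal RW_0$, in the proof of Theorem~\ref{teorfinal1}. Proving it needs the codimension flexibility the paper invokes elsewhere:
\begin{itemize}
\item realizing each Kohn--Strang step by compactly supported piecewise affine perturbations whose gradients stay uniformly away from the singular set, as in Lemma~\ref{lem belgacem};
\item density of maximal-rank piecewise affine maps (Gromov--Eliashberg, cf.\ Theorem~\ref{teorapproxaff^*});
\item a covering argument to pass from such test fields to the cell formula.
\end{itemize}
Once that identity is available, your Steps~3--4 go through as written.
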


    We now show that $\psi_0$ enjoys the same properties of $\psi$.
    
	\begin{lemma}\label{lemma4}
		Let 	$\psi : \mathbb{R}^3 \setminus \{0\} \times \mathbb{S}^2 \to [0,+\infty)$ satisfy $(B_1)-(B_5)$. Then, $\psi_0$ also satisfies $(B_1)-(B_5)$. 
	\end{lemma}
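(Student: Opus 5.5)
Recall $\psi_0(z,\nu)=\inf_{\zeta\in\mathbb R}\psi(z,\nu_\alpha,\zeta)$, where $\psi$ is extended to $(\mathbb R^3\setminus\{0\})\times\mathbb R^3$ by positive $1$-homogeneity, and $\psi_0$ is regarded as a function on $(\mathbb R^3\setminus\{0\})\times\mathbb S^2$ (or $\mathbb S^1$ in the $\nu_\alpha$ variable). I will verify $(B_1)$--$(B_5)$ one at a time, each time passing the inequality for $\psi$ through the infimum in $\zeta$. For $\nu_\alpha=0$ one takes $\zeta=0$ and gets $\psi_0(z,\nu)=0$, so only $\nu_\alpha\neq0$ matters. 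In that case I first observe that the infimum can be restricted to a compact set of $\zeta$. Indeed, by $(B_3)$ and homogeneity, $\psi(z,\nu_\alpha,\zeta)\ge C_3|(\nu_\alpha,\zeta)|\varphi(z)\ge C_3|\zeta|$, while the choice $\zeta=0$ gives $\psi_0(z,\nu)\le C_4|\nu_\alpha|\varphi(z)$. Hence only $|\zeta|\le (C_4/C_3)|\nu_\alpha|\varphi(z)$ is relevant, and this bound is locally uniform in $z$.

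\emph{Property $(B_3)$.} With the same $\varphi$, the lower bound $\psi(z,\nu_\alpha,\zeta)\ge C_3|(\nu_\alpha,\zeta)|\varphi(z)\ge C_3|\nu_\alpha|\varphi(z)$ gives the first inequality, and the choice $\zeta=0$ gives the second. (Note that after the extension this is $|\nu_\alpha|$ in place of $1$, which is the correct form on the reduced normals.)

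\emph{Properties $(B_4)$, $(B_2)$, $(B_1)$.} For $(B_4)$, substitute $\zeta\mapsto-\zeta$ and use $\psi(z,\nu_\alpha,\zeta)=\psi(-z,-\nu_\alpha,-\zeta)$. For $(B_2)$, the inequalities hold for each fixed direction $(\nu_\alpha,\zeta)$, so they pass to the infimum; the constant must be uniform in $\zeta$, and I would check that the constant in $(B_2)$ can be taken independent of $\nu$ (or at least uniform on the compact set of relevant directions). For $(B_1)$, pick $\zeta_2$ almost optimal for $z_2$. Then
\[
\psi_0(z_1)\le\psi(z_1,\cdot,\zeta_2)\le(1+\sigma)\psi(z_2,\cdot,\zeta_2)/(1-\sigma)
\]
by rearranging $(B_1)$, and symmetrizing yields $|\psi_0(z_1)-\psi_0(z_2)|\le\tilde\sigma(|z_1-z_2|)(\psi_0(z_1)+\psi_0(z_2))$ with $\tilde\sigma=2\sigma/(1-\sigma)$ for small arguments. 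For larger arguments, $\tilde\sigma$ is capped suitably, using that the ratio of the two values is bounded via $(B_3)$. This again requires the uniformity of $\sigma$ over the directions $(\nu_\alpha,\zeta)$, at least on compact sets; I would extract it from homogeneity, or assume it as in the original formulation.

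\emph{Property $(B_5)$.} This is the main obstacle, though it is routine. An infimum of upper semicontinuous functions is upper semicontinuous: if $(z_k,\nu_k)\to(z,\nu)$, fix $\zeta$ nearly optimal for $(z,\nu)$; then
\[
\limsup_k\psi_0(z_k,\nu_k)\le\limsup_k\psi(z_k,(\nu_k)_\alpha,\zeta)\le\psi(z,\nu_\alpha,\zeta)\le\psi_0(z,\nu)+\varepsilon,
\]
using the upper semicontinuity of the homogeneous extension at $(\nu_\alpha,\zeta)\neq0$. The delicate points are that the extension is upper semicontinuous away from the zero normal (it is a product of the continuous factor $|\nu|$ and the upper semicontinuous factor), and the case $\nu_\alpha=0$, where $\psi_0=0$ and the required upper semicontinuity inequality $\limsup_k\psi_0(z_k,\nu_k)\le0$ follows from the bound $\psi_0\le C_4|\nu_\alpha|(1+|z|)\to0$.
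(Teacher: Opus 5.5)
Your proposal is correct and follows the paper's proof: each of $(B_1)$--$(B_5)$ is passed through the infimum in $\zeta$. In particular, $(B_3)$ comes from $\psi(z,\nu_\alpha,\zeta)\ge C_3\sqrt{1+\zeta^2}\,\varphi(z)$ together with the choice $\zeta=0$, and $(B_5)$ from the fact that an infimum of upper semicontinuous functions is upper semicontinuous. For $(B_1)$, the paper assumes $\sigma\le 1$ and rearranges $(1-\sigma)\psi_0(z_1)\le(1+\sigma)\psi_0(z_2)$, which keeps the same $\sigma$, so your $2\sigma/(1-\sigma)$ and the cap via $(B_3)$ are unnecessary (the cap $\tilde\sigma\le 1$ is trivial anyway). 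The uniformity of $\sigma$ and $C_2$ over directions that you flag is used implicitly in the paper as well; it cannot be extracted from homogeneity and has to be read as part of the hypotheses.
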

	\begin{proof}
		We observe that properties \((B_2)\), \((B_4)\), and \((B_5)\) follow directly from the definition, together with the fact that the infimum of upper semicontinuous functions is upper semicontinuous. So it remains to prove $(B_1)$ and $(B_3)$. As the 1-homogeneity is conserved for $\psi_0$ by definition, we can consider, without loss of generality that the second variable belongs to $\mathbb S^2$. 
		To prove property $(B_1)$, let us assume, without loss of generality, that in $(B_1)$ we have $\sigma \le 1$.
		Consider two points $z_1, z_2 \in \mathbb{R}^3$ and $\nu \in \mathbb{S}^2$. Then, given $\zeta \in \R$,
		\[
		(1 - \sigma(|z_1 - z_2|))\, \psi_0(z_1,\nu)
		\le (1 - \sigma(|z_1 - z_2|))\, \psi(z_1,\nu,\zeta)
		\le (1 + \sigma(|z_1 - z_2|))\, \psi(z_2,\nu,\zeta),
		\]
		by property $(B_1)$ of $\psi$. Taking the infimum over $\zeta$ on the right-hand side, we obtain
		\[
		\psi_0(z_1,\nu) - \psi_0(z_2,\nu)
		\le \sigma(|z_1 - z_2|)\bigl(\psi_0(z_1,\nu) + \psi_0(z_2,\nu)\bigr).
		\]
		Exchanging the roles of $z_1$ and $z_2$ yields property $(B_1)$.
		
		\medskip
		
		For $(B_3)$, note that by property $(B_3)$ of $\psi$, for every $z \in \mathbb{R}^3$ and $\nu \in \mathbb{S}^2$,
		\[
		C_3 \phi(z)
		\le C_3 \sqrt{1 + |\zeta|^2}\,\phi(z)
		\le \psi(z,\nu,\zeta).
		\]
		Taking the infimum over $\zeta$ gives
		\[
		C_3\,\phi(z) \le \psi_0(z,\nu).
		\]
		
		To obtain the upper bound, we simply observe that
		\[
		\psi_0(z,\nu) \le \psi(z,\nu,0) \le C_4\,\phi(z),
		\]
		which concludes the proof of $(B_3)$.
	\end{proof}
	\begin{oss}\label{osspsi_0}
		Reasoning as in the case of $\psi$, we observe that $\psi_0$ is also continuous in $z$.
		Furthermore, by Lemma \ref{lemma3} and Lemma \ref{lemma4}, both $\mathcal{Q}W_0$ and $\psi_0$
		satisfy the properties of Theorem~\ref{teor69}.
	\end{oss}
    
	We now prove the lower bound. The proof of the $\Gamma\text{-}\liminf$ inequality follows, although with some modifications, from the one without determinant constraints presented in \cite[Proposition 2.4]{braides2001brittle}
	
	\begin{prop}\label{prop1}
		Let $\Sigma$ be a bounded open subset of $\mathbb{R}^2$ with Lipschitz boundary and let $1<p<+\infty$. Let $W$ be a function satisfying $(A_1)$–$(A_4)$ and let $\psi$ be a function satisfying $(B_1)$–$(B_5)$. Then, for every sequence $u_\rho \in GSBV^p(\Sigma_1;\mathbb{R}^3)$ such that $u_\rho \to u$ in measure on $\Sigma_1$, one has
		\[
		\liminf_{\rho\to 0}\rho^{-1}\mathcal{G}_{\rho}(u_\rho)\ge\mathcal{G}_0(u).
		\]
	\end{prop}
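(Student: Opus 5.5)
We plan to reduce the lower bound to the unconstrained result of Braides and Fonseca by comparison with a modified energy that has standard $p$-growth. Assume without loss of generality that $\liminf_{\rho}\rho^{-1}\mathcal{G}_\rho(u_\rho)<+\infty$, and pass to a subsequence realizing the liminf with $\sup_\rho \rho^{-1}\mathcal{G}_\rho(u_\rho)\le M$.

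\emph{Step 1 (compactness and structure of the limit).} By $(A_3)$ we get
\[
\int_{\Sigma_1}|\nabla_\alpha u_\rho|^p+\rho^{-p}|\partial_3 u_\rho|^p\,dx\le C.
\]
By $(B_3)$, and since $\psi$ is extended $1$-homogeneously, we have $\psi_\rho([u],\nu)\ge C_3|(\nu_\alpha,\nu_3/\rho)|$. This gives
\[
\mathcal{H}^2(J_{u_\rho})\le C \qquad\text{and}\qquad \int_{J_{u_\rho}}|(\nu_{u_\rho})_3|\,d\mathcal{H}^2\le C\rho.
\]
We then apply the $GSBV^p$ compactness theorem (Theorem~\ref{teor69}/Ambrosio's theorem in the appendix), together with lower semicontinuity of $\int_{J}|\nu_3|$ along the slicing in the $x_3$ direction. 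From this we deduce that $u\in GSBV^p(\Sigma_1;\mathbb{R}^3)$, that $\partial_3 u=0$, and that $(\nu_u)_3=0$ $\mathcal{H}^2$-a.e.\ on $J_u$. Slicing arguments as in \cite{braides2001brittle} then show that $u$ is independent of $x_3$, so $u\in\mathcal{A}$.

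\emph{Step 2 (reduction to an unconstrained functional).} Replace $W$ by $\widetilde W:=\mathcal{Q}W_0$-type bulk density. Concretely, since $W(F)\ge W_0(\nabla_\alpha\text{-part})\ge \mathcal{Q}W_0(F^1\mid F^2)$, we have pointwise
\[
W\!\left(\nabla_\alpha u_\rho\,\Big|\,\tfrac1\rho\partial_3 u_\rho\right)\ge \mathcal{Q}W_0(\nabla_\alpha u_\rho).
\]
By Lemma~\ref{lemma3}, $\mathcal{Q}W_0$ is finite, continuous, quasiconvex and $p$-coercive. It may fail to have $p$-growth from above, so we use instead $\mathcal{R}W_0\ge\mathcal{Q}W_0$ only for bounds, or truncate via $\min\{\mathcal{Q}W_0, C(1+|F|^p)\}$. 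The latter is still $\le \mathcal{Q}W_0$, and its quasiconvex envelope has standard growth and agrees with $\mathcal{Q}W_0$ by the bound in Lemma~\ref{lemma3}. For the surface part, $\psi_\rho(z,\nu)\ge\psi_0(z,\nu_\alpha)$ holds by definition of $\psi_0$ and $1$-homogeneity. Hence
\[
\rho^{-1}\mathcal{G}_\rho(u_\rho)\ge \int_{\Sigma_1}\mathcal{Q}W_0(\nabla_\alpha u_\rho)\,dx+\int_{J_{u_\rho}}\psi_0([u_\rho],(\nu_{u_\rho})_\alpha)\,d\mathcal{H}^2.
\]

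\emph{Step 3 (lower semicontinuity).} The right-hand side is a functional on $\Sigma_1$ with a quasiconvex bulk density of $p$-growth and a surface density satisfying $(B_1)$--$(B_5)$ (Lemma~\ref{lemma4}, Remark~\ref{osspsi_0}). Its $BV$-elliptic envelope $\mathcal{B}\psi_0$, viewed as a function of $(z,\nu_\alpha,\nu_3)$ independent of $\nu_3$, is what enters the lower semicontinuous envelope. We invoke the integral representation/lower semicontinuity result of \cite{cagnetti2019gamma}, which requires $(B_2)$, to get
\[
\liminf_{\rho} \rho^{-1}\mathcal{G}_\rho(u_\rho)\ge \int_{\Sigma_1}\mathcal{Q}W_0(\nabla_\alpha u)\,dx+\int_{J_u}\mathcal{B}\psi_0([u],(\nu_u)_\alpha)\,d\mathcal{H}^2 .
\]
Since $u$ is independent of $x_3$, $J_u=(J_u\cap\Sigma)\times(-\tfrac12,\tfrac12)$ with horizontal normal, and Fubini gives $\mathcal{G}_0(u)$.

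The main obstacle is Step 3. One must check that the $BV$-ellipticity of the envelope of $\psi_0$ in the two-dimensional sense transfers to the three-dimensional functional with normals having nonzero third component. The remainder from $(\nu_{u_\rho})_3$ is controlled by $\int_{J_{u_\rho}}|(\nu_{u_\rho})_3|\,\varphi\,d\mathcal{H}^2\to0$, using the coupled growth in $(B_3)$. My first attempt would be to perform this absorption before passing to the limit, mirroring \cite[Prop.~2.4]{braides2001brittle}.
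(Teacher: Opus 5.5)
Your Steps 1 and 2 are correct and agree with the paper. That covers compactness from $(A_3)$ and $(B_3)$, $\partial_3u=0$, $(\nu_u)_3=0$ via $\int_{J_{u_\rho}}\varphi([u_\rho])|(\nu_{u_\rho})_3|\,d\mathcal{H}^2\le C\rho$, and the pointwise bounds $W\ge\mathcal{Q}W_0(\nabla_\alpha u_\rho)$, $\psi_\rho(z,\nu)\ge\psi_0(z,\nu_\alpha)$. Two small corrections: the lower semicontinuity of $\int_{J}|\nu_3|$ comes from Theorem~\ref{thm:closure}, since Theorem~\ref{teor69} is a relaxation result; and no truncation is needed, because $\mathcal{Q}W_0\le\mathcal{R}W_0\le C(1+|F|^p)$ by Lemma~\ref{lemma3}.

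\textbf{The gap is Step 3.} The three-dimensional functional $v\mapsto\int_{\Sigma_1}\mathcal{Q}W_0(\nabla_\alpha v)\,dx+\int_{J_v}\psi_0([v],(\nu_v)_\alpha)\,d\mathcal{H}^2$ is degenerate in two ways. First, its bulk density ignores the third column, so it is not $p$-coercive on $\mathbb{M}^{3\times3}$. Second, its surface density vanishes at $\nu=\pm e_3$, so the positive lower bound in $(B_3)$ fails. Lemma~\ref{lemma4} gives $(B_1)$--$(B_5)$ for $\psi_0$ only as a function of $(z,\nu')$ with $\nu'\in\mathbb{S}^1$. Hence neither \cite{cagnetti2019gamma} nor Theorem~\ref{teor69} applies. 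The claim that the relaxed surface density is the $x_3$-extension of the two-dimensional envelope $\mathcal{B}\psi_0$ is exactly what must be proved. You flag it as the main obstacle but give no argument for it.

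\textbf{The missing idea is to slice by the planes $\{x_3=t\}$ and stay in dimension two.} Three facts combine:
\begin{itemize}
\item $\psi_0(z,\nu_\alpha)=|\nu_\alpha|\,\psi_0(z,\nu_\alpha/|\nu_\alpha|)$ by $1$-homogeneity.
\item $|\nu_\alpha|$ is the coarea factor of $x\mapsto x_3$ on $J_{u_\rho}$.
\item For a.e.\ $t$, $u_\rho(\cdot,t)\in GSBV^p(\Sigma;\mathbb{R}^3)$, with $J_{u_\rho(\cdot,t)}=J_{u_\rho}\cap\{x_3=t\}$ and normal $\nu_\alpha/|\nu_\alpha|$.
\end{itemize}
Together these turn your Step~2 lower bound into $\int_{-1/2}^{1/2}F(u_\rho(\cdot,t))\,dt$. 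Here $F$ is the functional on $\Sigma$ with densities $\mathcal{Q}W_0$ and $\psi_0$.

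Now pass to a further subsequence along which $u_\rho(\cdot,t)\to u(\cdot,t)$ in measure for a.e.\ $t$. Fatou's lemma, combined with the lower semicontinuity of the relaxed functional $\int_\Sigma\mathcal{Q}W_0+\int_J\mathcal{B}\psi_0$ from Theorem~\ref{teor69} (applicable by Remark~\ref{osspsi_0}), gives $\mathcal{G}_0(u)$ since $u\in\mathcal{A}$. This is the paper's argument. It also absorbs a $|\nu_3|$-remainder using the estimate $\int_{J_{u_\rho}}\varphi([u_\rho])|(\nu_{u_\rho})_3|\,d\mathcal{H}^2\to0$ that you derived. On this route no three-dimensional envelope ever needs to be identified.
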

	
	\begin{proof}
		If $\liminf_{\rho\to 0}\rho^{-1}\mathcal{G}_p(u_\rho)=+\infty$, there is nothing to prove. Assume now that 
		$$\liminf_{\rho\to 0}\rho^{-1}\mathcal{G}_p(u_\rho)<+\infty.
		$$ Up to extracting a subsequence, we may assume that the $\liminf$ is actually a limit and that there exists a constant $C>0$ such that $\sup\{\rho^{-1}\mathcal{G}_{\rho}(u_\rho)\}\le C$. Then, by $(A_3)$ and $(B_3)$, we obtain
		\begin{equation}\label{eqcompattezza}
			\int_{\Sigma_1} |\nabla u_\rho|^p + \mathcal{H}^2(J_{u_{\rho}}) \le C',
		\end{equation}
		so we may apply Ambrosio's compactness theorem \cite[Theorem 4.8]{ambrosio2000functions} and deduce that there exists $v \in GSBV^p(\Sigma_1;\mathbb{R}^3)$ such that, up to a further subsequence, $u_{\rho} \to v$ weakly in $GSBV^p(\Sigma_1,\mathbb{R}^3)$, that is, $u_\rho \to v$ pointwise a.e., and \eqref{eqcompattezza} holds. Hence $u=v$ a.e. in $\Sigma_1$. Moreover, by $(A_3)$, for every $\rho>0$,
		\[
		C \ge \rho^{-1}\mathcal{G}_{\rho}(u_\rho) \ge C_1\int_{\Sigma_1}\bigg(|\nabla_\alpha u_{\rho}|^p+\frac{1}{\rho^p}|\partial_3 u_\rho|^p\bigg)\,dx-C_2.
		\]
		This, together with the weak convergence of $\nabla u_\rho$ to $\nabla u$ in $L^p(\Sigma_1,\mathbb{M}^{3\times 3})$, implies that
		\[
		\int_{\Sigma_1} |\partial_3 u|^p \, dx\le \liminf_{\rho \to 0} \int_{\Sigma_1}|\partial_3 u_{\rho}|^p \,dx =0.
		\]
		Thus, $\partial_3 u=0$ a.e. in $\Sigma_1$. Furthermore, for every $\widetilde{\rho}\ge 0$, by $(B_3)$, 
		\[
		\begin{aligned}
			C &\ge \liminf_{\rho \to 0}\rho^{-1}\mathcal{G}_{\rho}(u_\rho) \ge \liminf_{\rho \to 0}\int_{J_{u_\rho}} \psi_{\rho}([u_\rho],\nu_{u_\rho})  \, d\mathcal{H}^2 \\
			&= \liminf_{\rho \to 0}\int_{J_{u_\rho}} \psi\bigg([u_\rho],\frac{(\nu_{\alpha,u_\rho}, \frac{1}{\rho}\nu_{3,u_\rho})}{|(\nu_{\alpha,u_\rho}, \frac{1}{\rho}\nu_{3,u_\rho})|}\bigg) \bigg|(\nu_{\alpha,u_\rho}, \frac{1}{\rho}\nu_{3,u_\rho})\bigg| \, d\mathcal{H}^2 \\
			&\ge C'\liminf_{\rho \to 0}\int_{J_{u_\rho}} \bigg|(\nu_{\alpha,u_\rho}, \frac{1}{\widetilde{\rho}}\nu_{3,u_\rho})\bigg| \, d\mathcal{H}^2 \\
			& \ge  C'\liminf_{\rho \to 0}\int_{J_{u_\rho}}\frac{1}{\widetilde{\rho}} |\nu_{3,u_\rho} |\, d\mathcal{H}^2 \ge \frac{1}{\widetilde{\rho}}\int_{J_{u}}|\nu_{3,u} |\, d\mathcal{H}^2,
		\end{aligned}
		\]
		where we have used the lower semicontinuity of 
		$$
		u \to \int_{J_u}|\nu_{3,u}|\,d\mathcal{H}^2
		$$
		with respect to the weak $GSBV^p$ convergence by Theorem \ref{thm:closure}.\\
		Since $\widetilde{\rho}>0$ is arbitrary, it follows that $\nu_{3,u}=0$ $\mathcal{H}^2$-a.e. on $J_u$, hence $u$ does not depend on $x_3$ and $u \in \mathcal{A}$.
		
		Then, repeating part of the previous argument and considering $\limsup$ instead of $\liminf$ (since we assumed that the limit of the family $\rho^{-1}\mathcal{G}(u_{\rho})$ exists) we obtain
		\[
		\begin{aligned}
			C&\ge \lim_{\rho \to 0}\rho^{-1}\mathcal{G}_{\rho}(u_\rho) \ge\limsup_{\rho \to 0}\int_{J_{u_\rho}} \psi\bigg([u_\rho],\frac{(\nu_{\alpha,u_\rho}, \frac{1}{\rho}\nu_{3,u_\rho})}{|(\nu_{\alpha,u_\rho}, \frac{1}{\rho}\nu_{3,u_\rho})|}\bigg) \bigg|(\nu_{\alpha,u_\rho}, \frac{1}{\rho}\nu_{3,u_\rho})\bigg| \, d\mathcal{H}^2 \\
			&\ge C_4\limsup_{\rho \to 0}\int_{J_{u_\rho}} \phi([u_\rho])\bigg|(\nu_{\alpha,u_\rho}, \frac{1}{\rho}\nu_{3,u_\rho})\bigg| \, d\mathcal{H}^2 \\
			&\ge C_4\limsup_{\rho \to 0}\int_{J_{u_\rho}} \phi([u_\rho]) \bigg|\frac{1}{\rho}\nu_{3,u_\rho}\bigg| \, d\mathcal{H}^2 \\
			& \ge C_4\frac{1}{\widetilde{\rho}}\limsup_{\rho \to 0}\int_{J_{u_\rho}} \phi([u_\rho]) |\nu_{3,u_\rho}| \, d\mathcal{H}^2.
		\end{aligned}
		\]
		Since $\widetilde{\rho}$ is arbitrary, we deduce that
		\[
		\lim_{\rho \to 0}\int_{J_{u_\rho}} \phi([u_\rho]) |\nu_{3,u_\rho}| \, d\mathcal{H}^2=0.
		\]
		
		Finally, since $W \ge W_0\ge \mathcal{Q}W_0$, $\psi \ge \psi_0\ge \mathcal{B}\psi_0$, and using also that
		\[
		\nu_{u_\rho(\cdot,t)}(x_\alpha)=\frac{\nu_{\alpha,u_\rho}(x_\alpha,t)}{|\nu_{\alpha,u_\rho}(x_\alpha,t)|} \qquad \mathcal{H}^1\text{-a.e. } x_\alpha \in J_{u_\rho(\cdot,t)},
		\]
		we obtain
		\[
		\begin{aligned}
			\lim_{\rho \to 0}\rho^{-1}\mathcal{G}_{\rho}(u_\rho)&\ge\liminf_{\rho \to 0}\bigg(\int_{-1/2}^{1/2}\bigg(\int_{\Sigma} \mathcal{Q}W_0(\nabla_\alpha u_\rho)\,dx_\alpha \\
			&+ \int_{J_{u_\rho(\cdot,t)} \cap \Sigma} \mathcal{B}\psi_0([u_\rho], \nu_{u_\rho(\cdot,t)}(x_\alpha))|\nu_{\alpha,u_\rho}(x_\alpha,t)|\, d\mathcal{H}^1(x_\alpha) \bigg)\,dt\bigg) \\
			&\ge \liminf_{\rho \to 0}\bigg( \int_{-1/2}^{1/2}\bigg(\int_{\Sigma} \mathcal{Q}W_0(\nabla_\alpha u_\rho)\,dx_\alpha \\
			&+ \int_{J_{u_\rho(\cdot,t)}\cap \Sigma} \mathcal{B}\psi_0([u_\rho], \nu_{u_\rho(\cdot,t)}(x_\alpha))\, d\mathcal{H}^1(x_\alpha) \bigg)\,dt\bigg) \\
			&- \limsup_{\rho \to 0}\int_{-1/2}^{1/2}\int_{J_{u_\rho(\cdot,t)}} \mathcal{B}\psi_0([u_\rho], \nu_{u_\rho(\cdot,t)}(x_\alpha))|\nu_{3,u_\rho}|\, d\mathcal{H}^1(x_\alpha) \,dt,
		\end{aligned}
		\]
		where we used the inequality $1\le |\nu_\alpha|+|\nu_3|$. Since $0\le \mathcal{B}\psi_0\le \psi \le C_2\phi$, the last term vanishes by the previous computation, and thus
		\[
		\begin{aligned}
			&\liminf_{\rho \to 0}\bigg( \int_{-1/2}^{1/2}\bigg(\int_{\Sigma} \mathcal{Q}W_0(\nabla_\alpha u_\rho)\,dx_\alpha \\
			&+ \int_{J_{u_\rho(\cdot,t)}\cap \Sigma} \mathcal{B}\psi_0([u_\rho], \nu_{u_\rho(\cdot,t)}(x_\alpha))\, d\mathcal{H}^1(x_\alpha)\bigg) \,dt\bigg)\\
			&\ge  \int_{-1/2}^{1/2}\bigg(\liminf_{\rho \to 0}\int_{\Sigma}\mathcal{Q}W_0(\nabla_\alpha u_\rho)\,dx_\alpha  \\
			&+ \int_{J_{u_\rho(\cdot,t)}\cap \Sigma} \mathcal{B}\psi_0([u_\rho], \nu_{u_\rho(\cdot,t)}(x_\alpha))\, d\mathcal{H}^1(x_\alpha)\bigg)\,dt \ge \mathcal{G}_0(u),
		\end{aligned}
		\]
		where we used Fatou's lemma, Theorem \ref{teor69}, and the fact that convergence in measure in $\Sigma_1$ implies, up to subsequences, convergence in measure on the sections $\Sigma \times \{t\}$ for a.e. $t$. Since $u\in\mathcal{A}$, the claim follows.
	\end{proof}

	\subsection{Upper bound}
	We begin by estimating the $\Gamma$-$\limsup$ of the functionals $\rho^{-1}\mathcal{G}_\rho$ in terms of an auxiliary functional defined on functions in $GSBV^{p}(\Sigma_1;\mathbb{R}^3)$ that are independent of the third variable. To this end, we need three preliminary results. The first, \cite[Lemma~4.5]{almi2023brittlemembranes}, provides an approximation of $\mathrm{Aff}^*$ maps on Lipschitz domains by smooth functions with positive determinant. The second, \cite[Lemma~4.10]{almi2023brittlemembranes}, concerns the construction of functions optimizing the third column of the bulk energy density $W$. The third, Lemma~\ref{lemmalinearalgebra}, deals with the construction of diffeomorphisms that optimize the third component of the surface energy density $\psi$ along the jump set.

	\begin{lemma}\label{lemmasucc} 
		Let $U$ be a bounded open subset of $\mathbb{R}^2$ with Lipschitz boundary, and let 
		$v \in \mathrm{Aff}^*(U;\mathbb{R}^3)$. Then there exists a sequence 
		$v_j \in C^1(\overline{U};\mathbb{R}^3)$ such that:
		\begin{itemize}
			\item $v_j \to v$ in $W^{1,p}(U;\R^3)$ and 
			$\| \nabla v_j \|_{L^\infty(U;\mathbb{M}^{3 \times 2})} \le 2 \| \nabla v \|_{L^\infty(U;\mathbb{M}^{3 \times 2})}$;
			\item there exists $\theta=\theta(v) > 0$ such that for every $j \ge 1$,
			\[
			|\partial_1 v_j(x) \wedge \partial_2 v_j(x)| \ge \theta 
			\qquad \text{for all } x \in \overline{U}.
			\]
		\end{itemize}
	\end{lemma}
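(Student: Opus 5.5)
The idea is to mollify $v$ and to use the $\mathrm{Aff}^*$ structure to keep the cross product of the two columns bounded away from zero, exactly as in Step~3 of the proof of Proposition~\ref{propu_jbuona}. The boundary of $U$ is handled by first pulling the domain slightly inwards.

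\emph{Step 1 (from the definition of $\mathrm{Aff}^*$).} Since $v\in \mathrm{Aff}^*(U;\mathbb{R}^3)$, there is $\eta>0$ such that $|M^1\wedge M^2|^2=\det(M^TM)\ge\eta$ for every $M\in\partial v(x)$ and every $x\in\overline U$. The triangulation of $v$ is finite, so there is $r>0$ with the following property: for every $x$, the set of gradients $\{\nabla v(y): y\in B_r(x)\cap U\}$ is contained in $\{\nabla v(y): y\in B_{r_0}(\zeta)\}$ for some vertex, edge point or interior point $\zeta$. It is therefore contained in the set of limits defining $\partial v(\zeta)$. This is the analogue of \eqref{ufff}. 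Consequently, any convex combination of gradients taken over a ball of radius $r$ lies in $\partial v(\zeta)$, and its columns have cross product of modulus at least $\sqrt\eta$.

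\emph{Step 2 (mollification and inward pull).} Because $U$ is Lipschitz, I would take diffeomorphisms $\varphi_j\in C^\infty(\overline U;\mathbb{R}^2)$ with $\varphi_j(\overline U)\Subset U$ and $\|\varphi_j-\mathrm{Id}\|_{C^1}\to0$. This is standard via a finite partition of unity and local translations along the transversal cone directions, as already used in Step~3 of Proposition~\ref{propu_jbuona}. Let $\delta_j\to0$ with $\delta_j<\min\{r,\mathrm{dist}(\varphi_j(\overline U),\partial U)\}$, and set
\[
v_j:=(\rho_{\delta_j}*v)\circ\varphi_j \in C^\infty(\overline U;\mathbb{R}^3).
\]
Then $\nabla(\rho_{\delta_j}*v)(y)=\int\nabla v(y-z)\rho_{\delta_j}(z)\,dz$ is an average of gradients over $B_{\delta_j}(y)\subset U$, so by Step~1 it belongs to some $\partial v(\zeta)$. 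This gives $|\partial_1(\rho_{\delta_j}*v)\wedge\partial_2(\rho_{\delta_j}*v)|\ge\sqrt\eta$ on $\varphi_j(\overline U)$. We also have the bound $\|\nabla(\rho_{\delta_j}*v)\|_\infty\le\|\nabla v\|_\infty$.

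\emph{Step 3 (composition and estimates).} By the chain rule, $\partial_1v_j\wedge\partial_2v_j=\det(\nabla\varphi_j)\,(\partial_1 w_j\wedge\partial_2 w_j)\circ\varphi_j$, where $w_j:=\rho_{\delta_j}*v$. Since $\det\nabla\varphi_j\ge1/2$ for large $j$, we may take $\theta=\sqrt\eta/2$. Similarly, $|\nabla v_j|\le|\nabla\varphi_j|\,\|\nabla v\|_\infty\le2\|\nabla v\|_\infty$ for large $j$ (with the operator norm, or after adjusting the smallness of $\|\nabla\varphi_j-\mathrm{Id}\|$). For the $W^{1,p}$ convergence, I would write $v_j-v=(w_j-v)\circ\varphi_j+(v\circ\varphi_j-v)$. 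The first term tends to zero by a change of variables, using that $w_j\to v$ in $W^{1,p}_{\rm loc}$ with uniformly bounded gradients and that the boundary layer has vanishing measure. The second term tends to zero by continuity of composition with $C^1$-convergent diffeomorphisms in $W^{1,p}$: approximate $v$ by smooth functions and use the uniform Lipschitz bounds.

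The main obstacle is Step~1, namely making rigorous that a small ball only sees gradients from the Clarke subdifferential of a single point. Near a vertex the ball meets several triangles, and all their gradients belong to $\partial v$(vertex). Away from vertices, the ball meets at most two triangles, sharing an edge point. Choosing $r$ smaller than a fixed fraction of the minimal distance between non-adjacent simplices of the finite triangulation settles this.
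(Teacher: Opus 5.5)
Your proposal is correct and follows the route the paper relies on. The lemma is quoted from \cite[Lemma~4.5]{almi2023brittlemembranes}, and your scheme is exactly what the paper runs in Step~3 of the proof of Proposition~\ref{propu_jbuona}: mollify $v$, use the observation \eqref{ufff} that averages of $\nabla v$ over small balls lie in a single Clarke subdifferential, compose with $C^1$-small diffeomorphisms pulling $\overline U$ into $U$, and apply $(GB)^1\wedge(GB)^2=\det B\,(G^1\wedge G^2)$.

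The one point to tighten is Step~1. The definition of $\mathrm{Aff}^*$ controls $\partial v(x)$ only for $x\in U$, not on $\overline U$. Moreover, with only $\delta_j<\mathrm{dist}(\varphi_j(\overline U),\partial U)$, a ball $B_{\delta_j}(y)\subset U$ near a triangulation vertex on $\partial U$ can meet three consecutive triangles whose only common point is that vertex. You should therefore also impose $\delta_j\le c\,\mathrm{dist}(\varphi_j(\overline U),\partial U)$, with $c>0$ depending only on the minimal angle of the triangulation, so that the common point $\zeta$ always lies in $U$.
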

	
	\begin{lemma}\label{lemmacontrollodet} 
		Let $U \subseteq \mathbb{R}^2$ be a bounded open set, let $\eta>0$, and let 
		$G \in C(\overline{U};\mathbb{R}^{3\times2} )$ be such that 
		\[
		|G^1(x)\wedge G^2(x)| \ge \eta 
		\qquad \text{for every } x \in \overline{U}.
		\]
		Let $\varepsilon \in (0,1/2)$, let $T$ be a finite triangulation of $U$, and let 
		$\Psi : U \to \mathbb{M}^{2\times 2}$ be a piecewise constant function on the 
		triangulation such that for every $x \in U$
		\[
		| \det(\Psi(x)) - 1 | \le \varepsilon.
		\]
		Then there exist a function $h \in C^\infty(\overline{U};\mathbb{R}^3)$ and a constant 
		$\beta = \beta(\eta,\|G\|_{L^\infty},\|\Psi\|_{L^\infty}) > 0$ such that 
		$\|h\|_{L^\infty(U;\R^3)} \le \beta$ and
		\[
		\begin{aligned}
			& \det\big(G(x)\Psi(x)\mid h(x)\big) \ge \frac{1}{\beta} \qquad \mbox{for every $x \in \overline{U}$}, \\
			& \int_{U} W\big(G(x)\Psi(x)\mid h(x)\big)\,dx 
			\le \varepsilon + \int_{U} W_0\big(G(x)\Psi(x)\big)\,dx.
		\end{aligned}
		\]
	\end{lemma}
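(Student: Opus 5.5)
The plan is to build $h$ by a measurable selection of near-optimal third columns, make it continuous, and then mollify, using the uniform lower bound on $|G^1\wedge G^2|$ to keep the determinant bounded away from zero throughout.

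\emph{Step 1: pointwise analysis.} Set $F(x):=G(x)\Psi(x)\in\mathbb{M}^{3\times2}$. Its columns span the same plane as those of $G(x)$, and
\[
F^1(x)\wedge F^2(x)=\det\Psi(x)\,\bigl(G^1(x)\wedge G^2(x)\bigr).
\]
Since $\det\Psi\ge 1-\varepsilon\ge 1/2$, we get $|F^1\wedge F^2|\ge\eta/2$ and $|F|\le\|G\|_{L^\infty}\|\Psi\|_{L^\infty}$. Moreover $F$ takes finitely many continuous ``pieces'', one per triangle: on each triangle $T$ it equals $G\Psi_T$, which is continuous on $\overline T$. By Lemma~\ref{lemma2}(3), $W_0(F(x))\le c_{\eta/2}(1+|F|^p)=:M$ uniformly.

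Next we bound near-minimizers. By $(A_3)$, $W(F\mid\xi)\ge C_1|\xi|^p-C_1^{-1}$, so any $\xi$ with $W(F\mid\xi)\le W_0(F)+1$ satisfies $|\xi|\le R=R(\eta,\|G\|,\|\Psi\|)$. We also need $\det(F\mid\xi)\ge 1/\beta$. Writing $\det(F\mid\xi)=\xi\cdot(F^1\wedge F^2)$, the set $K=\{(F,\xi):|F|\le\|G\|\|\Psi\|,\ |F^1\wedge F^2|\ge\eta/2,\ |\xi|\le R,\ W(F\mid\xi)\le M+1\}$ is compact. On $K$ the determinant is positive, because $W<\infty$ forces $\det>0$, and $W$ is continuous as an extended-valued function by $(A_1)$. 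So $\det\ge 2/\beta$ on $K$ for some $\beta$ depending only on the data.

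\emph{Step 2: piecewise near-optimal selection.} Fix $\tau>0$ small. Using uniform continuity of $G$ on $\overline U$ and of $W$ on the compact set $\{\det\ge 1/\beta,\ |\cdot|\le C\}$, where $W$ is finite and continuous, I cover each triangle by finitely many small cells. On each cell I pick a point $x_k$ and a vector $\xi_k$ with $W(F(x_k)\mid\xi_k)\le W_0(F(x_k))+\tau$. By Step 1, $\xi_k\in B_R$ and $\det(F(x_k)\mid\xi_k)\ge 2/\beta$. For $x$ in the same cell, $F(x)$ is close to $F(x_k)$, so $\det(F(x)\mid\xi_k)\ge 3/(2\beta)$ and $W(F(x)\mid\xi_k)\le W_0(F(x_k))+2\tau\le W_0(F(x))+3\tau$. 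Here I also use continuity of $W_0$ from Lemma~\ref{lemma2}(1), which is uniform on the relevant compact set where $W_0\le M$. This produces a piecewise constant $\bar h$ with $|\bar h|\le R$.

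\emph{Step 3: smoothing.} This is the main obstacle, since $h$ must be $C^\infty$ up to $\overline U$ while the determinant must stay positive everywhere, including near cell interfaces and across the jumps of $\Psi$. I will exploit that the admissible set of third columns at a point is convex. Indeed, $\{\xi:\xi\cdot n>1/\beta\}$, with $n=F^1\wedge F^2$, is a half-space. Near an interface between cells $k$ and $l$, and even across triangle edges where $\Psi$ jumps, the normals $n$ on either side are positive multiples of $G^1\wedge G^2$ evaluated at nearby points. So any convex combination of $\xi_k$ and $\xi_l$ keeps $\det(F\mid\xi)\ge 1/\beta$, possibly after shrinking the cells.

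I therefore take $h=\sum_k\chi_k\xi_k$ with a smooth partition of unity $\chi_k$ subordinate to slightly enlarged cells. Then $h\in C^\infty(\overline U)$, $|h|\le R\le\beta$, and $\det(F\mid h)\ge 1/\beta$ everywhere. The energy estimate holds exactly away from a transition layer of small measure. On that layer $W(F\mid h)$ is bounded by a constant $C_\beta$, using $(A_4)$ with $\delta=1/\beta$. Choosing the layer of measure at most $\tau/C_\beta$ and $\tau=\varepsilon/(3|U|+2)$ yields
\[
\int_U W(F\mid h)\,dx\le\int_U W_0(F)\,dx+\varepsilon.
\]
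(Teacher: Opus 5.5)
The paper does not prove this lemma; it quotes it from \cite[Lemma~4.10]{almi2023brittlemembranes}. There is therefore no in-paper argument to compare with, and your proposal has to stand as a self-contained proof. It does, up to one constant. The ingredients are the following.
\begin{itemize}
\item The a priori bound $|\xi|\le R$ follows from $(A_3)$.
\item $K$ is compact, since $\{W\le M+1\}$ is closed by $(A_1)$ and lies in $\{\det>0\}$ by $(A_2)$.
\item You take near-optimal constant third columns on cells of small oscillation inside each triangle.
\item The smooth partition of unity does not hurt positivity, because $\xi\mapsto\det(F\mid\xi)=\xi\cdot(F^1\wedge F^2)$ is linear.
\item $(A_4)$ pays for the thin transition layer.
\end{itemize}
Together these give all three conclusions, with $\beta$ independent of $\varepsilon$, of the triangulation and of $\tau$.

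The point to fix is your claim that shrinking the cells keeps $\det\ge 1/\beta$ across triangle edges. Near an edge, the enlarged cells unavoidably put weight on a vector $\xi_k$ chosen for $G\Psi_T$ at points $x$ where $\Psi=\Psi_{T'}$. There $\det(G(x)\Psi_{T'}\mid\xi_k)=\det\Psi_{T'}\,\xi_k\cdot(G^1\wedge G^2)(x)$. This differs from the value computed with $\Psi_T$ by the factor $\det\Psi_{T'}/\det\Psi_T$, which can be as small as $(1-\varepsilon)/(1+\varepsilon)$, i.e.\ close to $1/3$. Shrinking the cells does not remove this factor: from $2/\beta$ at $x_k$ you only get roughly $2/(3\beta)$ across the edge. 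The fix is to express positivity through the continuous field $G^1\wedge G^2$ and to require $\det\ge 4/\beta$ on $K$ instead of $2/\beta$. Then every $\xi_k$ active at $x$ gives $\det(F(x)\mid\xi_k)\ge 1/\beta$ for small cells and small enlargement, and the convex combination inherits this bound. This changes $\beta$ only by a universal factor.

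With this formulation your argument uses $\Psi$ in just two places. Positivity needs only $\det\Psi\ge 1-\varepsilon$. The energy estimate needs only uniform continuity of $G\Psi$ off a set of small measure, with the rest absorbed by $(A_4)$. So the same proof also covers bounded continuous $\Psi$. That case is relevant, because in Theorem~\ref{teorgamma_limsup} the lemma is applied with $\Psi=D\Phi_\delta(\Phi_\delta^{-1})$. That $\Psi$ comes from the smooth maps of Lemma~\ref{lemmadiffeolisciokernelvariabile} and is not piecewise constant on a triangulation.
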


    As stated beforehand, we will now construct a sequence of local $C^{\infty}$ diffeomorphisms converging to the identity in $W^{1,\infty}$ that optimize the third component of the surface energy density $\psi$.
    In the recovery sequence, the inverse diffeomorphisms will have to be defined on all of $\Sigma_1$. Since the optimizing diffeomorphisms perform a small "vertical" shift, points in $\Sigma_1$ may be mapped back outside $\Sigma_1$. We
    therefore construct them on the larger set $\Sigma_2 := \Sigma \times (-1,1)$.

    We clarify the role of the variables $\kappa$ and $\zeta$ in Lemma \ref{lemmalinearalgebra}. In the definition of $\psi_0$, $\zeta$ is an auxiliary scalar variable over which the
    surface density $\psi$ is minimized. Thus, for a fixed jump amplitude and a
    fixed normal $\nu$, $\zeta$ represents the additional degree of freedom used
    to optimize the third argument of $\psi$. In the recovery construction, we
    choose $\zeta$ as an almost minimizer (see \eqref{eqk_ibeta} below), and then construct a local diffeomorphism whose effect is precisely to realize this choice of the third variable. Accordingly, in the application of
    Lemma~\ref{lemmalinearalgebra}, the vector $\kappa$ is taken to be the normal
    $\nu_u \in \mathbb{S}^1$, while $\zeta$ is the prescribed almost-optimal value for $\psi$.

    \begin{lemma}\label{lemmalinearalgebra} 
		Let $x_0 \in \mathbb R^2$. Let $U,\, V,\, \Sigma$ be bounded open subsets of $\mathbb{R}^2$ such that
		\[
		x_0 \in U \Subset V \Subset  \Sigma .
		\]
		
		\noindent Let $\kappa \in \mathbb{S}^1$ (which we can also regard as a vector in $\mathbb{S}^2$ by setting its third component equal to zero) and let $\zeta \in \mathbb{R}$. Then, for every $\rho>0$ we can construct a $C^\infty$ map
		\[
		f_\rho : \mathbb R^3 \to \mathbb{R}^3 ,
		\]
		depending on $\rho,\kappa,\zeta,\,U,\,V,\, \Sigma$ and $x_0$, such that:
		
		\begin{itemize}
			\item[(a)] $f_\rho|_{\overline{U}_2}$ is an isometry, and its associated linear map $O_\rho$ satisfies
			\[
			O_\rho(\kappa,0)=\left( \frac{\kappa}{\sqrt{1+\rho^2 \zeta^2}},\, \frac{\rho \zeta}{\sqrt{1+\rho^2 \zeta^2}} \right);
			\]
			
			\item[(b)] $f_\rho \equiv \mathrm{Id}$ on $\Sigma_2 \setminus V_2$;
			
			\item[(c)] $\|f_\rho - \mathrm{Id}\|_{W^{k,\infty}(\Sigma_2;\R^3)} \to 0$ as $\rho \to 0$ for every $k \ge 1$;
			
			\item[(d)] for any compact set $K \Subset U$, for $\rho>0$ sufficiently small (depending on $K$), we have
			\[
			f_\rho\big( K \times (-1/2,1/2) \big) \subseteq U_2.
			\]

			\item[(e)] for $i=1,2$, one has
			\[
			\lim_{\rho\to 0}\bigg\Vert \frac{1}{\rho}\big(\nabla f_\rho\big)_{i}^3 \bigg\Vert_{L^\infty(\Sigma_2)} \le |\zeta|;
			\]
			
			\item[(f)] let $\Pi_{x_0,\kappa}$ be the plane passing through $x_0$ with normal $(\kappa,0)$. Then, $f_\rho(\Pi_{x_0,\kappa})$ is a $C^\infty$ surface such that for every $x \in \Pi_{x_0,\kappa}$ its normal $\nu$ satisfies
			\[
			\lim_{\rho\to 0}\, \bigg\Vert\frac{\nu_3\big( f_\rho \big)}{\rho} \bigg\Vert_{L^\infty(\Sigma_2)}
			\le 2|\zeta|.
			\]
		\end{itemize}
	\end{lemma}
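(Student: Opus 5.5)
We construct $f_\rho$ explicitly as a rigid rotation in a neighbourhood of $\overline U_2$, blended smoothly to the identity by a cutoff depending only on $x_\alpha$. Set $\theta_\rho:=\arctan(\rho\zeta)$, so that $\sin\theta_\rho=\rho\zeta/\sqrt{1+\rho^2\zeta^2}$ and $\cos\theta_\rho=1/\sqrt{1+\rho^2\zeta^2}$. Let $O_\rho\in SO(3)$ be the rotation by angle $\theta_\rho$ in the plane spanned by $(\kappa,0)$ and $e_3$, fixing the axis $(\kappa^\perp,0)$. Then
\[
O_\rho(\kappa,0)=(\cos\theta_\rho\,\kappa,\ \sin\theta_\rho),
\]
which is the identity required in (a). Fix a cutoff $\chi\in C^\infty_c(V;[0,1])$ with $\chi\equiv1$ on a neighbourhood of $\overline U$, and define
\[
f_\rho(x):=x+\chi(x_\alpha)\big((O_\rho-\mathrm{Id})(x-(x_0,0))\big).
\]

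\emph{Properties (a)--(d).} On $\overline U_2$ we have $\chi=1$, so $f_\rho(x)=(x_0,0)+O_\rho(x-(x_0,0))$ is an isometry with linear part $O_\rho$; this gives (a). Outside $V_2$ we have $\chi=0$, which gives (b). Since $|O_\rho-\mathrm{Id}|=O(\rho|\zeta|)$ and $x-(x_0,0)$ is bounded on $\Sigma_2$, every derivative of $f_\rho-\mathrm{Id}$ is $O(\rho)$ in $L^\infty(\Sigma_2)$. This gives (c), and for $\rho$ small it also makes $f_\rho$ a $C^\infty$ diffeomorphism. For (d), a point of $K\times(-1/2,1/2)$ is displaced by $O(\rho)$, while $\mathrm{dist}(K,\partial U)>0$ and $|x_3|<1/2<1$. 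Hence the image lies in $U_2$ for $\rho$ small.

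\emph{Property (f).} The plane $\Pi_{x_0,\kappa}$ is $\{(\kappa,0)\cdot(x-(x_0,0))=0\}$. Its image is smooth because $f_\rho$ is a diffeomorphism. The normal at $f_\rho(x)$ is parallel to $(\nabla f_\rho(x))^{-T}(\kappa,0)$. Writing $\nabla f_\rho=\mathrm{Id}+E_\rho$ with $\|E_\rho\|_\infty=O(\rho)$, the third component of this vector is $\rho\zeta+(\text{third column of }E_\rho)\cdot(\kappa,0)+O(\rho^2)$. So (f) reduces to the same bound on the third row of $E_\rho$ as (e), up to a factor $2$ that absorbs the normalisation and the $O(\rho^2)$ terms.

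\emph{Property (e): the main obstacle.} We have
\[
\partial_i f_{\rho,3}=\chi\,(O_\rho)_{3i}+\partial_i\chi\,\big((O_\rho-\mathrm{Id})(x-(x_0,0))\big)_3 .
\]
The first term is at most $\rho|\zeta|\,|\kappa_i|$. The second term is of order $\rho\zeta\,\partial_i\chi\,\kappa\cdot(x_\alpha-x_0)$, and for a generic cutoff this is not $\le\rho|\zeta|$. The fix I would try first is to realise the tilt of the third coordinate through a scalar profile rather than a cutoff of the full rotation. Concretely, write the third component as $x_3+\rho\zeta\,g(x_\alpha)+O(\rho^2)$ with $g$ constructed to satisfy:
\begin{itemize}
\item $g(x_\alpha)=\kappa\cdot(x_\alpha-x_0)$ on $U$;
\item $g$ has support in $V$;
\item $|\nabla g|\le1+o(1)$.
\end{itemize}
For instance, take $g=h(\kappa\cdot(x_\alpha-x_0))\,\eta(x_\alpha)$, where $h$ is a $1$-Lipschitz saturating profile and $\eta$ is chosen to vary slowly. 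The horizontal components are then chosen so that the map remains the exact isometry $O_\rho$ on $U_2$.

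If $V$ is too narrow for $|\nabla g|\le1+o(1)$, I would use the freedom in the lemma to choose $V$ (equivalently, apply the construction on small sets near each jump segment, where $\kappa\cdot(x_\alpha-x_0)$ stays small relative to the transition width). This makes the cutoff contribution $o(\rho)$, and then (e), and consequently (f), follow.
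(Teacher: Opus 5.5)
\textbf{There is a genuine gap at (e).} Your construction is the paper's own: it takes $x_0=0$ and sets $f_\rho=\varphi\,O_\rho x+(1-\varphi)x$ with $\varphi=\varphi(x_\alpha)$. Your arguments for (a)--(d) also match the paper's.

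The gap comes from transposing the index convention. In this paper $(\nabla f(x))^j_i=\partial f_i/\partial x_j$. So $(\nabla f_\rho)^3_i=\partial_3 f_{\rho,i}$ for $i=1,2$, i.e.\ the horizontal entries of the \emph{third column}, not $\partial_i f_{\rho,3}$. For your map, $\chi$ does not depend on $x_3$ and $\partial_3 x_i=0$ for $i=1,2$. Hence $\partial_3 f_{\rho,i}=\chi\,(O_\rho)_{i3}=-\chi\,\rho\kappa_i\zeta/\sqrt{1+\rho^2\zeta^2}$ exactly, with no cutoff term. This gives $\rho^{-1}\|(\nabla f_\rho)^3_i\|_{L^\infty(\Sigma_2)}\to|\kappa_i\zeta|\le|\zeta|$ in one line, which is the paper's argument.

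This third-column quantity is also the one used later:
\begin{itemize}
\item In (f), your own expansion of $(\nabla f_\rho)^{-T}(\kappa,0)$ picks out the third column of $E_\rho$, not the third row as you then claim. The extra ``$\rho\zeta+$'' term there is spurious: the third component is $-\sum_{j=1,2}\kappa_j(E_\rho)_{j3}+O(\rho^2)=\chi\rho\zeta/\sqrt{1+\rho^2\zeta^2}+O(\rho^2)$.
\item Step~3 of Theorem~\ref{teorgamma_limsup} needs precisely $\rho^{-1}\partial_3(g_{j,\rho})_i$, $i=1,2$, to bound $\rho^{-1}\partial_3 u_{j,\rho}$.
\end{itemize}

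As written, then, the proposal does not prove (e). You declare the construction fails and replace it by a profile $g$ with $|\nabla g|\le 1+o(1)$, equal to $\kappa\cdot(x_\alpha-x_0)$ on $U$ and supported in $V$, plus unspecified horizontal components. This cannot work in the stated generality. First, $U\Subset V$ are given, so there is no freedom to choose or reshape $V$. Second, under your reading the inequality is actually false:
\begin{itemize}
\item At $x_3=0$, property (a) forces $f_{\rho,3}=\rho\zeta\,\kappa\cdot(x_\alpha-x_0)/\sqrt{1+\rho^2\zeta^2}$ on $U$.
\item Property (b) forces $f_{\rho,3}=0$ at any $z\in\partial V$.
\item Take $y\in U$ and let $z$ be its nearest point on $\partial V$; the segment from $y$ to $z$ lies in $\overline V\subset\Sigma$. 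Along it, $\sup|\nabla_\alpha f_{\rho,3}|\ge\rho|\zeta|\,|\kappa\cdot(y-x_0)|/(\sqrt{1+\rho^2\zeta^2}\,\mathrm{dist}(y,\partial V))$.
\item This exceeds $\sqrt2\,\rho|\zeta|$ for small $\rho$ whenever $\zeta\neq0$ and $|\kappa\cdot(y-x_0)|>\sqrt2\,\mathrm{dist}(y,\partial V)$ for some $y\in U$, for instance when $U$ is long in the direction $\kappa$ compared with its gap to $\partial V$.
\end{itemize}
So the obstruction you found is a symptom of the misread index, not a real difficulty. With the correct reading, your first construction proves (e) directly, and (f) follows from (c) and (e) as in the paper.
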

	\begin{proof}
        It is not restrictive to consider $x_0=0$.
		Consider a linear map $O_\rho$ such that
		\[
		\begin{aligned}
			O_\rho(\kappa_1,\kappa_2,0)
			&=\left( 
			\frac{\kappa_1}{\sqrt{1+\rho^2 \zeta^2}},\,
			\frac{\kappa_2}{\sqrt{1+\rho^2 \zeta^2}},\,
			\frac{\rho \zeta}{\sqrt{1+\rho^2 \zeta^2}}
			\right),\\[4pt]
			O_\rho(-\kappa_2,\kappa_1,0)
			&=(-\kappa_2,\kappa_1,0),\\[4pt]
			O_\rho(0,0,1)
			&=\left(
			-\frac{\rho \zeta \kappa_1}{\sqrt{1+\rho^2 \zeta^2}},\,
			-\frac{\rho \zeta \kappa_2}{\sqrt{1+\rho^2 \zeta^2}},\,
			\frac{1}{\sqrt{1+\rho^2 \zeta^2}}
			\right).
		\end{aligned}
		\]
		This map is an isometry, since it sends an orthonormal basis to an orthonormal basis.  
		The matrix associated with $O_\rho$ with respect to the canonical basis is
		\[
		O_\rho=
		\begin{pmatrix}
			1+\kappa_1^2 \lambda & \kappa_1\kappa_2 \lambda & -\frac{\rho \kappa_1 \zeta}{s}\\[4pt]
			\kappa_1\kappa_2 \lambda & 1+\kappa_2^2 \lambda & -\frac{\rho \kappa_2 \zeta}{s}\\[4pt]
			\frac{\rho \kappa_1 \zeta}{s} & \frac{\rho \kappa_2 \zeta}{s} & \frac{1}{s}
		\end{pmatrix},
		\]
		where $s=\sqrt{1+\rho^2 \zeta^2}$ and $\lambda = \frac{1}{s}-1$ (see Figure \ref{figure2}).

				
					
				
					
				
        
		\begin{figure}[h!]
	\tdplotsetmaincoords{70}{130}
	\begin{tikzpicture}[tdplot_main_coords, scale=1.2]
		
		\begin{scope}
			\filldraw[draw=black, fill=blue!15] (5,0,0)--(0,0,0)--(0,0,5)--(5,0,5)--cycle;
			\draw[thick,->] (2.5,0,2.5)--(2.5,2,2.5);
			\node at (2.5,1.2,2.15) {$\kappa$};
		\end{scope}
		
		\begin{scope}[xshift=7.3cm]
			\draw[dashed, gray] (5,0,0)--(0,0,0)--(0,0,2.5);
			\draw[dashed, gray] (5,0,0)--(5,0,2.5);
			\filldraw[draw=black, fill=green!20] (5,0.4,0)--(0,0.4,0)--(0,-0.4,5)--(5,-0.4,5)--cycle;
			\draw[dashed, gray] (0,0,2.5)--(0,0,5)--(5,0,5)--(5,0,2.5);
			\draw[dashed, gray, ->] (2.5,0,2.5)--(2.5,2,2.5);
			\node[gray] at (2.5,1.2,2.15) {$\kappa$};
			\draw[thick, ->] (2.5,0,2.5)--(2.5,1.95,2.75);
			\node at (2.5,1.2,2.95) {$O_\rho(\kappa)$};
		\end{scope}
		
	\end{tikzpicture}
	\caption{The plane $\Pi_{x_0,\kappa}$ and its normal vector before and after the transformation through $O_\rho$.}
	\label{figure2}
\end{figure}
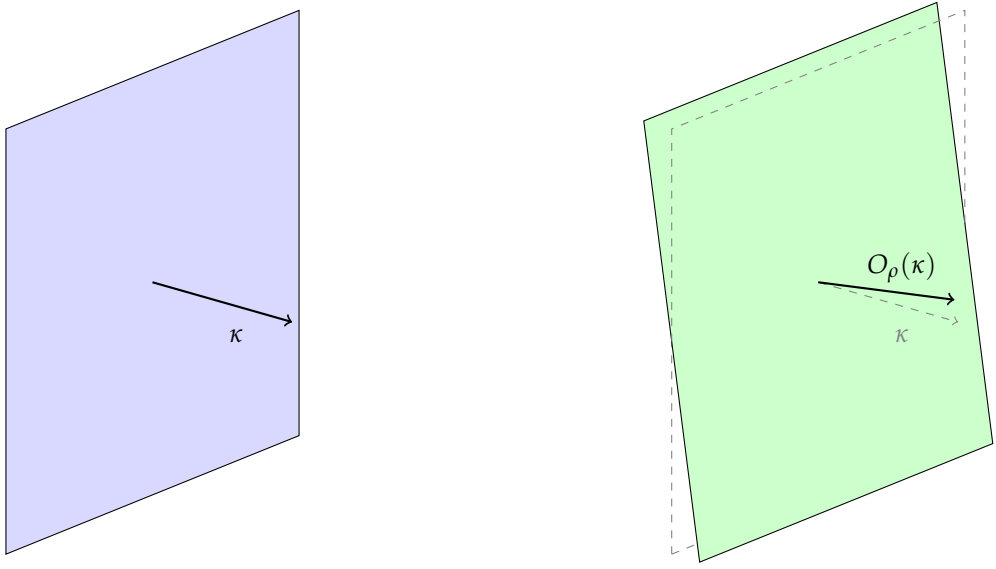
		
		Let $\varphi$ be a test function in $C^\infty$ such that $\varphi|_{\overline{U}} \equiv 1$, $\varphi \equiv 0$ outside of $V$, and $0 \le \varphi \le 1$.
		The function $\varphi$ can be extended to all of $\mathbb R^3$ by defining 
		\[
		\varphi(x) = \varphi(x_\alpha).
		\]
		
		We now define
		\[
		f_\rho(x)=\varphi(x)\,{O}_\rho(x) + (1-\varphi(x))\,x.
		\]
		This map clearly satisfies (a) and (b).  
		For (c), we notice that
		\[
		|O_\rho - I|\to 0 \qquad \mbox{as $\rho \to 0$},
		\]
		and that
		\[
		\begin{aligned}
			\|\nabla f_\rho - I\|_{L^\infty(\Sigma_2;\mathbb{M}^{3 \times 3})}
			&= \big\| (O_\rho - I)x \otimes \nabla \varphi 
			+ \varphi (O_\rho - I) \big\|_{L^\infty(\Sigma_2;\mathbb{M}^{3 \times 3})}\\[4pt]
			&\le 
			\|(O_\rho - I)x \otimes \nabla\varphi\|_{L^\infty(\Sigma_2;\mathbb{M}^{3 \times 3})}
			+ \|\varphi (O_\rho - I)\|_{L^\infty(\Sigma_2;\mathbb{M}^{3 \times 3})} \\[4pt]
			&\le 
			|O_\rho - I| \left( \|\nabla \varphi\|_{L^\infty(\Sigma_2;\mathbb{R}^{3})} \, \mathrm{diam}(\Sigma_2)  + 1 \right)
			\to 0,
		\end{aligned}
		\]
		as $\rho \to 0$, since $\Sigma_2$ is a bounded set in $\R^3$. For $k \geq 2$ the argument is analogous. 
        
        We now show (d). Call with $\partial_l U_2$ the lateral boundary of the cylinder $U_2$. Since $K \Subset U$, there exists $\eta>0$ so that
		$$
		\text{dist}(\partial_l U_2, \, K \times (-1/2,1/2) )=\text{dist}(\partial U , \,K)=\eta.
		$$
		Being $f_\rho$ a $C^\infty$ diffeomorphism converging to the identity by (c), we have that for $\rho$ sufficiently small it holds 
		$$
		\text{dist}(\partial_l U_2, f_\rho (K \times (-1/2,1/2) ))\ge \eta/2 >0.
		$$
		Furthermore, since $f_\rho \to \mathrm{Id}$ in $W^{1,\infty}(\Sigma_2;\R^3)$, for $\rho$ sufficiently small it follows that for every $x \in K \times (-1/2,1/2)$ we have $|(f_\rho(x))_3| \le 1$. Consequently,
        \[
        f_\rho\bigl(K \times (-1/2,1/2)\bigr) \subset U_2 .
        \]
		We are left to demonstrate (e) and (f). For (e), we notice that by definition of $f_\rho$ and the fact that $\partial \varphi(x)/\partial x_3=0$, for $i=1,2$
		$$
		\big(\nabla f_\rho\big)_{i}^3=-\varphi\frac{\rho\kappa_i \zeta}{s}.
		$$
		Therefore, for $i=1,2$
		$$
		\lim_{\rho \to 0}\bigg\Vert	\frac{1}{\rho}\big(\nabla f_\rho\big)_{i}^3\bigg\Vert_{L^\infty(\Sigma_2)}= \Vert\varphi\Vert_{L^\infty(\Sigma_2)} |\kappa_i \zeta|\le |\zeta|.
		$$
		Finally, since $x_0 = 0$, the plane $\Pi_{0,\kappa}$ can be parameterized as
		$$
		r(s,t) = (-s\kappa_2, \, s\kappa_1, \, t), \qquad s, t \in \mathbb{R}.
		$$
		The unit normal $\nu$ to the image surface $f_\rho(\Pi_{0,\kappa})$ at the point $f_\rho(r(s,t))$ is given by
		$$
		\frac{r_s \wedge r_t}{|r_s \wedge r_t|},
		$$
		where $r_s = \frac{\partial}{\partial s} f_\rho(r(s,t))$ and $r_t = \frac{\partial}{\partial t} f_\rho(r(s,t))$. By the chain rule,
		$$
		(r_s)_i = -\kappa_2 (\nabla f_\rho)_i^1 + \kappa_1 (\nabla f_\rho)_i^2, \qquad (r_t)_i = (\nabla f_\rho)_i^3,
		$$
		for $i = 1, 2, 3$. The third component of $r_s \wedge r_t$ is therefore
		$$
		\begin{aligned}
			(r_s \wedge r_t)_3 
			&= (r_s)_1\, (r_t)_2 - (r_s)_2\, (r_t)_1 \\
			&= \bigl(-\kappa_2 (\nabla f_\rho)_1^1 + \kappa_1 (\nabla f_\rho)_1^2\bigr)\, (\nabla f_\rho)_2^3 
			- \bigl(-\kappa_2 (\nabla f_\rho)_2^1 + \kappa_1 (\nabla f_\rho)_2^2\bigr)\, (\nabla f_\rho)_1^3.
		\end{aligned}
		$$
		By (c), as $\rho \to 0$, $(\nabla f_\rho)_i^\beta \to \delta_i^\beta$ uniformly, so that
		$$
		(r_s)_1 \to -\kappa_2, \qquad (r_s)_2 \to \kappa_1, \qquad |r_s \wedge r_t| \to 1.
		$$
		Since $\nu_3(f_\rho(r(s,t))) = (r_s \wedge r_t)_3 / |r_s \wedge r_t|$, dividing by $\rho$ and using (e) we obtain
		$$
		\begin{aligned}
			\lim_{\rho \to 0} \left\Vert\frac{\nu_3(f_\rho(r(s,t)))}{\rho}\right\Vert_{L^\infty(\Sigma_2)}
			&= \lim_{\rho \to 0} \left\Vert\frac{1}{|r_s \wedge r_t|} 
			\bigg((r_s)_1 \,\frac{(\nabla f_\rho)_2^3}{\rho} 
			- (r_s)_2 \,\frac{(\nabla f_\rho)_1^3}{\rho}\bigg)\right\Vert_{L^\infty(\Sigma_2)} \\
			&\le |\kappa_2| \lim_{\rho \to 0} 
			\left\Vert\frac{(\nabla f_\rho)_2^3}{\rho}\right\Vert_{L^\infty(\Sigma_2)}
			+ |\kappa_1| \lim_{\rho \to 0} 
			\left\Vert\frac{(\nabla f_\rho)_1^3}{\rho}\right\Vert_{L^\infty(\Sigma_2)} \\
			&\le |\kappa_2| \cdot |\zeta| + |\kappa_1| \cdot |\zeta| \le 2|\zeta|.
		\end{aligned}
		$$
		This concludes the proof.
	\end{proof}

	\begin{oss}\label{ossf_rho}
		From (b) and (c) of Lemma \ref{lemmalinearalgebra}, we infer that for $\rho$ sufficiently small the map $f_\rho$ is a $C^\infty$ diffeomorphism from $\Sigma_2$ to its image, and moreover
		\[
		\nabla f^{-1}_\rho\bigl(f_\rho(x)\bigr) = \bigl[\nabla f_\rho(x)\bigr]^{-1}.
		\]
		From this identity, one can show that properties (c),(e) and (f) of Lemma \ref{lemmalinearalgebra} also hold for $f^{-1}_\rho$ on the image of $\Sigma_2$ through $f_\rho$.
	\end{oss}

    We are finally ready to prove the $\Gamma$-$\limsup$ in terms of the auxiliary functional.

	\begin{teo}\label{teorgamma_limsup}
	Let $u \in GSBV^p(\Sigma_1;\mathbb R^3)$. Then
	\[
	\Gamma\text{-}\limsup_{\rho \to 0} \, \rho^{-1}\mathcal G_\rho(u)
	\le \overline{\mathcal G^w_0}(u),
	\]
	where $\overline{\mathcal G^w_0}(u)$ denotes the relaxation of $\mathcal G^w_0(u)$ and $\mathcal G^w_0(u)$ has been defined in \eqref{eqG_0^w}.
	\end{teo}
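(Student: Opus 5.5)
Since the $\Gamma$-$\limsup$ is lower semicontinuous with respect to convergence in measure, it suffices to prove
\[
\Gamma\text{-}\limsup_{\rho\to0}\rho^{-1}\mathcal G_\rho(u)\le \mathcal G^w_0(u)
\]
for every $u\in Y$; the relaxed inequality then follows by taking the lower semicontinuous envelope. So fix $u\in Y$. It is independent of $x_3$, and its trace $\bar u$ on $\Sigma$ lies in $\mathrm{Aff}^*(\Sigma\setminus\overline{J_u};\mathbb R^3)\cap\widehat{\mathcal W}(\Sigma;\mathbb R^3)$. Hence $\overline{J_{\bar u}}$ is a finite union of segments $S_1,\dots,S_N\Subset\Sigma$ with constant normals $\kappa_i\in\mathbb S^1$.

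\textbf{Step 1: reduction to approximately constant jumps.} Fix $\varepsilon>0$. By the continuity of the traces of $\bar u$ (which is piecewise affine on each side of the segments) and by $(B_1)$ for $\psi$, we subdivide each segment into finitely many subsegments $S_{i,l}$. On each $S_{i,l}$ the jump $[\bar u]$ oscillates so little that
\[
\psi([\bar u](x),\kappa_i,\zeta)\le(1+\varepsilon)\,\psi(z_{i,l},\kappa_i,\zeta)
\]
holds uniformly in $\zeta$, for a representative value $z_{i,l}$. For each $(i,l)$ we choose an almost minimizer $\zeta_{i,l}$ satisfying
\[
\psi(z_{i,l},\kappa_i,\zeta_{i,l})\le\psi_0(z_{i,l},\kappa_i)+\varepsilon .
\]
This only works up to small neighborhoods of the subdivision points, the vertices and the segment endpoints. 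There we will accept an error bounded by $C_4(1+\|u\|_\infty)$ times a small length, using $(B_3)$ and the bound $|\nu_\rho|\le C/\rho$ only where $\nu_3$ is controlled. To avoid that last issue, the diffeomorphism is kept equal to the identity near these points.

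\textbf{Step 2: the recovery sequence.} For each subsegment we pick open sets $U_{i,l}\Subset V_{i,l}$ with the $V_{i,l}$ pairwise disjoint, $U_{i,l}$ covering $S_{i,l}$ except for $\varepsilon$-small pieces near its ends, and $V_{i,l}$ avoiding the other segments. Lemma~\ref{lemmalinearalgebra} with $(\kappa_i,\zeta_{i,l})$ gives maps $f^{i,l}_\rho$, which we glue into a single $f_\rho$; by Remark~\ref{ossf_rho} this is a $C^\infty$ diffeomorphism. Next, Lemma~\ref{lemmasucc} gives $C^1$ approximants $v_j$ of $\bar u$ on each Lipschitz piece. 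We apply it on the domain $\Phi(\Sigma\setminus\overline{J_u})$ of Theorem~\ref{teorapproxaff^*}, so that the jump set is preserved and the traces converge uniformly. Lemma~\ref{lemmacontrollodet} with $G=\nabla_\alpha v_j$ and $\Psi=\mathrm{Id}$ provides a smooth third column $h$ with determinant at least $1/\beta$. The candidate is
\[
u_\rho(x):=\Big(v_j(x_\alpha)+\rho x_3\,h(x_\alpha)\Big)\circ f_\rho^{-1}\big(x_\alpha,\rho x_3\big)\quad\text{(suitably rescaled)},
\]
or, more precisely, the composition of the thin-domain deformation $y\mapsto v_j(y_\alpha)+y_3h(y_\alpha)$ with $f_\rho^{-1}$ in the physical variables $y=(x_\alpha,\rho x_3)$. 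In these variables $f_\rho$ acts on $\Sigma_\rho$, and property (d) of Lemma~\ref{lemmalinearalgebra} guarantees that the preimages stay inside the domain.

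\textbf{Step 3: energy estimates.} For the bulk term, $\nabla f_\rho^{-1}\to I$ uniformly and $\det\nabla f_\rho^{-1}\to1$, so the determinant stays above $1/(2\beta)$. By $(A_4)$ and continuity, the bulk energy then converges to $\int W(\nabla_\alpha v_j\mid h)\le\int W_0(\nabla_\alpha v_j)+\varepsilon$. Letting $j\to\infty$ and using the $L^\infty$ bound on $\nabla v_j$ together with Lemma~\ref{lemma2}(3), this tends to $\int W_0(\nabla_\alpha\bar u)$.

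For the surface term, on $f_\rho(S_{i,l}\times(-\rho/2,\rho/2))$ the image is a piece of a plane rotated by the isometry $O_\rho$. By (a) its normal is $(\kappa_i,\rho\zeta_{i,l})/s$, so in the rescaled functional
\[
\psi_\rho([u_\rho],\nu)=\psi\big([u_\rho],(\kappa_i,\zeta_{i,l})\big)/s ,
\]
and the area factor tends to $1$. Combined with Step 1 and the uniform convergence of the traces, this yields $\int_{J_{\bar u}}\psi_0([\bar u],\nu)\,d\mathcal H^1+C\varepsilon$. Outside the $U_{i,l}$, $f_\rho$ is the identity and $\nu_3=0$, so the contribution there is at most $C_4\int\varphi$ over an $\varepsilon$-small set. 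On the transition regions $V_{i,l}\setminus U_{i,l}$, property (f) bounds $\nu_3/\rho$ by $2|\zeta_{i,l}|$, so the integrand stays bounded while the measure of the region is small.

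\textbf{Conclusion and main obstacle.} Convergence $u_\rho\to u$ in measure follows from $f_\rho\to\mathrm{Id}$. A diagonal argument in $\rho$, $j$ and $\varepsilon$ then concludes the proof. The main obstacle I expect is the transition regions and the segment endpoints, where the jump set crosses $V_{i,l}\setminus U_{i,l}$. There $|\zeta_{i,l}|$ may be large, and it must be absorbed by choosing $V_{i,l}\setminus U_{i,l}$ small only after $\zeta_{i,l}$ has been fixed. This ordering of choices is the point to handle carefully.
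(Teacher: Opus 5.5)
\textbf{There is a genuine gap.} It sits at the step that makes this upper bound nontrivial: the two halves of your Step~3 are computed in two different coordinate systems, and neither system makes both halves work.

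\emph{Physical variables (your literal choice).} You compose with $f_\rho^{-1}$ in $y=(x_\alpha,\rho x_3)$. There your bulk argument is correct, since the physical gradient is the scaled gradient $(\nabla_\alpha u_\rho\,|\,\rho^{-1}\partial_3u_\rho)$ and $\nabla f_\rho^{-1}\to I$. The surface argument is not. $O_\rho$ tilts the \emph{physical} normal to $(\kappa_i,\rho\zeta_{i,l})/s$, which in rescaled variables is proportional to $(\kappa_i,\rho^2\zeta_{i,l})$. Hence $\psi_\rho([u_\rho],\nu)\to\psi([u_\rho],\kappa_i,0)$, and you only get $\int\psi([u],\nu_u,0)\,d\mathcal H^1$, not $\int\psi_0$. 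A rotation by an angle of order $\rho$ is invisible physically. What you need is an $O(1)$ physical tilt, i.e.\ an $O(\rho)$ tilt in the rescaled variables.

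This is why the paper applies $f_{j,\rho}$ to $x\in\Sigma_1$, setting $u_{j,\rho}=w_{j,\rho}\circ f_{j,\rho}^{-1}$ with $w_{j,\rho}(x)=\widehat v_j(x_\alpha)+\rho x_3\widehat h_j(x_\alpha)$. Property (d) likewise concerns the fixed-thickness cylinders $\Sigma_1\subset\Sigma_2$, not $\Sigma_\rho$. In that picture your formula $\psi_\rho=\psi([u_\rho],\kappa_i,\zeta_{i,l})/s$ is correct.

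\emph{Rescaled variables.} Here your bulk claim fails. With $\Lambda_\rho=\mathrm{diag}(1,1,\rho)$ one has $(\nabla_\alpha u_\rho\,|\,\rho^{-1}\partial_3 u_\rho)=(\nabla_\alpha w\,|\,\rho^{-1}\partial_3 w)(g_\rho)\,\Lambda_\rho\nabla g_\rho\Lambda_\rho^{-1}$. The entries $\rho^{-1}(\nabla g_\rho)^3_\alpha$ of the conjugated matrix equal $\zeta_{i,l}(\kappa_i)_\alpha/s$ on $f_\rho(U_{i,l}\times(-1,1))$. Property (e) only bounds them by $|\zeta|$; it does not make them vanish. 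So near $S_{i,l}$ the scaled gradient tends to $(\nabla_\alpha v_j\,|\,h+\zeta_{i,l}\nabla_\alpha v_j\,\kappa_i)$. This has the same determinant but in general strictly larger energy than $(\nabla_\alpha v_j|h)$, since $h$ was chosen almost optimal. Because you impose no smallness on the $V_{i,l}$, this is an $O(1)$ error.

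\emph{The missing idea: localize to a set of small area.} Choose all $V^i$ inside an open set $E\supset J_u$ with $\mathcal L^2(E)<\varepsilon$.
\begin{itemize}
\item On $E\times(-1/2,1/2)$, use only $\det\ge\frac1{2\beta}-C_j\rho$ and $|(\nabla_\alpha u_{j,\rho}\,|\,\rho^{-1}\partial_3u_{j,\rho})|\le C+C_j\rho$, together with $(A_4)$. This gives a contribution at most $C\varepsilon$.
\item Outside $E$ one has $g_\rho=\mathrm{Id}$, and your convergence to $\int W(\nabla_\alpha v_j|h)$ is correct there.
\end{itemize}
For $C$ to be independent of the construction you need $|\zeta_{i,l}|\le K$ uniformly. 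This follows at once from $(B_3)$ and the $1$-homogeneous extension: $C_3|\zeta|\le\psi(z,\kappa,\zeta)\le\psi_0(z,\kappa)+\varepsilon\le C_4(1+|z|)+\varepsilon$. So the ``large $|\zeta|$'' obstacle you anticipate does not occur; the real obstacle is the bulk one.

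\emph{A smaller point.} With $\Psi=\mathrm{Id}$ in Lemma~\ref{lemmacontrollodet}, the horizontal gradient of $v_j\circ\Phi$ is $\nabla v_j(\Phi)\nabla\Phi$, not $\nabla v_j$. Either take $\Psi=\nabla\Phi\circ\Phi^{-1}$ as the paper does, or add a uniform-continuity argument.
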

	\begin{proof} 
    We claim that 
    \[
    \Gamma\text{-}\limsup_{\rho \to 0} \, \rho^{-1}\mathcal G_\rho(u)
	\le \mathcal G^w_0(u), \qquad\text{for every }u\in GSBV^p(\Sigma_1;\R^m).
    \]
    We restrict to the case $u \in Y$. 
    Indeed, if $u\in GSBV^p(\Sigma_1;\mathbb R^3)\setminus Y$, then the inequality above is trivially satisfied by definition of $\mathcal G^w_0$.
Since $u \in Y$, there exists $\eta>0$ such that 
\[
|A^1 \wedge A^2| \ge \eta 
\quad\text{for every } A \in \partial u(x) \text{ and every } x \in \Sigma.
\]
We want to verify that for every sufficiently small $\varepsilon>0$ there exists a sequence 
\[
\{u_{j,\rho}\}_{j,\rho} \subseteq SBV(\Sigma_1;\mathbb R^3)
\]
such that $u_{j,\rho} \to u$ in $L^p(\Sigma_1;\mathbb R^3)$ as $\rho \to 0$ and $j \to +\infty$, and
\begin{equation}\label{eq1}
	\limsup_{j \to +\infty,\, \rho \to 0}
	\left(
	\int_{\Sigma_1} 
	W\!\left(\nabla_\alpha u_{j,\rho}\,\Big|\,
	\frac{1}{\rho}\partial_3 u_{j,\rho}\right)\, dx
	\;+\;
	\int_{J_{u_{j,\rho}}}
	\psi_\rho([u_{j,\rho}],\nu_{u_{j,\rho}})\, d\mathcal H^2
	\right)
	\le (1+\varepsilon)^2 \mathcal G_0^w(u) + C\varepsilon,
\end{equation}
where $\nabla_\alpha u=(\partial_1 u \,|\, \partial_2 u)$, and the $\limsup$ is taken in the given order, namely first $\rho\to 0$ and then $j\to+\infty$.
Let us divide the proof into three steps.\\
\textit{Step 1: Construction of the recovery sequence}. Since $u$ is independent of $x_3$, we may regard $u$ as an element of $GSBV^p(\Sigma;\mathbb{R}^3)$, identifying it with its trace on $\Sigma$. Moreover, since $u \in Y$, the jump set $J_u \Subset \Sigma$ consists of finitely many disjoint connected components, that is,
\[
J_u = \bigcup_{l=1}^m \gamma_l,
\]
each of which is either a single segment or the union of two segments sharing one endpoint.
Let $\delta_0>0$ be such that $(\gamma_l)_{\delta_0}\subseteq\Sigma$ and  
$(\gamma_l)_{\delta_0} \cap (\gamma_s)_{\delta_0} = \emptyset$ for every $s\neq l$.  
For every $0<\delta<\delta_0$, applying Lemma~\ref{lemmadiffeolisciokernelvariabile} to each connected component $\gamma_l$ and combining the resulting maps (which act on disjoint neighborhoods), we obtain a diffeomorphism
\[
\Phi_\delta : \Sigma \setminus J_u \to \Omega^\delta,
\]
where $\Omega^\delta := \Sigma \setminus \bigcup_{l=1}^m \Delta_{\delta,l}$ is a Lipschitz domain.
Fix $\varepsilon>0$. Up taking $\delta_0$ smaller, we may assume that for every $\delta\in(0,\delta_0)$
\begin{equation}\label{eq3}
	\|D\Phi_\delta\|_{L^\infty} + \|D\Phi_\delta^{-1}\|_{L^\infty} \le 3,
	\qquad
	\|\det(D\Phi_\delta)-1\|_{L^\infty} \le \varepsilon,
\end{equation}
\[
|\det(M)-1|\le \varepsilon
\quad\text{for every } M\in\partial\Phi_\delta^{-1}
\text{ and every } x\in\Omega^\delta.
\]
With a slight abuse of notation, for every $(x_1,x_2)\in\Omega^\delta$ we also set
\begin{equation}\label{eq4}
	\Phi_\delta(x_1,x_2,x_3) := (\Phi_\delta(x_1,x_2),\, x_3).
\end{equation}
Now fix $\delta \in (0,\delta_0)$ and define the function 
\[
v := u \circ \Phi_\delta^{-1}
\quad\text{in } \Omega^\delta.
\]
Using \eqref{eq3} and reasoning as in the proof of Theorem~\ref{teorapproxaff^*}, we obtain that
\[
v \in \mathrm{Aff}^*(\Omega^\delta;\mathbb R^3)
\quad\text{and}\quad
\|\nabla v\|_{L^\infty(\Omega^\delta;\mathbb{M}^{3 \times 2})} \le 3\|\nabla u\|_{L^\infty(\Sigma;\mathbb{M}^{3 \times 2})}.
\]

We now apply Lemma~\ref{lemmasucc} to the function $v$ on $\Omega^\delta$.  
We find a sequence $v_j \in C^1(\overline{\Omega^\delta};\mathbb R^3)$ satisfying the properties of Lemma \ref{lemmasucc} for some constant $\theta=\theta(v)>0$ independent of $j$.

Apply Lemma~\ref{lemmacontrollodet} with 
\[
G = \nabla v_j, \qquad 
\Psi = D\Phi_\delta(\Phi_\delta^{-1}), \qquad 
U = \Omega^\delta
\]
for each $j\ge 1$.  
We obtain $h_j \in C^\infty(\Omega^\delta;\mathbb R^3)\cap W^{1,\infty}(\Omega^\delta;\mathbb R^3)$ and 
\[
\beta=\beta\big(\theta,\,\|\nabla u\|_{L^\infty(\Sigma;\mathbb{M}^{3 \times 2})}\big)
\]
such that for every $j\ge 1$,
\[
\|h_j\|_{L^\infty(\Omega^\delta;\R^3)} \le \beta
\]
and
\begin{equation}\label{eq2}
	\det\big(\nabla v_j\, D\Phi_\delta(\Phi_\delta^{-1}) \,\big|\, h_j\big)
	\ge \frac{1}{\beta}
	\qquad\text{a.e. in } \Omega^\delta,
\end{equation}
\[
\int_{\Omega^\delta}
W\big(\nabla v_jD\Phi_\delta(\Phi_\delta^{-1})\,\big|\,h_j(x)\big)\, dx
\;\le\;
\int_{\Omega^\delta}
W_0\big(\nabla v_j(x)D\Phi_\delta(\Phi_\delta^{-1})\big)\, dx
+\varepsilon.
\]
Since $\delta$ and $j$ are independent of each other, for simplicity of notation we set
\[
\widehat{v}_j := v_j \circ \Phi_\delta,
\qquad
\widehat{h}_j := h_j \circ \Phi_\delta,
\]
suppressing the dependence on $\delta$.

We now focus on the surface part. Since $\mathcal L^2(J_u)=0$, there exists an open set $E$ such that 
\[
J_u \Subset E \subseteq \Sigma
\qquad \text{and} \qquad 
\mathcal L^2(E) < \varepsilon.
\] 
We first consider a component $\gamma_l$ of $J_u$ which consists of a single segment. Since the normal $\nu_u$ is constant along $\gamma_l$, since $\psi_0(\cdot,\nu_u)$ is locally bounded by Lemma \ref{lemma4} and continuous by Remark~\ref{osspsi_0}, and since the traces $\widehat{v}_j^\pm$ are equicontinuous on $\gamma_l$ by Lemma~\ref{lemmasucc}, there exists $\overline{n} \in \mathbb{N}$ (independent of $j$) such that for every $n \geq \overline{n}$ we can partition $\gamma_l$ into $n$ segments $\alpha_i$ of equal length, with midpoints $x_i$, satisfying
\begin{equation}\label{eqalpha_i}
	\int_{\gamma_l}\psi_0([\widehat{v}_j],\nu_u)\, d\mathcal H^1
	\ge 
	\sum_{i=1}^{n}
	\mathcal H^1(\alpha_i)\,
	\psi_0([\widehat{v}_j](x_i),\nu_u)
	-\varepsilon
\end{equation}
for every $j \in \mathbb{N}$.

Starting from these $\alpha_i$, we consider a segment $\alpha_i' \subseteq \alpha_i$ such that  
$\mathcal H^1(\alpha_i\setminus\alpha_i') \le \varepsilon/n$, the $\alpha_i'$ are mutually well-separated and $x_i$ is also the middle point of $\alpha_i'$.  
Then the segments $\alpha_i'$ constructed in this way satisfy
\begin{equation}\label{eq21}
\mathcal H^1\!\left(\gamma_l \setminus \bigcup_{i=1}^n \alpha_i'\right) < \varepsilon.
\end{equation}

For each $\alpha_i'$, we can find two open sets $U^i$ and $V^i$ in $\Sigma$ with the property that 
\[
\alpha_i' \subseteq U^i \Subset V^i \Subset E
\]
and the sets $\overline{V^i}$ are mutually disjoint (see Figure \ref{figure5}).

We now consider values $\zeta_{i,j}$, varying with $i=1,\dots,n$ and $j \in \mathbb N$, such that by the definition of $\psi_0$,
\begin{equation}\label{eqk_ibeta}
\varepsilon +\psi_0([\widehat{v}_j](x_i),\nu_u)
\;\ge\;
\psi([\widehat{v}_j](x_i),\nu_u,\zeta_{i,j}).
\end{equation}
The $\zeta_{i,j}$ are uniformly bounded in $i$ and $j$, since by $(B_3)$ and Lemma \ref{lemmasucc} we have
\begin{equation}\label{eqk_i}
|\zeta_{i,j}|
\;\le\;
C \psi([\widehat{v}_j](x_i),\nu_u,\zeta_{i,j})
\;\le\;
C \psi([\widehat{v}_j](x_i),\nu_u,0)
\;\le\;
C \bigl(1 + 2\|v_j\|_\infty\bigr)
\;\le\;
K.
\end{equation}
We are therefore in the position to construct functions $f_{i,l,j,\rho} \colon \Sigma_2 \to \R^3$ depending on 
$x_i,\, \nu_u,\, \zeta_{i,j},\, U^i$ and $V^i$, satisfying properties (a)--(f) of 
Lemma~\ref{lemmalinearalgebra}. Using again the equicontinuity of the functions $\widehat{v}_j^\pm$, we may take the same 
sets $U^i$ and $V^i$ for every $j\in\mathbb N$. 
Moreover, since the sets $\overline{V_i}$ are mutually disjoint, we may glue together the 
functions $f_{i,l,j,\rho}$ and obtain a single function $f_{l,j,\rho} \colon \Sigma_2 \to \R^3$ which coincides with each 
$f_{i,l,j,\rho}$ in the sets $V^i \times (-1,1)$ and it is the identity outside them.

Since the traces $\widehat{v}_j^\pm$ are uniformly equicontinuous on $\gamma_l$ and $\psi$ satisfies $(B_1)$, there exists $\overline{\tau} > 0$ such that for any $x, y \in \gamma_l$ with $|x - y| < \overline{\tau}$,
\begin{equation}\label{eqestimatepsi2}
	\big|\psi([\widehat{v}_j](x), \nu) - \psi([\widehat{v}_j](y), \nu)\big| < \varepsilon
\end{equation}
uniformly in $j \in \mathbb{N}$ and in $\nu \in \mathbb{R}^3 \setminus \{0\}$ with $\nu_\alpha \in \mathbb{S}^1$ and $|\nu_3| \le K$ (where $K$ is the constant from \eqref{eqk_i}).
We then choose $n_l \ge \max\{\overline{n},\, |\gamma_l|/\overline{\tau}\}$, so that both \eqref{eqestimatepsi2} and \eqref{eqalpha_i} are satisfied.

In the case where $\gamma_l$ consists of two segments meeting at a point, we discard a small neighborhood of the corner, introducing an error of order $C\varepsilon$, and argue on each segment separately. Therefore, up to relabeling, we may assume that each $\gamma_l$ is a single segment.

Taking $n \ge \max\{n_l \mid l = 1,\dots,m\}$, equations \eqref{eqestimatepsi2} and \eqref{eqalpha_i} are satisfied for every $l$. Finally, since the maps $f_{l,j,\rho}$ coincide with the identity outside disjoint neighborhoods of the respective $\gamma_l$, we can combine them into a single map $f_{j,\rho} : \Sigma_2 \to \mathbb{R}^3$, satisfying $f_{j,\rho} \to \mathrm{Id}$ in $W^{1,\infty}(\Sigma_2;\mathbb{R}^3)$ as $\rho \to 0$.

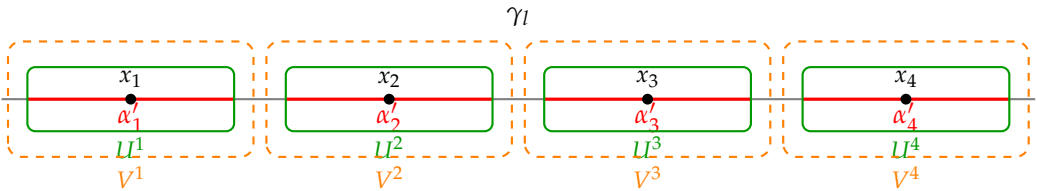
\begin{figure}[h!]
    \begin{tikzpicture}[scale=1.7]
\begin{scope}
\draw[thick, gray] (0,0) -- (8,0);
\node[above] at (4,0.5) {$\gamma_l$};

\foreach \i in {1,2,3,4} {
    \pgfmathsetmacro{\xl}{2*(\i-1)+0.3}
    \pgfmathsetmacro{\xr}{2*(\i-1)+1.7}
    \pgfmathsetmacro{\xm}{2*(\i-1)+1}
    
    \draw[red, very thick] (\xl-0.1,0) -- (\xr+0.1,0);
    \node[below, red] at (\xm,0.05) {$\alpha'_{\pgfmathprintnumber{\i}}$};
    \node[above] at (\xm,0.02) {\small $x_{\pgfmathprintnumber{\i}}$};
    \fill[black] (\xm,0) circle (1.2pt);
    
    \draw[green!60!black, thick, rounded corners=3pt] 
        (\xl-0.1, -0.25) rectangle (\xr+0.1, 0.25);
    \node[green!60!black, below] at (\xm,-0.22) {\small $U^{\pgfmathprintnumber{\i}}$};
    
    \draw[orange, thick, dashed, rounded corners=5pt] 
        (\xl-0.25, -0.45) rectangle (\xr+0.25, 0.45);
    \node[orange, below] at (\xm,-0.45) {\small $V^{\pgfmathprintnumber{\i}}$};
}
\end{scope}

\end{tikzpicture}
\caption{Construction of the segments $\alpha_i'$ and their neighborhoods $U^i \Subset V^i$ along a connected component $\gamma_l$ of the jump set.}
\label{figure5}
\end{figure}

Next, we define on $\Sigma_2$
\[
w_{j,\rho}(x):=\widehat{v}_j(x_\alpha)+\rho\, x_3\,\widehat{h}_j(x_\alpha),
\]
and take \( g_{j,\rho}:=f^{-1}_{j,\rho} \). Notice that by Remark \ref{ossf_rho}, for \(\rho\) sufficiently small we have that $g_{j,\rho}$ is a $C^\infty$ diffeomorphism such that \( g_{j,\rho}(\Sigma_1)\subseteq \Sigma_2 \).
Thus, we can finally define our \emph{recovery sequence} on $\Sigma_1$ as
\[
u_{j,\rho}(x):=w_{j,\rho}(g_{j,\rho}(x)),
\]
which is well defined for $\rho$ small and every $j \geq 1$.

By construction \( u_{j,\rho} \in SBV^p(\Sigma_1;\mathbb R^3) \). Indeed, both \( \widehat{v}_j \) and \( \widehat{h}_j \) belong to
\( SBV^p(\Sigma;\mathbb R^3) \), and moreover
\( J_{\widehat{v}_j},\,J_{\widehat{h}_j}\subseteq J_u \), hence we have 
\[
J_{u_{j,\rho}} \subseteq g^{-1}_{j,\rho}(J_u \times (-1/2,1/2))=f_{j,\rho}(J_u \times (-1/2,1/2)).
\]

Furthermore, by a change of variables given by \( \Phi^{-1}_\delta \)
and since \( g_{j,\rho}\to \text{Id} \) in \( W^{1,\infty}(\Sigma_1;\R^3) \), we obtain by dominated convergence theorem 
\[
\limsup_{ j \to +\infty}\limsup_{\rho \to 0}
\Vert u_{j,\rho}-u\Vert_{L^p(\Sigma_1)}
= \limsup_{j\to+\infty}\Vert v_j\circ\Phi_\delta -u\Vert_{L^p(\Sigma)}
\le \lim_{j\to+\infty}3^{1/p}\Vert v_j-v\Vert_{L^p(\Omega^\delta)}=0,
\]
from which the convergence in \(L^p\) follows.

\textit{Step 2: Estimate of the surface energy.}
We now consider the surface part of $\rho^{-1}\mathcal{G}_\rho$. By property~(f) of Lemma~\ref{lemmalinearalgebra} and \eqref{eqk_i},
\[
\lim_{\rho \to 0} \Vert(\nu_{u_{j,\rho}})_3/\rho\Vert_{L^\infty(\Sigma_2)} \le 2 \sup_i |\zeta_{i,j}| \le 2K,
\]
hence, for $\rho$ sufficiently small (possibly depending on $j$),
\begin{equation}\label{eq20}
	\Vert(\nu_{u_{j,\rho}})_3/\rho\Vert_{L^\infty(\Sigma_2)} \le 4K.
\end{equation}
Since $\|u_{j,\rho}\|_{L^\infty(\Sigma_1;\R^3)}$ is uniformly bounded in $\rho$ and $(\nu_{u_{j,\rho}})_3/\rho$ is uniformly bounded by \eqref{eq20}, assumption $(B_3)$ gives
\begin{equation}\label{eq22}
	\psi_\rho([u_{j,\rho}], \nu_{u_{j,\rho}}) \le C \qquad \text{on } J_{u_{j,\rho}}.
\end{equation}
Using that $J_{u_{j,\rho}} \subseteq \bigcup_{l=1}^m f_{j,\rho}(\gamma_l \times (-1/2,1/2))$, we estimate
\[
\limsup_{\rho \to 0} \int_{J_{u_{j,\rho}}} \psi_\rho([u_{j,\rho}], \nu_{u_{j,\rho}})\, d\mathcal{H}^2
\le \sum_{l=1}^m \limsup_{\rho \to 0} \int_{f_{j,\rho}(\gamma_l \times (-1/2,1/2))} \psi_\rho([u_{j,\rho}], \nu_{u_{j,\rho}})\, d\mathcal{H}^2.
\]
Thus, it suffices to estimate each summand separately. Fix $l \in \{1,\dots,m\}$. By property~(a) of Lemma~\ref{lemmalinearalgebra}, $f_{j,\rho}$ acts as the isometry $O_{i,\rho}$ on $U^i \times (-1/2,1/2)$. Using \eqref{eq21} and property~(c) of Lemma~\ref{lemmalinearalgebra}, we infer that $\mathcal{H}^2(f_{j,\rho}((\gamma_l \setminus \bigcup_{i=1}^n \alpha_i') \times (-1/2,1/2))) \le 2\varepsilon$ for $\rho$ small. Hence, recalling \eqref{eq22},
\[
\begin{aligned}
	&\limsup_{\rho \to 0} \int_{f_{j,\rho}(\gamma_l \times (-1/2,1/2))} \psi_\rho([u_{j,\rho}], \nu_{u_{j,\rho}})\, d\mathcal{H}^2 \\
	&\le \limsup_{\rho \to 0} \sum_{i=1}^n \int_{O_{i,\rho}(\alpha_i' \times (-1/2,1/2))} \psi_\rho([u_{j,\rho}], \nu_{u_{j,\rho}})\, d\mathcal{H}^2 + C\varepsilon \\
	&\le \sum_{i=1}^n \limsup_{\rho \to 0} \int_{\alpha_i' \times (-1/2,1/2)} \psi\!\left([w_{j,\rho}],\, \frac{\nu_u}{\sqrt{1+\rho^2 \zeta_{i,j}^2}},\, \frac{\zeta_{i,j}}{\sqrt{1+\rho^2 \zeta_{i,j}^2}}\right) d\mathcal{H}^2 + C\varepsilon,
\end{aligned}
\]
where in the last equality we performed a change of variables via the isometry $O_{i,\rho}$ and used property~(a) of Lemma~\ref{lemmalinearalgebra} to express the transformed normal. Since 
$$
[w_{j,\rho}](x_\alpha, x_3) = [\widehat{v}_j](x_\alpha) + \rho x_3 [\widehat{h}_j](x_\alpha) \to [\widehat{v}_j](x_\alpha)
$$ 
uniformly as $\rho \to 0$, by the reverse Fatou lemma, \eqref{eq22}, and the upper semicontinuity of $\psi$, we obtain
\[
\begin{aligned}
	&\sum_{i=1}^n \limsup_{\rho \to 0} \int_{\alpha_i' \times (-1/2,1/2)} \psi\!\left([w_{j,\rho}],\, \frac{\nu_u}{\sqrt{1+\rho^2 \zeta_{i,j}^2}},\, \frac{\zeta_{i,j}}{\sqrt{1+\rho^2 \zeta_{i,j}^2}}\right) d\mathcal{H}^2 + C\varepsilon \\
	&\le \sum_{i=1}^n \int_{\alpha_i'} \psi([\widehat{v}_j], \nu_u, \zeta_{i,j})\, d\mathcal{H}^1 + C\varepsilon \\
	&\le \sum_{i=1}^n \psi([\widehat{v}_j](x_i), \nu_u, \zeta_{i,j})\, \mathcal{H}^1(\alpha_i') + C\varepsilon \\
	&\le \sum_{i=1}^n \psi_0([\widehat{v}_j](x_i), \nu_u)\, \mathcal{H}^1(\alpha_i') + C\varepsilon \\
	&\le \sum_{i=1}^n \psi_0([\widehat{v}_j](x_i), \nu_u)\, \mathcal{H}^1(\alpha_i) + C\varepsilon \\
	&\le \int_{\gamma_l} \psi_0([\widehat{v}_j], \nu_u)\, d\mathcal{H}^1 + C\varepsilon,
\end{aligned}
\]
where we have used, in order: \eqref{eqestimatepsi2} to replace $[\widehat{v}_j](x)$ with $[\widehat{v}_j](x_i)$, \eqref{eqk_ibeta} to pass from $\psi$ to $\psi_0$, and \eqref{eqalpha_i}. Summing over $l = 1,\dots,m$, we obtain
\begin{equation}\label{eq5}
	\limsup_{\rho \to 0} \int_{J_{u_{j,\rho}}} \psi_\rho([u_{j,\rho}], \nu_{u_{j,\rho}})\, d\mathcal{H}^2
	\le \int_{J_u} \psi_0([\widehat{v}_j], \nu_u)\, d\mathcal{H}^1 + C\varepsilon.
\end{equation}
Finally, since $\widehat{v}_j^\pm \to u^\pm \circ \Phi_\delta^{-1} \circ \Phi_\delta = u^\pm$ uniformly on $J_u$ as $j \to +\infty$, by the upper semicontinuity of $\psi_0$, Lemma \ref{lemma4} and the reverse Fatou lemma,
\begin{equation}\label{eqlimsupj}
	\limsup_{j \to +\infty} \int_{J_u \cap \Sigma} \psi_0([\widehat{v}_j], \nu_u)\, d\mathcal{H}^1
	\le \int_{J_u \cap \Sigma} \psi_0([u], \nu_u)\, d\mathcal{H}^1.
\end{equation}

\textit{Step 3: Estimate of the bulk energy.}
We now estimate the bulk part of $\rho^{-1}\mathcal{G}_\rho(u_{j,\rho})$.   
Recall that $\nabla h_j \in L^\infty(\Omega^\delta;\mathbb R^3)$ for fixed $j$, although the bound on the norm is not uniform in $j$, that is, for every $j \geq 1$ there exists $C_j>0$ with the property that
\begin{equation}\label{eqcj}
\Vert \nabla h_j \Vert_{L^\infty(\Omega^\delta;\mathbb{M}^{3 \times 2})} \leq C_j.
\end{equation}
The dependence on $j$ won't be an issue, since we first let $\rho \to 0$.
We first observe that by \eqref{eq2}, for $\rho>0$ sufficiently small, it holds
\begin{equation*}
	\det\bigg(\nabla_\alpha v_j\big(\Phi_\delta(g_{j,\rho}(x))\big)\, D\Phi_\delta(g_{j,\rho}(x)) \,\bigg|\, h_j\big(\Phi_\delta(g_{j,\rho}(x))\big)\bigg)
	\ge \frac{1}{\beta} \qquad\text{for a.e. } x \in \Sigma_1.
\end{equation*}
By definition of $u_{j,\rho}$, on $\Sigma_1$ we have
\begin{equation*}
\det \left(\nabla_\alpha u_{j,\rho}\,\bigg|\, \frac{1}{\rho}\partial_3 u_{j,\rho}\right) 
= \det \left(\nabla_\alpha \widehat{v}_j(g_{j,\rho}) 
+ \rho\, (g_{j,\rho})_3\, \nabla_\alpha \widehat{h}_j(g_{j,\rho}) 
\,\bigg|\, \widehat{h}_j(g_{j,\rho})\right) \det \left(\nabla g_{j,\rho} \right).
\end{equation*}
Recall that, by Lemma~\ref{lemmasucc} and Lemma~\ref{lemmacontrollodet}, 
the norms $\|\nabla_\alpha v_j\|_{L^\infty(\Sigma_2)}$ and 
$\|h_j\|_{L^\infty(\Sigma_2)}$ are uniformly bounded in $j \geq 1$. 
Moreover, by Remark~\ref{ossf_rho}, $g_{j,\rho} \to \mathrm{Id}$ 
in $W^{1,\infty}(\Sigma_1;\mathbb{R}^3)$ as $\rho \to 0$. 
Combining these facts, we deduce that for every $j \geq 1$ there exists a constant $C_j>0$, possibly larger than the one appearing in~\eqref{eqcj}, such that for every $\rho > 0$ sufficiently small,
\begin{equation}\label{eqmatrixu_jrho}
\det\!\left(\nabla_\alpha u_{j,\rho}(x)\,\bigg|\,
\frac{1}{\rho}\partial_3 u_{j,\rho}(x)\right) \ge \frac{1}{2\beta} - \rho\, C_j \qquad\text{for a.e. } x \in \Sigma_1.
\end{equation}

We claim now that there exists $C_j>0$ possibly larger than the one appearing in~\eqref{eqcj}, such that for every $\rho > 0$ sufficiently small,
\begin{equation}\label{eqmatrixu_jrho2}
 \left\Vert \left(\nabla_\alpha u_{j,\rho}(x)\,\bigg|\,
\frac{1}{\rho}\partial_3 u_{j,\rho}(x)\right) \right\Vert_{L^\infty(\Sigma_1;\mathbb{M}^{3 \times 3})} \leq C+\rho C_j.
\end{equation}
To show the claim, we define
$$
A_\rho:=\bigg(\nabla_\alpha  u_{j,\rho}\,\bigg|\,
\frac{1}{\rho}\partial_3  u_{j,\rho}\bigg).
$$
Observe that for $i=1,2$
$$
\big(A_\rho)^i=\nabla u_{j,\rho}(g_{j,\rho}(x))(\nabla g_{j,\rho})^i.
$$
On the other hand, 
$$
\begin{aligned}
\big(A_\rho)^3= \frac{1}{\rho}\big(\nabla g_{j,\rho}\big)^3_{1}\big(\nabla u_{j,\rho}(g_{j,\rho}(x))\big)^1
+\frac{1}{\rho}\big(\nabla g_{j,\rho}\big)^3_{2}\big(\nabla u_{j,\rho}(g_{j,\rho}(x))\big)^2+\widehat{h}_j\big(g_{j,\rho}\big)\big(\nabla g_{j,\rho}\big)^3_{3}.
\end{aligned}
$$
Therefore, using (e) of Remark \ref{ossf_rho} (Lemma \ref{lemmalinearalgebra}) together with \eqref{eqk_i}, and reasoning as for \eqref{eqmatrixu_jrho}, we conclude the claim \eqref{eqmatrixu_jrho2}.

For the bulk contribution, we now distinguish two parts: the one in $E \times (-1/2,1/2)$ and the one in $\Sigma_1 \setminus \big( E \times (-1/2,1/2) \big)$. 
In $E \times (-1/2,1/2)$, since $\mathcal L^3\big(E \times (-1/2,1/2) \big) \le \varepsilon $, by \eqref{eqmatrixu_jrho}, \eqref{eqmatrixu_jrho2}, and $(A_4)$ we have 
$$
\limsup_{\rho \to 0} \int_{E \times (-1/2,1/2)}W\bigg(\nabla_\alpha u_{j,\rho}\bigg|\frac{1}{\rho}\partial_3u_{j,\rho}\bigg)\,dx\le C \mathcal L^3 \left(E \times (-1/2,1/2) \right)\le C\varepsilon.
$$
In $\Sigma_1 \setminus  \big(E \times (-1/2,1/2)\big)$ we notice that by (b) of Lemma \ref{lemmalinearalgebra}, $g_{j,\rho} \equiv \text{Id}$. Thus, we estimate
\[
\begin{aligned}
	&\limsup_{ j \to+\infty}\limsup_{\rho \to 0} 
	\int_{\Sigma_1\setminus \big(E \times (-1/2,1/2)\big)}
	W\!\left(\nabla_\alpha u_{j,\rho}\,\bigg|\,
	\frac{1}{\rho}\partial_3 u_{j,\rho}\right)\,dx \\
	&\le 
	\limsup_{ j \to+\infty}\limsup_{\rho \to 0} 
	\int_{\Sigma_1 }
	W\!\left(\nabla_\alpha w_{j,\rho}\,\bigg|\,
	\frac{1}{\rho}\partial_3 w_{j,\rho}\right)\,dx \\
	&=
	\limsup_{ j \to+\infty}\limsup_{\rho \to 0}
	\int_{\Sigma_1 }
	W\!\left(
	\nabla_\alpha \widehat{v}_j(x)D\Phi_\delta(x)
	+ \rho x_3\nabla_\alpha\widehat{h}_j(x)D\Phi_\delta(x)\,\bigg|\,
	\widehat{h}_j(x)
	\right)\,dx \\
	&=
	\limsup_{ j \to+\infty}
	\int_{\Sigma_1 }
	W\!\left(
	\nabla_\alpha \widehat{v}_j(x)D\Phi_\delta(x)\,\bigg|\,
	\widehat{h}_j(x)
	\right)\,dx,
\end{aligned}
\]
where in the last equality we applied dominated convergence thanks to $(A_4)$, \eqref{eqmatrixu_jrho}, and \eqref{eqmatrixu_jrho2}.
We now perform a change of variables using $\Phi_\delta^{-1}$.  
Then, using \eqref{eq3} and \eqref{eq4}, we obtain
\[
\begin{aligned}
	&\limsup_{ j \to+\infty} 
	\int_{\Sigma_1}
	W\!\left(
	\nabla_\alpha \widehat{v}_j(x)D\Phi_\delta(x)\,\bigg|\,
	\widehat{h}_j(x)
	\right)\,dx \\
	&=
	\limsup_{ j \to+\infty}
	\int_{\Omega^\delta_1}
	W\!\left(
	\nabla_\alpha v_j(y)D\Phi_\delta(\Phi^{-1}_\delta(y))\,\bigg|\,h_j(y)
	\right)
	\left|\det(D\Phi^{-1}_\delta(y))\right|
	\,dy \\
	&\le 
	\limsup_{ j \to+\infty}
	(1+\varepsilon)
	\int_{\Omega^\delta_1}
	W\!\left(
	\nabla_\alpha v_j(y)D\Phi_\delta(\Phi^{-1}_\delta(y))\,\bigg|\,h_j(y)
	\right)\,dy \\
	&\le 
	\limsup_{ j \to+\infty}
	(1+\varepsilon)
	\int_{\Omega^\delta_1}
	W_0\!\left(
	\nabla_\alpha v_j(y)D\Phi_\delta(\Phi^{-1}_\delta(y))
	\right)\,dy
	+2\varepsilon.
\end{aligned}
\]

We may now pass to the limit as $j\to\infty$ using Vitali’s convergence theorem, Lemma~\ref{lemma2}, and Lemma~\ref{lemmasucc}.  
Finally, performing a change of variables with $\Phi_\delta$ and using \eqref{eq3}, we obtain
\[
\begin{aligned}
	&\limsup_{ j \to+\infty}
	(1+\varepsilon)
	\int_{\Omega^\delta_1}
	W_0\!\left(
	\nabla_\alpha v_j(y)D\Phi_\delta(\Phi^{-1}_\delta(y))
	\right)\,dy
	+2\varepsilon \\
	&=
	(1+\varepsilon)
	\int_{\Omega^\delta_1}
	W_0\!\left(
	\nabla_\alpha v(y)D\Phi_\delta(\Phi^{-1}_\delta(y))
	\right)\,dy
	+2\varepsilon \\
	&\le
	(1+\varepsilon)^2
	\int_{\Sigma_1}
	W_0\!\left(
	\nabla_\alpha v(\Phi_\delta(x))D\Phi_\delta(x)
	\right)\,dx
	+2\varepsilon \\
	&=
	(1+\varepsilon)^2
	\int_{\Sigma_1}
	W_0\!\left(
	\nabla_\alpha u(x)
	\right)\,dx
	+2\varepsilon.
\end{aligned}
\]

Taking also into account \eqref{eq5} and \eqref{eqlimsupj}, this proves \eqref{eq1}. 
Since $\varepsilon$ is arbitrary, we conclude that  
\[
\Gamma\text{-}\limsup_{\rho\to 0}\rho^{-1}\mathcal{G}_\rho(u)
\le 
\mathcal{G}^w_0(u),
\] 
and the claim is proven.

Finally, by lower semicontinuity of the $\Gamma\text{-}\limsup$ (see \cite[Proposition 6.8]{dal2012introduction}), the assertion of the Theorem follows.
\end{proof}

Roughly speaking, what we have been able to prove is that 
\[
\mathcal{G}_0(u)
\le 
\Gamma\text{-}\liminf_{\rho \to 0}\,\rho^{-1}\mathcal{G}(u)
\le 
\Gamma\text{-}\limsup_{\rho \to 0}\,\rho^{-1}\mathcal{G}(u)
\le 
\overline{\mathcal{G}^w_0}(u).
\]

Observe that we cannot yet apply Theorem~\ref{teor69} to the functional $\overline{\mathcal{G}^w_0}$ since the upper $p$-growth condition appearing in Theorem is not satisfied by the bulk energy. Therefore, we prove as an intermediate step that 
\[
\overline{\mathcal{G}^w_0}=\overline{\mathcal{G}^{\mathcal{R}}_0},
\]
where
\[
\mathcal{G}^{\mathcal{R}}_0(u)=
\begin{cases}
	\displaystyle 
	\int_{\Sigma} \mathcal{R}W_0(\nabla u)\,dx
	+
	\int_{J_u}\psi_0([u],\nu_u)\, d\mathcal{H}^1 
	& \text{if } u \in GSBV^p(\Sigma;\mathbb{R}^3),\\[1.0em]
	+\infty & \text{otherwise}.
\end{cases}
\]

Here, $\mathcal{R}W_0$ denotes the rank–one convex envelope of $W_0$. In Lemma~\ref{lemma3} we have shown that $\mathcal{R}W_0$ is finite-valued, continuous, and satisfies a $p$-growth condition. This will allow us to apply Theorem \ref{teor69} to the relaxation of the functional $\mathcal{G}^{\mathcal{R}}_0$ and eventually show that $\overline{\mathcal{G}^{\mathcal{R}}_0}=\mathcal{G}_0$ on $GSBV^p$. Thus, we now focus on proving that $\overline{\mathcal{G}^w_0} = \overline{\mathcal{G}^{\mathcal{R}}_0}$. To this end, we need two preliminary lemmas. The first one is a result from Kohn and Strang \cite[Section 5C]{kohn1986optimal}, while the second one is a technical Lemma contained in \cite{hafsa2008nonlinear2} (see also \cite[Lemma A.1]{almi2023brittlemembranes} for a proof) which is crucial for our asymptotic analysis. 
\begin{lemma}\label{lemma6}
	We define a sequence $\mathcal{R}_i W_0$ such that $\mathcal{R}_0 W_0 = W_0$ and, for every $i \ge 0$ and every $F \in \mathbb{M}^{3 \times 2}$,
	\[
	\mathcal{R}_{i+1}W_0(F)
	=
	\inf_{a \in \mathbb{R}^2,\; b \in \mathbb{R}^2,\; \lambda \in [0,1]}
	\Big\{
	(1-\lambda)\,\mathcal{R}_i W_0(F - \lambda\, b \otimes a)
	+
	\lambda\,\mathcal{R}_i W_0(F - (1-\lambda)\, b \otimes a)
	\Big\}.
	\]
	We have that $\mathcal{R}_i W_0$ is upper semicontinuous for every $i \ge 0$ and finite-valued for every $i \ge 2$. Moreover, $\mathcal{R}_{i+1}W_0 \le \mathcal{R}_i W_0$ for every $i \ge 0$, and
	\[
	\mathcal{R}W_0 = \inf_{i \ge 0} \mathcal{R}_i W_0.
	\]
\end{lemma}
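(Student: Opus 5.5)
The plan is to prove the four assertions of Lemma~\ref{lemma6} in the order: monotonicity, upper semicontinuity, finiteness, identification of the limit. Throughout I adopt the convention that, for $\lambda\in\{0,1\}$, the expression in the infimum equals $\mathcal{R}_iW_0(F)$. This is consistent with $0\cdot(+\infty)=0$ and, since the infimum contains the choice $\lambda=0$, it gives $\mathcal{R}_{i+1}W_0\le\mathcal{R}_iW_0$ immediately.

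\emph{Upper semicontinuity.} I argue by induction on $i$. The case $i=0$ holds because $W_0$ is continuous as an extended-valued function by Lemma~\ref{lemma2}(1). Assume $\mathcal{R}_iW_0$ is upper semicontinuous. For fixed $a,b\in\mathbb{R}^2$ and $\lambda\in(0,1)$, the map
\[
F\mapsto(1-\lambda)\mathcal{R}_iW_0(F-\lambda\, b\otimes a)+\lambda\,\mathcal{R}_iW_0(F-(1-\lambda)\, b\otimes a)
\]
is a sum of nonnegative multiples of upper semicontinuous $[0,+\infty]$-valued functions composed with translations, hence upper semicontinuous. The endpoint choices $\lambda\in\{0,1\}$ give $\mathcal{R}_iW_0$ itself. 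Since $\mathcal{R}_{i+1}W_0$ is a pointwise infimum of upper semicontinuous functions, it is upper semicontinuous. This step needs care with extended values, and it is where the convention on $\lambda\in\{0,1\}$ is used.

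\emph{Finiteness for $i\ge2$.} By Lemma~\ref{lemma2}(2), $W_0(F)<\infty$ whenever $\operatorname{rank}F=2$.

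First let $\operatorname{rank}F=1$, say $F=c\otimes d$. Choose $b\in\mathbb{R}^3$ not parallel to $c$ and $e\in\mathbb{R}^2$ not parallel to $d$. Then for $t\neq0$ both $F+t\,b\otimes e$ and $F-t\,b\otimes e$ have rank $2$. Taking $\lambda=1/2$ and the rank-one direction $2t\,b\otimes e$ in the definition gives
\[
\mathcal{R}_1W_0(F)\le\tfrac12\bigl(W_0(F+t\,b\otimes e)+W_0(F-t\,b\otimes e)\bigr)<\infty .
\]
Now let $F=0$. Write $0=\tfrac12(b\otimes a)+\tfrac12(-b\otimes a)$ with $b\otimes a$ of rank one. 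The bound just proved yields $\mathcal{R}_2W_0(0)<\infty$. Hence $\mathcal{R}_2W_0$ is finite everywhere, and so is $\mathcal{R}_iW_0$ for every $i\ge2$ by monotonicity.

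\emph{Identification of the limit.} Set $R_\infty:=\inf_i\mathcal{R}_iW_0$, which is the pointwise decreasing limit. Induction gives $\mathcal{R}W_0\le\mathcal{R}_iW_0$ for all $i$: the case $i=0$ holds since $\mathcal{R}W_0\le W_0$. If $\mathcal{R}W_0\le\mathcal{R}_iW_0$, then rank-one convexity of $\mathcal{R}W_0$ gives, for every $F,a,b,\lambda$,
\[
\mathcal{R}W_0(F)\le(1-\lambda)\mathcal{R}W_0(F-\lambda\, b\otimes a)+\lambda\,\mathcal{R}W_0(F-(1-\lambda)\, b\otimes a),
\]
and the right-hand side is bounded by the corresponding expression for $\mathcal{R}_iW_0$. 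Taking the infimum yields $\mathcal{R}W_0\le\mathcal{R}_{i+1}W_0$, and therefore $\mathcal{R}W_0\le R_\infty$.

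Conversely, take $\xi,\xi'$ with $\operatorname{rank}(\xi-\xi')=1$ and $\mu\in(0,1)$. Writing $\xi-\xi'=b\otimes a$ and $F=\mu\xi+(1-\mu)\xi'$, the admissible choice $\lambda=1-\mu$ in the definition gives, for every $i$,
\[
R_\infty(F)\le\mathcal{R}_{i+1}W_0(F)\le\mu\,\mathcal{R}_iW_0(\xi)+(1-\mu)\mathcal{R}_iW_0(\xi').
\]
Letting $i\to\infty$ via monotone convergence of decreasing sequences, with finiteness for $i\ge2$ avoiding $\infty-\infty$ issues, shows that $R_\infty$ is rank-one convex. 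Since also $R_\infty\le W_0$, maximality of $\mathcal{R}W_0$ gives $R_\infty\le\mathcal{R}W_0$.

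The main obstacle I expect is bookkeeping rather than substance: handling $+\infty$ values in the semicontinuity and finiteness steps, and matching the parametrization of rank-one segments in the recursion with the definition of rank-one convexity.
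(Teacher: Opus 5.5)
Your proof is correct. The paper does not prove Lemma~\ref{lemma6} at all; it simply quotes it from Kohn and Strang \cite[Section~5C]{kohn1986optimal}. What you supply is a self-contained version of the standard iterated-lamination argument, with four ingredients:
\begin{itemize}
\item monotonicity, from a degenerate choice in the infimum;
\item upper semicontinuity by induction, since nonnegative combinations and arbitrary infima of upper semicontinuous $[0,+\infty]$-valued functions remain upper semicontinuous;
\item the inequality $\mathcal{R}W_0\le\mathcal{R}_iW_0$, by induction from the rank-one convexity of $\mathcal{R}W_0$;
\item rank-one convexity of $\inf_i\mathcal{R}_iW_0$, read off directly from the recursion.
\end{itemize}
The gain over the citation is that you make explicit where the specific structure of $W_0$ enters. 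Continuity of $W_0$ as an extended-valued function (Lemma~\ref{lemma2}(1)) gives the base case of the semicontinuity. The identification of $\{W_0=+\infty\}$ with the matrices of rank at most one (Lemma~\ref{lemma2}(2)) gives the finiteness claim, which no general lamination result can provide on its own. The threshold $i\ge2$ is also sharp. Every competitor for $\mathcal{R}_1W_0(0)$ evaluates $W_0$ only at matrices of rank at most one, so $\mathcal{R}_1W_0(0)=+\infty$.

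A few bookkeeping points.

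The recursion as printed has $b\in\mathbb{R}^2$ and second argument $F-(1-\lambda)\,b\otimes a$. Your finiteness step takes $\lambda=1/2$ and direction $2t\,b\otimes e$ to land on $W_0(F\pm t\,b\otimes e)$. This tacitly and correctly uses the intended version, with $b\in\mathbb{R}^3$ and $F+(1-\lambda)\,b\otimes a$. You should say this explicitly. With the printed signs the two arguments average to $F-2\lambda(1-\lambda)\,b\otimes a\neq F$. Two applications of $\lambda=1/2$ would then give $\mathcal{R}_2W_0\le\inf W_0$, since every $G$ differs from $F$ by a sum of two matrices of rank at most one. That bound is incompatible with the $p$-coercivity of $\mathcal{R}W_0$ in Lemma~\ref{lemma3}.

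With the intended recursion and $b\otimes a=\xi-\xi'$, the choice producing $\mu\,\mathcal{R}_iW_0(\xi)+(1-\mu)\,\mathcal{R}_iW_0(\xi')$ is $\lambda=\mu$, not $\lambda=1-\mu$. Alternatively, keep $\lambda=1-\mu$ and take $b\otimes a=\xi'-\xi$. Since $b$ ranges over all of $\mathbb{R}^3$, this slip is harmless.

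Monotonicity does not even need the convention $0\cdot(+\infty)=0$: just take $a=0$. Finally, $\inf_i\mathcal{R}_iW_0$ is upper semicontinuous, hence Borel, in case one insists on Borel competitors in the definition of $\mathcal{R}W_0$.
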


\begin{lemma}\label{lem belgacem}
	Let $V \subseteq \Sigma$ be an open set such that $|\partial V|=0$ and let $F \in \mathbb{M}^{3 \times 2}$ have rank $2$. Then there exists a sequence $\{h_{n,l,q}\}_{n,l,q \ge 1} \subseteq \mathrm{Aff}_C(V;\mathbb{R}^3)$ such that:
	\begin{enumerate}
		\item for every $l,q \ge 1$, we have $\lim_{n\to+\infty} h_{n,l,q} = 0$ in $L^p(V;\mathbb{R}^3)$;
		\item for every $n,l,q \ge 1$, the function $x \mapsto h_{n,l,q}(x) + F x + c$ belongs to $\mathrm{Aff}^*(V;\mathbb{R}^3)$ for every $c \in \mathbb{R}^3$;
		\item 
		\[
		\lim_{q\to+\infty}\,\lim_{l\to+\infty}\,\lim_{n\to+\infty}
		\int_V \mathcal{R}_i W_0(\nabla h_{n,l,q} + F)\, dx
		\;\le\;
		|V|\, \mathcal{R}_{i+1}W_0(F).
		\]
	\end{enumerate}
\end{lemma}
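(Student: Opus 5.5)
The plan is to build $h_{n,l,q}$ as a compactly supported laminate realizing the rank-one splitting in the definition of $\mathcal{R}_{i+1}W_0(F)$, and then perturb it so that the full gradient keeps rank $2$. The three indices play separate roles: $q$ selects an almost-optimal splitting, $l$ controls the boundary layer where the laminate is cut off, and $n$ is the fineness of the oscillation.

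\textbf{Choice of the splitting ($q$).} Given $q$, I would pick $a_q,b_q\in\mathbb{R}^2$ and $\lambda_q\in[0,1]$ with
\[
(1-\lambda_q)\mathcal{R}_iW_0(F_q^-)+\lambda_q\mathcal{R}_iW_0(F_q^+)\le \mathcal{R}_{i+1}W_0(F)+\tfrac1q ,
\]
where $F_q^-:=F-\lambda_q b_q\otimes a_q$ and $F_q^+:=F+(1-\lambda_q)b_q\otimes a_q$. (The statement of Lemma~\ref{lemma6} writes $b\in\mathbb{R}^2$; in the construction $b$ must be a vector in $\mathbb{R}^3$, since $b\otimes a$ has to be a $3\times2$ matrix.)

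If $F_q^\pm$ does not have rank $2$, I would perturb $a_q,b_q$ slightly so that both matrices have rank $2$. This step uses upper semicontinuity of $\mathcal{R}_iW_0$ from Lemma~\ref{lemma6}, in the following form: when the value is finite, one can choose the perturbation so that the energy increases by at most $1/q$. When $\mathcal{R}_iW_0(F_q^\pm)=+\infty$ (possible only for $i\le1$) the splitting is useless; but then either the infimum is attained elsewhere or $\mathcal{R}_{i+1}W_0(F)=+\infty$ and (3) is trivial. In the former case I would take near-minimizers whose endpoints are finite, and hence, by Lemma~\ref{lemma2}(2) together with finiteness, of rank $2$.

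\textbf{Laminate and cutoff ($n$, $l$).} Let $\chi$ be the $1$-periodic function with $\chi'=-\lambda_q$ on a portion of length $1-\lambda_q$ and $\chi'=1-\lambda_q$ on a portion of length $\lambda_q$. The function $x\mapsto \tfrac1n b_q\chi(n\,a_q\cdot x)$ then has gradient in $\{F_q^- -F,\ F_q^+-F\}$ with the right volume fractions. To obtain zero boundary values, I would cover $V$, up to measure $1/l$ (possible since $|\partial V|=0$), by finitely many compact polygons $K_1,\dots,K_N\subset V$. On each $K_r$ I would multiply the laminate by a piecewise affine cutoff that equals $1$ off a strip of width $1/l$ near $\partial K_r$. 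Making the laminate triangulation-compatible so that $h_{n,l,q}\in\mathrm{Aff}_c$ requires only a piecewise affine periodic profile, which $\chi$ already is.

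Property (1) is immediate because $\|h\|_\infty\lesssim |b_q|/n$. On the bulk, the energy tends to $(1-\lambda_q)\mathcal{R}_iW_0(F_q^-)+\lambda_q\mathcal{R}_iW_0(F_q^+)$ times the measure. In the boundary strips and on the uncovered part, the gradient is $F+$ a bounded term $+\ O(1/(nl^{-1}))$, which becomes small once $n\gg l$.

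\textbf{Main obstacle: property (2) in the transition layers.} There the gradient of $Fx+h$ interpolates and may lose rank $2$, so that $\mathcal{R}_iW_0$ could be infinite for $i\le1$. This is where the codimension $m=3>2$ enters, as in Hafsa--Mandallena \cite{hafsa2008nonlinear2}. In the cutoff region $\nabla h=\varphi(\nabla\ell)+\ell\otimes\nabla\varphi$, and the second term has size $O(l/n)\to0$. The dangerous contribution is therefore only the convex combination $F+\varphi\,(F_q^\pm-F)$ with $\varphi\in[0,1]$, i.e.\ matrices on the segments $[F,F_q^\pm]$. I would choose $b_q$ (after the perturbation above, using the freedom of the third dimension) so that these segments avoid the rank-deficient set. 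Since $F$ and $F_q^\pm$ have rank $2$, and the set of $3\times2$ matrices of rank $\le1$ has codimension $2$, a generic perturbation of $b_q$ makes the segments miss it. Compactness then gives a uniform lower bound on $|A^1\wedge A^2|$, which survives the $O(l/n)$ error and yields the $\mathrm{Aff}^*$ property and a bounded integrand by Lemma~\ref{lemma2}(3). Hence the layers contribute $O(1/l)$. Taking $n\to\infty$, then $l\to\infty$, then $q\to\infty$ gives (3).
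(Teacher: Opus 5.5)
The paper does not prove this lemma; it quotes it from \cite{hafsa2008nonlinear2} (see also \cite[Lemma A.1]{almi2023brittlemembranes}). Your argument therefore has to stand on its own. Its overall plan is sound: a near-optimal rank-one splitting, a perturbation of $b$ to remove rank degeneracy, a compactly supported laminate, and then the limits $n\to\infty$, $l\to\infty$, $q\to\infty$. You also correctly take $b\in\mathbb{R}^3$ and the endpoint $F+(1-\lambda)b\otimes a$; the formula in Lemma~\ref{lemma6} has misprints on both counts. Two steps, however, fail as written.

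\emph{The cutoff product is not piecewise affine.} Where $\varphi$ and $\chi_n:=\tfrac1n\chi(n\,a_q\cdot x)$ are both affine, their product is a genuine quadratic, with Hessian $\nabla\varphi\otimes\nabla\chi_n+\nabla\chi_n\otimes\nabla\varphi\neq0$. So $h_{n,l,q}\notin\mathrm{Aff}_c(V;\mathbb{R}^3)$, and the fact that $\chi$ is piecewise affine does not help. There are two easy repairs.
\begin{itemize}
\item Replace $\varphi\chi_n$ by its piecewise affine interpolant on a triangulation refining both, with mesh $\tau\ll\delta$. The gradient error is $O(\tau|\nabla\varphi||\nabla\chi_n|)$.
\item Avoid products altogether. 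Take the $K_r$ to be squares, so that $\operatorname{dist}(\cdot,\mathbb{R}^2\setminus K_r)$ is piecewise affine, and set $h=b_q\min\{\tilde\chi_n,\operatorname{dist}(\cdot,\mathbb{R}^2\setminus K_r)\}$ on $K_r$, where $\tilde\chi_n:=\chi_n-\min\chi_n\ge0$. This is piecewise affine, vanishes on $\partial K_r$, and differs from the laminate only in a layer of width $O(1/n)$.
\end{itemize}

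\emph{The layer width cannot be $1/l$ independently of the covering.} Polygons covering $V$ up to measure $1/l$ may be numerous, with side much smaller than $1/l$, and then strips of width $1/l$ swallow them entirely. Choose the width $\delta_l$ after the covering, with $\delta_l\sum_r\mathcal{H}^1(\partial K_r)\le 1/l$. The error $O(1/(n\delta_l))$ still vanishes because $n\to\infty$ first.

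\emph{Property (2) and the Clarke subdifferential.} Since $\mathrm{Aff}^*$ is defined through the Clarke subdifferential, you must control convex combinations of gradients of adjacent pieces, not only the gradients themselves; your write-up does not discuss this. An explicit computation handles it more cleanly than genericity. For every $a'\in\mathbb{R}^2$,
\[
(F+b\otimes a')^1\wedge(F+b\otimes a')^2=F^1\wedge F^2+(a'_2F^1-a'_1F^2)\wedge b .
\]
A cancellation would force the plane spanned by $a'_2F^1-a'_1F^2$ and $b$ to equal $\mathrm{span}(F)$. Hence, if $b\notin\mathrm{span}(F)$, the whole plane $F+b\otimes\mathbb{R}^2$ consists of rank-two matrices.

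So move $b_q$ off $\mathrm{span}(F)$, at cost $\le 1/q$ by upper semicontinuity. For $i\ge1$ it is this perturbation, not finiteness of $\mathcal{R}_iW_0(F_q^\pm)$, that makes $F_q^\pm$ rank two.

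All your maps, including the repaired ones, have the form $h=b_q\phi$ with $\phi$ scalar and Lipschitz. Hence every element of the Clarke subdifferential of $x\mapsto Fx+c+h(x)$ lies in the compact set $F+b_q\otimes\overline{B}_R$, with $R=\mathrm{Lip}(\phi)$. On this set $|A^1\wedge A^2|$ has a positive minimum, which gives (2) directly. Since $\mathrm{Lip}(\phi)$ stays bounded as $n\to\infty$, Lemma~\ref{lemma2}(3) (or \eqref{3.3.2} in the incompressible case) then bounds $\mathcal{R}_iW_0\le W_0$ in the layers uniformly in $n$.
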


\begin{prop}\label{propgamma_limsup}
	Let $\mathcal{G}^{\mathcal{R}}_0$ be as defined above. For every $u \in GSBV^p(\Sigma;\mathbb{R}^3)$ we have
	\[
	\Gamma\text{-}\limsup_{\rho \to 0} \rho^{-1}\mathcal{G}_\rho(u)
	\;\le\;
	\overline{\mathcal{G}^{\mathcal{R}}_0}(u).
	\]
\end{prop}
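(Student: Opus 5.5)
By Theorem~\ref{teorgamma_limsup} and the lower semicontinuity of the $\Gamma$-$\limsup$, it suffices to show $\overline{\mathcal G^w_0}\le \overline{\mathcal G^{\mathcal R}_0}$. Since $\overline{\mathcal G^w_0}$ is lower semicontinuous, this in turn reduces to $\overline{\mathcal G^w_0}(u)\le \mathcal G^{\mathcal R}_0(u)$ for every $u\in GSBV^p(\Sigma;\mathbb R^3)$ with $\mathcal G^{\mathcal R}_0(u)<+\infty$. I would proceed by induction on the index $i$ in Lemma~\ref{lemma6}. Set
\[
\mathcal G^i_0(u):=\int_\Sigma \mathcal R_iW_0(\nabla u)\,dx+\int_{J_u}\psi_0([u],\nu_u)\,d\mathcal H^1 \qquad\text{for } u\in Y,
\]
and $\mathcal G^i_0:=+\infty$ otherwise. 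Then $\mathcal G^0_0=\mathcal G^w_0$, and the claim to prove is $\overline{\mathcal G^0_0}\le \mathcal G^i_0$ on $Y$ for every $i$.

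For the inductive step $\overline{\mathcal G^0_0}\le\overline{\mathcal G^i_0}\le \mathcal G^{i+1}_0$, fix $u\in Y$. On each open triangle $V$ of the triangulation of $u$ away from $\overline{J_u}$, $\nabla u=F$ is constant with rank $2$, since $u\in\mathrm{Aff}^*$. Apply Lemma~\ref{lem belgacem} on each such $V$ and set $u_{n,l,q}:=u+\sum_V h^V_{n,l,q}$. Because the $h^V_{n,l,q}$ lie in $\mathrm{Aff}_c(V)$, the jump set and the traces on $J_u$ are unchanged, so the surface term is exactly preserved. By item (2) of Lemma~\ref{lem belgacem} we get $u_{n,l,q}\in Y$; the Clarke subdifferential condition on the common edges needs a check, which I expect to follow from compact support together with item (2). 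By item (1), $u_{n,l,q}\to u$ in $L^p$ as $n\to\infty$, and item (3) gives
\[
\lim_q\lim_l\lim_n \mathcal G^i_0(u_{n,l,q})\le \mathcal G^{i+1}_0(u).
\]
A diagonal argument combined with the lower semicontinuity of $\overline{\mathcal G^i_0}$ then closes the step.

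Next I pass to the infimum over $i$. Lemma~\ref{lemma6} gives $\mathcal R_iW_0\downarrow\mathcal R W_0$, and for $i\ge 2$ these functions are finite and upper semicontinuous. To apply monotone convergence I need an integrable bound, namely $\mathcal R_2W_0(\nabla u)\in L^1$. For $u\in Y$ the gradient is bounded, and upper semicontinuity makes $\mathcal R_2W_0$ locally bounded, so the bound holds. This yields $\overline{\mathcal G^w_0}\le \mathcal G^{\mathcal R}_0$ on $Y$.

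Finally I extend to general $u\in GSBV^p$. By Theorem~\ref{teorapproxaff^*}, with $m=3$, there exist $u_j\in Y$ with $u_j\to u$ in measure, $\nabla u_j\to\nabla u$ in $L^p$, and a limsup inequality for the surface term with $\psi_0$. This inequality is admissible because $\psi_0$ is upper semicontinuous, even, and has linear growth by Lemma~\ref{lemma4}. For the bulk term, Lemma~\ref{lemma3} shows $\mathcal RW_0$ is continuous with $p$-growth, so Vitali's theorem gives convergence of the bulk integrals. Hence
\[
\overline{\mathcal G^w_0}(u)\le\liminf_j \mathcal G^{\mathcal R}_0(u_j)\le \mathcal G^{\mathcal R}_0(u),
\]
and taking the relaxation concludes. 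I expect the main obstacle to be the inductive step, specifically verifying that the perturbed functions remain in $Y$ (nondegenerate Clarke subdifferential across cell interfaces), and handling $i=0,1$, where $\mathcal R_iW_0$ may take the value $+\infty$.
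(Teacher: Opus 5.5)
Your proposal is correct and follows the paper's proof. Step~1 treats $u\in Y$ through the iteration of Lemma~\ref{lemma6} and the compactly supported perturbations of Lemma~\ref{lem belgacem}, which leave $J_u$ and the traces unchanged; the paper delegates this step to \cite[Proposition~4.14, Step~1]{almi2023brittlemembranes}. Step~2 extends to $GSBV^p$ by density, lower semicontinuity of $\overline{\mathcal G^w_0}$, and continuity with $p$-growth of $\mathcal RW_0$. Your use of Theorem~\ref{teorapproxaff^*} with $\psi_0$ is in fact the precise form of what the paper writes with $\psi$.

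The obstacles you anticipate are harmless. Every $h^V_{n,l,q}$ vanishes near $\partial V$, so on common edges $\partial u_{n,l,q}=\partial u$ and membership in $Y$ is immediate. Also $\mathcal R_iW_0\le W_0<+\infty$ at the rank-two gradients involved, so $i=0,1$ need no special care.
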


\begin{proof}
	We divide the proof into two steps.

	\noindent\textit{Step~1.}
	We show that for every
	$v \in \mathrm{Aff}^*(\Sigma \setminus \overline{J_v};\mathbb{R}^3)
	\cap \widehat{\mathcal{W}}(\Sigma;\mathbb{R}^3)$,
	\[
	\overline{\mathcal{G}^w_0}(v)
	\le
	\int_{\Sigma} \mathcal{R}W_0(\nabla v)\,dx
	+
	\int_{J_v} \psi_0([v],\nu_v)\, d\mathcal{H}^1.
	\]
	The proof follows the same lines as~\cite[Proposition~4.14, Step~1]{almi2023brittlemembranes}. The key observation is that the optimizing sequence provided by Lemma~\ref{lem belgacem} has support compactly contained in the complement of the jump set of $v$, and therefore does not interact with the surface energy. Consequently, the argument of~\cite{almi2023brittlemembranes}, originally carried out for $\psi(z,\nu) =| \nu|$, extends verbatim to a general surface energy density $\psi$ satisfying $(B_1)$--$(B_5)$.

        \noindent \textit{Step 2.}
        We now move to the general case $u \in GSBV^p(\Sigma;\mathbb{R}^3)$.  
		From Theorem~\ref{teorgamma_limsup} we already know that for every $u \in GSBV^p(\Sigma;\mathbb{R}^3)$,
		\[
		\Gamma\text{-}\limsup_{\rho \to 0} \rho^{-1}\mathcal{G}_\rho(u)
		\le 
		\overline{\mathcal{G}^w_0}(u).
		\]
		We now want to show that for every $u \in GSBV^p(\Sigma,\mathbb{R}^3)$,
		\[
		\overline{\mathcal{G}^w_0}(u)
		\le
		\int_{\Sigma} \mathcal{R}W_0(\nabla u)\,dx
		+
		\int_{J_u} \psi_0([u],\nu_u)\, d\mathcal{H}^1.
		\]
		To this end, by applying Corollary \ref{corlimsup} and Remark \ref{oss3}, we obtain the existence of a sequence 
		\[
		u_j \in \mathrm{Aff}^*(\Sigma \setminus \overline{J_u}, \mathbb{R}^3)
		\cap \widehat{\mathcal{W}}(\Sigma;\mathbb{R}^3)
		\]
		such that $u_j \to u$ in measure, $\nabla u_j \to \nabla u$ in $L^p(\Omega;\mathbb{M}^{3 \times 2})$, and
		\[
		\limsup_{j\to+\infty}
		\int_{J_{u_j}} \psi([u_j],\nu_{u_j})\, d\mathcal{H}^{1}
		\le
		\int_{J_u} \psi([u],\nu_u)\, d\mathcal{H}^{1}.
		\]
		By applying Lemma~\ref{lemma3} together with the convergence 
		$\nabla u_j \to \nabla u$ in $L^p$, we obtain
		\[
		\lim_{j\to+\infty}
		\int_{\Sigma} \mathcal{R}W_0(\nabla u_j)\,dx
		=
		\int_{\Sigma} \mathcal{R}W_0(\nabla u)\,dx.
		\]
		Therefore, using the lower semicontinuity of the functional $\overline{\mathcal{G}^w_0}$, we have
		\[
		\begin{aligned}
			\overline{\mathcal{G}^w_0}(u)
			&\le 
			\liminf_{j\to+\infty} \overline{\mathcal{G}^w_0}(u_j) \\
			&\le 
			\liminf_{j\to+\infty}
			\left(
			\int_{\Sigma} \mathcal{R}W_0(\nabla u_j)\,dx
			+
			\int_{J_{u_j}} \psi([u_j],\nu_{u_j})\, d\mathcal{H}^1
			\right) \\
			&\le\int_{\Sigma} \mathcal{R}W_0(\nabla u)\,dx+
			\int_{J_u} \psi([u],\nu_u)\, d\mathcal{H}^1.
		\end{aligned}
		\]
		Passing to the relaxation, we obtain 
		\[
		\overline{\mathcal{G}^w_0}(u)
		\le
		\overline{\mathcal{G}^{\mathcal{R}}_0}(u),
		\]
		which proves the claim.
\end{proof}

\subsection{Main result}
We now have all the tools to prove the main result in the orientation-preserving case.

\begin{teo}\label{teorfinal1}
	Let $\Sigma$ be a bounded open subset of $\mathbb R^2$ with Lipschitz boundary and let $1<p<\infty$.  
	Let $W $ satisfy assumptions $(A_1)-(A_4)$, and let $\psi$ satisfy hypotheses $(B_1)-(B_5)$.
	Then,
	\[
	\Gamma\text{-}\lim_{\rho \to 0}\rho^{-1}\mathcal{G}_\rho=\mathcal{G}_0
	\]
	in the space of measurable functions with respect to convergence in measure.
\end{teo}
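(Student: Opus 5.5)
The argument is a sandwich. The lower bound is Proposition~\ref{prop1}, and the upper bound comes from Proposition~\ref{propgamma_limsup} together with a relaxation identity for $\mathcal{G}^{\mathcal{R}}_0$.

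For the $\Gamma$-$\liminf$, Proposition~\ref{prop1} already gives $\liminf_{\rho\to0}\rho^{-1}\mathcal{G}_\rho(u_\rho)\ge\mathcal{G}_0(u)$ for every $u_\rho\to u$ in measure. For $u\notin\mathcal{A}$ the compactness argument there shows that the $\liminf$ is $+\infty$. So $\mathcal{G}_0\le\Gamma$-$\liminf$ holds on all of $L^0(\Sigma_1;\mathbb R^3)$.

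For the $\Gamma$-$\limsup$, fix $u\in\mathcal{A}$, since otherwise $\mathcal{G}_0(u)=+\infty$ and there is nothing to prove. Identify $u$ with its trace in $GSBV^p(\Sigma;\mathbb R^3)$. Proposition~\ref{propgamma_limsup} gives
\[
\Gamma\text{-}\limsup_{\rho\to0}\rho^{-1}\mathcal G_\rho(u)\le\overline{\mathcal{G}^{\mathcal R}_0}(u),
\]
so it remains to show $\overline{\mathcal{G}^{\mathcal R}_0}=\mathcal{G}_0$ on $GSBV^p(\Sigma;\mathbb R^3)$. By Lemma~\ref{lemma3}, $\mathcal{R}W_0$ is finite, continuous, $p$-coercive and has $p$-growth from above. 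By Lemma~\ref{lemma4} and Remark~\ref{osspsi_0}, $\psi_0$ satisfies $(B_1)$--$(B_5)$. Hence the integral representation/relaxation result, Theorem~\ref{teor69}, applies to $\mathcal{G}^{\mathcal R}_0$ and yields
\[
\overline{\mathcal{G}^{\mathcal R}_0}(u)=\int_\Sigma\mathcal{Q}(\mathcal{R}W_0)(\nabla u)\,dx+\int_{J_u}\mathcal{B}\psi_0([u],\nu_u)\,d\mathcal H^1 .
\]
To identify this with $\mathcal{G}_0$, use $\mathcal{Q}W_0\le\mathcal{R}W_0\le W_0$ together with the fact that $\mathcal{Q}W_0$ is quasiconvex and finite (Lemma~\ref{lemma3}). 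This gives $\mathcal{Q}(\mathcal{R}W_0)=\mathcal{Q}W_0$, because the quasiconvex envelope is the largest quasiconvex function below each of $W_0$ and $\mathcal{R}W_0$.

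Since $\Gamma$-limsup is lower semicontinuous and the bound holds for every $u$, we get $\Gamma$-$\limsup\le\mathcal{G}_0$. Combined with the lower bound, this proves the theorem. The argument also uses that $\Gamma$-limits are taken along arbitrary sequences $\rho\to0$, which is compatible since both bounds hold for every sequence.

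I expect the main obstacle to be the careful application of Theorem~\ref{teor69}. One must check that the relaxation is taken with respect to convergence in measure on $\Sigma$, while the $\Gamma$-limit is on $\Sigma_1$. These match because functions in $\mathcal{A}$ are independent of $x_3$, and convergence in measure on $\Sigma_1$ of such functions is equivalent to convergence in measure of the traces on $\Sigma$. One must also verify that $\mathcal{B}\psi_0$ is exactly the surface density produced by that theorem under the growth $(B_3)$.
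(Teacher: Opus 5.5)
Your proposal is correct and follows essentially the same route as the paper. The lower bound comes from Proposition~\ref{prop1}, the upper bound from Proposition~\ref{propgamma_limsup} combined with Theorem~\ref{teor69} applied to $\mathcal{G}^{\mathcal{R}}_0$ (justified by Lemma~\ref{lemma3} and Lemma~\ref{lemma4}), and the identification $\mathcal{Q}(\mathcal{R}W_0)=\mathcal{Q}W_0$ comes from $\mathcal{Q}W_0\le\mathcal{R}W_0\le W_0$, where the first inequality follows from the rank-one convexity of $\mathcal{Q}W_0$ in Lemma~\ref{lemma3}, as in the paper.
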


\begin{proof}
	By Lemma~\ref{lemma3}, we may apply relaxation to the functional 
	$\overline{\mathcal{G}^R_0}$. Therefore, using Theorem~\ref{teor69}, for every $u \in GSBV^p(\Sigma, \mathbb R^3)$ we obtain
	\[
	\Gamma\text{-}\limsup_{\rho \to 0}\rho^{-1}\mathcal{G}_\rho(u)
	\le 
	\int_{\Sigma}\mathcal{Q}(\mathcal{R}W_0)(\nabla u)\, dx
	+
	\int_{J_u} \mathcal{B}\psi_0([u],\nu_u)\, d\mathcal{H}^1.
	\]
	Combining this with Proposition~\ref{prop1}, we deduce that
	\[
	\int_{\Sigma}\mathcal{Q}(\mathcal{R}W_0)(\nabla u)\, dx
	+
	\int_{J_u} \mathcal{B}\psi_0([u],\nu_u)\, d\mathcal{H}^1
	\le 
	\mathcal{G}_0(u)
	\le 
	\Gamma\text{-}\liminf_{\rho \to 0}\rho^{-1}\mathcal{G}_\rho(u).
	\]
	Therefore, for every function $u \in GSBV^p(\Sigma, \mathbb R^3)$,
	\[
	\int_{\Sigma}\mathcal{Q}(\mathcal{R}W_0)(\nabla u)\, dx
	+
	\int_{J_u} \mathcal{B}\psi_0([u],\nu_u)\, d\mathcal{H}^1
	=
	\Gamma\text{-}\lim_{\rho\to 0}\rho^{-1}\mathcal{G}_\rho(u).
	\]
	
	It only remains to show that $\mathcal{Q}W_0=\mathcal{Q}(\mathcal{R}W_0)$.  
	One inequality, namely “$\ge$”, is trivial.  
	For the reverse inequality, notice that by the definition of rank-one convexification and by Lemma~\ref{lemma3}, we have $\mathcal{Q}W_0 \le \mathcal{R}W_0$.  
	Applying $\mathcal{Q}$ to both sides yields
	\[
	\mathcal{Q}W_0 \le \mathcal{Q}(\mathcal{R}W_0).
	\]
	This concludes the proof of the theorem.
\end{proof}
\section{Dimension Reduction under Incompressibility}
In this section we consider the incompressible case. We recall that in this setting the bulk energy $W : \mathbb{M}^{3 \times 3} \to [0,+\infty]$ satisfies $(\widetilde{A}_1)$--$(\widetilde{A}_2)$, while $\psi \colon (\R^3 \setminus \{0\}) \times \R^3 \to [0,+\infty)$ satisfies $(B_1)$--$(B_5)$.
We start observing that by assumption $(\widetilde{A}_2)$ on $W$, given $F = (F_1 \mid F_2) \in \mathbb{R}^{3 \times 2}$ with $|F_1 \wedge F_2| \neq 0$, the following bounds hold
\begin{equation}\label{3.3.2}
\frac{1}{c}|F|^p + \frac{1}{c}\frac{1}{|F_1 \wedge F_2|^p} - c
\le
W_0(F)
\le
c|F|^p + c\frac{1}{|F_1 \wedge F_2|^p} + c .
\end{equation}
If instead $\operatorname{rank}(F) \le 1$, then $W_0(F) = +\infty$.

Thanks to \eqref{3.3.2}, $W_0$ satisfies all the assumptions of Lemma \ref{lemma3}.

Thus, following the same arguments as in Proposition~\ref{prop1}, we obtain the following result. 

\begin{prop}\label{prop1I}
		Let $\Sigma$ be a bounded open subset of $\mathbb{R}^2$ with Lipschitz boundary and let $1<p<+\infty$. Let $W$ be a function satisfying $(\widetilde{A}_1)$ and $(\widetilde{A}_2)$, and let $\psi$ be a function satisfying $(B_1)$–$(B_5)$. Then, for every sequence $u_\rho \in GSBV^p(\Sigma_1;\mathbb{R}^3)$ such that $u_\rho \to u$ in measure on $\Sigma_1$, one has
		\[
		\liminf_{\rho\to 0}\rho^{-1}\mathcal{G}_{\rho}(u_\rho)\ge\mathcal{G}_0(u).
		\]
\end{prop}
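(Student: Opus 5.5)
The argument will follow the proof of Proposition~\ref{prop1} almost verbatim, since it uses only lower bounds on $W$, the properties of $\psi$, and the relaxation result Theorem~\ref{teor69}. In particular, the upper growth $(A_4)$ and the specific form of the constraint were never used there. We may assume $\liminf_{\rho\to0}\rho^{-1}\mathcal G_\rho(u_\rho)<+\infty$, pass to a subsequence realizing it as a limit, and take $\sup_\rho \rho^{-1}\mathcal G_\rho(u_\rho)\le C$.

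\emph{Compactness.} The lower bound in $(\widetilde{A}_2)$, together with $W=+\infty$ off $\mathrm{SL}(3)$, gives $W(F)\ge c^{-1}|F|^p-c$ for every $F\in\mathbb M^{3\times3}$; this is exactly $(A_3)$ with $C_1$ replaced by $c^{-1}$. Combining it with $(B_3)$ yields the bound \eqref{eqcompattezza}, so Ambrosio's compactness theorem gives $u_\rho\to u$ weakly in $GSBV^p(\Sigma_1;\mathbb R^3)$. The rescaled lower bound
\[
\int_{\Sigma_1}\Big(|\nabla_\alpha u_\rho|^p+\rho^{-p}|\partial_3u_\rho|^p\Big)dx\le C,
\]
together with lower semicontinuity, gives $\partial_3 u=0$. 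The surface estimate with the auxiliary parameter $\widetilde\rho$ then gives $\nu_{3,u}=0$ $\mathcal H^2$-a.e. on $J_u$, and also $\int_{J_{u_\rho}}\varphi([u_\rho])|\nu_{3,u_\rho}|\,d\mathcal H^2\to0$. Neither computation involves $W$ beyond coercivity, so both transfer unchanged. Hence $u\in\mathcal A$.

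\emph{Slicing and relaxation.} Next we use the pointwise inequalities
\[
W\Big(\nabla_\alpha u_\rho\Big|\tfrac1\rho\partial_3u_\rho\Big)\ge W_0(\nabla_\alpha u_\rho)\ge\mathcal QW_0(\nabla_\alpha u_\rho)
\]
and $\psi\ge\psi_0\ge\mathcal B\psi_0$. We slice in $x_3=t$ and split $1\le|\nu_\alpha|+|\nu_3|$ exactly as before; the remainder term vanishes since $\mathcal B\psi_0\le C_4\varphi$. Fatou's lemma in $t$ then reduces the claim to lower semicontinuity, on a.e. slice, of
\[
v\mapsto\int_\Sigma\mathcal QW_0(\nabla v)+\int_{J_v}\mathcal B\psi_0([v],\nu_v)\,d\mathcal H^1
\]
under convergence in measure, for which we invoke Theorem~\ref{teor69}.

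\emph{Main obstacle.} The one point needing verification is that $\mathcal QW_0$ satisfies the hypotheses of Theorem~\ref{teor69} in the incompressible setting: finite, continuous, with $p$-growth and $p$-coercivity. This is where \eqref{3.3.2} enters. It shows that $W_0$ is continuous as an extended-valued function, $p$-coercive, equal to $+\infty$ exactly when $F^1\wedge F^2=0$, and bounded by $c_\delta(1+|F|^p)$ whenever $|F^1\wedge F^2|\ge\delta$. In other words, it gives items (1)--(3) of Lemma~\ref{lemma2}, so Lemma~\ref{lemma3} applies and the required properties of $\mathcal QW_0$ follow. One detail to check here is continuity of $W_0$ near rank-deficient matrices. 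The lower bound in \eqref{3.3.2} forces $W_0\to+\infty$ as $|F^1\wedge F^2|\to0$, which handles that case. Elsewhere, continuity follows from continuity of $W$ on $\mathrm{SL}(3)$ together with the fact that the infimum defining $W_0$ is attained on a compact set of $\xi$, by coercivity. The corresponding properties of $\psi_0$ come from Lemma~\ref{lemma4}, which is unaffected by the change in $W$. With these in hand, the conclusion $\liminf\rho^{-1}\mathcal G_\rho(u_\rho)\ge\mathcal G_0(u)$ follows as in Proposition~\ref{prop1}.
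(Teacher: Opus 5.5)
Your proposal is correct and follows the paper's own argument: the paper notes via \eqref{3.3.2} that $W_0$ satisfies the hypotheses of Lemma~\ref{lemma3}, and then repeats the proof of Proposition~\ref{prop1}. One small correction to your opening claim: $(A_4)$ is used in Proposition~\ref{prop1}, indirectly through item (3) of Lemma~\ref{lemma2} feeding into Lemma~\ref{lemma3}. That is exactly the point you re-verify from \eqref{3.3.2}, so the argument is complete.
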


To prove the analogue of Theorem~\ref{teorgamma_limsup}, however, we require the following technical lemma, which is based on a construction from \cite[Proposition~5.1]{ContiDolzmann2006}. For completeness, and in order to make explicit the dependence of certain constants, we include the full proof.

	\begin{lemma}\label{lemma1}
		Let $u \in W^{k,\infty}(\Sigma \setminus \overline{J_u}, \mathbb{R}^3) \cap SBV^p(\Sigma, \mathbb{R}^3)$ and  
		$b \in W^{k,\infty}(\Sigma \setminus \overline{J_u}, \mathbb{R}^3)$ for every $k \geq 1$,  
		such that
		$$
		\det(\nabla u(x)\,|\, b(x)) = 1 \quad \text{for almost every } x \in \Sigma \setminus \overline{J_u}.
		$$
		Assume moreover that $J_u$ consists of a finite union of regular curves. Then there exist $\rho > 0$  
		and a function
		$$
		v \in W^{k,\infty}(\Sigma_\rho \setminus \overline{J_v}, \mathbb{R}^3) \cap SBV^p(\Sigma_\rho, \mathbb{R}^3)
		$$
		for every $k \geq 1$, such that $v(x,0) = u(x)$,  
		$J_v \subseteq J_u \times (-\rho/2, \rho/2)$, and
		$$
		\det(\nabla v(x)) = 1 \quad \text{for almost every } x \in \Sigma_\rho.
		$$
		Moreover, there exists a constant $C = C(u,b) > 0$ such that for almost every $x \in \Sigma_\rho$
		\begin{equation}\label{pw bound}
		|\nabla v(x) - (\nabla u(x)\,|\, b(x))| \leq C |x_3|.
		\end{equation}
	\end{lemma}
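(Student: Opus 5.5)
The plan is to follow the flow construction of Conti--Dolzmann: transport points off the mid-plane along the vector field $b$ and correct the volume with a scalar ODE. On each connected component of $\Sigma\setminus\overline{J_u}$, set
\[
v(x,x_3) := u(x) + \int_0^{x_3} \tau(x,s)\, b(x)\,ds \qquad\text{or, more conveniently,}\qquad v(x,x_3):=u(x)+ a(x,x_3)\,b(x),
\]
with $a(x,0)=0$ and $\partial_3 a(x,0)=1$. The scalar function $a$ is determined by requiring $\det\nabla v=1$. We compute
\[
\nabla v=\big(\nabla u + b\otimes\nabla_x a + a\nabla b \,\big|\, \partial_3 a\, b\big).
\]
Since the column $b\otimes \nabla_x a$ is parallel to the third column, it drops out of the determinant. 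This leaves
\[
\partial_3 a\,\det\big(\nabla u + a\nabla b\,\big|\, b\big)=1 .
\]
Set $P(x,a):=\det(\nabla u(x)+a\nabla b(x)\,|\,b(x))$. This is a polynomial of degree at most $2$ in $a$, with coefficients in $W^{k,\infty}(\Sigma\setminus\overline{J_u})$ and $P(x,0)=1$. The ODE $\partial_3 a = 1/P(x,a)$, $a(x,0)=0$, is then solved pointwise in $x$.

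The main step is to get a uniform existence time $\rho$ together with $W^{k,\infty}$ regularity in $x$. Because the coefficients of $P$ are bounded by $M:=\|u\|_{W^{1,\infty}}+\|b\|_{W^{1,\infty}}$ on $\Sigma\setminus\overline{J_u}$, we have $P(x,a)\ge 1/2$ for $|a|\le a_0(M)$. On that range the right-hand side $1/P$ is smooth in $a$, bounded by $2$, and Lipschitz uniformly in $x$. Picard--Lindelöf therefore gives a solution on $|x_3|<\rho/2$ with $\rho:=a_0/2$, uniformly in $x$, and the solution stays in $|a|\le a_0$. Smooth dependence on parameters then gives $a\in W^{k,\infty}((\Sigma\setminus\overline{J_u})\times(-\rho/2,\rho/2))$ for every $k$. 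The justification is to differentiate the ODE in $x$ and apply Gronwall inductively: each $x$-derivative of $a$ satisfies a linear ODE whose coefficients involve derivatives of the coefficients of $P$ up to order $k$, and these are bounded by hypothesis. All constants depend only on norms of $u$ and $b$, which makes $C=C(u,b)$. The step I expect to need care is that these bounds are uniform up to $J_u$. They are, because the hypotheses are global $W^{k,\infty}$ bounds on $\Sigma\setminus\overline{J_u}$ and the ODE is pointwise in $x$, so no estimate crosses the crack.

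The remaining properties follow from the construction. First, $v(x,0)=u(x)$ because $a(x,0)=0$. Second, $v$ is $W^{k,\infty}$ off $J_u\times(-\rho/2,\rho/2)$ and bounded with $L^\infty$ gradient. Since $J_u$ is a finite union of regular curves with finite $\mathcal H^1$ measure, $v\in SBV^p(\Sigma_\rho)$ with $J_v\subseteq J_u\times(-\rho/2,\rho/2)$. Finally, for \eqref{pw bound}, write
\[
\nabla v-(\nabla u|b)=\big(b\otimes\nabla_xa + a\nabla b\,\big|\,(\partial_3a-1)b\big).
\]
We have $|a|\le 2|x_3|$. Also $\nabla_x a(x,0)=0$ and $\partial_3\nabla_x a$ is bounded, so $|\nabla_x a|\le C|x_3|$. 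Lastly, $|\partial_3 a-1|=|1/P(x,a)-1|\le C|a|\le C|x_3|$. Together these give the pointwise bound.
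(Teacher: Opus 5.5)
Your proposal is correct and is essentially the paper's proof. The paper also sets $v=u+\Gamma b$, where $\Gamma$ solves $\partial_3\Gamma=1/\det(\nabla u+\Gamma\nabla b\,|\,b)$ with $\Gamma(\cdot,0)=0$ (written there via $w=u+x_3b$). The differences are only technical: you solve pointwise in $x$ and get the $x$-regularity by differentiating the ODE and applying Gronwall, whereas the paper exhausts $\Sigma\setminus\overline{J_u}$ by smooth sets and differentiates the identity $x_3=\int_0^{\Gamma}\det\nabla w(x_\alpha,t)\,dt$. The paper thereby gets the sharper $|\nabla_\alpha\Gamma|\le Cx_3^2$, but your $C|x_3|$ bound is already enough for \eqref{pw bound}.
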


    	\begin{proof}
		Let $\rho >0$ small. Define the function $w \in SBV^p (\Sigma_\rho,\R^3)$ as $w(x_\alpha,x_3):=u(x_\alpha)+x_3b(x_\alpha)$. By construction we also have $w \in W^{k ,\infty}(\Sigma_\rho \setminus \overline{J_u \times (-\rho/2,\rho/2)},\R^3)$ for every $k \geq 1$. Set $J_{u,\rho}:=J_u \times (-\rho/2,\rho/2)$ for brevity. We have
		\begin{equation}\label{cd1}
			\det \nabla w(x_\alpha,x_3)= \det (\nabla u(x_\alpha)+x_3 \nabla b(x_\alpha)|b(x_\alpha))=1+x_3 P(x_\alpha)+x_3^2 Q(x_\alpha),
		\end{equation}
		where $P$ and $Q$ are combinations of the components of $\nabla u$, $b$ and $\nabla b$ which are all elements of $L^\infty(\Sigma)$ by assumption, hence uniformly bounded. Therefore, for $x_3 \in (-\rho/2,\rho/2)$, we have $|\det \nabla w(x_\alpha,x_3)-1| \leq C|x_3|$ for every $x_\alpha \in \Sigma \setminus \overline{J_u}$, where $C=C(u,b)>0$.
		
		Let $\{\Omega^i\}_{i=1}^\infty$ be a family of open sets with smooth boundary such that for every $i \geq 1$, $\Omega^i \subset \Omega^{i+1}$, $\Omega^i \subset \Sigma \setminus \overline{J_u}$ and $\Omega^i \to \Sigma \setminus \overline{J_u}$ as $i \to +\infty$ in the Hausdorff sense. The sets $\Omega^i$ might not be connected, however, the number of connected components composing the sets $\Omega^i$'s is uniformly bounded in $i$. Set $\Omega^i_\rho:=\Omega^i \times (-\rho/2,\rho/2)$. Then, for every $i \geq 1$, we can find $\Gamma^i: \Omega^i_\rho \to \R$ such that $\Gamma^i(x_\alpha,\cdot)$ is the solution of the following ODE
		\begin{equation}\label{eq diff}
			\begin{cases*}
				\displaystyle \partial_3 \Gamma^i(x_\alpha,x_3)=\frac{1}{\det\nabla w(x_\alpha,\Gamma^i(x_\alpha,x_3))} & for $x_3 \in (-\rho/2,\rho/2)$, \\
				\Gamma^i(x_\alpha,0)=0;
			\end{cases*}
		\end{equation} 
		for $\rho>0$ small and for every $x_\alpha \in \Omega^i$. If $\Omega^i$ is not connected, we solve the problem in each connected component.
		The set $\Omega_\rho^i$ has Lipschitz boundary which gives $w \in C^\infty(\overline{\Omega^i_\rho},\R^3)$. Therefore, by standard theory of parameter dependent ODEs, we have that $\Gamma^i$ is smooth on $\overline{\Omega_\rho^i}$. Moreover, by the previous bound on $\det \nabla w$, we also have $|\Gamma^i(x_\alpha,x_3)-x_3| \leq Cx_3^2$ for every $i \geq 1$. Thus, we infer that the function $v^i(x_\alpha,x_3):=u(x_\alpha)+\Gamma^i(x_\alpha,x_3)b(x_\alpha)$ is smooth in $\overline{\Omega_\rho^i}$ for $\rho$ small enough.
		
		Differentiating \eqref{cd1} with respect to $x_1$, for $x_3 \in (-\rho/2,\rho/2)$ and $x_\alpha \in \Sigma \setminus \overline{J_u}$ we get
		$$
		|\partial_1 \det \nabla w(x_\alpha,x_3)| \leq |x_3 \partial_1 P(x_\alpha)|+|x_3^2 \partial Q(x_\alpha)| \leq C |x_3|,
		$$
		where $C$ may now depends also on the second derivatives of $u$ and $b$. Differentiating with respect to $x_1$ the identity $x_3=\int_0^{\Gamma^i(x_\alpha,x_3)} \det \nabla w(x_\alpha,t) \, dt$ we get
		$$
		0=\int_0^{\Gamma^i(x_\alpha,x_3)} \partial_1 \det \nabla w(x_\alpha,t) \, dt + \det \nabla w(x_\alpha, \Gamma^i(x_\alpha,x_3)) \partial_1 \Gamma^i(x_\alpha,x_3),
		$$
		for $x_\alpha \in \Omega_\rho^i$. Therefore, recalling that for $\rho$ small we have $\det \nabla w \in [1/2,2]$, 
		$$
		|\partial_1 \Gamma^i(x_\alpha,x_3)| \leq 2 \int_0^{\Gamma(x_\alpha,x_3)} |\partial_1 \nabla w(x_\alpha,t)| \, dt \leq C x_3^2 \qquad \mbox{ for $(x_\alpha,x_3) \in \Omega_\rho^i$}. 
		$$
		The bound for $|\partial_2 \Gamma^i|$ can be obtained analogously. Notice that the bounds on the derivatives of $\Gamma^i$ hold with a constant $C=C(u,b)>0$ independent of $i$.
		Moreover, by uniqueness of solution of the ODE \eqref{eq diff}, we have that $\Gamma^{i+1}|_{\Omega^i_\rho}=\Gamma^i$. Thus, we can define a global function $\Gamma \in W^{k ,\infty}(\Sigma_\rho \setminus \overline{J_{u,\rho}})$ for every $k \geq 1$ such that $\Gamma|_{\Omega^i_\rho}=\Gamma_i$ for every $i \geq 1$ and the bounds on the derivatives of $\Gamma$ hold for every $x \in \Sigma_\rho \setminus \overline{J_{u,\rho}}$. In particular, $J_\Gamma \subseteq \overline{J_{u,\rho}}$.
		
		By setting $v(x_\alpha,x_3):=u(x_\alpha)+\Gamma(x_\alpha,x_3)b(x_\alpha)$, we have $J_v \subseteq \overline{J_{u,\rho}}$ and,
		since $\Gamma$ satisfies \eqref{eq diff}, $\det(\nabla v(x))=1$ for almost every $x \in \Sigma_\rho$. Finally, observing that
		$$
		\nabla v=(\partial_1 u+ \partial_1 \Gamma b+ \Gamma \partial_1 b \ | \ \partial_2 u+ \partial_2 \Gamma b+ \Gamma \partial_2 b \ | \ \partial_3 \Gamma b)
		$$
		and using in turn that $|\partial_1 \Gamma|+|\partial_2 \Gamma| \leq C x_3^2$ and $|\partial_3 \Gamma -1|\leq C|x_3|$, we obtain \eqref{pw bound} for almost every $x \in \Sigma_\rho$ with $C=C(u,b)$.
	\end{proof}
	
    In the next result we show that the map constructed in Lemma~\ref{lemmalinearalgebra} can be modified so as to satisfy the incompressibility constraint $\det \nabla f_\rho = 1$.

	\begin{lemma}\label{corollariodet}
		Let $U$, $V$, and $\Sigma$ be the open sets appearing in Lemma~\ref{lemmalinearalgebra}.  
		Then, for $\rho>0$ sufficiently small, there exists a map $f_\rho$ satisfying (a)--(f) of Lemma~\ref{lemmalinearalgebra} and, in addition,
		\[
		\det \nabla f_\rho = 1
		\quad \text{for every } x \in \Sigma_{2} .
		\]
	\end{lemma}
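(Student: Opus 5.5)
The plan is to replace the convex interpolation $\varphi O_\rho x+(1-\varphi)x$ of Lemma~\ref{lemmalinearalgebra} by the \emph{time-$\theta_\rho$ flow of a divergence-free vector field}. By Liouville's theorem such a flow automatically has $\det\nabla f_\rho=1$. The field is chosen to coincide with the infinitesimal generator of the rotation $O_\rho$ where the cut-off equals $1$, and to vanish outside $V_2$.

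Take $x_0=0$. Use coordinates $s:=\kappa\cdot x_\alpha$, $t:=\kappa^\perp\cdot x_\alpha$ and $x_3$. Let $A$ be the skew matrix generating rotations in the plane spanned by $(\kappa,0)$ and $e_3$, oriented so that $\exp(\theta A)(\kappa,0)=(\cos\theta\,\kappa,\sin\theta)$. Set $\theta_\rho:=\arctan(\rho\zeta)$. Then $\exp(\theta_\rho A)$ is exactly the matrix $O_\rho$ of Lemma~\ref{lemmalinearalgebra}, since $\cos\theta_\rho=1/\sqrt{1+\rho^2\zeta^2}$ and $\sin\theta_\rho=\rho\zeta/\sqrt{1+\rho^2\zeta^2}$. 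Fix $U\Subset U'\Subset V$. Choose $\varphi\in C_c^\infty(V)$ with $\varphi\equiv1$ on $U'$, and $\chi\in C_c^\infty(-3,3)$ with $\chi\equiv1$ on $[-2,2]$. Put $H:=\varphi(x_\alpha)\chi(x_3)\tfrac12(s^2+x_3^2)$ and define
\[
X:=-\partial_3 H\,(\kappa,0)+\partial_s H\,e_3 ,
\]
with no component along $(\kappa^\perp,0)$. A direct computation gives $\operatorname{div}X=-\partial_s\partial_3H+\partial_3\partial_sH=0$. Moreover $X\in C_c^\infty(\mathbb R^3;\mathbb R^3)$, $X\equiv0$ outside $V\times(-3,3)$, and $X(x)=Ax$ wherever $\varphi=\chi=1$. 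Let $f_\rho$ be the flow of $X$ at time $\theta_\rho$. It is a $C^\infty$ diffeomorphism of $\mathbb R^3$, and $\det\nabla f_\rho\equiv1$ by Liouville.

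Next I verify (a)--(f). For (b), $X=0$ on $\Sigma_2\setminus V_2$, so $f_\rho=\mathrm{Id}$ there. For (a), a trajectory starting in $\overline U_2$ moves at most $\theta_\rho\|X\|_\infty\to0$. For $\rho$ small it therefore stays in $U'\times(-2,2)$, where $X=Ax$, so $f_\rho=\exp(\theta_\rho A)=O_\rho$ on $\overline U_2$. For (c), Gronwall estimates on the flow and its variational equations give $\|f_\rho-\mathrm{Id}-\theta_\rho X\|_{W^{k,\infty}}\le C_k\theta_\rho^2$ and $\|f_\rho-\mathrm{Id}\|_{W^{k,\infty}}\le C_k\theta_\rho\to0$. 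Property (d) then follows verbatim from (c) as in Lemma~\ref{lemmalinearalgebra}.

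For (e), recall $(\nabla f_\rho)^3_i=\partial_3(f_\rho)_i$. On $\Sigma_2$ we have $\chi\equiv1$, hence $X_i=-\varphi\,x_3\kappa_i$ for $i=1,2$, and so $\partial_3X_i=-\varphi\kappa_i$. The expansion in (c) together with $\theta_\rho/\rho\to\zeta$ gives
\[
\lim_{\rho\to0}\Big\|\tfrac1\rho(\nabla f_\rho)^3_i\Big\|_{L^\infty(\Sigma_2)}=|\zeta|\,\|\varphi\kappa_i\|_\infty\le|\zeta|.
\]
For (f), the proof in Lemma~\ref{lemmalinearalgebra} used only (c) and (e) through the chain-rule formula for $(r_s\wedge r_t)_3$. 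That argument therefore repeats unchanged and gives the bound $2|\zeta|$.

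The main obstacle is to keep the \emph{sharp} constants in (e) together with exact incompressibility. A generic divergence correction would add uncontrolled terms to $\partial_3 f_i$. The stream-function form of $X$ avoids this: the cut-off $\varphi$ depends only on $x_\alpha$, so $\partial_3X_\alpha$ carries no derivative of $\varphi$. The remaining delicate point is the $O(\theta_\rho^2)$ control in $W^{1,\infty}$ in (c), which shows that the first-order term $\theta_\rho X$ alone determines the limit in (e). I would check this carefully via the linearized flow equation $\tfrac{d}{d\tau}\nabla\Phi_\tau=\nabla X(\Phi_\tau)\nabla\Phi_\tau$.
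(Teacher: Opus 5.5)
Your construction is correct, and it takes a genuinely different route from the paper.

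\emph{The paper's route.} The paper does not build a new map. It takes the map $g_\rho$ of Lemma~\ref{lemmalinearalgebra} and keeps two pieces of data: the midplane trace $u_\rho=g_\rho(\cdot,0)$ and the normalized transversal derivative $b_\rho=\partial_3 g_\rho(\cdot,0)/\det\nabla g_\rho(\cdot,0)$, so that $\det(\nabla u_\rho\mid b_\rho)=1$. It then applies the Conti--Dolzmann ODE correction of Lemma~\ref{lemma1}: $f_\rho=u_\rho+\Gamma_\rho b_\rho$, where $\partial_3\Gamma_\rho=1/\det\nabla w_\rho(x_\alpha,\Gamma_\rho)$ and $w_\rho=u_\rho+x_3 b_\rho$. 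The properties are then checked as follows.
\begin{itemize}
\item[(a), (b):] there $w_\rho$ is already the volume-preserving map $O_\rho x$ or $\mathrm{Id}$, which forces $\Gamma_\rho=x_3$.
\item[(c):] this follows from $w_\rho\to\mathrm{Id}$.
\item[(e):] this follows from $\partial_3(f_\rho)_i=\partial_3\Gamma_\rho\,(b_\rho)_i$, property (e) of $g_\rho$, and $\partial_3\Gamma_\rho,\ \det\nabla g_\rho\to 1$.
\end{itemize}
The advantage of this route is economy. Lemma~\ref{lemma1} is needed anyway for the bulk part of the recovery sequence in Theorem~\ref{teor3+}, so one tool serves both purposes. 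It also works as a generic incompressible correction of an already constructed map close to the identity.

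\emph{Your route.} Your divergence-free flow has several advantages of its own.
\begin{itemize}
\item Exact incompressibility comes directly from Liouville. There is no need to control the interval of existence of an ODE in the $x_3$ variable.
\item You get a global $C^\infty$ diffeomorphism of $\mathbb{R}^3$. Its inverse is the flow at time $-\theta_\rho$, which is the same construction with $\zeta$ replaced by $-\zeta$. So the properties of $f_\rho^{-1}$ needed in Remark~\ref{ossf_rho2} are immediate rather than derived from the identity $\nabla f_\rho^{-1}(f_\rho)=(\nabla f_\rho)^{-1}$.
\item The expansion $f_\rho=\mathrm{Id}+\theta_\rho X+O(\theta_\rho^2)$ in every $W^{k,\infty}$ gives the sharp constant $|\zeta|$ in (e) directly. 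This works because the stream-function form keeps derivatives of the cut-off out of $\partial_3 X_\alpha$, as you observed.
\end{itemize}

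One small point to add: take $0\le\varphi\le 1$. You use this implicitly in the bound $\|\varphi\kappa_i\|_\infty\le|\kappa_i|$.
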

	
	\begin{proof}
        Without loss of generality, we may assume that $\Sigma$ is connected; otherwise, it suffices to argue on each
		connected component separately.
        Let $\{g_\rho\}_\rho$ be the family of diffeomorphisms given by Lemma~\ref{lemmalinearalgebra}.
		  Consider the sequence of functions
		\[
		u_\rho(x_\alpha) := g_\rho(x_\alpha,0).
		\]
	    Since, by Lemma~\ref{lemmalinearalgebra},
		$g_\rho \to \mathrm{Id}$ in $W^{1,\infty}(\Sigma_2;\R^3)$, there exists $\rho>0$ sufficiently small such that
		\[
		\det(\nabla g_\rho) \geq \tfrac{1}{2}
		\]
		uniformly in $\Sigma_2$. Hence, the sequence
		\[
		b_\rho(x_\alpha) := \frac{\partial_3 g_\rho(x_\alpha,0)}{\det(\nabla g_\rho(x_\alpha,0))}
		\]
		is well defined.
		
		The assumptions of Lemma~\ref{lemma1} are therefore satisfied with $J_{u_\rho}=\emptyset$. 
        Moreover, using again that $g_\rho \to \mathrm{Id}$ in $W^{1,\infty}(\Sigma_2;\R^3)$, for $\rho$ sufficiently small the ODE \eqref{eq diff} with
        \[
        w_\rho(x_\alpha,x_3) := u_\rho(x_\alpha) + x_3 b_\rho(x_\alpha)
        \]
        is solvable on the full interval $(-1,1)$, giving $\Gamma_\rho \colon \Sigma_2 \to \R$ which solves the ODE. Hence, we infer the existence of a sequence of $C^\infty$ functions $f_\rho$ on $\Sigma_2$ such that $f_\rho(x_\alpha,0)=u_\rho(x_\alpha)$ and
		\[
		\det(\nabla f_\rho(x)) = 1 \qquad \mbox{for every $x \in \Sigma_2$}.
		\]
		We recall that the function $f_\rho$ is defined in the following way,
		\[
		f_\rho(x) = u_\rho(x_\alpha) + \Gamma_\rho(x)\, b_\rho(x_\alpha),
		\]
		where $\Gamma_\rho$ solves the ODE and thus the integral equation
		\[
		\Gamma_\rho(x_\alpha,x_3)
		= \int_0^{x_3} \frac{1}{\det \nabla w_\rho(x_\alpha,\Gamma(x_\alpha,s))}\, ds.
		\]

        We now verify that $f_\rho$ satisfies properties~(a)--(f).
        
        In $\overline{U}_2$, by Lemma \ref{lemmalinearalgebra} (a), $g_\rho$ acts as the isometry $O_\rho$, so $u_\rho(x_\alpha) = O_\rho(x_\alpha, 0)$, $b_\rho(x_\alpha) = O_\rho\, e_3$ (since $\det O_\rho = 1$), and therefore $w_\rho(x) = O_\rho\, x$. It follows that $\det \nabla w_\rho = 1$, so the ODE gives $\Gamma_\rho(x_\alpha, x_3) = x_3$. By construction, $f_\rho(x) = O_\rho\, x = g_\rho(x)$ in $\overline{U}_2$, which yields~(a).

        Similarly, by Lemma \ref{lemmalinearalgebra} (b), in $\Sigma_2 \setminus \overline{V_2}$, $g_\rho = \mathrm{Id}$, so $u_\rho(x_\alpha) = (x_\alpha, 0)$, $b_\rho = e_3$, $w_\rho = \mathrm{Id}$, and again $\Gamma_\rho = x_3$, giving $f_\rho = \mathrm{Id}$, which yields (b).

        We now prove (c). Since $g_\rho \to \mathrm{Id}$ in $W^{k,\infty}(\Sigma_2;\mathbb{R}^3)$ for every $k \geq 1$, we have $u_\rho \to (\cdot, 0)$ and $b_\rho \to e_3$ in $W^{k,\infty}(\Sigma_2;\mathbb{R}^3)$, hence $w_\rho \to \mathrm{Id}$ in $W^{k,\infty}(\Sigma_2;\mathbb{R}^3)$.
        Consequently,
		   \[
		   \det \nabla w_\rho \to 1 \quad \text{in } W^{k-1,\infty}(\Sigma_2;\R^3),
		   \]
        which implies $\Gamma_\rho \to x_3$ in $W^{k-1,\infty}(\Sigma_2;\R^3)$, and therefore $f_\rho \to \mathrm{Id}$ in $W^{k-1,\infty}(\Sigma_2;\mathbb{R}^3)$ for every $k \geq 1$.
        

        Point (d) follows from~(a) and~(c) by the same argument used in the proof of Lemma~\ref{lemmalinearalgebra}.
		
		It remains to establish (e) and (f). Since (f) is a direct consequence of (c) and (e), it suffices to prove (e). 
        
        This follows immediately by estimating, for $i=1,2$ and every $x \in \Sigma_2$,
		\[
        \frac{1}{\rho} \left|(\nabla f_\rho)^3_{i}(x)\right|
        = \frac{1}{\rho} |\partial_3 \Gamma_\rho(x)|\, \frac{|(\partial_3 g_\rho(x_\alpha,0))_i|}{|\det (\nabla g_\rho(x_\alpha,0))|} \leq 2 \frac{|(\partial_3 g_\rho(x_\alpha,0))_i|}{\rho} \leq 2|\zeta|,
        \]
		  where we used that $\det \nabla g_\rho \to 1$ and $\partial_3\Gamma_\rho \to 1$ uniformly as $\rho \to 0$, together with property~(e) of Lemma~\ref{lemmalinearalgebra}.
	\end{proof}

    \begin{oss}\label{ossf_rho2}
		As in Remark \ref{ossf_rho}, using (b) and (c) of Lemma  \ref{corollariodet}, we infer that for $\rho$ sufficiently small, the map $f_\rho$ is a $C^\infty$ diffeomorphism from $\Sigma_2$ to its image. Moreover, for every $x \in \Sigma_2$,
		\[
		\nabla f^{-1}_\rho\bigl(f_\rho(x)\bigr) = \bigl[\nabla f_\rho(x)\bigr]^{-1}.
		\]
		Again, using this identity, one can show that properties (c),(e) and (f) Lemma \ref{corollariodet} also hold for $f^{-1}_\rho$ on the image of $\Sigma_2$ through $f_\rho$.
	\end{oss}

	We are now in a position to prove an analogue of Theorem~\ref{teorgamma_limsup} for the upper bound of the $\Gamma$-$\limsup$.

	\begin{teo}\label{teor3+}
		Let $u \in GSBV^p(\Sigma_1;\mathbb{R}^3)$. Then
		\[
		\Gamma\text{-}\limsup_{\rho \to 0} \rho^{-1}\mathcal{G}_\rho(u)
		\le \overline{\mathcal{G}_0^{\,w}}(u),
		\]
		where $\overline{\mathcal{G}_0^{\,w}}(u)$ denotes the relaxation of $\mathcal{G}_0^{\,w}(u)$.
	\end{teo}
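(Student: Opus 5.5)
As in Theorem~\ref{teorgamma_limsup}, I would first prove the pointwise bound $\Gamma\text{-}\limsup_{\rho\to0}\rho^{-1}\mathcal G_\rho(u)\le\mathcal G_0^w(u)$ and then pass to the relaxation by lower semicontinuity of the $\Gamma$-$\limsup$. It therefore suffices to treat $u\in Y$. The scheme is the one of Theorem~\ref{teorgamma_limsup}, with three incompressible ingredients substituted: Proposition~\ref{propu_jbuona} replaces Lemma~\ref{lemmasucc}, Lemma~\ref{lemma1} replaces the linear ansatz $\widehat v_j+\rho x_3\widehat h_j$, and Lemma~\ref{corollariodet} replaces Lemma~\ref{lemmalinearalgebra}.

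\emph{Step 1: regular in-plane approximation and optimal director.} Since $u\in \mathrm{Aff}^*(\Sigma\setminus\overline{J_u};\mathbb R^3)\cap\widehat{\mathcal W}(\Sigma;\mathbb R^3)$, Proposition~\ref{propu_jbuona} gives $u_j\in W^{k,\infty}(\Sigma\setminus\overline{J_{u_j}})$ for all $k$. These maps satisfy $|\partial_1u_j\wedge\partial_2u_j|\ge\theta$, have uniform $W^{1,\infty}$ bounds, $\mathcal H^1(J_{u_j}\triangle J_u)\to0$, and equicontinuous traces on $J_u\cap A_\varepsilon$.

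I then need a director $b_j\in W^{k,\infty}(\Sigma\setminus\overline{J_{u_j}})$ with $\det(\nabla u_j\mid b_j)=1$ and $\int W(\nabla u_j\mid b_j)\le\int W_0(\nabla u_j)+\varepsilon$. The set of admissible $b$ is an affine plane $\{b:\ b\cdot n_j=|\partial_1u_j\wedge\partial_2u_j|^{-1}\}$, where $n_j$ is the unit normal to $\mathrm{span}(\nabla u_j)$. I will take $b_j=n_j/|\partial_1u_j\wedge\partial_2u_j|+\tau_j$ with $\tau_j$ tangential. To choose $\tau_j$, pick near-optimal tangential parts on a fine grid, which is possible by continuity of $W$ on $\mathrm{SL}(3)$ and the uniform bounds from \eqref{3.3.2}. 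Then mollify; the tangential part does not affect the determinant constraint, so mollifying it keeps $\det=1$. The analogue of Lemma~\ref{lemmacontrollodet} with this exact constraint is the thing I would write out carefully.

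Lemma~\ref{lemma1} then yields $v_j$ on $\Sigma_\rho$ with $\det\nabla v_j=1$, $v_j(\cdot,0)=u_j$, $J_{v_j}\subseteq J_{u_j}\times(-\rho/2,\rho/2)$, and $|\nabla v_j-(\nabla u_j\mid b_j)|\le C_j|x_3|$. Rescale by setting $w_{j,\rho}(x_\alpha,x_3):=v_j(x_\alpha,\rho x_3)$ on $\Sigma_1$ (or on $\Sigma_2$, after shrinking $\rho$). Then $(\nabla_\alpha w_{j,\rho}\mid\rho^{-1}\partial_3w_{j,\rho})$ has determinant $1$ and converges uniformly to $(\nabla u_j\mid b_j)$ at rate $C_j\rho$.

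\emph{Step 2: rotating the crack normal.} On $J_u\cap A_\varepsilon$ I partition each segment into pieces $\alpha_i'$ with neighbourhoods $U^i\Subset V^i\Subset E$, where $\mathcal L^2(E)<\varepsilon$, and choose almost minimisers $\zeta_{i,j}$ as in \eqref{eqk_ibeta}, bounded as in \eqref{eqk_i}. Lemma~\ref{corollariodet} provides volume-preserving $f_{j,\rho}$ satisfying (a)--(f); glue these over disjoint $V^i$ and set $u_{j,\rho}:=w_{j,\rho}\circ f_{j,\rho}^{-1}$. Writing $g=f^{-1}_{j,\rho}$, the chain rule gives
$$\Big(\nabla_\alpha u_{j,\rho}\,\Big|\,\tfrac1\rho\partial_3u_{j,\rho}\Big)=\Big(\nabla_\alpha w_{j,\rho}\,\Big|\,\tfrac1\rho\partial_3w_{j,\rho}\Big)(g)\,D_\rho\nabla g\,D_\rho^{-1},$$
with $D_\rho=\mathrm{diag}(1,1,\rho)$. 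The factor $D_\rho\nabla gD_\rho^{-1}$ has determinant $1$ and is bounded, by property (e) and Remark~\ref{ossf_rho2}. So the incompressibility constraint holds exactly, and $(\widetilde A_2)$ gives a uniform $L^\infty$ bound on $W$.

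\emph{Step 3: energy estimates.} Outside $E\times(-1/2,1/2)$ we have $g=\mathrm{Id}$, and dominated convergence as $\rho\to0$, followed by the director choice and Vitali as $j\to\infty$, gives $\int_\Sigma W_0(\nabla u)+C\varepsilon$. Inside $E\times(-1/2,1/2)$ the bulk contribution is $\le C\varepsilon$. The surface part is handled exactly as in Step~2 of Theorem~\ref{teorgamma_limsup}, using (a), (f), \eqref{eqestimatepsi2} and the reverse Fatou lemma. The remaining jump set has small $\mathcal H^1$-measure, namely $J_{u_j}\setminus A_\varepsilon$, and by (B$_3$) and the $L^\infty$ bounds it contributes at most $C\varepsilon$.

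\emph{Main obstacle.} I expect the director lemma to be the main obstacle: producing $b_j$ that is $W^{k,\infty}$ away from the jump set, satisfies the exact constraint $\det=1$, and is nearly optimal for $W_0$. The second obstacle is Lemma~\ref{lemma1}'s requirement that $J_{u_j}$ consist of regular curves, which Proposition~\ref{propu_jbuona}(iii) supplies. The diagonal order is: $\rho\to0$ first, then $j\to\infty$, then $\varepsilon\to0$.
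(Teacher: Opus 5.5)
Your proposal is correct, and its architecture is the paper's. It uses Proposition~\ref{propu_jbuona} for the in-plane approximants $u_j$ and Lemma~\ref{lemma1} to build $v_j$ with $\det\nabla v_j=1$. It then rescales via $w_{j,\rho}(x)=v_j(x_\alpha,\rho x_3)$ and applies volume-preserving rotations from Lemma~\ref{corollariodet} placed only along $J_u\cap A_\varepsilon$. The bulk term is split over $E\times(-1/2,1/2)$ and its complement, and the surface term is treated as in Theorem~\ref{teorgamma_limsup}.

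The one genuine difference is how you build the director $b_j$. The paper never needs an incompressible analogue of Lemma~\ref{lemmacontrollodet}. Since $u\in Y$ is piecewise affine, it takes an exact minimiser $b_i$ of $W(A_i\mid\cdot)$ on each affine piece (coercivity gives existence). It writes the resulting piecewise constant $b=\lambda\,\partial_1u+\Lambda\,\partial_2u+|\partial_1u\wedge\partial_2u|^{-2}\,\partial_1u\wedge\partial_2u$ and smooths only the two scalar coefficients $\lambda,\Lambda$, with uniform $L^\infty$ bounds. It then transports them to $u_j$ by the same formula. The constraint $\det(\nabla u_j\mid b_j)=1$ holds identically, and $\int W(\nabla u_j\mid b_j)\to\int W(\nabla u\mid b)=\int W_0(\nabla u)$ follows directly from Vitali's theorem, with no $\varepsilon$ loss.

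Your route builds a near-optimal smooth director separately for each $u_j$, via a grid and mollification. It is more general, since it does not use the piecewise-affine structure of $u$. It costs a separate lemma (uniform continuity of $W$ on compact subsets of $\mathrm{SL}(3)$, plus control of grid cells meeting $J_{u_j}$). It also needs the extra step $\int W_0(\nabla u_j)\to\int W_0(\nabla u)$, which relies on continuity of $W_0$ on rank-two matrices and the uniform bounds from \eqref{3.3.2} and Proposition~\ref{propu_jbuona}(i),(iv).

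One detail to fix when you write it out. Mollifying a tangential vector field $\tau_j$ does not keep it tangential. With $N_j:=|\partial_1u_j\wedge\partial_2u_j|^{-2}\,\partial_1u_j\wedge\partial_2u_j$ one has $\det(\nabla u_j\mid N_j+\tau)=1+(\partial_1u_j\wedge\partial_2u_j)\cdot\tau$. So you must mollify the coefficients of $\tau_j$ in the moving frame $(\partial_1u_j,\partial_2u_j)$, or reproject onto $\mathrm{span}(\nabla u_j)$, exactly as the paper does with $\lambda_j,\Lambda_j$.
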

	
	\begin{proof}
    We divide the proof in three steps.
    
	\textit{Step 1: Construction of the recovery sequence}.	Let $u \in Y$. Its trace on $\Sigma$ belongs to the space
		\[
		\mathrm{Aff}^*(\Sigma \setminus \overline{J_u};\mathbb{R}^3)
		\cap \widehat{\mathcal{W}}(\Sigma;\mathbb{R}^3).
		\]
		In particular, there exists a finite family of open sets $\{T^i\}_{i=1}^N$ such that
		$\mathcal{L}^2\!\left(\Sigma \setminus \bigcup_{i=1}^N T^i\right)=0$,
		$\mathcal{L}^2(\partial T^i)=0$ for every $i=1,\dots,N$, and
		$\nabla u|_{T^i}=A_i$ with $A_i \in \mathbb{R}^{3\times 2}$ satisfying
		\[
		|A_i^1 \wedge A_i^2| \ge \eta > 0,
		\qquad \text{for every } i=1,\dots,N.
		\]
		
		By the coercivity of $W$, for each $A_i$ there exists $b_i \in \mathbb{R}^3$ such that
		$W(A_i \mid b_i)=W_0(A_i)$. Hence, we can find a piecewise constant function
		$b:\Sigma \to \mathbb{R}^3$ such that $b|_{T^i}=b_i$ for every $i=1,\dots,N$ and
		\begin{equation}\label{eqnabla}
			\det(\nabla u(x)\mid b(x))=1,
			\qquad
			W(\nabla u(x)\mid b(x))=W_0(\nabla u(x))
			\quad \text{for a.e. } x \in \Sigma.
		\end{equation}
		
		Since $b$ satisfies the determinant constraint in \eqref{eqnabla}, for a.e.\ $x\in\Sigma$
		the function $b$ can be written as
		\[
		b(x)=\lambda(x) \, \partial_1 u(x)+\Lambda(x) \, \partial_2 u(x)
		+|\partial_1 u(x)\wedge \partial_2 u(x)|^{-2}
		\,\partial_1 u(x)\wedge \partial_2 u(x),
		\]
		where $\lambda,\Lambda:\Sigma \to \mathbb{R}$ are piecewise constant functions.
		Notice that $b$ is piecewise constant on a finite partition of $\Sigma$, hence
		$b\in L^\infty(\Sigma)$, which in particular implies $\lambda,\Lambda\in L^\infty(\Sigma)$.
		Therefore, there exist sequences $\lambda_j,\Lambda_j\in C_c^\infty(\Sigma)$ such that
		$\lambda_j\to \lambda$ and $\Lambda_j\to \Lambda$ in $L^p(\Sigma)$ as $j\to+\infty$, with
		\begin{equation}\label{eq30}
		\|\lambda_j\|_{L^\infty(\Sigma)}\le \|\lambda\|_{L^\infty(\Sigma)},
		\qquad
		\|\Lambda_j\|_{L^\infty(\Sigma)}\le \|\Lambda\|_{L^\infty(\Sigma)}.
		\end{equation}
		
		Moreover, by applying Proposition~\ref{propu_jbuona} to $u$, we find a sequence
		\[
		u_j \in SBV^p(\Sigma;\mathbb{R}^3)
		\cap W^{k,\infty}(\Sigma \setminus \overline{J_{u_j}};\mathbb{R}^3)
		\quad \text{for every } k\ge 1,
		\]
		satisfying properties (i)--(iv) of the proposition.
		
		For every $x \in \Sigma \setminus J_u$, we define
		\[
		b_j(x)
		:= \lambda_j(x) \,\partial_1 u_j(x)+\Lambda_j(x) \,\partial_2 u_j(x)
		+|\partial_1 u_j(x)\wedge \partial_2 u_j(x)|^{-2}
		\,\partial_1 u_j(x)\wedge \partial_2 u_j(x).
		\]
		By construction, $J_{b_j}\subseteq J_{u_j}\Subset \Sigma$ and
		$b_j \in W^{k,p}(\Sigma \setminus \overline{J_{b_j}};\mathbb{R}^3)$ for every $k\ge 1$.
		Furthermore, $b_j \to b$ pointwise a.e.\ in $\Sigma$ and, by properties (i)--(v) of
		Proposition~\ref{propu_jbuona} and \eqref{eq30}, for every $j\ge 1$,
		\begin{equation}
		\|b_j\|_{L^\infty}(\Sigma;\R^3)
		\le
		8\big(\|\lambda\|_{L^\infty(\Sigma)}+\|\Lambda\|_{L^\infty(\Sigma)}\big)\|\nabla u\|_{L^\infty(\Sigma;\mathbb{M}^{3\times 2})}
		+\frac{64}{\theta^2}\|\nabla u\|_{L^2(\Sigma;\mathbb{M}^{3\times 2})}^2.
		\end{equation}
		This implies that $b_j \to b$ in $L^p(\Sigma;\mathbb{R}^3)$.
		
		We define $J_{j,\rho}:=J_{u_j}\times(-\rho/2,\rho/2)$ for $j\ge 1$ and $\rho>0$.
		Notice that the pair $(u_j,b_j)$ satisfies the assumptions of Lemma~\ref{lemma1}
		for every $j\ge 1$. Therefore, letting $j\to+\infty$, we can select a sequence
		$\rho_j\to 0$ and construct a family
		\[
		v_j \in W^{k,\infty}(\Sigma_{\rho_j}\setminus \overline{J_{j,\rho_j}};\mathbb{R}^3)
		\cap SBV^p(\Sigma_{\rho_j};\mathbb{R}^3)
		\quad \text{for every } k\ge 1,
		\]
		such that $v_j(x_\alpha,0)=u_j(x_\alpha)$, $J_{v_j}\subseteq J_{j,\rho_j}$, and
		\[
		\det(\nabla v_j(x))=1
		\quad \text{for a.e. } x\in \Sigma_{\rho_j}.
		\]
		Moreover, by \eqref{pw bound}, for every $j\ge 1$ there exists a constant $C_j>0$ such that, for a.e.\
		$x\in \Sigma_{\rho_j}$,
		\begin{equation}\label{eqb_j}
			\big|\nabla v_j(x)-(\nabla u_j(x_\alpha)\mid b_j(x_\alpha))\big|
			\le C_j |x_3|.
		\end{equation}
        We now focus on the jump set of $u_j$. By property~(v) of Proposition~\ref{propu_jbuona}, for every $\varepsilon > 0$ and $j$ sufficiently large, there exists an open set $A_\varepsilon \subseteq \Sigma$ such that $J_{u_j} \cap A_\varepsilon = J_u \cap A_\varepsilon$, $\mathcal{H}^1(J_{u_j} \setminus A_\varepsilon) < \varepsilon$, and $J_u \cap A_\varepsilon$ is a finite union of disjoint segments. Thus, arguing as in Step~1 of Theorem~\ref{teorgamma_limsup} (with Lemma~\ref{corollariodet} in place of Lemma~\ref{lemmalinearalgebra} to ensure the incompressibility constraint $\det \nabla f_{j,\rho} = 1$ and with $J_{u_j} \cap A_\varepsilon$ in place of $J_u$), we construct a map $f_{j,\rho} : \Sigma_2 \to \mathbb{R}^3$ satisfying properties~(a)--(f) of Lemma~\ref{lemmalinearalgebra}, $\det \nabla f_{j,\rho} = 1$ on $\Sigma_2$, and $f_{j,\rho} \to \mathrm{Id}$ in $W^{1,\infty}(\Sigma_2;\mathbb{R}^3)$ as $\rho \to 0$. We also find a set $E \subset \Sigma$ such that $\mathcal{L}^2(E) \leq \varepsilon$, $J_u \cap A_\varepsilon \subset E$, and $f_{j,\rho} \equiv \text{Id}$ on $\Sigma_2 \setminus \left(E \times (-1,1) \right)$.

		Next, for every $\rho \leq \rho_j/2$, we denote by
		\[
		w_{j,\rho}(x):=v_j(x_\alpha,\rho x_3)
		\]
		and finally, we define our \emph{recovery sequence} as
		\[
		u_{j,\rho}:=w_{j,\rho}(g_{j,\rho}(x)),
		\]
		where \( g_{j,\rho}=f^{-1}_{j,\rho} \).
		We note that, by Remark \ref{ossf_rho}, for \(\rho\) sufficiently small we have that $g_{j,\rho}$ is a $C^\infty$ diffeomorphism such that 
		\( g_{j,\rho}(\Sigma_1)\subseteq \Sigma_2 \); hence the function \( {u}_{j,\rho} \)
		is well defined.
        By construction \(  u_{j,\rho} \in SBV^p(\Sigma_1,\mathbb R^3) \). Moreover, $J_{{u}_{j,\rho}} \cap \left(A_\varepsilon \times (-1/2,1/2) \right) = J_{{u}_{j}} \cap \left(A_\varepsilon \times (-1/2,1/2) \right)$, and
	       \[
	       J_{{u}_{j,\rho}} \subseteq g^{-1}_{j,\rho}(J_{u_j} \times (-1/2,1/2))=f_{j,\rho}(J_{u_j} \times (-1/2,1/2)).
	       \]
        We can extend $u_j$ and $u$ to $\Sigma_1$ by setting them constant in $x_3$. Recalling Lemma \ref{lemma1}, we infer that for every $j \geq 1$
        \begin{equation}\label{eq31}
        \lim_{\rho \to 0} \Vert w_{j,\rho}-u_j \Vert_{L^\infty(\Sigma_1;\R^3)}=0.
        \end{equation}
		Furthermore, by a change of variables given by \( g_{j,\rho} \)
	and since \( g_{j,\rho}\to \text{Id} \) in \( W^{1,\infty} \), we obtain
    \[
    \begin{aligned}
        \limsup_{ j \to +\infty}\limsup_{\rho \to 0} \Vert u_{j,\rho}-u\Vert_{L^p(\Sigma_1)} & \le \limsup_{ j \to +\infty}\limsup_{\rho \to 0} \Vert u_{j,\rho}-w_{j,\rho}\Vert_{L^p}+\limsup_{ j \to +\infty}\limsup_{\rho \to 0} \Vert w_{j,\rho}-u\Vert_{L^p} \\
        & = \limsup_{ j \to +\infty}\limsup_{\rho \to 0} \Vert w_{j,\rho}-u\Vert_{L^p} = \limsup_{ j \to +\infty} \Vert u_j-u\Vert_{L^p}=0,
    \end{aligned}
    \]
    from which the convergence in \(L^p\) follows.
    

\textit{Step 2: Estimate of the surface energy}. Let us now consider the surface part of $\rho^{-1}\mathcal G_\rho$. Arguing as in the beginning of Step~2 of Theorem~\ref{teorgamma_limsup}, we infer that
    \begin{equation}\label{eq32}
        \psi_\rho([u_{j,\rho}],\nu_{u_{j,\rho}}) \leq C \qquad \mbox{on $J_{u_{j,\rho}}$}.
    \end{equation}
    Therefore, using that $\mathcal{H}^2\left(J_{u_{j,\rho}} \setminus \left(A_\varepsilon \times (-1/2,1/2) \right) \right) \leq \varepsilon$ by construction, we estimate
	\[
	\begin{aligned}
		\limsup_{\rho \to 0}
		\int_{J_{ u_{j,\rho}}}
		\psi_\rho\big([ u_{j,\rho}],\nu_{ u_{j,\rho}}\big)\, d\mathcal{H}^2
		&\le
		\limsup_{\rho \to 0}
		\int_{J_{ u_{j,\rho}} \cap \left(A_\varepsilon \times (-1/2,1/2) \right)}
		\psi_\rho\big([ u_{j,\rho}],\nu_{ u_{j,\rho}}\big)\, d\mathcal{H}^2 \\
		&\quad +
		\limsup_{\rho \to 0}
		\int_{J_{ u_{j,\rho}} \setminus \left(A_\varepsilon \times (-1/2,1/2) \right)}
		\psi_\rho\big([ u_{j,\rho}],\nu_{ u_{j,\rho}}\big)\, d\mathcal{H}^2 \\
		&\le
		\limsup_{\rho \to 0}
		\int_{J_{ u_{j,\rho}} \cap \left(A_\varepsilon \times (-1/2,1/2) \right)}
		\psi_\rho\big([ u_{j,\rho}],\nu_{ u_{j,\rho}}\big)\, d\mathcal{H}^2
		+ C \,\varepsilon.
	\end{aligned}
	\]
	Reasoning as in Step~2 of Theorem~\ref{teorgamma_limsup}, using Lemma~\ref{corollariodet} in place of Lemma~\ref{lemmalinearalgebra}, together with \eqref{eq32} and \eqref{eqb_j}, we obtain
\begin{equation}\label{eq5+}
	\limsup_{\rho \to 0} \int_{J_{u_{j,\rho}}} \psi_\rho([u_{j,\rho}], \nu_{u_{j,\rho}})\, d\mathcal{H}^2
	\le \int_{J_{u_j}} \psi_0([u_j], \nu_{u_j})\, d\mathcal{H}^1 + C\varepsilon.
\end{equation}
Since $u_j^\pm \to u^\pm$ uniformly on $\Sigma$ by Proposition~\ref{propu_jbuona} and $\mathcal{H}^1(J_{u_j} \,\triangle\, J_u) \to 0$, the upper semicontinuity of $\psi_0$ and the reverse Fatou lemma give
\begin{equation}\label{eqlimsupj+}
	\limsup_{j \to +\infty} \int_{J_{u_j}} \psi_0([u_j], \nu_{u_j})\, d\mathcal{H}^1
	\le \int_{J_u} \psi_0([u], \nu_u)\, d\mathcal{H}^1.
\end{equation}
	
    \textit{Step 3: Estimate of the bulk energy}. Notice that the derivative of ${u}_{j,\rho}$ is given by
	$$
	\nabla {u}_{j,\rho}(x)=\nabla w_{j,\rho}(g_{j,\rho}(x))\nabla g_{j,\rho}(x).
	$$
    Since $\det(\nabla v_j) = 1$ on $\Sigma_{\rho_j}$, $\|\nabla v_j\|_{L^\infty} \leq C_j$ for a constant $C_j > 0$ depending on $j$, and $g_{j,\rho} \to \mathrm{Id}$ in $W^{1,\infty}(\Sigma_2;\mathbb{R}^3)$, reasoning as in Step~3 of Theorem~\ref{teorgamma_limsup} we obtain
    \begin{equation}\label{eq33}
        \det\!\left(\nabla_\alpha u_{j,\rho}\,\bigg|\,
        \frac{1}{\rho}\partial_3 u_{j,\rho}\right) =1,
        \quad \text{and} \quad 
        \left\Vert\left(\nabla_\alpha u_{j,\rho}\,\bigg|\,
        \frac{1}{\rho}\partial_3 u_{j,\rho}\right)\right\Vert_{L^\infty(\Sigma_1;\mathbb{M}^{3 \times 3})} \leq C+ \rho C_j 
        \qquad \text{a.e.\ in } \Sigma_1.
    \end{equation}
    
    
    We split the bulk integral into contributions from $E \times (-1/2,1/2)$ and $\Sigma_1 \setminus (E \times (-1/2,1/2))$. 
	Since $\mathcal L^3\big(E \times (-1/2,1/2)  \le\varepsilon $, using \eqref{eq33} and $(\widetilde{A}_2)$, we infer 
	$$
	\limsup_{\rho \to 0} \int_{E \times (-1/2,1/2))}W\bigg(\nabla_\alpha u_{j,\rho}\bigg|\frac{1}{\rho}\partial_3 u_{j,\rho}\bigg)\,dx\le C \varepsilon.
	$$
	On $\Sigma_1 \setminus (E \times (-1/2,1/2))$, since $g_{j,\rho} \equiv \mathrm{Id}$ there, using \eqref{eqb_j}, the continuity of $W$, and the dominated convergence theorem (justified by \eqref{eq33} and $(\widetilde{A}_2)$),
		\[
		\begin{aligned}
				&\limsup_{ j \to+\infty}\limsup_{\rho \to 0} 
			\int_{\Sigma_1\setminus \big( E \times (-1/2,1/2)\big)}
			W\!\left(\nabla_\alpha w_{j,\rho}\,\bigg|\,
			\frac{1}{\rho}\partial_3 w_{j,\rho}\right)\,dx \\
			&\le \limsup_{j \to \infty}\limsup_{\rho \to 0}
			\int_{\Sigma_1}
			W\!\left(
			\nabla_\alpha w_{j,\rho}(x)\,\bigg|\,\frac{1}{\rho}\partial_3 w_{j,\rho}(x)
			\right)\, dx \\
			&=
			\limsup_{j \to \infty}
			\int_{\Sigma_1}
			W\!\left(
			\nabla u_j(x)\,\big|\, b_j(x)
			\right)\, dx.
		\end{aligned}
		\]
		Finally, by Vitali's convergence theorem, using that $b_j \to b$ in $L^p(\Sigma;\mathbb{R}^3)$, $\nabla u_j \to \nabla u$ in $L^p(\Sigma;\mathbb{M}^{3 \times 2})$, $\det(\nabla u_j \mid b_j) = 1$, the $p$-growth of $W$ on $\mathrm{SL}(3)$, and \eqref{eqnabla},
        \[
            \limsup_{j \to \infty}
			\int_{\Sigma_1}
			W\!\left(
			\nabla u_j(x)\,\big|\, b_j(x)
			\right)\, dx
			=
			\int_{\Sigma_1}
			W\!\left(
			\nabla u(x)\mid b(x)
			\right)\, dx 
			=
			\int_{\Sigma_1}
			W_0\!\left(
			\nabla u(x)
			\right)\, dx .
        \]
        Taking also into account \eqref{eq5+} and \eqref{eqlimsupj+}, this proves that
        \[
	\Gamma\text{-}\limsup_{\rho\to 0}\rho^{-1}\mathcal{G}_\rho(u)
	\le 
	\mathcal{G}^w_0(u) +C\varepsilon
	\qquad\text{for every }u\in Y.
	\]   
	Since $\varepsilon$ is arbitrary, we conclude that  
	\[
	\Gamma\text{-}\limsup_{\rho\to 0}\rho^{-1}\mathcal{G}_\rho(u)
	\le 
	\mathcal{G}^w_0(u)
	\qquad\text{for every }u\in Y.
	\]
	For the remaining functions $u\in GSBV^p(\Sigma_1,\mathbb R^3)\setminus Y$, the inequality above is trivially satisfied.  
	By semicontinuity of the $\Gamma\text{-}\limsup$, the assertion follows.
	\end{proof}

    We are now in a position to prove the main result in the incompressible case.
    
    \begin{teo}\label{teorfinal2}
	Let $\Sigma$ be a bounded open subset of $\mathbb R^2$ with Lipschitz boundary and let $1<p<\infty$.  
	Let $W $ satisfy assumptions $(\Tilde{A}_1)-(\Tilde{A}_2)$, and let $\psi$ satisfy hypotheses $(B_1)-(B_5)$.  
	Then,
	\[
	\Gamma\text{-}\lim_{\rho \to 0}\rho^{-1}\mathcal{G}_\rho=\mathcal{G}_0
	\]
	in the space of measurable functions with respect to convergence in measure.
\end{teo}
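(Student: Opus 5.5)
The proof will mirror that of Theorem~\ref{teorfinal1}, with Proposition~\ref{prop1I} and Theorem~\ref{teor3+} taking the roles of Proposition~\ref{prop1} and Theorem~\ref{teorgamma_limsup}. The lower bound $\Gamma\text{-}\liminf_{\rho\to0}\rho^{-1}\mathcal{G}_\rho\ge\mathcal{G}_0$ is exactly Proposition~\ref{prop1I}. For the upper bound, Theorem~\ref{teor3+} already gives $\Gamma\text{-}\limsup_{\rho\to0}\rho^{-1}\mathcal{G}_\rho\le\overline{\mathcal{G}^w_0}$. It therefore remains to prove $\overline{\mathcal{G}^w_0}\le\mathcal{G}_0$ on $GSBV^p(\Sigma;\mathbb{R}^3)$; functions outside $\mathcal{A}$ are trivial, since there $\mathcal{G}_0=+\infty$.

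The first step is to use \eqref{3.3.2} to check that $W_0$ satisfies the hypotheses of Lemma~\ref{lemma3}. It is continuous and $p$-coercive, it equals $+\infty$ exactly when $F^1\wedge F^2=0$, and on $\{|F^1\wedge F^2|\ge\delta\}$ it is bounded by $c(1+|F|^p)+c\delta^{-p}$, which is property (3) of Lemma~\ref{lemma2}. Lemma~\ref{lemma3} then shows that $\mathcal{R}W_0$ is finite, continuous, $p$-coercive and has $p$-growth. Lemmas~\ref{lemma6} and~\ref{lem belgacem} involve only $W_0$ on $\mathbb{M}^{3\times2}$ through these structural properties, so they still apply. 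Next I would repeat Proposition~\ref{propgamma_limsup} verbatim to get $\overline{\mathcal{G}^w_0}\le\overline{\mathcal{G}^{\mathcal R}_0}$.
\begin{itemize}
\item[] \emph{Step 1.} For $v\in\mathrm{Aff}^*(\Sigma\setminus\overline{J_v};\mathbb{R}^3)\cap\widehat{\mathcal W}(\Sigma;\mathbb{R}^3)$, iterate Lemma~\ref{lem belgacem} on the pieces of the triangulation. The perturbations are compactly supported away from $J_v$, so the surface term is unchanged. Letting $i\to\infty$ in Lemma~\ref{lemma6} gives $\mathcal{R}W_0$ in the bulk.
\item[] \emph{Step 2.} Pass to general $u\in GSBV^p$ via Theorem~\ref{teorapproxaff^*}, the continuity and $p$-growth of $\mathcal{R}W_0$, and the lower semicontinuity of the relaxation.
\end{itemize}

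Finally, Theorem~\ref{teor69} applies to $\mathcal{G}^{\mathcal R}_0$, because $\mathcal{R}W_0$ has $p$-growth and, by Lemma~\ref{lemma4}, $\psi_0$ satisfies $(B_1)$--$(B_5)$. This yields
\[
\overline{\mathcal{G}^{\mathcal R}_0}(u)=\int_\Sigma\mathcal{Q}(\mathcal{R}W_0)(\nabla u)\,dx+\int_{J_u}\mathcal{B}\psi_0([u],\nu_u)\,d\mathcal{H}^1 .
\]
To identify the bulk density with $\mathcal{Q}W_0$, note that $\mathcal{Q}W_0\le\mathcal{Q}(\mathcal{R}W_0)$ because $\mathcal{Q}W_0$ is rank-one convex (Lemma~\ref{lemma3}) and lies below $W_0$, hence $\mathcal{Q}W_0\le\mathcal{R}W_0$. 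The reverse inequality follows from $\mathcal{R}W_0\le W_0$. Chaining the lower bound with this upper bound gives $\Gamma$-convergence to $\mathcal{G}_0$ with respect to convergence in measure.

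The main obstacle is checking that Step 1 of Proposition~\ref{propgamma_limsup} carries over to the incompressible setting. The inequality there concerns only the reduced functional $\overline{\mathcal{G}^w_0}$, whose bulk density is $W_0$. Its admissible class $Y$ still requires $\mathrm{Aff}^*$ with $|F^1\wedge F^2|$ bounded below, so the argument of~\cite{almi2023brittlemembranes} should apply unchanged. I would confirm this by noting that Lemma~\ref{lem belgacem} produces $\mathrm{Aff}^*$ maps, which lie in $Y$ when glued to $v$ away from $J_v$.
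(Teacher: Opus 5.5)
Your proposal is correct and matches the paper's proof. The paper likewise checks via \eqref{3.3.2} that $W_0$ satisfies the hypotheses of Lemma~\ref{lemma3}. It then combines Proposition~\ref{prop1I} and Theorem~\ref{teor3+} with the arguments of Proposition~\ref{propgamma_limsup}, which pass from $\overline{\mathcal{G}^w_0}$ to $\overline{\mathcal{G}^{\mathcal R}_0}$, and of Theorem~\ref{teorfinal1}, which uses Theorem~\ref{teor69} and the identification $\mathcal{Q}(\mathcal{R}W_0)=\mathcal{Q}W_0$; you simply write out in more detail the steps the paper leaves implicit.
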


\begin{proof}
    We observe that $W_0$ satisfies the assumptions of Lemma~\ref{lemma3}. The proof then follows by combining Proposition~\ref{prop1I} and Theorem~\ref{teor3+} with the arguments of Proposition~\ref{propgamma_limsup} and Theorem~\ref{teorfinal1}.
\end{proof}

\begin{ackn}
    We are grateful to Stefano Almi and Francesco Solombrino for several useful discussions.
    D.R. gratefully acknowledge the support from the Cluster of Excellence EXC 2044-390685587, Mathematics Münster: Dynamics-Geometry-Structure funded by the Deutsche Forschungsgemeinschaft (DFG, German Research Foundation).
\end{ackn}

	\appendix

	\section*{Appendix A}

    \renewcommand{\thesection}{A} 
	
	\setcounter{figure}{0}
	\setcounter{equation}{0}
    \setcounter{teo}{0}

    In this section we recall several definitions and results concerning $SBV$ and $GSBV$ functions; we refer to~\cite{ambrosio2000functions} for a comprehensive treatment. Let $\Omega \subseteq \mathbb{R}^n$ be open and $f : \Omega \to \mathbb{R}^m$ be $\mathcal{L}^n$-measurable. We say that $a \in \mathbb{R}^m$ is the \emph{approximate limit} of $f$ at $x \in \Omega$ if
\[
\lim_{r \to 0}
\frac{
	\mathcal{L}^n\!\left(\Omega \cap B_r(x) \cap \{|f-a|>\varepsilon\}\right)
}{r^n}
= 0
\qquad \text{for every } \varepsilon > 0,
\]
and we write $\operatorname{ap}\text{-}\lim_{y\to x} f(y) = a$. A point $x \in \Omega$ is an \emph{approximate jump point} of $f$, written $x \in J_f$, if there exist $a,b \in \mathbb{R}^m$ with $a \neq b$ and $\nu \in \mathbb{S}^{n-1}$ such that
\[
\operatorname{ap}\text{-}\lim_{\substack{y\to x \\ (y-x)\cdot\nu >0}} f(y) = a,
\qquad
\operatorname{ap}\text{-}\lim_{\substack{y\to x \\ (y-x)\cdot\nu <0}} f(y) = b.
\]
The triple $(a,b,\nu)$ is unique up to the interchange $(a,b,\nu) \leftrightarrow (b,a,-\nu)$; we write $(f^+(x), f^-(x), \nu_f(x))$ and define the \emph{jump} $[f](x) := f^+(x) - f^-(x)$.

The space $BV(\Omega;\mathbb{R}^m)$ consists of those $u \in L^1(\Omega;\mathbb{R}^m)$ whose distributional gradient $Du$ is a finite $\mathbb{M}^{m \times n}$-valued Radon measure, which decomposes as $Du = D^a u + D^s u$, where $D^a u \ll \mathcal{L}^n$ and $D^s u \perp \mathcal{L}^n$. The density of $D^a u$ with respect to $\mathcal{L}^n$ is denoted $\nabla u$ and coincides a.e.\ with the approximate gradient. The jump set $J_u$ is countably $(n{-}1)$-rectifiable with approximate normal $\nu_u$.

The space $SBV(\Omega;\mathbb{R}^m)$ consists of those $u \in BV(\Omega;\mathbb{R}^m)$ with $|D^s u|(\Omega \setminus J_u) = 0$. For $p \in [1,+\infty)$,
\[
SBV^p(\Omega;\mathbb{R}^m)
:= \{ u \in SBV(\Omega;\mathbb{R}^m) : \nabla u \in L^p(\Omega;\mathbb{M}^{m \times n}),\;
\mathcal{H}^{n-1}(J_u) < +\infty \}.
\]

We say that $u \in GSBV(\Omega;\mathbb{R}^m)$ if $\phi(u) \in SBV_{\mathrm{loc}}(\Omega;\mathbb{R}^m)$ for every $\phi \in C^1(\mathbb{R}^m;\mathbb{R}^m)$ with compactly supported gradient. For $p \in [1,+\infty)$,
\[
GSBV^p(\Omega;\mathbb{R}^m)
:= \{ u \in GSBV(\Omega;\mathbb{R}^m) : \nabla u \in L^p(\Omega;\mathbb{M}^{m \times n}),\;
\mathcal{H}^{n-1}(J_u) < +\infty \}.
\]

	We now state and prove a preliminary result on $SBV$ functions used in the paper.
	\begin{prop}\label{prop3}
	Let $\Omega\subseteq \mathbb{R}^N$ be an open set, let $u \in SBV^{p}(\Omega;\mathbb{R}^{m})$, and let $(u_n)_{n\ge1} \subseteq SBV^{p}(\Omega;\mathbb{R}^{m})$ be such that, as $n\to\infty$,
	\[
	u_n \to u \quad \text{in } L^1(\Omega;\mathbb{R}^m), \qquad |D(u_n - u)|(\Omega) \to 0, \qquad \mathcal{H}^{N-1}(J_{u_n} \,\triangle\, J_u) \to 0.
	\]
	Then, up to a subsequence (not relabeled), there exist a set $A \subseteq \mathbb{R}^N$ with $\mathcal{H}^{N-1}(A)<+\infty$ and a non-negative function $g \in L^1(\mathcal{H}^{N-1}\!\llcorner\, A)$ such that:
	\begin{enumerate}[label=\emph{(\roman*)}]
		\item $A \supseteq J_{u_n}$ for every $n \in \mathbb{N}$;
		\item $(u_n^+ - u_n^-)\otimes \nu_{u_n} \to (u^+ - u^-)\otimes \nu_u$ in measure with respect to $\mathcal{H}^{N-1}\!\llcorner\, A$;
		\item up to a further subsequence,
		\[
		u_n^+ - u_n^- \to u^+ - u^-, \qquad \nu_{u_n} \to \nu_u \qquad \mathcal{H}^{N-1}\text{-a.e.\ on } A;
		\]
		\item $|u_n^+(x) - u_n^-(x)| \le g(x)$ for every $n \in \mathbb{N}$ and $\mathcal{H}^{N-1}$-a.e.\ $x \in A$.
	\end{enumerate}
	\end{prop}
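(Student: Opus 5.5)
The plan is to work on the finite-measure set $\mathcal{J}:=J_u\cup\bigcup_n J_{u_n}$ and get (ii)--(iii) from the decomposition of the jump part of $D(u_n-u)$. After passing to a subsequence with $\sum_n \mathcal{H}^{N-1}(J_{u_n}\triangle J_u)<+\infty$, I would set
\[
A:=J_u\cup\bigcup_{n}J_{u_n}.
\]
Then $\mathcal{H}^{N-1}(A)\le \mathcal{H}^{N-1}(J_u)+\sum_n\mathcal{H}^{N-1}(J_{u_n}\setminus J_u)<+\infty$, since $u\in SBV^p$ gives $\mathcal{H}^{N-1}(J_u)<+\infty$. This yields (i) immediately. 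On $A$ I extend the jump quantities by zero, setting $[u_n]\otimes\nu_{u_n}:=0$ on $A\setminus J_{u_n}$ and likewise for $u$. This extension is well defined, because $[u_n]\otimes\nu_{u_n}$ is invariant under the orientation swap $(a,b,\nu)\leftrightarrow(b,a,-\nu)$.

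For (ii), use that $u_n,u\in SBV$, so
\[
D^j(u_n-u)=\big([u_n]\otimes\nu_{u_n}-[u]\otimes\nu_u\big)\,\mathcal{H}^{N-1}\llcorner A .
\]
Since the jump part is a restriction of the full derivative, $|D^j(u_n-u)|(\Omega)\le|D(u_n-u)|(\Omega)\to0$. Hence $[u_n]\otimes\nu_{u_n}\to[u]\otimes\nu_u$ in $L^1(\mathcal{H}^{N-1}\llcorner A)$, which in particular gives convergence in measure and so (ii).

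For (iii), extract a further subsequence along which the convergence in $L^1$ holds $\mathcal{H}^{N-1}$-a.e. on $A$. To separate $[u_n]$ from $\nu_{u_n}$, I restrict to points of $J_u$ that lie in all but finitely many $J_{u_n}$. By Borel--Cantelli and the summability $\sum_n\mathcal{H}^{N-1}(J_{u_n}\triangle J_u)<+\infty$, $\mathcal{H}^{N-1}$-a.e. point of $A$ lies in $J_u\cap J_{u_n}$ for $n$ large, or outside $J_{u_n}\cup J_u$ for $n$ large. At such a point in $J_u$ the limit $[u]\otimes\nu_u$ is a nonzero rank-one matrix. From $a_n\otimes\nu_n\to a\otimes\nu$ with $|\nu_n|=1$ and $a\neq0$, we get $|a_n|\to|a|$ and $\nu_n\to\pm\nu$. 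Choosing the orientation of $\nu_{u_n}$ consistently with $\nu_u$ (allowed by the symmetry of the triple) gives $\nu_{u_n}\to\nu_u$ and $[u_n]\to[u]$. At points of $A\setminus J_u$ both sides are eventually zero under the extension convention. The delicate point, and the main obstacle, is exactly this orientation choice: (iii) is only meaningful up to the swap, so the statement should be read with $\nu_{u_n}$ oriented so that $\nu_{u_n}\cdot\nu_u\ge0$ on $J_{u_n}\cap J_u$.

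For (iv), use the standard dominating function for an $L^1$-convergent sequence: pass to a subsequence with $\|[u_n]\otimes\nu_{u_n}-[u]\otimes\nu_u\|_{L^1(A)}\le 2^{-n}$ and define
\[
g:=|[u]|+\sum_n\big|[u_n]\otimes\nu_{u_n}-[u]\otimes\nu_u\big| .
\]
Then $g\in L^1(\mathcal{H}^{N-1}\llcorner A)$, since $|[u]|\in L^1(\mathcal{H}^{N-1}\llcorner J_u)$ by $|D^ju|(\Omega)<\infty$. Finally $|[u_n]|=|[u_n]\otimes\nu_{u_n}|\le g$, because $|\nu_{u_n}|=1$.
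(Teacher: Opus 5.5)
Your proposal is correct and follows the paper's proof: the same set $A=J_u\cup\bigcup_n J_{u_n}$ after passing to a subsequence with summable $\mathcal{H}^{N-1}(J_{u_n}\triangle J_u)$, the same $L^1(\mathcal{H}^{N-1}\llcorner A)$ convergence of the zero-extended jump densities deduced from $|D^j(u_n-u)|(\Omega)\le|D(u_n-u)|(\Omega)\to 0$, and a dominating function produced from that $L^1$ convergence. The differences are minor: you work with the tensor $[u_n]\otimes\nu_{u_n}$ directly rather than reducing to scalar components, you write $g$ explicitly instead of quoting the partial converse of dominated convergence, and in (iii) you obtain $\nu_{u_n}\to\nu_u$ from Borel--Cantelli plus the fact that $a_n\otimes\nu_n\to a\otimes\nu$ with $a\neq 0$ forces $\nu_n\to\pm\nu$, whereas the paper invokes the locality $\nu_{u_n}=\pm\nu_u$ $\mathcal{H}^{N-1}$-a.e.\ on $J_{u_n}\cap J_u$ (your Borel--Cantelli step also makes precise the ``eventually in $J_{u_n}$'' claim that the paper leaves implicit).
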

	
	\begin{proof}
		We may assume that $u_n : \mathbb{R}^N \to \mathbb{R}$ are scalar-valued, since otherwise we apply the argument component-wise.

	Since $u_n \in SBV^p(\Omega;\mathbb{R})$, the singular part of $Du_n$ is concentrated on $J_{u_n}$ and satisfies
	\[
	D^s u_n = (u_n^+ - u_n^-)\,\nu_{u_n}\,\mathcal{H}^{N-1}\!\llcorner\, J_{u_n}.
	\]
	By assumption, for every $k = 1,\dots,N$,
	\[
	|D^s_k u_n - D^s_k u|(\Omega) \to 0,
	\]
	where $D^s_k u$ denotes the $k$-th component of $D^s u$.

	Passing to a subsequence, we may assume that
	\[
	\mathcal{H}^{N-1}(J_{u_n} \,\triangle\, J_u) < 2^{-n}
	\qquad \text{for every } n \in \mathbb{N}.
	\]
	Define
	\[
	A := J_u \cup \bigcup_{n=1}^\infty J_{u_n}.
	\]
	Then
	\[
	\mathcal{H}^{N-1}(A)
	\le \mathcal{H}^{N-1}(J_u) + \sum_{n=1}^\infty \mathcal{H}^{N-1}(J_{u_n} \setminus J_u)
	\le \mathcal{H}^{N-1}(J_u) + 1 < +\infty,
	\]
	and $A \supseteq J_{u_n}$ for every $n \in \mathbb{N}$, which proves~(i).

	We extend the functions $(u_n^+ - u_n^-)\nu_{k,u_n}$ and $(u^+ - u^-)\nu_{k,u}$ to all of $A$ by setting them equal to zero outside $J_{u_n}$ and $J_u$, respectively. With this convention,
	\[
	|D^s_k u_n - D^s_k u|(\Omega)
	= \int_A
	\bigl|(u_n^+ - u_n^-)\nu_{k,u_n} - (u^+ - u^-)\nu_{k,u}\bigr|
	\,d\mathcal{H}^{N-1}
	\to 0.
	\]
	In particular, for each $k = 1,\dots,N$, the sequence $(u_n^+ - u_n^-)\nu_{k,u_n}$ converges to $(u^+ - u^-)\nu_{k,u}$ both in $L^1(\mathcal{H}^{N-1}\!\llcorner\, A)$ and in measure with respect to $\mathcal{H}^{N-1}\!\llcorner\, A$. Summing over $k$, we obtain~(ii).

	Up to a further subsequence, we may additionally assume that, for each $k = 1,\dots,N$,
	\[
	(u_n^+ - u_n^-)\nu_{k,u_n} \to (u^+ - u^-)\nu_{k,u}
	\qquad \mathcal{H}^{N-1}\text{-a.e.\ on } A.
	\]
	By the locality properties of $SBV$ functions (see~\cite[Proposition~3.73]{ambrosio2000functions}), up to a change of sign of the normal vectors, we have $\nu_{u_n} = \nu_u$ $\mathcal{H}^{N-1}$-a.e.\ on $J_{u_n} \cap J_u$. Since $\mathcal{H}^{N-1}(J_{u_n} \,\triangle\, J_u) \to 0$, this yields
	\[
	\nu_{u_n} \to \nu_u
	\qquad \text{and} \qquad
	u_n^+ - u_n^- \to u^+ - u^-
	\qquad \mathcal{H}^{N-1}\text{-a.e.\ on } A,
	\]
	which proves~(iii).

	Finally, the $L^1(\mathcal{H}^{N-1}\!\llcorner\, A)$ convergence established above implies, for each $k = 1,\dots,N$, the existence of a non-negative function $g_k \in L^1(\mathcal{H}^{N-1}\!\llcorner\, A)$ such that
	\[
	|(u_n^+ - u_n^-)\nu_{k,u_n}| \le g_k
	\qquad \mathcal{H}^{N-1}\text{-a.e.\ on } A, \quad \text{for all } n \in \mathbb{N}.
	\]
	Therefore,
	\[
	|u_n^+ - u_n^-|
	= \sum_{k=1}^N |u_n^+ - u_n^-|\,|\nu_{k,u_n}|^2
	\le \sum_{k=1}^N |(u_n^+ - u_n^-)\nu_{k,u_n}|
	\le \sum_{k=1}^N g_k.
	\]
	Setting $g := \sum_{k=1}^N g_k \in L^1(\mathcal{H}^{N-1}\!\llcorner\, A)$ gives~(iv) and concludes the proof.
	\end{proof}
    
	\begin{cor}\label{corlimsup}
	Let $\Omega\subseteq \mathbb{R}^N$ be an open set, let $u \in SBV^{p}(\Omega;\mathbb{R}^{m})$, and let
	$(u_n)_{n\ge1} \subseteq SBV^{p}(\Omega;\mathbb{R}^{m})$ satisfy, as $n\to\infty$,
	\[
	u_n \to u \quad \text{in } L^1(\Omega;\mathbb{R}^m), \qquad |D(u_n - u)|(\Omega) \to 0, \qquad \mathcal{H}^{N-1}(J_{u_n} \,\triangle\, J_u) \to 0.
	\]
	Let $E \subseteq \Omega$ be a Borel set, and let
	$\psi : \mathbb{R}^{m} \times \mathbb{S}^{N-1} \to [0,+\infty)$
	be upper semicontinuous, with
	\[
	\psi(z,\nu) = \psi(-z,-\nu)
	\qquad \text{for all } (z,\nu) \in \mathbb{R}^m \times \mathbb{S}^{N-1},
	\]
	and satisfying the linear growth bound
	\[
	\psi(z,\nu) \le C\,(1+|z|)
	\qquad \text{for all } (z,\nu) \in \mathbb{R}^m \times \mathbb{S}^{N-1},
	\]
	for some constant $C>0$. Then
	\[
	\limsup_{n\to\infty}
	\int_{J_{u_n} \cap E}
	\psi\bigl([u_n], \nu_{u_n}\bigr)\,
	d\mathcal{H}^{N-1}
	\;\le\;
	\int_{J_u \cap E}
	\psi\bigl([u], \nu_u\bigr)\,
	d\mathcal{H}^{N-1}.
	\]
\end{cor}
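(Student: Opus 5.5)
The plan is to use Proposition~\ref{prop3}: it turns the total-variation convergence into pointwise information on the jump parts, and then a reverse Fatou argument gives the $\limsup$ inequality. First I would extract a subsequence realizing the $\limsup$ on the left-hand side. Since any further subsequence still realizes it, I may then apply Proposition~\ref{prop3} and pass to the subsequence given in its item (iii). This produces a set $A\supseteq J_u\cup\bigcup_n J_{u_n}$ with $\mathcal{H}^{N-1}(A)<+\infty$ and a function $g\in L^1(\mathcal{H}^{N-1}\llcorner A)$ such that
\[
[u_n]\to[u],\qquad \nu_{u_n}\to\nu_u\qquad \mathcal{H}^{N-1}\text{-a.e. on } A,
\]
and $|[u_n]|\le g$.

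Next I would rewrite each integral as an integral over the fixed set $A\cap E$. Set
\[
F_n:=\psi([u_n],\nu_{u_n})\,\chi_{J_{u_n}}, \qquad F:=\psi([u],\nu_u)\,\chi_{J_u},
\]
so that the two sides of the claim become $\int_{A\cap E}F_n\,d\mathcal{H}^{N-1}$ and $\int_{A\cap E}F\,d\mathcal{H}^{N-1}$. The growth bound gives $0\le F_n\le C(1+g)$, and this dominating function lies in $L^1(\mathcal{H}^{N-1}\llcorner A)$ because $\mathcal{H}^{N-1}(A)<\infty$. The reverse Fatou lemma then yields
\[
\limsup_n\int_{A\cap E}F_n\le\int_{A\cap E}\limsup_n F_n ,
\]
so it suffices to show $\limsup_n F_n\le F$ a.e.\ on $A$.

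To prove this pointwise bound I split $A$ into pieces. On $J_u$, the membership $x\in J_{u_n}$ holds eventually (see below), so $F_n(x)=\psi([u_n](x),\nu_{u_n}(x))$ for large $n$. Its $\limsup$ is then at most $\psi([u](x),\nu_u(x))$ by upper semicontinuity of $\psi$ and the a.e.\ convergence of jumps and normals. On $A\setminus J_u$, I need $F_n(x)=0$ eventually, i.e.\ $x\notin J_{u_n}$ for large $n$.

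The main obstacle is this eventual-membership issue, namely that points may lie in infinitely many $J_{u_n}\setminus J_u$ or outside infinitely many $J_{u_n}\cap J_u$. Knowing only $[u_n]\to 0$ off $J_u$ is not enough, because $\psi(z,\nu)$ need not vanish as $z\to 0$. I would handle it with the summable choice from the proof of Proposition~\ref{prop3}, $\mathcal{H}^{N-1}(J_{u_n}\triangle J_u)<2^{-n}$. By Borel--Cantelli, $\mathcal{H}^{N-1}$-a.e.\ $x\in A$ belongs to only finitely many of the sets $J_{u_n}\triangle J_u$. Hence for a.e.\ $x$ one has $\chi_{J_{u_n}}(x)=\chi_{J_u}(x)$ for all large $n$, which gives exactly both cases above. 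Finally, the orientation convention $\nu_{u_n}=\nu_u$ on the overlap, together with the symmetry $\psi(z,\nu)=\psi(-z,-\nu)$, makes the integrands independent of the chosen orientation, so no ambiguity in sign arises.
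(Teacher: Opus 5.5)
Your proposal is correct and follows the same route as the paper: Proposition~\ref{prop3} gives a.e.\ convergence of jumps and normals together with the domination $|[u_n]|\le g$, and then the reverse Fatou lemma on the fixed set $A$ combined with upper semicontinuity of $\psi$ yields the inequality. You also make explicit two points the paper's proof uses only tacitly: the Borel--Cantelli argument (based on the summable choice $\mathcal{H}^{N-1}(J_{u_n}\triangle J_u)<2^{-n}$) and the preliminary extraction of a subsequence realizing the $\limsup$. The Borel--Cantelli step is genuinely needed, since on $A\setminus J_u$ one only knows $[u_n]\to 0$, and $\psi(0,\nu)$ need not vanish.
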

\begin{proof}
	By Proposition~\ref{prop3}, up to a subsequence (not relabeled), there exist a set $A \subseteq \mathbb{R}^N$ with $A \supseteq J_{u_n}$ for every $n \in \mathbb{N}$, $\mathcal{H}^{N-1}(A) < +\infty$, and a non-negative function $g \in L^1(\mathcal{H}^{N-1}\!\llcorner\, A)$ such that
	\[
	u_n^+ - u_n^- \to u^+ - u^-, \qquad \nu_{u_n} \to \nu_u
	\qquad \mathcal{H}^{N-1}\text{-a.e.\ on } A,
	\]
	and
	\[
	|u_n^+ - u_n^-| \le g
	\qquad \mathcal{H}^{N-1}\text{-a.e.\ on } A, \quad \text{for all } n \in \mathbb{N}.
	\]
	In particular, for every $n \in \mathbb{N}$,
	\[
	\psi\bigl([u_n], \nu_{u_n}\bigr)\,\chi_{E \cap J_{u_n}}
	\le C\,(1 + g) \in L^1(\mathcal{H}^{N-1}\!\llcorner\, A).
	\]
	Since $A \supseteq J_{u_n}$, we may integrate over $A$, apply the reverse Fatou lemma and use the upper semicontinuity of $\psi$ to obtain
		\[
		\begin{aligned}
			\limsup_{n\to\infty}
			\int_{J_{u_n} \cap E}
			\psi\bigl([u_n], \nu_{u_n}\bigr)\, d\mathcal{H}^{N-1}
			&=
			\limsup_{n\to\infty}
			\int_{J_{u_n}}
			\psi\bigl([u_n], \nu_{u_n}\bigr)\,\chi_{E}\,
			d\mathcal{H}^{N-1}
			\\[0.4em]
			&\le
			\int_{A}
			\limsup_{n\to\infty}
			\psi\bigl([u_n], \nu_{u_n}\bigr)\,\chi_{E \cap J_{u_n}}\,
			d\mathcal{H}^{N-1}
			\\[0.4em]
			&\le
			\int_{J_{u}}
			\psi\bigl([u], \nu_{u}\bigr)\,\chi_{E}\,
			d\mathcal{H}^{N-1}
			\\[0.4em]
			&=
			\int_{J_{u}\cap E}
			\psi\bigl([u], \nu_{u}\bigr)\,
			d\mathcal{H}^{N-1},
		\end{aligned}
		\]
		which concludes the proof.
	\end{proof}
	We now introduce the notion of a $BV$–elliptic function and state a fundamental result concerning compactness and lower semicontinuity. Proofs can be found in \cite{ambrosio2000functions} and \cite{ambrosio1988existence}.
	
	\begin{dfnz}[$BV$–ellipticity]\label{defBV-ellipticity}
		Let $T \subseteq \mathbb{R}^n$ and let 
		\[
		\psi : T \times T \times \mathbb{S}^{n-1} \to [0,+\infty].
		\]
		We say that $\psi$ is \emph{$BV$–elliptic} if
		\[
		\int_{J_u \cap Q_1(\nu)} \psi(u^+,u^-,\nu_u)\, d\mathcal{H}^{n-1} \;\ge\; \psi(i,j,\nu)
		\]
		for every piecewise constant function \(u : Q_1(\nu) \to T\) such that   
		\(\{u \neq u_{i,j,\nu}\} \Subset Q_1(\nu)\).  
		Here \(Q_1(\nu)\) denotes the open cube of side length~1, centred at the origin, with two faces orthogonal to \(\nu\); and \(u_{i,j,\nu}\) is the pure jump function attaining the values \(i\) and \(j\) on the two sides of the hyperplane \(\pi_\nu\) orthogonal to \(\nu\).  The \emph{ BV-elliptic envelope} of $\psi$, denoted by $\mathcal B\psi$, is the largest BV-elliptic function that is less than or equal to $\psi$.
		
	\end{dfnz}
	
	\begin{teo}[Closure in $GSBV^p$]\label{thm:closure}
		Let \(W\) be quasiconvex and let \(\psi\) be $BV$–elliptic, with
		\[
		0 \le W(F) \le C(1+|F|^p), 
		\qquad 
		0 \le \psi(z,\nu) \le C(1+|z|)
		\]
		for some constant \(C>0\), for all \(F \in \mathbb{M}^{m\times n}\), \(z \in \mathbb{R}^m\), and \(\nu \in \mathbb{S}^{n-1}\).  
		
		Let \(u_h \in GSBV^p(\Omega;\mathbb{R}^m)\) be such that \(u_h\to u\) in measure and
		\begin{equation}\label{eq:bounded_energy}
			\sup_h \left\{
			\int_{\Omega} |\nabla u_h|^p \, dx 
			+ 
			\mathcal{H}^{n-1}(J_{u_h} \cap \Omega)
			\right\} 
			< +\infty.
		\end{equation}
		Then \(u \in GSBV^p(\Omega;\mathbb{R}^m)\) and the following lower–semicontinuity inequalities hold:
		\[
		\int_{J_u} \psi([u],\nu_u)\, d\mathcal{H}^{n-1}
		\;\le\;
		\liminf_{h\to\infty}
		\int_{J_{u_h}} \psi([u_h],\nu_{u_h})\, d\mathcal{H}^{n-1},
		\]
		and
		\[
		\int_{\Omega} W(\nabla u)\, dx
		\;\le\;
		\liminf_{h\to\infty}
		\int_{\Omega} W(\nabla u_h)\, dx.
		\]
	\end{teo}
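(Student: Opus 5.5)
The statement is the closure theorem in $GSBV^p$: compactness of $u$ in $GSBV^p$ and lower semicontinuity of the surface and bulk terms. The plan is to reduce to the classical Ambrosio closure and lower semicontinuity theory, using truncations to pass from $GSBV$ to $SBV$.

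\emph{Step 1: membership of the limit.} Fix $\phi\in C^1(\mathbb{R}^m;\mathbb{R}^m)$ with compactly supported gradient. The functions $\phi(u_h)$ are bounded in $L^\infty$ and lie in $SBV$, with
\[
|\nabla \phi(u_h)|\le \|\nabla\phi\|_\infty|\nabla u_h| ,\qquad J_{\phi(u_h)}\subseteq J_{u_h},\qquad |[\phi(u_h)]|\le 2\|\phi\|_\infty .
\]
By \eqref{eq:bounded_energy}, $|D\phi(u_h)|(\Omega')$ is then bounded on every $\Omega'\Subset\Omega$. Moreover $\phi(u_h)\to\phi(u)$ in $L^1_{\rm loc}$ by dominated convergence. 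Ambrosio's $SBV$ closure theorem (superlinear $L^p$ control of gradients, $p>1$, plus bounded $\mathcal H^{n-1}(J)$) then gives $\phi(u)\in SBV_{\rm loc}$, so $u\in GSBV$. Lower semicontinuity of $\int|\nabla\cdot|^p$ and of $\mathcal H^{n-1}(J)$ along truncations $\phi=\phi_k$ with $\phi_k={\rm Id}$ on $B_k$ then gives $\nabla u\in L^p$ and $\mathcal H^{n-1}(J_u)<\infty$. Here we use $\nabla u=\nabla\phi_k(u)$ a.e. on $\{|u|<k\}$ and $J_u=\bigcup_k J_{\phi_k(u)}$ up to null sets. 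Hence $u\in GSBV^p$.

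\emph{Step 2: bulk term.} Apply the truncation $\phi_k$. By the closure theorem, $\nabla\phi_k(u_h)\rightharpoonup\nabla\phi_k(u)$ weakly in $L^1$, and in fact weakly in $L^p$. For quasiconvex $W$ with $p$-growth, the Ambrosio lower semicontinuity theorem on $SBV$ gives
\[
\int W(\nabla \phi_k(u))\le\liminf_h\int W(\nabla\phi_k(u_h)).
\]
This step uses the jump bound. To return to $u_h$, choose $\phi_k$ with ${\rm Lip}\le1$ equal to the identity on $B_k$ and apply the standard averaging trick (De Giorgi slicing over $k$ layers). This controls $\int_{\{|u_h|>k\}}W(\nabla\phi_k(u_h))\le C\int_{\{|u_h|>k\}}(1+|\nabla u_h|^p)$, which is uniformly small in an averaged sense, since $\mathcal L^n(\{|u_h|>k\})\to0$ uniformly thanks to convergence in measure. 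Letting $k\to\infty$ and using monotone convergence on the left gives the claim.

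\emph{Step 3: surface term, the main obstacle.} BV-ellipticity is exactly the condition in the Ambrosio theorem for lower semicontinuity of $\int_{J}\psi([u],\nu)$ along sequences with bounded gradients in $L^p$ and bounded jump length. The difficulty is that $\psi$ depends on the jump amplitude, while truncation changes $[u_h]$. I would first prove the inequality for bounded sequences (the case $\sup\|u_h\|_\infty<\infty$) via the blow-up method. At $\mathcal H^{n-1}$-a.e. $x_0\in J_u$, localize the measures $\mu_h=\psi([u_h],\nu_{u_h})\mathcal H^{n-1}\llcorner J_{u_h}$ and blow up. The bulk contribution vanishes by the $L^p$ bound with $p>1$, leaving piecewise-constant-type competitors near $u_{u^+,u^-,\nu}$ in the cube $Q_1(\nu)$. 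BV-ellipticity, together with the upper bound $\psi\le C(1+|z|)$ used to modify boundary values, yields density $\ge\psi([u](x_0),\nu_u(x_0))$.

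For unbounded sequences, I would apply truncations $\phi_k$ acting radially with ${\rm Lip}\le1$. On $J_u$ one has $[\phi_k(u)]=[u]$ wherever $|u^\pm|<k$, which exhausts $J_u$ as $k\to\infty$. The delicate point is to compare $\psi([\phi_k(u_h)],\nu)$ with $\psi([u_h],\nu)$ on $J_{u_h}$. I would try to arrange this by selecting $k$ from many layers so that the jumps with one trace inside $B_k$ and the other outside carry small total energy, using the linear growth bound. If that fails, the fallback is to work directly in the blow-up with $u_h$, using convergence in measure, which is local and unaffected by truncation at large scales.
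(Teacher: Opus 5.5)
The paper gives no proof of this statement; it only points to \cite{ambrosio2000functions,ambrosio1988existence}. Your Steps 1--2 are the standard truncation arguments and are fine in substance, with one correction in Step 2. The smallness there must come from pigeonholing over disjoint layers $\{a_i<|u_h|<a_{i+1}\}$, using a truncation whose gradient vanishes beyond the layer, so that outside it only $W(0)\,\mathcal{L}^n(\{|u_h|>a_{i+1}\})$ survives. The bound $C\int_{\{|u_h|>k\}}(1+|\nabla u_h|^p)$ is not small merely because $\mathcal{L}^n(\{|u_h|>k\})\to 0$, since $|\nabla u_h|^p$ may concentrate exactly there.

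The genuine gap is Step 3, first in the passage to unbounded sequences. At jump points of $u_h$ with one trace in $B_k$ and the other far outside, all you know is $\psi([\phi_k(u_h)],\nu_{u_h})\le C(1+2k)$. Since $\psi(\cdot,\nu)$ is not assumed monotone in $|z|$, nothing compares this with $\psi([u_h],\nu_{u_h})$. Choosing $k$ among many layers does not help, because, unlike the bulk layers, these crossing sets are not disjoint: a point with $|u_h^-|\le 1$ and $|u_h^+|\ge h$ crosses every layer between $1$ and $h$. Quantitatively, orienting so that $|u_h^-|\le|u_h^+|$, one has $\int_0^\infty\mathcal{H}^{n-1}(\{x\in J_{u_h}: |u_h^-|<t<|u_h^+|\})\,dt=\int_{J_{u_h}}(|u_h^+|-|u_h^-|)\,d\mathcal{H}^{n-1}$, and the hypotheses do not bound this. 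The layer argument closes only under a monotonicity condition such as $(B_2)$, with a truncation constant outside $B_{2k}$. Then crossings whose far trace lies beyond $(C_2+1)k$ cost no more than before, and only traces in the annulus $k\le|y|\le(C_2+1)k$ need pigeonholing. This is the kind of condition GSBV truncation arguments as in \cite{cagnetti2019gamma} rely on, but it is not among your hypotheses. Your fallback, blowing up directly along $u_h$, is precisely the part that would still have to be proved.

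More seriously, the bounded case is not justified either, and under the stated hypotheses it cannot be. BV-ellipticity only tests piecewise constant competitors equal to $u_{i,j,\nu}$ near $\partial Q_1(\nu)$, whereas the blow-ups are merely close to $u_{i,j,\nu}$. On a gluing interface the growth bound gives a cost $\le C(1+|w-u_{i,j,\nu}|)$ per unit area on $\{w\neq u_{i,j,\nu}\}$, which does not become small as $w\to u_{i,j,\nu}$. Forcing exact values changes the jump amplitudes, and controlling that needs regularity of $\psi$ in $z$. Without it the inequality fails even for bounded sequences with $\nabla u_h=0$:
\begin{itemize}
\item Set $g(0)=0$, $g=1$ on $\mathbb{Q}^m\setminus\{0\}$, $g=2$ on $\mathbb{R}^m\setminus\mathbb{Q}^m$, and $\psi(z,\nu):=g(z)$.
\item Since $g$ is even and subadditive, $\psi$ is BV-elliptic. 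For a competitor $v$, $\int_{J_v}g([v])\,d\mathcal{H}^{n-1}\ge\int_{J_v}g([v])\,|\nu_v\cdot\nu|\,d\mathcal{H}^{n-1}$. By slicing, the right side is the average over lines parallel to $\nu$ of the sum of $g$ over the finitely many jumps on each line. These jumps form a chain from $j$ to $i$, so the sum is at least $g(i-j)$.
\item Yet $u_h=z_h\chi_{\{x\cdot\nu>0\}}$ with $\mathbb{Q}^m\ni z_h\to z\notin\mathbb{Q}^m$ gives $\liminf_h\int_{J_{u_h}}\psi=1<2=\int_{J_u}\psi$.
\end{itemize}
So the statement needs an extra assumption on $\psi$, e.g.\ joint convexity as in the SBV lower semicontinuity results of \cite{ambrosio2000functions}, which also forces lower semicontinuity. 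For the only use made of the theorem in the paper, $\psi=|\nu_3|$, none of this arises. Truncation leaves normals unchanged and only removes jump points, so $\int_{J_{\phi_k(u_h)}}|\nu_3|\le\int_{J_{u_h}}|\nu_3|$, and $|\nu_3|$ is jointly convex. Your Step 1, the classical SBV result, and $J_u=\bigcup_k J_{\phi_k(u)}$ then give the claim by monotone convergence.
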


    We conclude this section with a relaxation result for functionals defined on $GSBV^p$. The result follows by applying the $\Gamma$-convergence theorem
    of \cite[Theorems 3.5 and 3.8]{cagnetti2019gamma} to a constant sequence of
    functionals. In this case, the $\Gamma$-limit is the lower semicontinuous
    envelope of the original functional and is represented by the corresponding
    bulk and surface blow-up formulae. The bulk blow-up coincides with the
    quasiconvex envelope of the volume density. The surface blow-up coincides with the $BV$-elliptic envelope of the surface density. These identifications follow
    by the same argument as in \cite[Proof of Theorem 4]{bouchitte2002global},
    although the growth assumptions there are slightly different.
    Indeed, the lower linear bound in $|\zeta|$ is not needed for the local
    identification of the surface cell formula, once the integral representation is
    already available.

    \begin{teo}\label{teor69}
		Let \(\Omega\) be an open set, and let  
		\[
		f : \mathbb{M}^{m\times n} \to [0,+\infty), 
		\qquad
		g : \mathbb{R}^m \setminus \{0\} \times \mathbb{S}^{n-1} \to [0,+\infty),
		\]
		satisfy:
		\begin{itemize}
			\item $f$ is continuous;
			\item there exist \(C_1>0\) and \(C_2>0\) such that \(C_1(|\xi|^p - 1) \le f(\xi) \le C_2(|\xi|^p+1)\) for all \(\xi\);
			\item $g$ is Borel measurable;
			\item for each \(\nu\) there exists \(C_3\ge1\) such that  
			\[
			g(z_1,\nu) \le C_3 g(z_2,\nu)
			\quad\text{whenever } |z_1|\le|z_2|,
			\]
			and
			\[
			g(z_1,\nu) \le g(z_2,\nu)
			\quad\text{whenever } C_3|z_1|\le |z_2|;
			\]
			\item for each \(\nu\) there exists a decreasing continuous  
			\(\sigma : [0,+\infty)\to[0,+\infty)\) with \(\sigma(0)=0\) such that  
			\[
			|g(z_1,\nu) - g(z_2,\nu)|
			\le
			\sigma(|z_1-z_2|)\bigl(g(z_1,\nu)+g(z_2,\nu)\bigr);
			\]
			\item \(g(z,\nu)=g(-z,-\nu)\) for all \(z\neq 0\);
			\item there exist \(C_4>0\) and \(C_5>0\) such that \( C_4 \le g(z,\nu)\le C_5(|z|+1)\) for all \(z\neq 0\).
		\end{itemize}
		Then the lower semicontinuous envelope of
		\[
		u \mapsto 
		\int_\Omega f(\nabla u)\, dx
		+
		\int_{J_u} g([u],\nu_u)\, d\mathcal{H}^{n-1},
		\qquad
		u \in GSBV^p(\Omega;\mathbb{R}^m),
		\]
		with respect to convergence in measure, is given by
		\[
		\int_\Omega \mathcal{Q}f(\nabla u)\, dx
		+
		\int_{J_u} \mathcal{B}g([u],\nu_u)\, d\mathcal{H}^{n-1},
		\]
		where \(\mathcal{Q}f\) is the quasiconvex envelope of \(f\), and \(\mathcal{B} g\) is the $BV$–elliptic envelope of \(g\).
	\end{teo}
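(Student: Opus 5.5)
The statement is a relaxation result: the lower semicontinuous envelope of the functional with densities $f$ and $g$ is the functional with densities $\mathcal{Q}f$ and $\mathcal{B}g$. The plan is to obtain it as the $\Gamma$-limit of the constant sequence $F_k\equiv F$, where
\[
F(u)=\int_\Omega f(\nabla u)\,dx+\int_{J_u} g([u],\nu_u)\,d\mathcal{H}^{n-1}.
\]
For a constant sequence, the $\Gamma$-limit with respect to convergence in measure coincides with the lower semicontinuous envelope $\overline F$. It therefore suffices to check that the hypotheses of \cite[Theorems~3.5 and~3.8]{cagnetti2019gamma} hold, and then to identify the densities given by the integral representation.

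The first step is to verify those hypotheses on the localized functionals $F(u,A)$, defined for open sets $A\subseteq\Omega$. The listed assumptions on $f$ give continuity and two-sided $p$-growth. The assumptions on $g$ give Borel measurability, the two monotonicity conditions with constant $C_3$, the modulus-of-continuity estimate with $\sigma$, the symmetry $g(z,\nu)=g(-z,-\nu)$, and the bounds $C_4\le g\le C_5(1+|z|)$. These are exactly the structural conditions on the surface density in \cite{cagnetti2019gamma}. Theorem~3.5 there yields, along a subsequence, a $\Gamma$-limit $\overline F(u,A)$ that is the restriction of a Borel measure and is local and lower semicontinuous. Theorem~3.8 then represents it as
\[
\overline F(u,A)=\int_A f_\infty(x,\nabla u)\,dx+\int_{J_u\cap A} g_\infty(x,u^+,u^-,\nu_u)\,d\mathcal{H}^{n-1},
\]
where $f_\infty$ and $g_\infty$ are given by blow-up minimum problems on small cubes. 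Because the sequence is constant, no subsequence is needed. Because $f$ and $g$ do not depend on $x$ and $F$ is translation invariant, $f_\infty$ does not depend on $x$, and $g_\infty$ depends only on $([u],\nu_u)$.

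The second step identifies the bulk density, following \cite[Proof of Theorem~4]{bouchitte2002global}. The bulk blow-up is
\[
\lim_{\varepsilon\to0}\varepsilon^{-n}\inf\{\overline F(v,Q_\varepsilon(x_0)) : v=\xi\,\cdot\ \text{near}\ \partial Q_\varepsilon\}.
\]
Test functions with jumps cost at least $C_4\mathcal{H}^{n-1}(J_v)$. By the standard truncation and Sobolev-replacement argument from the $GSBV^p$ setting, those with small jump set can be replaced by $W^{1,p}$ competitors at vanishing cost. The cell problem therefore reduces to $\inf \fint f(\xi+\nabla\varphi)$ over $\varphi\in W^{1,p}_0$, which equals $\mathcal{Q}f(\xi)$. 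Here the $p$-growth of $f$ and Dacorogna's characterization are used.

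The third step identifies the surface density, and I expect it to be the main obstacle. The surface blow-up is
\[
g_\infty(z,\nu)=\lim_{\varepsilon\to0}\varepsilon^{1-n}\inf\{\overline F(v,Q^\nu_\varepsilon) : v=u_{z,0,\nu}\ \text{near}\ \partial Q^\nu_\varepsilon\}.
\]
For the upper bound $g_\infty\le\mathcal{B}g$, I would insert piecewise constant competitors from the definition of BV-ellipticity and rescale. For the lower bound, I would rescale an almost-minimizer to the unit cube, so that its gradient term scales like $\varepsilon^{p'}\to0$. I would then use the coarea or piecewise-constant approximation argument of \cite{bouchitte2002global} to replace it with a piecewise constant function whose surface energy is asymptotically no larger; this is where the continuity estimate with $\sigma$ and the monotonicity conditions are needed. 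Finally I would use that $g_\infty$ is itself BV-elliptic as a blow-up density, together with $g_\infty\le g$, so that $g_\infty\le\mathcal{B}g$ forces equality.

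The difference from \cite{bouchitte2002global} is that no lower bound linear in $|z|$ is available. I would avoid needing one by working only at the level of the local cell formula, once the representation from \cite{cagnetti2019gamma} is in hand, where equi-boundedness of the surface measures already follows from $g\ge C_4$.
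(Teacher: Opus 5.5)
Your proposal takes the same route as the paper. You apply \cite[Theorems~3.5 and~3.8]{cagnetti2019gamma} to the constant sequence, identify the bulk and surface blow-up densities with $\mathcal{Q}f$ and $\mathcal{B}g$ following \cite[Proof of Theorem~4]{bouchitte2002global}, and observe, as the paper does, that the linear lower bound in $|z|$ is not needed once the integral representation is available. One small slip: after rescaling an almost-minimizer to the unit cube, $\int_{Q_1}|\nabla w_\varepsilon|^p$ is of order $\varepsilon^{p-1}$, not $\varepsilon^{p'}$; it still vanishes since $p>1$.
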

    
	\section*{Appendix B} 
    
	\renewcommand{\thesection}{B} 
	
	\setcounter{figure}{0}
	\setcounter{equation}{0}
    \setcounter{teo}{0}
    In this section we prove Lemma \ref{lemmadiffeolisciokernelvariabile}. Since we need to use convolution with variable kernels, we report here their main properties, referring mainly to \cite{dephilippis2017approximation}.
    
	Let $\Omega \subset \mathbb R^N$ be a bounded set with Lipschitz boundary and let $K \subset \overline{\Omega}$ be a compact set. We fix a regularized distance function $d: \Omega \to \mathbb R$ from $K$ such that $d \in C^\infty(\mathbb R^N \setminus K)$ and
	\begin{equation}\label{distance}
		\frac{\text{dist}(x,K)}{2} \leq d(x) \leq \text{dist}(x,K) \ \ \mbox{for every $x \in \mathbb R^N$}, \ \ \ \ \ \Vert \nabla^j d \Vert_{L^\infty(\mathbb R^N \setminus K)} \leq 2 \ \ \mbox{for every $j \in \NN$}.
	\end{equation}
	We also consider a function $h \in C^\infty([0,+\infty))$ such that $h(t)=1$ for every $t \geq 2$ and 
	\begin{equation}\label{f}
		h^{(j)}(0)=0 \ \ \mbox{and} \ \ 0 \leq h^{(j)}(t) \leq 1 \ \ \mbox{for every $t \geq 0$ and $j \in \NN$,} \ \ \ 0 \leq h(t) \leq 1 \ \ \mbox{for every $t \geq 0$}.
	\end{equation}
	Given $\sigma \in (0,1)$ and $y \in B_1(0)$ the generalized translation $T_{\sigma,y}$ is defined as $$T_{\sigma,y}(x)=x-\sigma h (d(x))y$$ for every $x \in \Omega \setminus K$. Notice also that it holds
	\begin{equation}\label{translation nabla}
		\nabla T_{\sigma,y}(x)= I-\sigma h'(d(x))y \otimes \nabla d(x).
	\end{equation}
	Finally let $\rho \in C_c^\infty(B_1(0))$ be a positive radial function such that $\int_{B_1(0)}\rho=1$. Recalling that $(\Omega)_\sigma:=\Omega+B_\sigma$, we are ready to define convolutions with variable kernel. 
	
	\begin{dfnz}\label{variabel kernel def}
		Let $\sigma \in (0,1)$, $T_{\sigma,y}$ and $\rho$ as above. For any $u \in L^1((\Omega)_\sigma)$ and given $x \in \Omega \setminus K$, we define
		\begin{equation*}
			u_\sigma(x)=\int_{B_1(0)} u(T_{\sigma,y}(x))\rho(y) \, dy.
		\end{equation*}
	\end{dfnz}
	
	We now present some properties of convolutions with variable kernel, we refer again to \cite{dephilippis2017approximation} for the proof.
	
	\begin{prop}\label{variable ker prop}
		Let $1 \leq p < \infty$ and let $u \in L^p((\Omega)_\sigma)$ for some $0<\sigma<1/2$. Then
		\begin{enumerate}
			\item[{(i)}] $u_\sigma \in C^\infty(\Omega \setminus K)$.
			\item[{(ii)}] $\Vert u_\sigma \Vert_{L^p(\Omega)} \leq 2 \Vert u \Vert_{L^p((\Omega)_\sigma)}$.
			\item[{(iii)}] $\Vert u_\sigma-u \Vert_{L^p(\Omega)} \to 0$ as $\sigma \to 0$.
			\item[{(iv)}] If $u \in \textnormal{BV}((\Omega)_\sigma)$, then $u_\sigma \in \textnormal{BV}(\Omega)$ and $$\nabla u_\sigma = (\nabla u)_\sigma + \sigma \xi^\sigma$$ where $\xi^\sigma$ is a Radon measure such that $\xi^\sigma \llcorner K=0$ and $|\xi^\sigma|(\Omega) \leq 2 |D u| (\Omega)$.
			Moreover, $Du_\sigma \llcorner K=Du \llcorner K$. 
			
			\noindent Finally, if $\nabla u \in L^p((\Omega)_\sigma)$ and $J_u \subseteq K$, then $\Vert \nabla u_\sigma-\nabla u \Vert_{L^p(\Omega)} \to 0$ as $\sigma \to 0$.
		\end{enumerate}
	\end{prop}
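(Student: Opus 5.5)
The proof rests on the fact that convolution with a variable kernel is an average of compositions of $u$ with the translations $T_{\sigma,y}$. These translations are uniformly close to the identity and fix $K$, since $h(d(x))=0$ when $d(x)=0$ and $h$ is flat at $0$. All four properties will therefore follow from standard estimates for $u\circ T_{\sigma,y}$ integrated against $\rho(y)\,dy$. As a preliminary, I will record the properties of the maps $T_{\sigma,y}$. By \eqref{translation nabla} and \eqref{distance}, for $\sigma<1/2$ one has $\nabla T_{\sigma,y}=I-\sigma h'(d)\,y\otimes\nabla d$ with $|\sigma h'(d)\,y\otimes\nabla d|\le 2\sigma<1$. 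Hence each $T_{\sigma,y}$ is a smooth diffeomorphism of $\Omega\setminus K$ onto its image, with $T_{\sigma,y}(\Omega)\subset(\Omega)_\sigma$. Moreover $|\det\nabla T_{\sigma,y}|=|1-\sigma h'(d)\,y\cdot\nabla d|\ge 1/2$, using the rank-one determinant formula, and $T_{\sigma,y}\to\mathrm{Id}$ uniformly in $C^k$ as $\sigma\to0$.

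For (i), I will change variables $z=T_{\sigma,y}(x)$, i.e. $y=(x-z)/(\sigma h(d(x)))$, on the set where $h(d(x))>0$. This gives $u_\sigma(x)=\int u(z)\,\rho\big((x-z)/(\sigma h(d(x)))\big)\,(\sigma h(d(x)))^{-N}\,dz$, a smooth kernel in $x$ because $d$ is smooth away from $K$. Differentiating under the integral yields $C^\infty$ regularity. Where $h(d(x))=0$ one has $u_\sigma=u$ locally; this case should be handled as in \cite{dephilippis2017approximation}, and I expect this is a minor point since $h>0$ on $(0,\infty)$ may be assumed.

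For (ii), I will apply Jensen's inequality in $y$ and the change of variables $x\mapsto T_{\sigma,y}(x)$ with Jacobian bounded below by $1/2$. This gives $\int_\Omega|u_\sigma|^p\le\int\rho(y)\int_\Omega|u(T_{\sigma,y}x)|^p\,dx\,dy\le 2\|u\|^p_{L^p((\Omega)_\sigma)}$, which is in fact a better constant than the one claimed.

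For (iii), I will approximate $u$ by $\phi\in C_c((\Omega)_{1/2})$. Then $\|u_\sigma-u\|_p\le\|(u-\phi)_\sigma\|_p+\|\phi_\sigma-\phi\|_p+\|\phi-u\|_p$. The first and last terms are controlled by (ii). The middle term vanishes by uniform continuity of $\phi$, since $|T_{\sigma,y}x-x|\le\sigma$.

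For (iv), the main step, I will differentiate under the integral sign for $u\in BV$. By the chain rule for $BV$ functions composed with diffeomorphisms, $D(u\circ T_{\sigma,y})=(Du\circ T_{\sigma,y})\nabla T_{\sigma,y}$, and by \eqref{translation nabla} this equals $(Du)\circ T_{\sigma,y}-\sigma\,(Du\circ T_{\sigma,y})\,h'(d)\,y\otimes\nabla d$. Averaging in $y$ identifies the first term with $(Du)_\sigma$, understood through pushforward. It identifies the second term with $\sigma\xi^\sigma$, and the bound $|\xi^\sigma|\le 2|Du|$ follows from $|h'|\le1$, $|\nabla d|\le2$, $|y|\le1$ and the Jacobian bound. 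On $K$ we have $h'(d)=0$ and $T_{\sigma,y}=\mathrm{Id}$, so $\xi^\sigma\llcorner K=0$ and $Du_\sigma\llcorner K=Du\llcorner K$; this uses that the pushforward of $Du\llcorner K$ by maps fixing $K$ is unchanged. The final claim then follows: if $J_u\subseteq K$, the absolutely continuous part of $Du_\sigma$ off $K$ is $(\nabla u)_\sigma-\sigma\big((\nabla u)h'(d)\,y\otimes\nabla d\big)_\sigma$. Applying (iii) to $\nabla u\in L^p$ and (ii) to the error term gives $\nabla u_\sigma\to\nabla u$ in $L^p$.

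I expect the main obstacle to be making the measure-valued identity in (iv) rigorous, that is, justifying the interchange of the $y$-integral with distributional differentiation and the behaviour on $K$. I would first verify it against test functions $\varphi\in C_c^1$, using Fubini and the change of variables $T_{\sigma,y}$ for each fixed $y$.
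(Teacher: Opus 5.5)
The paper gives no proof of this proposition and simply refers to \cite{dephilippis2017approximation}. Your outline is the standard argument: the kernel form $u_\sigma(x)=\int u(z)\,\rho\big((x-z)/r(x)\big)\,r(x)^{-N}\,dz$ with $r=\sigma\,h\circ d$ for (i), Jensen plus the change of variables $z=T_{\sigma,y}(x)$ for (ii), density for (iii), and the $BV$ chain rule for $u\circ T_{\sigma,y}$ averaged in $y$ for (iv). Several steps are wrong as written, however.

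\emph{Jacobian and constants.} From $\nabla T_{\sigma,y}=I-\sigma h'(d)\,y\otimes\nabla d$ with $|h'|\le 1$, $|y|<1$, $|\nabla d|\le 2$, you only get $\det\nabla T_{\sigma,y}\ge 1-2\sigma$, which is $\ge 1/2$ only for $\sigma\le 1/4$. In general your computation yields the factor $(1-2\sigma)^{-1/p}$ in (ii), and $|\xi^\sigma|(\Omega)\le 2(1-2\sigma)^{-1}|Du|((\Omega)_\sigma)$ in (iv). The right-hand side must involve $(\Omega)_\sigma$ rather than $\Omega$, because $\xi^\sigma$ on $\Omega$ sees $Du$ on $T_{\sigma,y}(\Omega)\not\subset\Omega$. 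Only small $\sigma$ is used later, so restrict to $\sigma\le 1/4$ and state the constants you actually obtain.

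\emph{Positivity of $h$.} Item (i) genuinely requires $h>0$ on $(0,+\infty)$, which is not among the listed properties of $h$. If $h$ vanished on $(0,a]$, then $u_\sigma=u$ on $\{0<d\le a\}$ and (i) would be false. So this is a hypothesis to add, not a detail to borrow from the reference.

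\emph{Bijectivity of $T_{\sigma,y}$.} For $Du_\sigma\llcorner K=Du\llcorner K$, a diffeomorphism of $\Omega\setminus K$ onto its image is not enough. You need $T_{\sigma,y}$ to be a bijection of $\mathbb{R}^N$ mapping $\mathbb{R}^N\setminus K$ onto itself, so that no part of $Du\llcorner(\mathbb{R}^N\setminus K)$ is transported onto $K$. This does hold. The function $d$ is $2$-Lipschitz on all of $\mathbb{R}^N$: if $[x,x']$ meets $K$ at $k$, then $d(x)+d(x')\le|x-k|+|x'-k|=|x-x'|$. Hence $T_{\sigma,y}$ is the identity minus a contraction. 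Moreover $h\circ d\in C^1(\mathbb{R}^N)$ with zero gradient on $K$, by the flatness of $h$ at $0$.

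\emph{The last claim.} This is the more substantial gap. You identify the absolutely continuous part of $Du_\sigma$ off $K$ with $(\nabla u)_\sigma$ plus a term built from $\nabla u$. That presupposes $Du\llcorner(\Omega\setminus K)=\nabla u\,\mathcal{L}^N$, i.e.\ that $u$ has no Cantor part. For a general $u\in BV$ with $J_u\subseteq K$, both this step and the claim itself fail. Since $u_\sigma$ is smooth off $K$, all of $(D^c u)_\sigma$ becomes absolutely continuous there, and it converges weakly$^*$ to $D^c u\neq 0$. For instance, take $N=1$ and let $u$ be a Cantor function rising from $0$ to $1$ on an interval $[a,b]$ at positive distance from $K$. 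Then $\nabla u=0$ a.e.\ and $J_u=\emptyset$, but $\int_a^b|\nabla u_\sigma|\ge|u_\sigma(b)-u_\sigma(a)|\to 1$. The final statement therefore needs $u\in SBV$, or at least $|D^c u|(\Omega\setminus K)=0$. This is the situation in the paper's application, where $\Psi_\delta\in W^{1,\infty}$ away from $K$, and under that hypothesis your argument goes through.
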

	If $u \in W^{1,\infty}((\Omega)_\sigma \setminus K)$ then from Proposition \ref{variable ker prop} we can deduce additional properties on $u_\sigma$.
	
	\begin{cor}\label{varianle ker cor}
		If $u \in W^{1,\infty}((\Omega)_\sigma \setminus K)$ and $\Omega$ has finite measure, then $\xi^\sigma$ is a function defined as
		$$
		\xi^\sigma = - h'(d(x))\nabla d(x) \int_{B_1(0)} \nabla u (T_{\sigma,y}(x))y\rho(y) \, dy
		$$
		and $\Vert \xi^\sigma \Vert_{L^\infty(\Omega)} \leq 2 \Vert \nabla u \Vert_{L^\infty((\Omega)_\sigma)}$ for every $\sigma \in (0,1/2)$. Moreover, we have that $u_\sigma \in W^{1,\infty}(\Omega \setminus K)$ and $\Vert \nabla u_\sigma \Vert_{L^\infty(\Omega)} \leq 3 \Vert  \nabla u \Vert_{L^\infty((\Omega)_\sigma)}$ for every $\sigma \in (0,1/2)$.
	\end{cor}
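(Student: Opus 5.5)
The plan is to read off every claim of Corollary~\ref{varianle ker cor} from the structure of Definition~\ref{variabel kernel def}. Since $u \in W^{1,\infty}((\Omega)_\sigma \setminus K)$, the map $y \mapsto u(T_{\sigma,y}(x))$ is differentiable in $x$ for a.e.\ $y$. I will differentiate under the integral sign in the definition of $u_\sigma$ and use \eqref{translation nabla}. For $x \in \Omega \setminus K$ this gives
\[
\nabla u_\sigma(x) = \int_{B_1(0)} \nabla u(T_{\sigma,y}(x))\,\bigl(I - \sigma h'(d(x))\, y \otimes \nabla d(x)\bigr)\rho(y)\,dy .
\]
The first term is precisely $(\nabla u)_\sigma(x)$. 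The second term equals $\sigma$ times $-h'(d(x))\int_{B_1(0)} \nabla u(T_{\sigma,y}(x))\,y\,\rho(y)\,dy \otimes \nabla d(x)$, which is the claimed expression for $\xi^\sigma$ (up to the index convention for the product with $\nabla d$). Comparing with Proposition~\ref{variable ker prop}(iv) then shows that the measure $\xi^\sigma$ is absolutely continuous with this density. Indeed, on $\Omega\setminus K$ both representations agree as distributions, and $\xi^\sigma\llcorner K = 0$, so no singular part survives.

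To justify differentiating under the integral, I need that $T_{\sigma,y}(x)$ stays in $(\Omega)_\sigma \setminus K$. Here $|T_{\sigma,y}(x)-x| \le \sigma h(d(x)) \le \sigma$. Moreover, since $h$ vanishes to infinite order at $0$, we have $h(t) \le t/2$ near $0$, so $\sigma h(d(x)) < \mathrm{dist}(x,K)$ by \eqref{distance}. Hence $T_{\sigma,y}(x)\notin K$. Because $T_{\sigma,y}$ is a local diffeomorphism with $\nabla T$ close to $I$, the composition $u\circ T_{\sigma,y}$ is Lipschitz near $x$ with gradient given by the chain rule a.e. Dominated convergence, using the bound $\|\nabla u\|_{L^\infty}$, then allows the exchange of derivative and integral; equivalently, one can argue through difference quotients.

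For the bounds, I use $|h'| \le 1$, $|\nabla d| \le 2$, $|y|\le1$ and $\int\rho = 1$. These give $|\xi^\sigma(x)| \le 2\|\nabla u\|_{L^\infty((\Omega)_\sigma)}$. Similarly $|(\nabla u)_\sigma| \le \|\nabla u\|_{L^\infty}$, so $|\nabla u_\sigma| \le (1+2\sigma)\|\nabla u\|_{L^\infty} \le 2\|\nabla u\|_{L^\infty}\le 3\|\nabla u\|_{L^\infty}$ for $\sigma<1/2$. Since $|u_\sigma| \le \|u\|_{L^\infty}$ trivially, we get $u_\sigma \in W^{1,\infty}(\Omega\setminus K)$. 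The finite measure of $\Omega$ is used only to ensure $u \in L^p$, so that Proposition~\ref{variable ker prop} applies.

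The main obstacle I expect is the measure-theoretic identification of $\xi^\sigma$ as the function above, rather than as an abstract Radon measure. My approach is to test against $C_c^\infty(\Omega\setminus K)$ and use that both $\xi^\sigma$ and $Du_\sigma - (\nabla u)_\sigma$ vanish on $K$.
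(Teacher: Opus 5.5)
Your argument is correct and is essentially the route the paper has in mind; the paper gives no separate proof and only says the corollary is deduced from Proposition~\ref{variable ker prop}(iv), which you do by differentiating under the integral with the chain rule and \eqref{translation nabla}, reading off $\xi^\sigma$, and bounding it with $|h'|\le 1$, $|\nabla d|\le 2$, $|y|\le 1$. One small fix: your observation that $h(t)\le t/2$ near $0$ only handles points close to $K$, so instead use $h(0)=0$ and $0\le h'\le 1$ to get $h(t)\le t$ for all $t\ge 0$, whence $\sigma h(d(x))\le \sigma d(x)<\mathrm{dist}(x,K)$ for every $x\in\Omega\setminus K$.
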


	The next result asserts that if $u \in W^{k,\infty}$ for some $k \in \NN$, then $u_\sigma \in W^{k,\infty}$ as well in a slightly smaller domain.
	
	\begin{lemma}\label{variable ker lem}
		Let $\Omega \subset \mathbb{R}^N$ be a bounded open set with Lipschitz boundary and let $K \subset \partial \Omega$ be a compact set. Let $u \in W^{k,\infty}(\Omega)$ for every integer $k \geq 1$. Then, the function $u_\sigma$ defined as in Definition \ref{variabel kernel def} is such that $u_\sigma \in W^{k,\infty}(U)$ for every integer $k \geq 1$, where
        \[
        U=\{ x \in \Omega \colon \, \textnormal{dist}(x,\partial \Omega \setminus K) >\sigma \}.
        \]
	\end{lemma}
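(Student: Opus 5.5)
The plan is to differentiate under the integral sign, which is legitimate because the translation $T_{\sigma,y}$ is smooth on $\Omega\setminus K$. By \eqref{distance} and \eqref{f}, the map $x\mapsto h(d(x))$ is $C^\infty$ on $\mathbb R^N\setminus K$, and all its derivatives are bounded by constants depending only on $j$. Indeed, each $\nabla^j(h\circ d)$ is a finite sum of products of derivatives $h^{(i)}(d)$, bounded by $1$, and derivatives $\nabla^{l}d$, bounded by $2$. Hence $T_{\sigma,y}(x)=x-\sigma h(d(x))y$ has $\|\nabla^j T_{\sigma,y}\|_{L^\infty}\le C_j\sigma$ for $j\ge 2$, while $\nabla T_{\sigma,y}=I+O(\sigma)$, uniformly in $y\in B_1(0)$. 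In particular $T_{\sigma,y}$ is a smooth map close to the identity.

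The first point I will check is that $T_{\sigma,y}(x)\in\Omega$ for $x\in U$ and $y\in B_1(0)$. This holds because $|T_{\sigma,y}(x)-x|\le\sigma h(d(x))|y|<\sigma$. The ball $B_\sigma(x)$ avoids $\partial\Omega\setminus K$ by definition of $U$, and it can meet $K$ only if $\mathrm{dist}(x,K)<\sigma$. Near $K$, the translation is damped, since $h(d(x))\to 0$ with all derivatives as $d\to0$. I will therefore use the sharper bound $|T_{\sigma,y}(x)-x|\le\sigma h(d(x))<d(x)\le \mathrm{dist}(x,K)$, valid for $\sigma<1$ and small $d$, using $h(t)\le t$ near $0$, which follows from $h(0)=0$ and $0\le h'\le1$. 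Thus $T_{\sigma,y}(x)$ stays in the open segment-like neighbourhood of $x$ inside $\Omega$, and $u\circ T_{\sigma,y}$ is well defined on $U$. I expect this geometric step to be the delicate one. One must ensure that the whole segment $[x,T_{\sigma,y}(x)]$ lies in $\Omega$, not just the endpoint, so that $u\in W^{k,\infty}(\Omega)$ can be used with the chain rule and Lipschitz bounds. The fallback is to use local convexity of $B_{\min\{\sigma,\mathrm{dist}(x,K)\}}(x)\subset\Omega$.

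Next, for every multi-index $\beta$ with $|\beta|\le k$, I will apply the Faà di Bruno formula to $u\circ T_{\sigma,y}$:
\[
\partial^\beta\bigl(u\circ T_{\sigma,y}\bigr)(x)=\sum \bigl(\nabla^{i}u\bigr)(T_{\sigma,y}(x))\bigl[\partial^{\gamma_1}T_{\sigma,y},\dots,\partial^{\gamma_i}T_{\sigma,y}\bigr](x),
\]
with $1\le i\le|\beta|$. Each term is bounded by $\|\nabla^iu\|_{L^\infty(\Omega)}\prod_l\|\partial^{\gamma_l}T_{\sigma,y}\|_{L^\infty}$, which is a constant independent of $x\in U$ and $y$. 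Since $u\in W^{k,\infty}$ only gives an a.e.\ defined $\nabla^k u$, I will justify the formula by first mollifying $u$ in the standard way on compact subsets, or by noting that $T_{\sigma,y}$ is a bi-Lipschitz diffeomorphism onto its image (because $\nabla T_{\sigma,y}=I+O(\sigma)$ is invertible for $\sigma<1/2$), so composition preserves $W^{k,\infty}$ locally with the stated bounds.

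Finally, I will integrate against $\rho(y)\,dy$ and differentiate under the integral by dominated convergence, using the uniform bounds above together with $\int\rho=1$. This gives $\partial^\beta u_\sigma(x)=\int_{B_1(0)}\partial^\beta(u\circ T_{\sigma,y})(x)\rho(y)\,dy$, and hence $\|\partial^\beta u_\sigma\|_{L^\infty(U)}\le C(k,\sigma)\|u\|_{W^{k,\infty}(\Omega)}$ for all $|\beta|\le k$. Since $k$ is arbitrary, $u_\sigma\in W^{k,\infty}(U)$ for every $k\ge1$.
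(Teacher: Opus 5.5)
Your proposal is correct and is essentially the paper's argument. The paper iterates the identity $\nabla u_\sigma=(\nabla u)_\sigma+\sigma\xi^\sigma$ from the preceding corollary and bounds each resulting term $H(x)\int_{B_1(0)}\nabla^j u(T_{\sigma,y}(x))[y,\dots,y]\rho(y)\,dy$ using the bounds on $d$ and $h$; that is exactly the bookkeeping of your Fa\`a di Bruno expansion of $u\circ T_{\sigma,y}$.

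Your check that $T_{\sigma,y}(x)\in B_{\min\{\sigma,\mathrm{dist}(x,K)\}}(x)\subset\Omega$ for $x\in U$ (via $h(t)\le t$) fills in a detail the paper leaves implicit. Your mollification step is unnecessary, since $u\in W^{k,\infty}(\Omega)$ for every $k$ already gives $u\in C^\infty(\Omega)$.
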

	
	\begin{proof}
        By Corollary~\ref{varianle ker cor}, $u_\sigma \in W^{1,\infty}(U)$ and 
    $\nabla u_\sigma = (\nabla u)_\sigma + \sigma \xi^\sigma_1$, where
    \begin{equation}\label{xi}
        \xi^\sigma_1 = - h'(d(x))\nabla d(x) \int_{B_1(0)} \nabla u(T_{\sigma,y}(x))\, y\rho(y) \, dy.
    \end{equation}
    Differentiating iteratively, one finds
    \[
    \nabla^k u_\sigma = (\nabla^k u)_\sigma + \sigma \sum_{j=1}^k \nabla^{k-j} \xi^\sigma_j,
    \]
    where each $\xi^\sigma_j$ has an expression analogous to \eqref{xi} with $\nabla^j u$ instead of $\nabla u$. Therefore, it suffices to show $\xi^\sigma \in W^{k,\infty}(U)$ for every $k \geq 1$. 
    By \eqref{distance}, \eqref{f}, and \eqref{translation nabla}, each component of 
    $\nabla^k \xi^\sigma$ is a finite sum of terms of the form
    \[
    H(x) \int_{B_1(0)} \nabla^j u(T_{\sigma,y}(x))\,\underbrace{[y,\dots,y]}_\text{$j$-times}\,\rho(y)\,dy,
    \quad j \in \{1,\dots,k\},
    \]
    with $\|H\|_{L^\infty(U)} \leq 2$. Hence, for every $k \geq 1$ there exists $C=C(k)$ such that
    \[
    \|\nabla^k \xi^\sigma\|_{L^\infty(U)} \leq C \sum_{j=1}^k \|\nabla^j u\|_{L^\infty(\Omega)},
    \]
    which concludes the proof.
	\end{proof}

    We are now ready to prove Lemma~\ref{lemmadiffeolisciokernelvariabile}. 

      \begin{proof}[Proof of Lemma \ref{lemmadiffeolisciokernelvariabile}]
         It is not restrictive to assume that $S=\left\{(x_1,g(x_1))| \ x_1 \in [0,1] \right\}$. Let $\delta>0$. We divide the proof in two steps: first we define a general $W^{1,\infty}$-homeomorphism and then we smooth it with variable kernel convolutions. Finally, we check that it satisfies all the desired properties.

    \textit{Step 1: construction of $\Psi_\delta$.} 
    Let $f\in C^\infty([0,1])$ satisfy $f(0)=f(1)=0$, $f>0$ on $(0,1)$, $f'(0)>0$, $f'(1)<0$, 
    and define
    \[
    \Delta_\delta := \{(x_1,x_2) \mid x_1\in[0,1],\ x_2\in[g(x_1),\,g(x_1)+\delta^2 f(x_1)]\}.
    \]
    The assumptions on $g$ and $f$ ensure that $\Delta_\delta$ has Lipschitz boundary, giving~(ii).
    Set
    \[
    \varphi(t) =
    \begin{cases}
        t & t \leq 0, \\
        (1-\delta)t+\delta^2 & t \in (0,\delta), \\
        t & t \geq \delta,
    \end{cases}
    \]
    and define $\Psi_\delta:\mathbb{R}^2\setminus\overline{S}\to\mathbb{R}^2\setminus\Delta_\delta$ 
    by $\Psi_\delta=\mathrm{Id}$ outside $(0,1)\times\mathbb{R}$ and
    \[
    \Psi_\delta(x_1,x_2) := \left(x_1,\; \varphi\!\left(\frac{x_2-g(x_1)}{f(x_1)}\right)f(x_1)+g(x_1)\right)
    \quad \text{on } (0,1)\times\mathbb{R}\setminus\overline{S}.
    \]
    One checks that $\nabla\Psi_\delta\in L^\infty(\mathbb{R}^2;\mathbb{M}^{2\times 2})$, that 
    $\Psi_\delta\in W^{k,\infty}(U_\delta;\mathbb{R}^2)$ for every $k\geq 1$ where 
    \[U_\delta:=\{x_1\in[0,1],\; x_2\in[g(x_1),g(x_1)+\delta f(x_1)]\},
    \]
    and that $\|\Psi_\delta-\mathrm{Id}\|_{W^{1,\infty}(\mathbb{R}^2\setminus\overline{S};\R^2)}\to 0$ 
    as $\delta\to 0$.

    \textit{Step 2: smoothing via variable kernel convolution.}
    Let $\sigma\in(2\delta^2,\delta/2)$, $K=\overline{S}$, 
    and set $\Phi_\delta:=(\Psi_\delta)^\sigma$ as in Definition~\ref{variabel kernel def}. Using 
    Proposition~\ref{variable ker prop} and Corollary~\ref{varianle ker cor}, we have that
    $\Phi_\delta \in C^\infty(\mathbb{R}^2\setminus\overline{S};\mathbb{R}^2)$ and 
    $\nabla\Phi_\delta \in L^\infty(\mathbb{R}^2;\mathbb{M}^{2\times 2})$. 
    Property~(iii) holds since $x\notin(S)_{2\delta}$ implies 
    $B_{\sigma h(d(x))}(x)\subset\mathbb{R}^2\setminus U_\delta$, so $\Phi_\delta(x)=x$.

    The fact that $\Phi_\delta:\mathbb{R}^2\setminus\overline{S}\to\mathbb{R}^2\setminus\Delta_\delta$ is a bijection 
    follows from the identity $(\Phi_\delta(x))_1=x_1$ (which holds since $(\Psi_\delta)_1=x_1$ and $\rho$ 
    is radial), the strict monotonicity of $(\Psi_\delta(x_1,\cdot))_2$, and a trace argument 
    showing that the one-sided limits of $(\Phi_\delta(x_1,\cdot))_2$ at $x_2=g(x_1)$ 
    equal $g(x_1)$ from below and $g(x_1)+\delta^2 f(x_1)$ from above.

    Property~(i) follows, up taking $\delta>0$ small enough, from Lemma~\ref{variable ker lem} applied separately on each of the 
    two connected components of 
    $(S)_{2\sigma}\setminus\overline{(S\cup B_\delta(g(0))\cup B_\delta(g(1)))}$
    (see Figure~\ref{figure3}) and using that $\Psi_\delta\in W^{k,\infty}(U_\delta;\mathbb{R}^2)$ for every $k\geq 1$.
    
    We now prove (iv). We have that $\nabla\Phi_\delta = (\nabla\Psi_\delta)^\sigma + \sigma\xi^\sigma$,
    \[
    \|\xi^\sigma\|_{L^\infty(\mathbb{R}^2;\,\mathbb{M}^{2\times 2})} \leq 2\|\nabla\Psi_\delta\|_{L^\infty(\mathbb{R}^2;\,\mathbb{M}^{2\times 2})}, \ \ \mbox{and} 
    \ \ \|(\nabla\Psi_\delta)^\sigma - I\|_{L^\infty(\mathbb{R}^2;\,\mathbb{M}^{2\times 2})} \leq \|\nabla\Psi_\delta - I\|_{L^\infty(\mathbb{R}^2;\,\mathbb{M}^{2\times 2})}.
    \]
    Thus, recalling that $\|\Psi_\delta - \mathrm{Id}\|_{W^{1,\infty}(\mathbb{R}^2\setminus\overline{S};\,\mathbb{R}^2)} \to 0$, 
    we conclude.

    \end{proof}








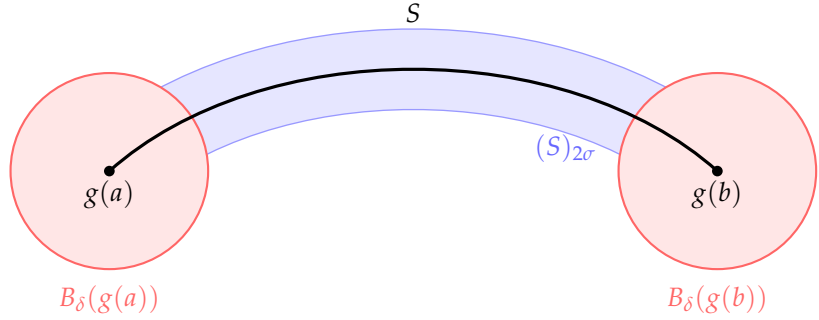
\begin{figure}[h!]
	\begin{tikzpicture}[line cap=round, line join=round]
		
		\def\R{1.3}
		\def\r{0.35}
		
		\coordinate (L) at (-4,0);
		\coordinate (R) at (4,0);
		
		\draw[
			line width=3*\r cm,
			blue!10,
			preaction={draw, blue!40, line width=3*\r cm + 1pt}
		]
		(L) .. controls (-2,1.8) and (2,1.8) .. (R);
		
		\filldraw[fill=red!10, draw=red!60, thick] (L) circle (\R);
		\filldraw[fill=red!10, draw=red!60, thick] (R) circle (\R);
		
		\draw[very thick]
		(L) .. controls (-2,1.8) and (2,1.8) .. (R);
		
		\fill (L) circle (2pt);
		\fill (R) circle (2pt);
		
		\node[below] at (L) {$g(a)$};
		\node[below] at (R) {$g(b)$};
		\node[above] at (0,1.85) {$S$};
		\node[blue!60] at (2,0.3) {$(S)_{2\sigma}$};
		\node[red!60] at (-4,-1.7) {$B_\delta(g(a))$};
		\node[red!60] at (4,-1.7) {$B_\delta(g(b))$};
		
	\end{tikzpicture}
	\caption{The curve $S$, its tubular neighborhood $(S)_{2\sigma}$, and the balls $B_\delta(g(a))$, $B_\delta(g(b))$ around the endpoints.}
	\label{figure3}
\end{figure}


\begin{thebibliography}{50}
\bibitem{acerbi1991variational}
{\sc E.~Acerbi, G.~Buttazzo, and D.~Percivale.}
\newblock \emph{A variational definition of the strain energy for an elastic string}.
\newblock Journal of Elasticity.
25(2):137--148, 1991.

\bibitem{almi2021dimension}
{\sc S.~Almi, S.~Belz, S.~Micheletti, and S.~Perotto.}
\newblock \emph{A dimension-reduction model for brittle fractures on thin shells with mesh adaptivity}.
\newblock Mathematical Models and Methods in Applied Sciences.
31(1):37--81, 2021.

\bibitem{almi2023brittlemembranes}
{\sc S.~Almi, D.~Reggiani, and F.~Solombrino.}
\newblock \emph{Brittle membranes in finite elasticity}.
\newblock ZAMM -- Journal of Applied Mathematics and Mechanics / Zeitschrift f\"ur Angewandte Mathematik und Mechanik.
103(11), 2023.

\bibitem{almi2023brittle}
{\sc S.~Almi and E.~Tasso.}
\newblock \emph{Brittle fracture in linearly elastic plates}.
\newblock Proceedings of the Royal Society of Edinburgh Section A: Mathematics.
153(1):68--103, 2023.


\bibitem{almi2025generalized}
{\sc S.~Almi, E.~Tasso.}
\newblock \emph{Generalized bounded deformation in non-Euclidean settings}.
\newblock Indiana University Mathematics Journal.
74:1099--1152, 2025.


\bibitem{ambrosio1988existence}
{\sc L.~Ambrosio.}
\newblock \emph{Existence theory for a new class of variational problems}.
\newblock Archive for Rational Mechanics and Analysis.
111:291--322, 1990.

\bibitem{ambrosio2000functions}
{\sc L.~Ambrosio, N.~Fusco, and D.~Pallara.}
\newblock  Functions of Bounded Variation and Free Discontinuity Problems.
\newblock {\em Oxford University Press}, 2000.


\bibitem{anzellotti1994dimension}
{\sc G.~Anzellotti, S.~Baldo, and D.~Percivale.}
\newblock \emph{Dimension reduction in variational problems, asymptotic development in $\Gamma$-convergence and thin structures in elasticity}.
\newblock Asymptotic Analysis.,
9(1):61--100, 1994.

\bibitem{babadjian2006quasistatic}
{\sc J.-F.~Babadjian.}
\newblock \emph{Quasistatic evolution of a brittle thin film}.
\newblock Calculus of Variations and Partial Differential Equations.,
26(1):69--118, 2006.

\bibitem{babadjian2016reduced}
{\sc J.-F.~Babadjian and D.~Henao.}
\newblock \emph{Reduced models for linearly elastic thin films allowing for fracture, debonding or delamination}.
\newblock Interfaces and Free Boundaries.,
18(4):545--578, 2016.

\bibitem{barenblatt1962equilibrium}
{\sc G.~I.~Barenblatt.}
\newblock \emph{The mathematical theory of equilibrium cracks in brittle fracture}.
\newblock Advances in Applied Mechanics.,
7:55--129, 1962.

\bibitem{bouchitte2002global}
{\sc G.~Bouchitt\'e, I.~Fonseca, G.~Leoni, and L.~Mascarenhas.}
\newblock \emph{A global method for relaxation in $W^{1,p}$ and in $SBV^p$}.
\newblock Archive for Rational Mechanics and Analysis.,
165(3):187--242, 2002.

\bibitem{bourdin2008variational}
{\sc B.~Bourdin, G.~A.~Francfort, and J.-J.~Marigo.}
\newblock \emph{The variational approach to fracture}.
\newblock Journal of Elasticity.,
91(1):5--148, 2008.

\bibitem{braides1996homogenization}
{\sc A.~Braides, A.~Defranceschi, and E.~Vitali.}
\newblock \emph{Homogenization of free discontinuity problems}.
\newblock Archive for Rational Mechanics and Analysis.,
135(4):297--356, 1996.

\bibitem{braides2001brittle}
{\sc A.~Braides and I.~Fonseca.}
\newblock \emph{Brittle thin films}.
\newblock Applied Mathematics \& Optimization.,
44(3):299--323, 2001.

\bibitem{cagnetti2019gamma}
{\sc F.~Cagnetti, G.~Dal~Maso, L.~Scardia, and C.~I.~Zeppieri.}
\newblock \emph{{$\Gamma$}-convergence of free-discontinuity problems}.
\newblock Ann.\ Inst.\ H.~Poincar\'e C, Anal.\ Non Lin\'eaire, 36(4):1035--1079, 2019.

\bibitem{cesana2015effective}
{\sc P.~Cesana, P.~Plucinsky, and K.~Bhattacharya.}
\newblock \emph{Effective behavior of nematic elastomer membranes}.
\newblock Archive for Rational Mechanics and Analysis.,
218(2):863--905, 2015.

\bibitem{cicalese2017global}
{\sc M.~Cicalese, M.~Ruf, and F.~Solombrino.}
\newblock \emph{On global and local minimizers of prestrained thin elastic rods}.
\newblock Calculus of Variations and Partial Differential Equations.,
56(4):115, 2017.

\bibitem{cicalese2017hemihelical}
{\sc M.~Cicalese, M.~Ruf, and F.~Solombrino.}
\newblock \emph{Hemihelical local minimizers in prestrained elastic bi-strips}.
\newblock Zeitschrift f\"ur Angewandte Mathematik und Physik.,
68(6):122, 2017.

\bibitem{ciarlet2000volIII}
{\sc P.~G.~Ciarlet.}
\newblock  Mathematical Elasticity. Vol.~III: Theory of shells.
\newblock {\em Vol.~29 of Studies in Mathematics and its Applications, North-Holland, Amsterdam}, 2000. 

\bibitem{ContiDolzmann2006}
{\sc S. Conti and G. Dolzmann.}
\newblock \emph{Derivation of elastic theories for thin sheets and the constraint of incompressibility}.
\newblock Analysis, Modeling and Simulation of Multiscale Problems,
Springer, Berlin, 2006, pp.~225--247.

\bibitem{conti2009gamma}
{\sc S.~Conti and G.~Dolzmann.}
\newblock \emph{$\Gamma$-convergence for incompressible elastic plates}.
\newblock Calculus of Variations and Partial Differential Equations.,
34(4):531--551, 2009.

\bibitem{cortesani1999density}
{\sc G.~Cortesani and R.~Toader.}
\newblock \emph{A density result in {SBV} with respect to non-isotropic energies}.
\newblock  Nonlinear Analysis-theory Methods \& Applications,. 38 (1999), pp.~585--604.

\bibitem{dacorogna1989}
{\sc B.~Dacorogna.}
\newblock Direct Methods in the Calculus of Variations.
\newblock {\em Vol.~78 of Applied Mathematical Sciences, Springer-Verlag, Berlin}, 1989.

\bibitem{dal2012introduction}
{\sc G.~Dal Maso.}
\newblock An Introduction to $\Gamma$-Convergence.
\newblock {\em Springer Science \& Business Media, Vol.~8}, 2012.

\bibitem{davoli2011thin}
{\sc E.~Davoli.}
\newblock \emph{Thin-walled beams with a cross-section of arbitrary geometry: derivation of linear theories starting from 3D nonlinear elasticity}.
\newblock Advances in Calculus of Variations.,
6:33--91, 2013.

\bibitem{davoli2014linearized}
{\sc E.~Davoli.}
\newblock \emph{Linearized plastic plate models as $\Gamma$-limits of 3D finite elastoplasticity}.
\newblock ESAIM: Control, Optimisation and Calculus of Variations.,
20:725--747, 2014.

\bibitem{davoli2014quasistatic}
{\sc E.~Davoli.}
\newblock \emph{Quasistatic evolution models for thin plates arising as low energy $\Gamma$-limits of finite plasticity}.
\newblock Mathematical Models and Methods in Applied Sciences.,
24:2085--2153, 2014.

\bibitem{davoli2013mora}
{\sc E.~Davoli and M.~G.~Mora.}
\newblock \emph{A quasistatic evolution model for perfectly plastic plates derived by $\Gamma$-convergence}.
\newblock Annales de l'Institut Henri Poincar\'e C, Analyse Non Lin\'eaire.,
30:615--660, 2013.

\bibitem{dephilippis2017approximation}
{\sc G.~De~Philippis, N.~Fusco, and A.~Pratelli.}
\newblock \emph{On the approximation of $SBV$ functions}.
\newblock Rendiconti Lincei -- Matematica e Applicazioni.,
28:369--413, 2017.

\bibitem{eleuteri2024asymptotic}
{\sc M.~Eleuteri, F.~Prinari, and E.~Zappale.}
\newblock \emph{Asymptotic analysis of thin structures with point-dependent energy growth}.
\newblock Mathematical Models and Methods in Applied Sciences.,
34:1401--1443, 2024.

\bibitem{engl2026membrane}
{\sc D.~Engl, A.~Molchanova, and H.~Sch\"onberger.}
\newblock \emph{Derivation of variational membrane models in the context of anisotropic nonlocal hyperelasticity}.
\newblock arXiv preprint. arXiv:2602.17278, 2026.

\bibitem{freddi2010dimension}
{\sc L.~Freddi, R.~Paroni, and C.~Zanini.}
\newblock \emph{Dimension reduction of a crack evolution problem in a linearly elastic plate}.
\newblock Asymptotic Analysis.,
70:101--123, 2010.

\bibitem{friedrich2024voids}
{\sc M.~Friedrich, L.~Kreutz, and K.~Zemas.}
\newblock \emph{Derivation of effective theories for thin 3D nonlinearly elastic rods with voids}.
\newblock Mathematical Models and Methods in Applied Sciences.,
34:723--777, 2024.

\bibitem{friedrich2026kirchhoff}
{\sc M.~Friedrich, L.~Kreutz, and K.~Zemas.}
\newblock \emph{Derivation of Kirchhoff-type plate theories for elastic materials with voids}.
\newblock Journal de Math\'ematiques Pures et Appliqu\'ees.,
103865, 2026.

\bibitem{friedrich2020derivation}
{\sc M.~Friedrich and M.~Kru\v{z}\'{\i}k.}
\newblock \emph{Derivation of von K\'arm\'an plate theory in the framework of
	three-dimensional viscoelasticity}.
\newblock Archive for Rational Mechanics and Analysis.,
238(1):489--540, 2020.

\bibitem{friedrich2022ribbon}
{\sc M.~Friedrich and L.~Machill.}
\newblock \emph{Derivation of a one-dimensional von K\'arm\'an theory for viscoelastic ribbons}.
\newblock NoDEA: Nonlinear Differential Equations and Applications.,
29:Paper No.~11, 2022.

\bibitem{friesecke2002theorem}
{\sc G.~Friesecke, R.~D.~James, and S.~M\"uller.}
\newblock \emph{A theorem on geometric rigidity and the derivation of nonlinear plate theory from three-dimensional elasticity}.
\newblock Communications on Pure and Applied Mathematics.,
55(11):1461--1506, 2002.

\bibitem{friesecke2006hierarchy}
{\sc G.~Friesecke, R.~D.~James, and S.~M\"uller.}
\newblock \emph{A hierarchy of plate models derived from nonlinear elasticity by $\Gamma$-convergence}.
\newblock Archive for Rational Mechanics and Analysis.,
180(2):183--236, 2006.

\bibitem{ginster2024euler}
{\sc J.~Ginster and P.~Gladbach.}
\newblock \emph{The Euler--Bernoulli limit of thin brittle linearized elastic beams}.
\newblock Journal of Elasticity.,
156(1):125--155, 2024.

\bibitem{griffith1921rupture}
{\sc A.~A.~Griffith.}
\newblock \emph{VI. The phenomena of rupture and flow in solids}.
\newblock Philosophical Transactions of the Royal Society of London. Series A.,
221(582--593):163--198, 1921.

\bibitem{gromov1971nonsingular}
{\sc M.~Gromov and J.~Eliashberg.}
\newblock \emph{Construction of nonsingular isoperimetric films}.
\newblock Trudy Matematicheskogo Instituta im. V.\,A. Steklova.,
(1971):18--33.

\bibitem{gromov1986pdr}
{\sc M.~Gromov.}
\newblock Partial Differential Relations.
\newblock {\em Vol.~9, Springer}, 1986.

\bibitem{hafsa2008nonlinear}
{\sc O.~A.~Hafsa and J.-P.~Mandallena.}
\newblock \emph{The nonlinear membrane energy: variational derivation under the constraint ``$\det \nabla u \neq 0$''}.
\newblock Journal de Math\'ematiques Pures et Appliqu\'ees.,
86:100--115, 2005.

\bibitem{hafsa2008nonlinear2}
{\sc O.~A.~Hafsa and J.-P.~Mandallena.}
\newblock \emph{The nonlinear membrane energy: variational derivation under the constraint ``$\det \nabla u > 0$''}.
\newblock Bulletin des Sciences Math\'ematiques.,
132:272--291, 2008.

\bibitem{hornung2014derivation}
{\sc P.~Hornung, S.~Neukamm, and I.~Vel\v{c}i\'c.}
\newblock \emph{Derivation of a homogenized nonlinear plate theory from 3D elasticity}.
\newblock Calculus of Variations and Partial Differential Equations.,
51(3):677--699, 2014.

\bibitem{kohn1986optimal}
{\sc R.~V. Kohn and G.~Strang.}
\newblock \emph{Optimal design and relaxation of variational problems, I}.
\newblock Communications on Pure and Applied Mathematics., 39(1):113--137, 1986.


\bibitem{kruvzik2026effects}
{\sc M.~Kru\v{z}\'\i k and F.~Riva.}
\newblock \emph{The effects of pressure loads in the dimension reduction of elasticity models}.
\newblock{\em Journal of Nonlinear Science}, 
36:37, 2026.

\bibitem{ledret1993modele}
{\sc H.~Le~Dret and A.~Raoult.}
\newblock \emph{Le mod\`ele de membrane non lin\'eaire comme limite variationnelle de l'\'elasticit\'e non lin\'eaire tridimensionnelle}.
\newblock Comptes Rendus de l'Acad\'emie des Sciences. S\'erie I, Math\'ematique.,
317(2):221--226, 1993.



\bibitem{schmidt2017griffith}
{\sc B.~Schmidt.}
\newblock \emph{A Griffith--Euler--Bernoulli theory for thin brittle beams derived from nonlinear models in variational fracture mechanics}.
\newblock Mathematical Models and Methods in Applied Sciences.,
27(9):1685--1726, 2017.

\bibitem{schmidt2023continuum}
{\sc B.~Schmidt and J.~Zeman.}
\newblock \emph{A continuum model for brittle nanowires derived from an atomistic
	description by $\Gamma$-convergence}.
\newblock Calculus of Variations and Partial Differential Equations,
62:243, 2023.

\bibitem{trabelsi2006modeling}
{\sc K.~Trabelsi.}
\newblock {\em Modeling of a membrane for nonlinearly elastic incompressible materials via $\Gamma$-convergence}.
\newblock Analysis and Applications.,
4(1):31--60, 2006.

\end{thebibliography}

\end{document}